\newcommand\blfootnote[1]{%
  \begingroup
  \renewcommand\thefootnote{}\footnote{#1}%
  \addtocounter{footnote}{-1}%
  \endgroup
}
\newcommand{\tpitchfork}{%
  \vbox{
    \baselineskip\z@skip
    \lineskip-.52ex
    \lineskiplimit\maxdimen
    \m@th
    \ialign{##\crcr\hidewidth\smash{$-$}\hidewidth\crcr$\pitchfork$\crcr}
  }%
}
\definecolor{ao(english)}{rgb}{0.0, 0.5, 0.0}
\definecolor{DBrown}{RGB}{161, 104, 17}
\definecolor{RawSienna}{RGB}{125, 73, 49}
\definecolor{RedViolet}{RGB}{146, 43, 62}
\definecolor{Cerulean}{RGB}{29,172,214}
\numberwithin{equation}{section}
\numberwithin{figure}{section}
\newtheorem{defn0}{Definition}[section]
\newtheorem{prop0}[defn0]{Proposition}
\newtheorem{thm0}[defn0]{Theorem}
\newtheorem{lemma0}[defn0]{Lemma}
\newtheorem{corollary0}[defn0]{Corollary}
\newtheorem{example0}[defn0]{Example}
\newtheorem{remark0}[defn0]{Remark}
\newtheorem{conjecture0}[defn0]{Conjecture}
\newtheorem{ansatz0}[defn0]{Ansatz}
\newtheorem{notation0}[defn0]{Notation}
\newenvironment{definition}{ \begin{defn0}}{\end{defn0}}
\newenvironment{proposition}{\bigskip \begin{prop0}}{\end{prop0}}
\newenvironment{theorem}{\bigskip \begin{thm0}}{\end{thm0}}
\newenvironment{lemma}{\bigskip \begin{lemma0}}{\end{lemma0}}
\newenvironment{corollary}{\bigskip \begin{corollary0}}{\end{corollary0}}
\newenvironment{remark}{ \bigskip \begin{remark0}}{\end{remark0}}
\newenvironment{conjecture}{\begin{conjecture0}}{\end{conjecture0}}
\newcommand{\transv}{\mathrel{\text{\tpitchfork}}}
\newcommand\Alpha{\mathrm{A}}
\newcommand\de{\delta}
\newcommand{\constantValue}{\kappa}
\newcommand{\constantEquilibrium}{m_0}
\newcommand{\timeParametrization}{w}
\newcommand{\mcGeheeTimeCollision}{\tau}
\newcommand{\hamiltonianPolarRotatingCoordinatesCenteredSun}{\mathcal{H}}
\newcommand{\hamiltonianPolarRotatingCoordinatesCenteredCM}{\hat{\mathcal{H}}}
\newcommand{\hamiltonianCartesianRotatingCoordinates}{\mathcal{H}}
\newcommand{\jacobiConstant}{\mathcal{J}}
\newcommand{\mcGeheeFirstIntegral}{M}
\newcommand{\initialtheta}{\overline{\theta}}
\newcommand{\rOrbitPolarRotatingUnperturbedProblem}{ r_0}
\newcommand{\thetaOrbitPolarRotatingUnperturbedProblem}{ \theta_0}
\newcommand{\ROrbitPolarRotatingUnperturbedProblem}{ R_0}
\newcommand{\ThetaOrbitPolarRotatingUnperturbedProblem}{ \Theta_0}
\newcommand{\orbitPolarRotatingUnperturbedProblemRpos}{ \varphi_0^+ }
\newcommand{\orbitPolarRotatingUnperturbedProblemRneg}{ \varphi_0^- }
\newcommand{\rOrbitPolarRotatingUnperturbedProblemRpos}{ \rOrbitPolarRotatingUnperturbedProblem^+ }
\newcommand{\thetaOrbitPolarRotatingUnperturbedProblemRpos}{ \thetaOrbitPolarRotatingUnperturbedProblem^+ }
\newcommand{\ROrbitPolarRotatingUnperturbedProblemRpos}{ \ROrbitPolarRotatingUnperturbedProblem^+ }
\newcommand{\ThetaOrbitPolarRotatingUnperturbedProblemRpos}{ \ThetaOrbitPolarRotatingUnperturbedProblem^+ }
\newcommand{\rOrbitPolarRotatingUnperturbedProblemRneg}{ \rOrbitPolarRotatingUnperturbedProblem^- }
\newcommand{\thetaOrbitPolarRotatingUnperturbedProblemRneg}{ \thetaOrbitPolarRotatingUnperturbedProblem^- }
\newcommand{\ROrbitPolarRotatingUnperturbedProblemRneg}{ \ROrbitPolarRotatingUnperturbedProblem^- }
\newcommand{\ThetaOrbitPolarRotatingUnperturbedProblemRneg}{ \ThetaOrbitPolarRotatingUnperturbedProblem^- }
\newcommand{\rRotatingCenteredCM}{\hat{r}}
\newcommand{\thetaRotatingCenteredCM}{\hat{\theta}}
\newcommand{\RRotatingCenteredCM}{\hat{R}}
\newcommand{\ThetaRotatingCenteredCM}{\hat{\Theta}}
\newcommand{\stableManifoldInftymu}{W_\mu^s(\Alpha_{\hat{\Theta}_0})}
\newcommand{\unstableManifoldInftymu}{W_\mu^u(\Alpha_{\hat{\Theta}_0})}
\newcommand{\stableManifoldInftymuO}{W_0^s(\Alpha_0)}
\newcommand{\unstableManifoldInftymuO}{W_0^u(\Alpha_0)}
\newcommand{\stableManifoldSminusmu}{ W_\mu^s(S^-) }
\newcommand{\unstableManifoldSplusmu}{W_\mu^u(S^+) }
\newcommand{\stableManifoldCollisionSplusmu}{W_{\mu}^s(\circleSplus)}
\newcommand{\unstableManifoldCollisionSminusmu}{W_{\mu}^u(\circleSminus)}
\newcommand{\mcGeheeRadiusInfinity}{\xi}
\newcommand{\mcGeheeRadialMomentum}{v}
\newcommand{\mcGeheeAngularMomentum}{u}
\newcommand{\mcGeheePolarRadius}{\rho}
\newcommand{\mcGeheePolarAngle}{\alpha}
\newcommand{\mcGeheeRegularRadius}{s}
\newcommand{\mcGeheeTransformationCollision}{\psi}
\newcommand{\generalMcGeheeInvariantManifold}{\mathcal{M}}
\newcommand{\collisionManifold}{\Omega}
\newcommand{\circleSplus}{S^+}
\newcommand{\circleSminus}{S^-}
\newcommand{\circlesSpm}{S^\pm}
\newcommand{\equilibriumPointSplus}{S_{\initialtheta}^+}
\newcommand{\equilibriumPointSminus}{S_{\initialtheta}^-}
\newcommand{\heteroclinicConnectionCollisionmu}{\gamma_h}
\newcommand{\thetaHeteroclinicConnectionCollisionmu}{\theta_h}
\newcommand{\alphaHeteroclinicConnectionCollisionmu}{\alpha_h}
\newcommand{\vectorFieldMcGeheePolarCoordinatesmu}{F_\mu}
\newcommand{\eigenvaluerSpmGeneralVectorField}{\lambda_s}
\newcommand{\eigenvalueThetaSpmGeneralVectorField}{\lambda_\theta}
\newcommand{\eigenvalueAlphaSpmGeneralVectorField}{\lambda_\mcGeheePolarAngle}
\newcommand{\sectionrTwo}{\Sigma}
\newcommand{\sectionrTwohRpos}{\Sigma_{h}^{>}}
\newcommand{\sectionrTwohRneg}{\Sigma_{h}^{<}}
\newcommand{\circleSpm}{S^\pm}
\newcommand{\invariantManifoldsInfinity}{W_\mu^{s,u}(\Alpha_{\hat{\Theta}_0})}
\newcommand{\reducedInvariantManifoldsInfinity}{\Tilde{W}_\mu^{s,u}(\Alpha_{\hat{\Theta}_0})}
\newcommand{\reducedStableManifoldInfinitymu}{\Tilde{W}_\mu^{s}(\Alpha_{\hat{\Theta}_0})}
\newcommand{\reducedUnstableManifoldInfinitymu}{\Tilde{W}_\mu^{u}(\Alpha_{\hat{\Theta}_0})}
\newcommand{\invariantManifoldsSpm}{W_\mu^{u,s}(\circlesSpm)}
\newcommand{\closeCollision}{\delta}
\newcommand{\ballCollisionSminus}{\mathcal{B}_\closeCollision^-}
\newcommand{\angleStraightWsSminus}{\Tilde{\angleSminus}}
\newcommand{\straightenFiberWsSminus}{z}
\newcommand{\angleStraightWuSplus}{\Tilde{\angleSplus}}
\newcommand{\straightenFiberWuSplus}{w}
\newcommand{\angleSminus}{\beta}
\newcommand{\angleSplus}{\iota}
\newcommand{\localStableInvariantManifoldFiberSminusReferencey}{W_\mu^s(\circleSminus_{y_0})}
\newcommand{\localUnstableInvariantManifoldFiberSplusReferencey}{W_\mu^{u}(\circleSplus_{x_0})}
\newcommand{\graphOfStableManifoldSminus}{\psi_{-}}
\newcommand{\graphOfFiberStableManifoldSminus}{\chi_{-}}
\newcommand{\graphOfUnstableManifoldSplus}{\psi_{+}}
\newcommand{\graphOfFiberUnstableManifoldSplus}{\chi_{+}}
\newcommand{\curveIntersectionStableInfinitySectionrTwohRPos}{\Delta_\infty^{s}}
\newcommand{\curveIntersectionUnstableInfinitySectionrTwohRNeg}{\Delta_\infty^{u}}
\newcommand{\curveIntersectionUnstableSplusSectionrTwohRPos}{\Delta_{\circleSplus}^{u}}
\newcommand{\distancepos}{d_+}
\newcommand{\melnikovpos}{M_+}
\newcommand{\distanceneg}{d_-}
\newcommand{\melnikovneg}{M_-}
\newcommand{\poincareMapRposRneg}{\mathcal{P}}
\newcommand{\mapStraightenSminus}{\Gamma_-}
\newcommand{\mapStraightenSplus}{\Gamma_+}
\title{Oscillatory motions, parabolic orbits and collision orbits in the planar circular restricted three-body problem}
\author[1,3]{Marcel Guardia \orcidlink{0000-0002-4802-3151}}
\author[2]{Jos\'e Lamas \orcidlink{0000-0002-1809-1823}}
\author[2,3]{Tere M.Seara \orcidlink{0000-0001-8421-8717}}
\affil[1]{Departament de Matem\`atiques i Inform\`atica, Universitat de Barcelona, Gran Via, 585, 08007 Barcelona, Spain}
\affil[2]{Departament de Matem\`atiques \& IMTECH, Universitat Polit\`ecnica de Catalunya, Diagonal 647, 08028 Barcelona, Spain}
\affil[3]{Centre de Recerca Matem\`atica, Campus de Bellaterra, Edifici C, 08193, Barcelona, Spain}
\date{}
\begin{document}

\maketitle

\begin{abstract}
\blfootnote{\textit{E-mail addresses}: \href{mailto:guardia@ub.edu}{guardia@ub.edu} (M.Guardia), \href{mailto:jose.lamas.rodriguez@upc.edu}{jose.lamas.rodriguez@upc.edu} (J.Lamas), \href{mailto:tere-m.seara@upc.edu}{tere-m.seara@upc.edu} (Tere M.Seara).}
In this paper we consider the planar circular restricted three body problem (PCRTBP), which models the motion of a massless body under the attraction of other two bodies, the primaries, which describe circular orbits around their common center of mass. In a suitable system of coordinates, this is a two degrees of freedom Hamiltonian system. The orbits of this system are either defined for all (future or past) time or eventually go to collision with one of the primaries. For orbits defined for all time, Chazy provided a classification of all possible asymptotic behaviors, usually called final motions.

By considering a sufficiently small mass ratio between the primaries, we analyze the interplay between collision orbits and various final motions and construct several types of dynamics.

In particular, we show that orbits corresponding to any combination of past and future final motions can be created to pass arbitrarily close to the massive primary. Additionally, we construct arbitrarily large ejection-collision orbits (orbits which experience collision in both past and future times) and periodic orbits that are arbitrarily large and get arbitrarily close to the massive primary. Furthermore, we also establish oscillatory motions in both position and velocity, meaning that as time tends to infinity, the superior limit of the position or velocity is infinity while the inferior limit remains a real number.
\\

\end{abstract}
\newpage

\section{Introduction}\label{Section: Introduction}

Understanding the long term dynamics of the 3 body problem is one of the longstanding questions in dynamical systems. Following the seminal works by Painlevé \cite{painleve1894leccons}, it is well known that orbits are either defined for all positive time or either two or three bodies collide (same happens regarding the past behavior).

Chazy in 1922 classified the (future and past) final motions of the 3 Body Problem. They are defined as the possible asymptotic behaviors that the orbits which are defined for all positive (negative) time can have as time tends to infinity (minus infinity).

Let us denote by $q_0, q_1, q_2$ the positions of the three bodies and  by $r_k$ the vector from  $q_i$ to  $q_j$ for $i\neq k$, $j\neq k$, $i<j$. The following theorem classifies the possible final motions of the 3 body problem in terms of the mutual distances $r_i$.

\begin{theorem}[Chazy, 1922, see also \cite{MR2269239}]\label{thm:chazy}
 Every solution of the 3 Body Problem defined for all (future) time belongs to one of the following seven classes.
\begin{itemize}
\item Hyperbolic (\textit{H}): $|r_i|\to\infty$, $|\dot r_i|\to c_i>0$, $i=0,1,2$, as $t\to\infty$.
\item Hyperbolic--Parabolic (\textit{HP$_k$}): $|r_i|\to\infty$, $i=0,1,2$, $|\dot r_k|\to 0$, $|\dot r_i|\to c_i>0$, $i\neq k$, as $t\to\infty$.
\item Hyperbolic--Elliptic,  (\textit{HE$_k$}): $|r_i|\to\infty$, $|\dot r_i|\to c_i>0$, $i=0,1,2$, $i\neq k$, as $t\to\infty$, $\sup_{t\geq t_0}|r_k|<\infty$.
\item Parabolic-Elliptic (\textit{PE$_k$}): $|r_i|\to\infty$, $|\dot r_i|\to 0$, $i=0,1,2$, $i\neq k$, as $t\to\infty$, $\sup_{t\geq t_0}|r_k|<\infty$.
\item Parabolic (\textit{P}): $|r_i|\to\infty$, $|\dot r_i|\to 0$, $i=0,1,2$, as $t\to\infty$.
\item Bounded (\textit{B}):  $\sup_{t\geq t_0}|r_i|<\infty$, $i=0,1,2$.
\item Oscillatory (\textit{OS}):  $\limsup_{t\to\infty}\sup_{i=0,1,2}|r_i|=\infty$ and $\liminf_{t\to\infty}\sup_{i=0,1,2}|r_i|<\infty$.
\end{itemize}
\end{theorem}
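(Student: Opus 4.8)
This is classical (Chazy 1922; see \cite{MR2269239}), so the plan is only to recall the shape of the proof. Its engine is the behaviour of the polar moment of inertia. First I would pass to the centre-of-mass frame, so that $\sum_i m_i q_i\equiv 0$, and set $I=\sum_i m_i|q_i|^2$ (so that $I$ is comparable to $\max_{i<j}|r_k|^2$), $T$ the kinetic energy, $U=\sum_{i<j}m_im_j/|r_k|>0$ minus the potential, and keep in mind the two conserved quantities $h=T-U$ (energy) and $c$ (modulus of the angular momentum). Everything rests on two classical facts: the Lagrange--Jacobi identity $\ddot I=2U+4h$, coming from Euler's relation for the $(-1)$-homogeneous potential, and Sundman's inequality $2IT\ge\tfrac14\dot I^{2}+c^{2}$, a Cauchy--Schwarz (Lagrange identity) estimate.

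The next step is a split according to $\sgn h$. If $h>0$ then $\ddot I\ge 4h>0$, so $I\to+\infty$ at least quadratically; since a cluster of mutually bounded bodies can contain at most two of the three, at least one body must escape to infinity. If $h=0$ then $I$ is strictly convex, hence eventually monotone, so it either tends to $+\infty$ or decreases to a finite limit. If $h<0$ the sign of $\ddot I=2(U-2|h|)$ is not fixed, so $I$ may oscillate --- this is exactly the room in which class $OS$ lives. Within each regime I would refine the picture using $h$, $c$ and the asymptotics of $|r_k|$; the structural statement to prove is that along any orbit with $I\to\infty$ the bodies organize into clusters that stay at mutually bounded distance while distinct clusters separate, the inter-cluster interaction decaying like the inverse separation, so that each cluster evolves asymptotically as an isolated Kepler subsystem. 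Distributing $h$ among the escaping clusters and their internal motions then produces precisely $H$, $HP_k$, $HE_k$, $P$, $PE_k$ and $B$ ($B$ being the case $I$ bounded, necessarily with $h<0$). The leftover orbits --- those along which $I$ is unbounded but does not tend to $+\infty$ --- are by definition exactly class $OS$.

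I expect the genuine obstacle to be making that structural step rigorous, not the bookkeeping around it. One has to exclude ``super-hyperbolic'' escape $|r_k|/t\to\infty$: the idea is that along escaping directions $U\to 0$, so $\ddot I$ is eventually close to $4h$ and $I=O(t^{2})$, but one still has to control possible close approaches at intermediate times so that $U$ does not conspire to accelerate $I$ further. One also has to show that the relative velocities $|\dot r_k|$ actually converge and that a bounded cluster does not slowly disintegrate --- this asymptotic-Keplerian analysis is the delicate analytic core, and is the part that historically needed extra care. Once each surviving cluster is known to be asymptotically Keplerian, the elliptic/parabolic/hyperbolic trichotomy for its internal and relative motions is standard, and reading off the $|r_i|$ and $|\dot r_i|$ gives the seven classes. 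For the infinitesimal body of the PCRTBP the same scheme collapses to the three relevant options --- bounded, parabolic or hyperbolic escape, and oscillatory --- which is the version that matters for what follows.
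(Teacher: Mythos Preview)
The paper does not prove this theorem at all: it is quoted as a classical background result, attributed to Chazy (1922) with a pointer to \cite{MR2269239}, and no argument is given. So there is nothing in the paper to compare your proposal against.

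That said, your sketch is the standard outline of the classical proof and is correct in spirit: Lagrange--Jacobi $\ddot I=2U+4h$ plus Sundman's inequality to control $I$, a trichotomy on $\sgn h$, a cluster decomposition along escaping orbits with asymptotically Keplerian subsystems, and the residual class $OS$ for orbits with $\limsup I=\infty$ but $I\not\to\infty$. You also correctly flag the genuinely delicate step (ruling out super-hyperbolic escape and proving convergence of relative velocities / asymptotic Keplerian behaviour of clusters), which is exactly where the real analytic work lies in Chazy's argument and its modern presentations. For the purposes of this paper, a one-line citation is all that is expected; your sketch would be overkill here but is a faithful summary of how the result is actually proved.
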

Note that this classification applies both when $t\to+\infty$ or $t\to-\infty$. To distinguish both cases we add a superindex $+$ or $-$ to each of the cases, e.g $H^+$ and $H^-$. Among the admissible final motions at a given energy level, it is known that any future and past combination of them can be obtained for almost all choices of masses (see \cite{MR0629685, guardia2022hyperbolic, MR2350333}). 




Besides the question of existence of such motions, there is the question about their ``abundance''. It turns out that for any combination of past and future final motion it is known whether the set has zero or positive measure except for $\mathcal{OS}^+\cap \mathcal{OS}^-$ (see \cite{MR2269239}).  As is pointed out in \cite{GorodetskiK12}, V. Arnol'd, in the conference in
honor of the 70th anniversary of Alexeev, posed the following question.

\begin{conjecture}
The Lebesgue measure of the set $\mathcal{OS}^+\cap \mathcal{OS}^-$ is equal to zero.    
\end{conjecture}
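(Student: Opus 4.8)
The natural route is to reduce the conjecture to a question about the dynamics \emph{near infinity}. After a McGehee-type compactification, the ``circle at infinity'' $\{|r|=\infty\}$ becomes an invariant manifold $\mathcal{I}$ for the (suitably time-reparametrized) flow, carrying an internal dynamics and possessing stable and unstable invariant manifolds $W^s(\mathcal{I})$ and $W^u(\mathcal{I})$; the orbits in $W^s(\mathcal{I})$ are exactly the future parabolic ones ($P^+$ and $PE^+$), while those strictly outside $W^s(\mathcal{I})\cup W^u(\mathcal{I})$ which leave every bounded region either escape monotonically (the hyperbolic classes $H^+$, $HE^+$, and the associated $HP^+$) or are oscillatory. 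In these coordinates $\mathcal{OS}^+$ is characterized as the set of orbits whose forward trajectory accumulates on $\mathcal{I}$ without converging to it: it performs infinitely many excursions whose apocenters $R_n$ tend to $\infty$ while always returning to a fixed transversal section $\Sigma=\{|r|=r_\ast\}$. The first step of the plan is to make this description quantitative by building a first-return map $\mathcal{P}$ to the outgoing component of $\Sigma$ and showing that, up to a set of measure zero, $\mathcal{OS}^+\cap\Sigma$ equals $\bigcap_{n}E_n$, where $E_n\subset\Sigma$ is the set of points whose orbit reaches $|r|=R_n$ and afterwards returns to $\Sigma$.

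The second step is the measure estimate proper. One stratifies $\Sigma$ according to the Jacobi constant and the angular-momentum-type parameter measuring, for an outgoing arc, its distance to the separatrix $W^u(\mathcal{I})$. Using the flow in a neighbourhood of $\mathcal{I}$ together with the regularity of $W^{s,u}(\mathcal{I})$, one wants to show that the conditional ``probability'' (with respect to normalized Liouville measure on the relevant stratum) that an orbit which has already reached $|r|=R_n$ comes back to $\Sigma$ is bounded above by $1-c_n$ for an explicit $c_n>0$: a definite proportion of the outgoing arcs that get that far are in fact escaping and never return. If one can propagate these conditional bounds along the whole chain of excursions --- that is, obtain genuine measure-theoretic, not merely heuristic, conditional estimates --- and if $\sum_n c_n=\infty$, a Borel--Cantelli-type argument yields $|\bigcap_n E_n|=0$, hence $|\mathcal{OS}^+|=0$, and \emph{a fortiori} $|\mathcal{OS}^+\cap\mathcal{OS}^-|=0$. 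Should it turn out, as is widely suspected, that $\mathcal{OS}^+$ alone has positive measure, the same scheme must instead be run on the doubly-infinite sequence of forward \emph{and} backward excursions, exploiting that membership in $\mathcal{OS}^-$ is itself a stringent recurrence condition that the escaping arcs cannot meet.

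The main obstacle --- and the reason the conjecture has resisted proof --- is that $\mathcal{I}$ is a \emph{parabolic}, not hyperbolic, invariant manifold: the transverse ``expansion'' accumulated by an orbit that goes out to $|r|=R$ and returns grows only like a power of $R$, the manifolds $W^{s,u}(\mathcal{I})$ are only finitely differentiable, and the normal dynamics is non-uniformly hyperbolic. Consequently the per-excursion escape fraction $c_n$ decays only logarithmically, effectively $c_n\sim 1/n$, so $\sum_n c_n=\infty$ is barely satisfied and the conclusion is valid only if successive excursions are sufficiently ``independent''. They are not: consecutive excursions are strongly correlated through the slow drift of the angular momentum, and quantifying this correlation --- proving an effective decay-of-correlations / quasi-independence statement for a return map associated with a parabolic object at infinity, with uniformity in the unboundedly growing apocenters --- is precisely the crux. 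I would attack first the perturbative regime treated in the rest of this paper, where $\mu$ is small and the dynamics near $\mathcal{I}$ is a perturbation of the integrable Kepler flow, so that the splitting of $W^{s,u}(\mathcal{I})$ and the geometry of the return map are controlled by explicit Melnikov-type integrals; this makes the distortion estimates quantitative and offers the best chance of turning the heuristic count into a theorem at small mass ratio, as a stepping stone toward the general conjecture.
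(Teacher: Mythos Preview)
The paper does not prove this statement. It is presented as an open conjecture attributed to Arnol'd, and the text immediately following it says explicitly: ``This conjecture is wide open.'' There is therefore no proof in the paper to compare your proposal against.

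Your proposal is not a proof either, and you are clearly aware of this: what you have written is a research strategy, and you yourself identify the central obstruction. The outline is reasonable as a heuristic --- McGehee compactification, return map to a section near infinity, a Borel--Cantelli scheme on the sequence of excursions --- but the crucial step, the quasi-independence/decorrelation estimate for successive excursions past a parabolic invariant manifold, is exactly the part nobody knows how to do. You state correctly that the per-excursion escape fraction decays like $1/n$, so $\sum c_n=\infty$ only barely, and that the correlations between excursions are strong. Turning this into an actual measure estimate would require control on the distortion of the return map that is currently unavailable; the parabolic nature of infinity means there is no Pesin theory or thermodynamic-formalism machinery to invoke, and the Melnikov-type analysis in the perturbative regime $\mu\ll 1$ controls the \emph{geometry} of the splitting of $W^{s,u}(\mathcal{I})$ but says nothing about the \emph{measure-theoretic} independence of consecutive passages.

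In short: there is no gap to name because there is no argument yet --- you have described where the wall is, not how to climb it. That is appropriate for a conjecture the paper itself declares open.
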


This conjecture is wide open. Another fundamental question is what is the topology of this set. For instance, in some restricted 3 body problems, the set of oscillatory motions has maximal Hausdorff dimension (see \cite{GorodetskiK12}). On the other hand,   a famous conjecture by Herman (see \cite{MR1648051}), if true,  would imply  that the set of bounded orbits is nowhere dense. 

The mentioned classification and conjectures refer to orbits defined for all time. That is, they exclude orbits hitting collisions. How ``large'' is the set of such orbits? 
Saari \cite{MR0321386,MR0295648} (see also \cite{fleischer2019improbability1,fleischer2019improbability2})
proved that the set of orbits hitting a collision  has zero measure. However, it  may form a topologically
“rich” set as stated in the following conjecture by by Alekseev \cite{MR0629685} (it actually might
be traced back to Siegel, Sec. 8, P. 49 in \cite{Siegel}).

\begin{conjecture}\label{conj_alexeev}
Is there an open subset $\mathcal{U}$ of the phase space such that for a dense subset of initial conditions the associated trajectories go to a collision?\end{conjecture}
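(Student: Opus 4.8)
The plan is to answer \conjref{conj_alexeev} in the affirmative for the PCRTBP with a sufficiently small mass ratio $\mu$, by exhibiting an open set $\mathcal{U}$ in phase space in which the initial conditions whose trajectories collide with one of the primaries are dense. The construction rests on the two McGehee-type compactifications used throughout this paper — the one at infinity, with invariant manifold $\manifoldInfinity$, and the one at collision, with collision manifold $\collisionManifold$ carrying the equilibria $\circleSplus$ and $\circleSminus$ — which I would glue together into a single return map and then analyze. Recall the unperturbed ($\mu=0$) picture: the flow is an integrable Kepler problem in the rotating frame, $W_0^{u}(\manifoldInfinity)$ is foliated by parabolic ejection orbits and $W_0^{s}(\manifoldInfinity)$ by parabolic capture orbits, $W_0^u(\circleSplus)$ and $W_0^s(\circleSminus)$ consist of the radial (zero angular momentum) ejection and collision orbits, and there exist orbits lying simultaneously in $W_0^{u}(\manifoldInfinity)$ and $W_0^s(\circleSminus)$ — parabolic motions that fall into collision — and, symmetrically, in $W_0^u(\circleSplus)\cap W_0^s(\manifoldInfinity)$.

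For $0<\mu\ll 1$ these coincidences should be promoted to transverse intersections inside the energy level: this is exactly what the (already computed) Melnikov functions $\melnikovpos$ and $\melnikovneg$ detect, since they are not identically zero. In particular, $W_\mu^s(\circleSminus)$ meets any section transverse to the flow along a graph $\Theta=g_\mu(\cdot)$ with $g_\mu=O(\mu)$, so that ``being a future-collision orbit'' is, locally near such a section, cut out by a single smooth equation — roughly, ``angular momentum zero at the close approach, up to an $O(\mu)$ correction''. I would then fix an energy level and a section $\Sigma$ transverse to the flow, placed just after a close approach to the massive primary and before the ensuing excursion toward infinity, with coordinates of the form $(\mathrm{phase},\Theta)$, and on a suitable open box $B\subset\Sigma$ compose: (i) the passage near $\collisionManifold$, governed by the transition map $\transitionMapCloseCollision$; and (ii) the excursion out to aphelion and back, governed by the local dynamics near the infinity manifold (McGehee--Moser asymptotics) together with the gluing afforded by the transverse heteroclinic connections $W_\mu^u(\manifoldInfinity)\cap W_\mu^s(\circleSminus)$ and $W_\mu^u(\circleSplus)\cap W_\mu^s(\manifoldInfinity)$. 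This produces a well-defined return map $\poincareMapRposRneg$ on $B$, and I would take $\mathcal{U}$ to be the open set obtained by flowing $B$.

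The decisive structural feature is that the excursion near $\manifoldInfinity$ is \emph{parabolic}: an orbit that climbs out to aphelion of size $\sim\ell$ and returns accumulates a twist in the phase variable that grows without bound as $\ell\to\infty$, while its angular momentum is displaced only by an $O(\mu)$, phase-dependent, oscillatory amount controlled by $\melnikovpos,\melnikovneg$. Let $\Gamma:=B\cap W_\mu^s(\circleSminus)$, the set of collision orbits in $B$, a graph over the phase by the transversality above. Because one may route orbits through arbitrarily large aphelia before the next return, the iterated preimages $\poincareMapRposRneg^{-n}(\Gamma)$ — again graphs over the phase — wind around more and more as $n\to\infty$; a quantitative version of this (an invariant unstable cone on $B$ together with a phase-expansion factor tending to infinity) would show that $\bigcup_{n\ge 0}\poincareMapRposRneg^{-n}(\Gamma)$ is dense in $B$. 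Since each preimage consists of initial conditions whose forward orbit reaches collision, collision orbits are then dense in $\mathcal{U}$, which is the assertion.

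\textbf{Main obstacle.} The genuine difficulty is the passage from \emph{existence} to \emph{density in an open set}. Existence of collision orbits — indeed of arbitrarily large ejection--collision orbits and of large near-primary periodic orbits, as announced — follows from the transverse heteroclinic chain linking $\manifoldInfinity$, $\circleSplus$ and $\circleSminus$ and the nonvanishing of the Melnikov functions. Density, however, requires a truly quantitative control of $\poincareMapRposRneg$ in the \emph{parabolic} regime near $\manifoldInfinity$, where the expansion is only polynomial in time, not exponential; the standard hyperbolic toolbox ($\lambda$-lemma, Smale horseshoes) does not apply off the shelf, and horseshoes would in any case only furnish a Cantor set. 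One must track the McGehee flow near the infinity manifold uniformly as the aphelion tends to infinity, splice it cleanly to the collision-region transition map $\transitionMapCloseCollision$, and keep the $O(\mu)$ angular-momentum kicks under simultaneous control, so that the family of pulled-back collision graphs genuinely sweeps out an open box rather than degenerating. The other delicate point is precisely the input used above, that ``$\Theta$ small at the onset of a close approach'' robustly places the orbit on $W_\mu^s(\circleSminus)$ — i.e.\ the transversality of the stable manifold of the collision equilibrium to $\Sigma$ — which is again where the Melnikov analysis is indispensable.
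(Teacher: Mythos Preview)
The statement you are attempting is \emph{not} proved in the paper: it is explicitly listed as an open conjecture (attributed to Alekseev, possibly going back to Siegel), and the paper only records the partial result of \cite{MR3951693} that collision orbits are $\mu^\alpha$-dense in some open set --- strictly weaker than density. So there is no ``paper's own proof'' to compare against; you are proposing to settle an open problem.

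Your outline assembles the right objects --- the McGehee charts at $\collisionManifold$ and $\manifoldInfinity$, the transverse heteroclinics detected by $\melnikovpos,\melnikovneg$, and the parabolic twist near infinity --- and indeed these are exactly what the paper uses to prove Theorems~\ref{thm: Existence of parabolic, oscillatory and periodic orbits for small energies}--\ref{thm: Existence of infinite sequence of ECO orbits for small energies}. But those theorems yield existence, accumulation, and symbolic dynamics on a \emph{Cantor} set, not density in an open set, and you correctly identify this gap yourself in your ``Main obstacle'' paragraph. The step you leave unproved --- that $\bigcup_{n\ge 0}\poincareMapRposRneg^{-n}(\Gamma)$ is dense in $B$ --- is the entire content of the conjecture in this setting. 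The spiraling/$\lambda$-lemma mechanism the paper exploits (Section~\ref{sec: Proof of Theorem for ECO orbits} and equation~\eqref{eqn: Lambda Lemma Moser}) makes preimages accumulate on the \emph{invariant manifolds}, which are curves, not on open boxes; unbounded phase-twist alone does not force the family of pulled-back graphs to sweep out an open set. Concretely: the $\Theta$-kick per return is $O(\mu)$ and oscillatory, so after $n$ returns the accessible $\Theta$-range is $O(n\mu)$, but within that strip the graphs could still concentrate on a meager set --- ruling this out requires a genuine equidistribution or area-filling argument that neither your sketch nor the paper's machinery supplies.

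In short: the proposal is a reasonable blueprint for attacking the conjecture in the PCRTBP, but the decisive density step is asserted rather than argued, and it is precisely the step that separates the known $\mu^\alpha$-density from the conjectured full density.
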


The purpose of this paper is to analyze the interplay between the different types of final motions and collision orbits  in  the planar circular restricted $3$-body problem (PCRTBP).


\subsection{Main results}\label{subsec: Main results}
The  PCRTBP models the motion of an infinitesimal body $P_3$ under the attraction of two massive bodies called primaries, labeled as $P_1$ and $P_2$. Since  $P_3$ is considered to have zero mass does not exert any force onto the other two bodies, which are assumed to perform circular motions around their common center of mass and are coplanar with the motion of $P_3$. In this configuration one can normalize the masses of $P_1$ and $P_2$ as $m_1 = 1-\mu$ and $m_2 = \mu$ with $\mu \in (0,1/2]$. Since in this paper we will consider $\mu$ small, we will refer to $P_1$ as the Sun and to $P_2$ as Jupiter.

Taking the appropriate units and considering rotating coordinates, the PCRTBP is Hamiltonian with respect to 
\begin{equation}\label{eqn:Hamiltonian in cartesian synodical coordinates}
   \hamiltonianCartesianRotatingCoordinates(q,p;\mu) = \frac{||p||^2}{2} - q_1p_2+q_2p_1 - \frac{1-\mu}{||q+(\mu,0)||} - \frac{\mu}{ ||q-(1-\mu,0)||},
\end{equation}
where $q,p \in \mathds{R}^2$ and $P_1 = (-\mu,0)$ and $P_2 = (1-\mu,0)$ are the position of the primaries, which are fixed.
The PCRTBP is reversible with respect to the symmetry
\begin{equation}\label{eqn: Symmetry of the PCRTBP in cartesian synodical coordinates}
    (q_1,q_2,p_1,p_2;t) \to (q_1,-q_2,-p_1,p_2;-t).
\end{equation}
We define the collision with the Sun  as the set
\begin{equation}\label{eqn: Alpha and Omega}
    \begin{aligned}
         \mathcal{S} &= \{(q,p) \in \mathds{R}^4 \colon q = (-\mu,0)\}.
    \end{aligned}
\end{equation}
For the PCRTBP the Chazy classification  is reduced to 
\begin{itemize}
    \item Hyperbolic ($\mathcal{H}^\pm$) $\colon \underset{t\to \pm \infty}{\lim} ||q(t)|| = \infty$ and $\underset{t\to \pm \infty}{\lim}||\dot{q}(t)|| = c > 0$.
    \item Parabolic ($\mathcal{P}^\pm)$ $\colon \underset{t\to \pm \infty}{\lim} ||q(t)|| = \infty$ and $\underset{t\to \pm \infty}{\lim}||\dot{q}(t)|| = 0$.
    \item Bounded ($\mathcal{B}^\pm$) $ \colon \underset{t\to \pm \infty}{\limsup}\, ||q(t)|| < \infty$.
    \item Oscillatory ($\mathcal{OS}^\pm$) $\colon \underset{t\to \pm \infty}{\limsup}\, ||q(t)|| = \infty$ and $\underset{t\to \pm \infty}{\liminf}\,||q(t)|| < \infty$.
\end{itemize}
For any $\mu\in (0,1/2]$, it is known (see \cite{MR3455155,MR0573346}) that 
\[
X^{+}\cap Y^{-}\neq \emptyset\quad \text{where}\quad X,Y=\mathcal{H},\mathcal{P},\mathcal{B},\mathcal{OS}.
\]
Note that, when $\mu=0$, the PCRTBP is reduced to a Kepler problem and therefore in this case $\mathcal{OS}^\pm=\emptyset$ and $\mathcal{H}^+=\mathcal{H}^-$, $\mathcal{P}^+=\mathcal{P}^-$, $\mathcal{B}^+=\mathcal{B}^-$. On the contrary,  for any $\mu>0$, any combination of past and future final motion is possible.

Let us define also the ejection and collision orbits.
\begin{definition}\label{def: Ejection-Collision orbits in cartesian synodical coordinates}
Consider an orbit $(q(t),p(t))$ of the Hamiltonian  \eqref{eqn:Hamiltonian in cartesian synodical coordinates}. Then, we call to this orbit
\begin{itemize}
\item Ejection orbit (from the Sun) if there exists $t_0 \in \mathds{R}$ such that $\underset{t\to t_0^-}{\lim} q(t) = (-\mu,0)$.
\item Collision orbit (to the Sun) if there exists $t_1\in \mathds{R}$ such that $\underset{t\to t_1^+}{\lim} q(t) = (-\mu,0)$.
\item Ejection-collision orbit if it is both collision and ejection orbit.
\end{itemize}
\end{definition}

The main results of the present paper are the following.

\begin{theorem}\label{thm: Existence of parabolic, oscillatory and periodic orbits for small energies} 
There exists $\mu_0 > 0$  such that, for any $\mu\in (0,\mu_0)$,
\[
\overline{X^+\cap Y^-}\cap \mathcal{S} \neq\emptyset\qquad \text{where}\qquad  X,Y=\mathcal{H},\mathcal{P},\mathcal{B},\mathcal{OS}.
\]
Moreover, 
\begin{itemize}
 \item There exist orbits $({q}(t),{p}(t))$ of \eqref{eqn:Hamiltonian in cartesian synodical coordinates} which are oscillatory and get arbitrarily close to collision with the Sun. Namely, they satisfy
    \begin{equation*}
        \underset{t\to \pm \infty}{\limsup}\, ||q(t)|| = \infty \;\;\;\; \text{and} \;\;\;\; \underset{t\to \pm \infty}{\liminf}\, ||q(t) + (\mu,0)|| = 0.
    \end{equation*} 
    In particular, this also implies that
    \begin{equation}\label{def:oscillatoryspeed}
            \underset{t\to \pm \infty}{\limsup}\, ||\dot q(t)|| = \infty \;\;\;\; \text{and} \;\;\;\; \underset{t\to \pm \infty}{\liminf}\, ||\dot q(t) || <\infty.
\end{equation}
            
 \item For any $\varepsilon>0$,   there exists a periodic orbit $(q(t),p(t))$ of \eqref{eqn:Hamiltonian in cartesian synodical coordinates} satisfying
    \begin{equation*}
       \underset{t\in \mathds{R}}{\sup}\;||q(t)||\geq \varepsilon^{-1} \;\;\;\; \text{and} \;\;\;\;\underset{t\in \mathds{R}}{\inf}\,||q(t)+ (\mu,0)||\leq \varepsilon.
    \end{equation*}
\end{itemize}

\end{theorem}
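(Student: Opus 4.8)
The plan is to reduce everything to a careful study of the invariant manifolds of two degenerate invariant sets of the PCRTBP and of the heteroclinic connections between them. First I would introduce two McGehee-type changes of coordinates: one near $\|q\|=\infty$, after which ``infinity'' becomes a normally \emph{parabolic} invariant manifold $\Alpha_{\hat{\Theta}_0}$ whose stable and unstable manifolds $W_\mu^{s}(\Alpha_{\hat{\Theta}_0})$, $W_\mu^{u}(\Alpha_{\hat{\Theta}_0})$ organize the Chazy classes (parabolic $=$ orbit asymptotic to $\Alpha_{\hat{\Theta}_0}$; hyperbolic, bounded and oscillatory $=$ the three standard alternatives in a Smale horseshoe attached to a transverse homoclinic of $\Alpha_{\hat{\Theta}_0}$); and one near the Sun, blowing up the collision set $\mathcal{S}$ into a collision manifold $\collisionManifold$ on which the flow is gradient-like from an ``ejection circle'' $\circleSplus$ to a ``collision circle'' $\circleSminus$, so that $W_\mu^u(\circleSplus)$ carries ejection orbits and $W_\mu^s(\circleSminus)$ collision orbits; after Levi--Civita regularization, orbits passing close to $\mathcal{S}$ without colliding shadow $W_\mu^s(\circleSminus)$, cross a small neighborhood of $\collisionManifold$, and leave shadowing $W_\mu^u(\circleSplus)$. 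All of this is compatible with the reversor \eqref{eqn: Symmetry of the PCRTBP in cartesian synodical coordinates}, which swaps $W_\mu^s$ with $W_\mu^u$ and $\circleSplus$ with $\circleSminus$, so roughly half of the estimates below would come for free.

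The analytic core of the argument would be three transversality statements, all obtained by Poincar\'e--Melnikov theory, exploiting that for $\mu=0$ the system is the integrable rotating Kepler problem with everything explicit. (i) Splitting of the parabolic manifolds: $W_\mu^s(\Alpha_{\hat{\Theta}_0})\pitchfork W_\mu^u(\Alpha_{\hat{\Theta}_0})$, which is what makes $\mathcal{OS}^\pm\neq\emptyset$ and, through the Moser--Llibre--Sim\'o symbolic dynamics, produces near $\Alpha_{\hat{\Theta}_0}$ orbits realizing \emph{any} past/future pair $Y^-,X^+$ with $X,Y\in\{\mathcal{H},\mathcal{P},\mathcal{B},\mathcal{OS}\}$. (ii) A collision--infinity connection: for $\mu=0$ the zero-angular-momentum parabolic orbit is simultaneously an ejection orbit from the Sun and asymptotic to $\Alpha_0$ (it is the limit, as the semimajor axis $\to\infty$, i.e.\ as the energy $\to 0$, of the large Keplerian ejection--collision arcs; this is why the small-energy regime is the natural one); switching on Jupiter and computing the associated Melnikov potential --- an integral of the $O(\mu)$ part of \eqref{eqn:Hamiltonian in cartesian synodical coordinates} along this unperturbed connection, improper at both the collision and the parabolic ends --- one checks it is non-constant, and hence $W_\mu^u(\circleSplus)\pitchfork W_\mu^s(\Alpha_{\hat{\Theta}_0})$ and, by reversibility, $W_\mu^u(\Alpha_{\hat{\Theta}_0})\pitchfork W_\mu^s(\circleSminus)$, transversally inside the energy level. (iii) The local maps: an inclination ($\lambda$-)lemma for the parabolic manifold $\Alpha_{\hat{\Theta}_0}$, in the version adapted to parabolic fixed points, together with a well-controlled transition map across a neighborhood of $\collisionManifold$ whose transition time --- hence the minimal distance of the orbit to $\mathcal{S}$ --- can be prescribed at will.

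Granting (i)--(iii), the three assertions would follow by a standard but delicate shadowing/concatenation along the heteroclinic ``cycle'' $\Alpha_{\hat{\Theta}_0}\to\circleSminus\rightsquigarrow\circleSplus\to\Alpha_{\hat{\Theta}_0}$. For $\overline{X^+\cap Y^-}\cap\mathcal{S}\neq\emptyset$: given $\delta>0$, take an orbit that is $Y^-$ in the past (from the horseshoe of (i), or directly), follow it by the transverse connection $W_\mu^u(\Alpha_{\hat{\Theta}_0})\pitchfork W_\mu^s(\circleSminus)$ and the $\lambda$-lemmas into the collision region, let it flyby within distance $\delta$ of $\mathcal{S}$, exit along $W_\mu^u(\circleSplus)$ and, via $W_\mu^u(\circleSplus)\pitchfork W_\mu^s(\Alpha_{\hat{\Theta}_0})$, be $X^+$ in the future; letting $\delta\to0$ yields points of $X^+\cap Y^-$ accumulating on $\mathcal{S}$. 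The oscillatory orbits approaching collision come from iterating the cycle with an itinerary containing, both as $t\to+\infty$ and as $t\to-\infty$, infinitely many flybys whose (Keplerian) apoapses tend to infinity: then $\limsup\|q\|=\infty$ while $\liminf\|q+(\mu,0)\|=0$, and \eqref{def:oscillatoryspeed} is immediate, since $\|\dot q\|\to\infty$ at the periapses approaching collision while staying bounded along the portions of the orbit at moderate distance from the Sun. For the periodic orbits I would use reversibility: by the same cycle and the $\lambda$-lemmas, for each $\varepsilon>0$ one finds a point of the symmetry set $\{q_2=0,\,p_1=0\}$ whose forward orbit reaches $\|q\|\ge\varepsilon^{-1}$, returns within $\varepsilon$ of the Sun, and meets $\{q_2=0,\,p_1=0\}$ again; the reversor then glues this segment into a symmetric periodic orbit with the required size, and the fact that the returning segment hits the symmetry set is obtained from a transversal-intersection (degree / intermediate-value) argument.

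The main obstacle will be (i)--(iii) themselves, together with making the concatenation uniform. Because $\Alpha_{\hat{\Theta}_0}$ is only parabolic, not hyperbolic, the invariant-manifold theory, the splitting estimates and the $\lambda$-lemma all have to be carried out in the parabolic category, where the Melnikov integrals --- both along the homoclinic of $\Alpha_{\hat{\Theta}_0}$ and along the collision--infinity connection --- are improper; establishing their convergence and, above all, their non-vanishing is where the real work lies. On top of that, the shadowing must compose --- with estimates uniform in the itinerary, in $\delta$ and in $\mu$ --- the outer map near $\Alpha_{\hat{\Theta}_0}$, the passage near $\collisionManifold$ (which takes infinite McGehee time), and the transverse heteroclinics, so that the resulting orbits, and for the last statement the symmetric closing condition, genuinely exist.
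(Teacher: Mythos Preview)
Your overall architecture matches the paper's: McGehee blow-ups at infinity and at collision, Melnikov for the heteroclinic connections (ii), a local transition map through the collision neighborhood (iii), then shadowing. But the concatenation step has a genuine gap, one the paper explicitly flags: items (ii)+(iii) alone give horseshoes and oscillatory motions only at a \emph{fixed positive distance} from $\mathcal{S}$, not accumulating on it (see Section~\ref{subsec: ideas for the proofs of the main results}, item~3).

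Here is why. The circles $S^\pm$ are foliated by an angle $\bar\theta$, and the passage map near the collision manifold is, to leading order, \emph{fiber-preserving}: an orbit entering close to the leaf $W^s_\mu(S^-_{\bar\theta})$ exits close to the leaf $W^u_\mu(S^+_{\bar\theta})$ with the \emph{same} $\bar\theta$ (Theorem~\ref{thm: Transition map close to collision}, Remark~\ref{rem:extensioftilde}). Now $W^u_\mu(\Alpha)\pitchfork W^s_\mu(S^-)$ selects a leaf $\bar\theta_-$ and $W^u_\mu(S^+)\pitchfork W^s_\mu(\Alpha)$ selects a leaf $\bar\theta_+$; a~priori $\bar\theta_-\neq\bar\theta_+$. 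In your sentence ``exit along $W^u_\mu(S^+)$ and, via $W^u_\mu(S^+)\pitchfork W^s_\mu(\Alpha)$, be $X^+$'' the exit is along $W^u_\mu(S^+_{\bar\theta_-})$, whereas the connection to $\Alpha$ sits on $W^u_\mu(S^+_{\bar\theta_+})$. As the flyby distance tends to $0$ the exit curve limits to a point at positive distance from $W^s_\mu(\Alpha)$, so the minimal flyby compatible with being $X^+$ afterwards is bounded below. The same obstruction blocks the oscillatory orbits with $\liminf\|q+(\mu,0)\|=0$ and the arbitrarily-near-collision periodic orbits.

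The missing ingredient is what the paper calls a \emph{triple intersection} (Definition~\ref{def:tripleinter}, Proposition~\ref{prop: triple intersection}): one tunes the energy $h$---equivalently $\hat\Theta_0$---as an extra free parameter so that $\bar\theta_-=\bar\theta_+$ at a specific level $h^*(\mu)$, via a second implicit-function argument on top of (ii), and then checks an ordering of the three tangent directions at the common point (the angle inequality $A<B<0$) so that the domain of the Moser return map is nonempty on the correct side. Only with both of these does the horseshoe accumulate simultaneously on $\Alpha$ and on $\mathcal{S}$. Incidentally, the paper neither proves nor uses your item~(i) at these energies---the ``homoclinic to $\Alpha$'' is manufactured through the fiber-matched heteroclinic cycle---and the periodic orbits are obtained from periodic symbol sequences in the resulting shift rather than from a reversible closing argument.
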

In fact, this theorem will be a consequence of the following one. It constructs hyperbolic sets with symbolic dynamics in the PCRTBP, which contain the collision in its closure and also contains points arbitrarily far from collision (``close to infinity'').

\begin{theorem}\label{thm:chaos} 
There exists $\mu_0 > 0$  such that, for any $\mu\in (0,\mu_0)$, there exists a section $\Pi$ transverse to the flow of \eqref{eqn:Hamiltonian in cartesian synodical coordinates} such that the induced Poincar\'e map
\[
\mathbb{P} : U \subset \Pi \to \Pi
\]
has an invariant set $\mathcal{X}$ which is
homeomorphic to $\mathbb{N}^\mathbb{Z}$
and whose  dynamics $\mathbb{P}: \mathcal{X}\to \mathcal{X}$ is topologically conjugated to the shift $\sigma: \mathbb{N}^\mathbb{Z}\to \mathbb{N}^\mathbb{Z}$, $(\sigma\omega)_k=\omega_{k+1}$.
Moreover, this invariant set satisfies that its closure intersects $\mathcal{S}$ and contains ejection orbits to $\mathcal{S}$, orbits in $\mathcal{P}^+$ and orbits in $\mathcal{P}^-$.
\end{theorem}

This theorem implies that the ``set of chaotic motions'' accumulate both at collision with the Sun and at infinity. The construction of these hyperbolic sets accumulating to $\mathcal{P}^\pm$ was already achieved in \cite{MR3455155,MR0573346}. The main novelty of this theorem is that such sets can be also constructed accumulating to ejection and collision orbits.

We are also able to construct ``large'' ejection collision orbits.
\begin{theorem}\label{thm: Existence of infinite sequence of ECO orbits for small energies}
There exists $\eta >0$ and $\mu_0 > 0$  such that, for any $\mu\in (0,\mu_0)$ and  any energy $h \in (-\eta\mu,\eta\mu)$, one can find a sequence of ejection-collision orbits $\{z_k(t)\}_{k\in \mathds{N}}$, $z_k(t) = (q_k(t),p_k(t))$ in the energy level $\hamiltonianPolarRotatingCoordinatesCenteredSun(q,p;\mu) = h$ such that
\begin{equation*}
    \underset{k\to \infty}{\limsup}\; \left(\underset{t\in\mathds{R
    }}{\sup}\;||q_k(t)||\right) = +\infty.
\end{equation*}
\end{theorem}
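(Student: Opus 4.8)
The plan is to combine the reversibility~\eqref{eqn: Symmetry of the PCRTBP in cartesian synodical coordinates} with the transversal heteroclinic connections between the collision manifold of the Sun and the infinity manifold that already power Theorem~\ref{thm:chaos}, and then to run an inclination ($\lambda$-)lemma argument near infinity that produces \emph{symmetric} ejection--collision orbits with unbounded excursions. Write $\mathcal{R}$ for the reversor of~\eqref{eqn: Symmetry of the PCRTBP in cartesian synodical coordinates}: it preserves every energy level, its fixed set $\mathrm{Fix}(\mathcal{R})=\{q_2=0,\ p_1=0\}$ is a codimension-two submanifold, and after the McGehee-type regularization of collision and of infinity it maps the invariant manifold at infinity $\Alpha_{\hat{\Theta}_0}$ to itself, interchanging its stable and unstable branches, and it interchanges the circles $\circleSplus$ (ejection) and $\circleSminus$ (collision), so that $\mathcal{R}\bigl(\unstableManifoldcircleSplusmu\bigr)=\stableManifoldcircleSminusmu$. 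The starting remark is then elementary: an orbit that lies on $\unstableManifoldcircleSplusmu$ and is $\mathcal{R}$-symmetric (i.e.\ meets $\mathrm{Fix}(\mathcal{R})$) automatically lies also on $\stableManifoldcircleSminusmu=\mathcal{R}(\unstableManifoldcircleSplusmu)$, hence is an ejection--collision orbit. So it suffices to produce, at energy $h\in(-\eta\mu,\eta\mu)$, a sequence of $\mathcal{R}$-symmetric orbits on $\unstableManifoldcircleSplusmu$ whose maximal distance to the origin tends to infinity.

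First I would recall from the Melnikov analysis of the previous sections that, for $\mu$ small and $h$ in the stated range, there is in a suitable Poincar\'e section near infinity a transversal intersection $\unstableManifoldcircleSplusmu\transv\stableManifoldInftymu$ (an ejection orbit that becomes parabolic in the future) and, applying $\mathcal{R}$, a transversal intersection $\unstableManifoldInftymu\transv\stableManifoldcircleSminusmu$. I would also record that $\stableManifoldInftymu$ and $\unstableManifoldInftymu$ each cross $\mathrm{Fix}(\mathcal{R})$ transversally inside that section: in the integrable limit $\mu=0$ these manifolds are families of near-radial parabolic orbits with pericentre on the $q_1$-axis, for which the crossing with $\mathrm{Fix}(\mathcal{R})$ is explicit, and this persists for $\mu$ small by the perturbative estimates already in place.

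Next I would propagate $\unstableManifoldcircleSplusmu$ past infinity. Although $\Alpha_{\hat{\Theta}_0}$ is parabolic rather than normally hyperbolic, the $C^1$-control of the passage near it that is developed for Theorems~\ref{thm:chaos} and~\ref{thm: Existence of parabolic, oscillatory and periodic orbits for small energies} yields an inclination lemma: the forward images, under the return map associated with one loop near infinity, of the outer branch of $\unstableManifoldcircleSplusmu$ issuing from the intersection point of the previous step accumulate in the $C^1$ topology onto $\unstableManifoldInftymu$, and the $n$-th ``sheet'' of this accumulation is traversed by orbits whose excursion reaches a maximal distance $\|q\|\ge R_n$ from the origin, with $R_n\to\infty$ as $n\to\infty$ (in the limit the orbit is parabolic, with an unbounded pericentre-to-infinity excursion). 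Since each such sheet is $C^1$-close to $\unstableManifoldInftymu$, which by the previous step crosses $\mathrm{Fix}(\mathcal{R})$ transversally, for every large $n$ the $n$-th sheet also meets $\mathrm{Fix}(\mathcal{R})$, at some point $z_n$ close to $\unstableManifoldInftymu\cap\mathrm{Fix}(\mathcal{R})$. The orbit through $z_n$ lies on $\unstableManifoldcircleSplusmu$ (it ejects from the Sun), lies in the energy level $\mathcal{H}(q,p;\mu)=h$, is $\mathcal{R}$-symmetric, hence — by the starting remark — is an ejection--collision orbit, and by construction $\sup_{t\in\mathds{R}}\|q_n(t)\|\ge R_n$. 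Passing if necessary to a subsequence along which $R_n\to\infty$ and relabelling, we obtain the sought sequence $\{z_k\}_{k\in\mathds{N}}$ with $\limsup_{k\to\infty}\bigl(\sup_{t\in\mathds{R}}\|q_k(t)\|\bigr)=+\infty$.

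The main obstacle is the third step: setting up the inclination lemma and the quantitative link between the sheet number $n$ and the excursion size $R_n$ for the \emph{degenerate} infinity manifold — that is, checking that as its near-infinity excursion grows the outer branch of $\unstableManifoldcircleSplusmu$ genuinely sweeps across the transversal crossing $\unstableManifoldInftymu\cap\mathrm{Fix}(\mathcal{R})$. By contrast, the transversality inputs of the second step are exactly the ones already produced by the Melnikov computations, the reversibility reduction of the first step is routine, and the restriction $h\in(-\eta\mu,\eta\mu)$ is inherited from that Melnikov analysis.
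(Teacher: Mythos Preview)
Your reversibility strategy is reasonable in spirit, but the key step fails. In polar coordinates the reversor~\eqref{eqn: Symmetry of the PCRTBP in synodical polar coordinates centered at P1} has $\mathrm{Fix}(\mathcal{R})=\{\theta\in\{0,\pi\},\ R=0\}$, so meeting it requires $R=0$. In the regime of the paper ($\hat\Theta_0=\mu\overline\Theta_0$, i.e.\ $\hat\Theta_0\to 0$ as $\mu\to 0$), the manifolds $W_\mu^{s,u}(\Alpha_{\hat\Theta_0})$ are perturbations of the radial parabolic ejection/collision orbits of Lemma~\ref{lemma: Flow of the 2 body problem}, which satisfy $R=\pm\sqrt{2/\kappa}\,|t|^{-1/3}\neq 0$ for all finite $t$; their only contact with $\{R=0\}$ is at the periodic orbit $\Alpha_{\hat\Theta_0}$ at infinity. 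Hence, in any finite Poincar\'e section (in particular the sections $\Sigma_h^>,\Sigma_h^<$ on which the passage near infinity is analysed, where by definition $R\gtrless 0$), the intersection $W_\mu^u(\Alpha_{\hat\Theta_0})\cap\mathrm{Fix}(\mathcal{R})$ is empty. Your claim that ``in the integrable limit these manifolds are families of near-radial parabolic orbits with pericentre on the $q_1$-axis, for which the crossing with $\mathrm{Fix}(\mathcal{R})$ is explicit'' is therefore incorrect: at $\mu=0$ the pericentre degenerates to the collision itself. So the $\lambda$-lemma does produce sheets of $W_\mu^u(S^+)$ accumulating on $\Delta_\infty^u(\mu)$, but there is nothing transverse to intersect them with along $\mathrm{Fix}(\mathcal{R})$ in that section.

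The paper's argument bypasses this entirely. It uses both Melnikov intersections of Theorem~\ref{thm: transverse intersection of the invariant manifolds}: once the sheets $\mathcal{P}(\gamma_{u,>})\subset W_\mu^u(S^+)$ spiral onto $\Delta_\infty^u(\mu)$ in $\Sigma_h^<$, the \emph{second} transverse intersection $\Delta_{S^-}^s(\mu)\transv\Delta_\infty^u(\mu)$ at $q_<$ (which, incidentally, is just $\mathcal{R}$ applied to the first intersection at $p_>$) immediately yields a sequence $q_k\in\mathcal{P}(\gamma_{u,>})\cap\Delta_{S^-}^s(\mu)$ with $q_k\to q_<$. Each $q_k\in W_\mu^u(S^+)\cap W_\mu^s(S^-)$ is an ejection--collision orbit, and its excursion size diverges because the transit time $t_\mu(p_k)$ does. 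No reduction to symmetric orbits is needed. If you want to salvage your symmetry route, you must abandon the section argument and work instead at the apex $\{R=0\}$ of each bounded orbit on $\gamma_{u,>}$: as the initial point approaches $p_>$ the apex escapes to infinity while $\dot\theta\approx -1$ there, so the apex angle sweeps through $\{0,\pi\}$ infinitely often, and each such crossing is a symmetric ejection--collision orbit. This works but is longer than the paper's two-line use of the second Melnikov intersection.
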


Theorem \ref{thm: Existence of parabolic, oscillatory and periodic orbits for small energies} gives the existence of periodic orbits and oscillatory orbits which can pass as close to the Sun as determined and also as large as determined. 
It also implies, for instance, that, for any $\varepsilon>0$,  there exists a (forward and backward) parabolic orbit $(q(t),p(t))$ such that 
    \begin{equation*}
    \underset{t\in \mathds{R}}{\inf}\; ||q(t) + (\mu,0)||\leq \varepsilon.
    \end{equation*}
Concerning Theorem \ref{thm: Existence of infinite sequence of ECO orbits for small energies}, it provides the first proof of existence of ``large'' ejection-collision orbits. Indeed, all those provided by the previous results (see Section \ref{sec:literature} below) were arbitrarily small.

\subsection{Literature and previous results}\label{sec:literature}

The literature on the analysis of final motions and collision orbits is abundant. 

Concerning the combination of different past and future final motions, it can be traced back to the work by Sitnikov for the nowadays called Sitnikov problem \cite{MR0127389}, who showed in this model that all combinations of past and future motion was possible, and in particular constructed oscillatory motions. A decade later Moser \cite{moser2001stable} gave a new proof of the same results relying on dynamical systems tools and relating them to chaotic motions (Smale horseshoes). After Moser, his approach has been implemented in other restricted three body problems \cite{MR3455155,guardia2022hyperbolic,MR0573346} (see \cite{KaloshinGalanteIII,KaloshinGalanteI,KaloshinGalanteII, paradela2022oscillatory} for results using other methods). The first results for the (non-restricted) 3 body problem were obtained by Alekseev \cite{MR0249754,MR0276949,MR0276950}, which has recently been generalized in \cite{guardia2022hyperbolic}.

Concerning collision dynamics, as already mentioned,  Saari proved that the set of colliding orbits has zero Lebesgue measure \cite{MR0321386,MR0295648}. Even if Conjecture \ref{conj_alexeev} is wide open, a partial answer was given in \cite{MR3951693}, where the authors prove that the set of orbits leading to collision in the PCRTBP is $\mu^\alpha$-dense (for some $\alpha>0$) in some open set of phase space.

The first proof of existence of ejection-collision orbits for the PCRTBP was achieved by Lacomba and Llibre \cite{MR0949626,MR0682839} (see also \cite{MR1342132}), where they prove their existence for small mass ratio and large Jacobi constant, which implies that the orbits are very close to collision (in contraposition to those obtained in Theorem \ref{thm: Existence of infinite sequence of ECO orbits for small energies} which can make arbitrarily large excursions).
These results have been later generalized in  \cite{MR4110029,MR3693390,MR4518121}. More recently, relying on computer assisted proofs,  Capi\'nski, Kepley and Mireles \cite{MR4576879F} have constructed ejection-collision orbits involving the two primaries (and other orbits which close passages to both collisions) and Capi\'nski and Pasiut \cite{capinski2024oscillatory} have constructed orbits which oscillate between collision and a compact set of phase space away from collision (these orbits are oscillatory in velocity in the sense of \eqref{def:oscillatoryspeed} but not oscillatory in the sense of Chazy).


Another dynamics associated to collisions are the so-called punctured tori, that is invariant quasiperiodic tori (for the Levi-Civita regularized three body problems) which contain binary collisions \cite{MR0967629, MR1849229, MR1919782, MR3417880}.
Passages close to collision also allow constructing the so-called second species periodic solutions, which are periodic orbits such that the massless body has a  certain number of close encounters with the small primary  (see \cite{MR1335057, MR1805879, MR2245344, MR2331205}).


Triple collision is not possible in the restricted 3 body problem, but it is indeed possible in the full 3 body problem. It has been widely studied since the pioneering results by  McGehee \cite{MR0359459} (see also \cite{MR0586428,MR1013560,MR0571374,MR0636961,MR0640127}). The analysis of triple collisions has lead to the construction of a large variety of motions in the 3 body problem. In particular, to oscillatory motions both in position and velocity (see Moeckel \cite{MR1000223, MR2350333}). Note that this analysis requires close passages to triple collision and therefore one must be in the regime of total angular momentum very close to zero. On the contrary, in the present paper there are two bodies performing circular motion (and therefore have large angular momenta) whereas the other has small angular momentum.

\subsection{Main ideas for the proofs of Theorems \ref{thm: Existence of parabolic, oscillatory and periodic orbits for small energies}, \ref{thm:chaos} and \ref{thm: Existence of infinite sequence of ECO orbits for small energies}}\label{subsec: ideas for the proofs of the main results}

The orbits constructed in Theorems \ref{thm: Existence of infinite sequence of ECO orbits for small energies} and \ref{thm: Existence of parabolic, oscillatory and periodic orbits for small energies} rely on developing an invariant manifold theory for ``singular'' invariant objects that the PCRTBP possesses. Those are the collision (with the Sun) and infinity, which after some compactification can be seen as invariant objects for the regularized flow. This will require certain changes of coordinates (and time reparameterizations): one to deal with binary collision (similar to that considered by  McGehee to analyze triple collision, see \cite{MR0359459}) and a different one to compactify infinity, also first used by McGehee (see \cite{McGeheeInf}). These changes of variables are explained in Section \ref{Section: Analysis of the invariant manifolds in McGehee coordinates}.

After these transformations, on the one hand the collision set $\mathcal{S}$ (see \eqref{eqn: Alpha and Omega}) becomes an invariant torus which contains two circles that are normally hyperbolic invariant manifolds and are foliated by critical points. On the other hand, the ``parabolic infinity'', that is the limit of parabolic orbits, at a fixed energy level becomes a periodic orbit. This periodic orbit is degenerate (the linearization of the vector field at it vanishes) but it is well known that it possesses stable and unstable invariant manifolds (see \cite{MR0362403}).

We analyze these invariant manifolds and, relying on perturbative methods (suitable versions of Poincar\'e-Melnikov Theory) we prove that they intersect transversally. These intersections plus the local analysis close to collision and infinity lead to the construction of the different types of motions provided by Theorems \ref{thm: Existence of parabolic, oscillatory and periodic orbits for small energies} and \ref{thm: Existence of infinite sequence of ECO orbits for small energies}.

Let us be more precise.
\begin{enumerate}
\item We prove that the stable manifold of infinity intersects transversally the unstable manifold of the collision and the unstable manifold of infinity intersects transversally the stable manifold of the collision (see Section \ref{sec: Asymptotic formula for the distance between the manifolds}).
\item Relying on the local analysis close to infinity (at the $\mathcal{C}^1$ level) done in \cite{moser2001stable}, one can prove that the stable and unstable invariant manifolds of the collision set intersect transversally and that these intersections can be arbitrarily far away from the Sun (see Section \ref{sec: Proof of Theorem for ECO orbits}). This proves Theorem \ref{thm: Existence of infinite sequence of ECO orbits for small energies}.

\item We analyze the dynamics close to the collision set, and we prove a $\mathcal{C}^1$ Lambda lemma type statement for the passage close to collision (see Section \ref{sec: Dynamics close to collision}). This local analysis leads to transverse intersections between the stable and unstable manifolds of infinity close (but at a fixed distance) to collision. Proceeding as in \cite{moser2001stable}, one can construct hyperbolic sets with symbolic dynamics which contain the homoclinic points to infinity in its closure (but not containing the Sun in its closure). This leads to oscillatory motions passing close to collision (and combination of past and future different final motions), but not to oscillatory motions which have the Sun at its closure, and it does not imply Theorems \ref{thm:chaos} and  \ref{thm: Existence of parabolic, oscillatory and periodic orbits for small energies}.

\item To prove these theorems we have to further analyze the invariant manifolds of infinity and the collision. We rely on what we call, with a strong abuse of language, triple intersection of invariant manifolds. It is well known that stable invariant manifolds of different objects cannot intersect. So let us explain what do we mean by that. For an open interval of energy levels, we have transverse intersections of the stable manifold of collision with the unstable of infinity (and also the other way around). We say that we have triple intersection if, moreover, these two intersections belong to the stable/unstable leaf \emph{of the same equilibrium} point in the collision set. We prove that there exists an energy level where this happens (see Section \ref{sec: Proof of Theorem parabolic orbits}). Then, relying on the local analysis close to collision and the tools developed in Moser \cite{moser2001stable} one can construct the behaviors provided by Theorems \ref{thm:chaos} and \ref{thm: Existence of parabolic, oscillatory and periodic orbits for small energies}.
\end{enumerate}

\section*{Acknowledgments}
This work was partially supported by the grant PID-2021-122954NB-100 funded by MCIN/AEI/10.13039/
\\501100011033 and ``ERDF A way of making Europe''. 

M.Guardia has been supported by the European Research Council (ERC) under the
European Union’s Horizon 2020 research and innovation programme (grant agreement
No. 757802). M.Guardia was also supported by the Catalan Institution for Research and
Advanced Studies via an ICREA Academia Prize 2019.

J.Lamas has been supported by grant 2021 FI\_B 00117 under the European Social Fund.

Tere M.Seara has been supported by the Catalan Institution for Research and Advanced Studies via an ICREA Academia Prize 2019.

This work was also supported by the Spanish State Research Agency through the Severo Ochoa and Mar\'ia de Maeztu Program for Centers and Units of Excellence in R\&D(CEX2020-001084-M).

\section{Analysis of the invariant manifolds}\label{Section: Analysis of the invariant manifolds in McGehee coordinates}

The first step towards a proof of Theorems \ref{thm: Existence of parabolic, oscillatory and periodic orbits for small energies}, \ref{thm:chaos} and  \ref{thm: Existence of infinite sequence of ECO orbits for small energies}  consists on the analysis of the collision and infinity ``invariant sets''. To this end, in Section \ref{subsec: McGehee coordinates at infinity}, we introduce the so-called \textit{McGehee coordinates at infinity} (see for instance \cite{MR0573346,McGeheeInf,moser2001stable}) to give a proper definition of the \textit{parabolic infinity} set  and analyze the dynamics ``close'' to it. On the other hand, the Hamiltonian \eqref{eqn:Hamiltonian in cartesian synodical coordinates} is singular at the collision set $\mathcal{S}$. To regularize it, in Section \ref{subsec: McGehee coordinates at collision} we use the \textit{McGehee coordinates at collision} (see \cite{MR0359459,MR3693390}).




\subsection{The McGehee coordinates at infinity}\label{subsec: McGehee coordinates at infinity}

To define the McGehee infinity coordinates, we first  express the Hamiltonian \eqref{eqn:Hamiltonian in cartesian synodical coordinates} in (synodical) polar coordinates centered at the center of mass, defined by
\begin{equation}\label{eqn: Change from synodical cartesian to synodical polar centered at CM}
\begin{aligned}
    q_1 &=  \hat{r}\cos\hat{\theta}\;\;\;\; p_1 = \hat{R}\cos\hat{\theta} - \frac{\hat{\Theta}}{\hat{r}}\sin\hat{\theta},\\
    q_2&= \hat{r}\sin\theta\;\;\;\; p_2 = \hat{R}\sin\hat{\theta} + \frac{\hat{\Theta}}{\hat{r}}\cos\hat{\theta},
\end{aligned}
\end{equation}
which leads to the Hamiltonian

\begin{equation}\label{eqn:Hamiltonian Polar Rotating Coordinates centered at CM}
    \hamiltonianPolarRotatingCoordinatesCenteredCM(\rRotatingCenteredCM,\thetaRotatingCenteredCM,\RRotatingCenteredCM,\ThetaRotatingCenteredCM) = \frac{1}{2}\left(\RRotatingCenteredCM^2 + \frac{\ThetaRotatingCenteredCM^2}{\rRotatingCenteredCM^2}\right) - \frac {1}{\rRotatingCenteredCM} -\ThetaRotatingCenteredCM - \hat{V}(\rRotatingCenteredCM,\thetaRotatingCenteredCM;\mu),
\end{equation}
where
\begin{equation}\label{eqn:Perturbed Potential in polar rotating coordinates centered at CM}
    \hat{V}(\rRotatingCenteredCM,\thetaRotatingCenteredCM;\mu) = \frac{1-\mu}{\left(\rRotatingCenteredCM^2 + 2\rRotatingCenteredCM\mu\cos\thetaRotatingCenteredCM + \mu^2\right)^{\frac{1}{2}}} + \frac{\mu}{\left(\rRotatingCenteredCM^2 - 2\rRotatingCenteredCM(1-\mu)\cos\thetaRotatingCenteredCM + (1-\mu)^2\right)^{\frac 1 2}} - \frac {1}{\rRotatingCenteredCM},
\end{equation}
and the symmetry \eqref{eqn: Symmetry of the PCRTBP in cartesian synodical coordinates} becomes

\begin{equation}\label{eqn: Symmetry of the PCRTBP in synodical polar coordinates centered at CM}
    \left(\rRotatingCenteredCM,\thetaRotatingCenteredCM,\RRotatingCenteredCM,\ThetaRotatingCenteredCM;t\right) \to \left(\rRotatingCenteredCM,-\thetaRotatingCenteredCM,-\RRotatingCenteredCM,\ThetaRotatingCenteredCM;-t\right).
\end{equation}
Then, we define the McGehee  change of coordinates (see \cite{McGeheeInf}) as
\begin{equation}\label{eqn: McGehee Coordinates at infinity}
    \hat{r} = 2\mcGeheeRadiusInfinity^{-2},
\end{equation}
in which the parabolic infinity 
\[
\Alpha=\left\{(\hat{r},\hat{\theta},\hat{R},\hat{\Theta}) \colon\hat r=+\infty, \hat R=0, \hat{\theta}\in\mathbb{T}, \hat{\Theta}\in\mathbb{R}\right\}
\]
becomes
\begin{equation}\label{eqn: Infinity set in mcgehee coordinates at infinity}
    \Alpha = \Bigg\{(\xi,\hat{\theta},\hat{R},\hat{\Theta}) \colon \mathds{R}^+\times \mathds{T}\times \mathds{R}^2\colon \xi = 0, \hat{R} = 0\Bigg\}. 
\end{equation}
Applying the change of coordinates \eqref{eqn: McGehee Coordinates at infinity} to the equations of motion associated to Hamiltonian $\hat{\mathcal{H}}$ in \eqref{eqn:Hamiltonian Polar Rotating Coordinates centered at CM} leads to
\begin{equation}\label{eqn: Eq motion in McGehee coordinates of infinity}
    \begin{aligned}
        \frac{d\xi}{dt} &=-\frac{\hat{R}\xi^3}{4}\\
        \frac{d\hat{\theta}}{dt} &= \frac{\hat{\Theta}}{4}\xi^4 - 1\\
        \frac{d\hat{R}}{dt} &= - \frac{\xi^4}{4} + \frac{\hat{\Theta}^2}{8}\xi^6 - \frac{\xi^3}{4}\partial_{\xi}\hat V(\xi,\hat{\theta};\mu)\\
        \frac{d\hat{\Theta}}{dt} &= \partial_{\hat{\theta}} \hat V(\xi,\hat{\theta};\mu),
    \end{aligned}
\end{equation}
where

\begin{equation*}
    \hat{V}(\xi,\hat{\theta};\mu) = \frac{\xi^2}{2}\left(\frac{1-\mu}{\left(1+\xi^2\mu\cos\hat{\theta} + \frac{\xi^4}{4}\mu^2\right)^{\frac 1 2}} + \frac{\mu}{\left(1- \xi^2(1-\mu)\cos\hat{\theta} + \frac{\xi^4}{4}(1-\mu)^2\right)^{\frac 1 2}}-1\right).
\end{equation*}
Note that the change of variables \eqref{eqn: McGehee Coordinates at infinity} is not symplectic, so the new vector field is no longer Hamiltonian. Nevertheless, the Hamiltonian \eqref{eqn:Hamiltonian Polar Rotating Coordinates centered at CM} becomes a first integral of system \eqref{eqn: Eq motion in McGehee coordinates of infinity} and is given by

\begin{equation}\label{eqn: Hamiltonian function in mcgehee coordinates at infinity}
    \hat{H}(\xi,\hat{\theta},\hat{R},\hat{\Theta}) = \frac{1}{2}\left(\hat{R}^2 + \frac{\hat{\Theta}^2 \xi^4}{4}\right) - \frac{\xi^2}{2} - \hat{\Theta} - \hat{V}(\xi,\theta;\mu).
\end{equation}
From \eqref{eqn: Eq motion in McGehee coordinates of infinity}, one obtains that the manifold $\Alpha$ in \eqref{eqn: Infinity set in mcgehee coordinates at infinity} is foliated by periodic orbits as $\Alpha= \underset{\hat{\Theta}_0 \in \mathds{R}}{\bigcup}\Alpha_{\hat{\Theta}_0}$ with
\begin{equation}\label{eqn: Definition of Alpha_Theta0 in mcgehee coordinates at infinity}
    \Alpha_{\hat{\Theta}_0} = \Bigg\{(\xi,\hat{\theta},\hat{R},\hat{\Theta}) \colon \mathds{R}^+\times \mathds{T}\times \mathds{R}^2\colon \xi = 0, \hat{R} = 0, \hat{\Theta}=\hat{\Theta}_0\Bigg\}.
\end{equation}
In \cite{MR0362403} it was proven that these periodic orbits have stable and unstable manifolds, which we denote by $\stableManifoldInftymu$ and $\unstableManifoldInftymu$ respectively. 
In \cite{MR3455155} and \cite{MR0573346} it is shown that $\invariantManifoldsInfinity$ intersect transversally for any $\mu \in \left(0,\frac 1 2\right]$ if $\hat{\Theta}_0$ is big enough. Theorem \ref{thm: transverse intersection of the invariant manifolds} will guarantee this transversality for small values of $\hat{\Theta}_0$ and $\mu > 0$.

Note that the rates of convergence of the invariant manifolds $\invariantManifoldsInfinity$ are polynomial in $t$ and not exponential as in the case of normally hyperbolic invariant manifolds. For this reason, in \cite{MR3927089} and \cite{MR3455155}, the set $\Alpha$ in \eqref{eqn: Infinity set in mcgehee coordinates at infinity} is called a normally parabolic invariant manifold.
   


\subsection{The McGehee coordinates at collision}\label{subsec: McGehee coordinates at collision}
To study the collision set $\mathcal{S}$ in \eqref{eqn: Alpha and Omega}, we express first the Hamiltonian \eqref{eqn:Hamiltonian in cartesian synodical coordinates} in synodical polar coordinates centered at the primary we want to regularize, i.e, $\mathcal{S}$.

\begin{equation}\label{eqn: Change from synodical cartesian to synodical polar centered at P1}
    \begin{aligned}
        q_1 &= -\mu + r\cos\theta\;\;\;\; p_1 = R\cos\theta - \frac{\Theta}{r}\sin\theta,\\
        q_2&= r\sin\theta\;\;\;\;\;\;\;\;\;\;\;\;\;\; p_2 = R\sin\theta + \frac{\Theta}{r}\cos\theta.
    \end{aligned}
\end{equation}
In these new coordinates, the Hamiltonian $\mathcal{H}$ in \eqref{eqn:Hamiltonian in cartesian synodical coordinates} becomes
\begin{equation}\label{eqn: Hamiltonian function in rotating polar coordinates centered at $P_1$}
\hamiltonianPolarRotatingCoordinatesCenteredSun(r,\theta,R,\Theta;\mu) = \frac 1 2 \left(R^2 + \frac{\Theta^2}{r^2}\right) - \frac 1 r - \Theta - V(r,\theta,R,\Theta;\mu),
\end{equation}
where
\begin{equation}\label{eqn: Potential hamiltonian in rotating polar coordinates cetnered at P1}
    V(r,\theta,R,\Theta;\mu) = -\mu \left(\frac 1 r + R\sin \theta + \frac{\Theta}{r}\cos \theta - \frac{1}{\sqrt{1+r^2-2r\cos\theta}}\right).
\end{equation}
Moreover, the symmetry \eqref{eqn: Symmetry of the PCRTBP in cartesian synodical coordinates} now reads
\begin{equation}\label{eqn: Symmetry of the PCRTBP in synodical polar coordinates centered at P1}
    (r,\theta,R,\Theta;t) \to (r,-\theta,-R,\Theta;-t).
\end{equation}
In these coordinates, the collision set $\mathcal{S}$ in \eqref{eqn: Alpha and Omega} becomes $\{r=0\}$.
%
Following  \cite{MR0359459}, we perform the  transformation
\begin{equation}\label{eqn:McGehee map Collision}
    \begin{aligned}
    \mcGeheeTransformationCollision \colon \mathds{R}^+ \times \mathds{T} \times \mathds{R}^2 &\to \mathds{R}^+ \times \mathds{T} \times \mathds{R}^2\\
    (r,\theta,\mcGeheeRadialMomentum, \mcGeheeAngularMomentum) &\mapsto (r,\theta,R, \Theta) = \left(r,\theta, vr^{-\frac 1 2} - \mu\sin\theta, ur^{\frac 1 2} + r^2 - \mu r \cos\theta\right).
    \end{aligned}
\end{equation}
and the change of time
\begin{equation}\label{eqn:change of time McGehee Collision}
    dt = r^\frac 3 2 d\mcGeheeTimeCollision,
\end{equation} 
so that the equations of motion associated to the Hamiltonian $\mathcal{H}$ in \eqref{eqn: Hamiltonian function in rotating polar coordinates centered at $P_1$} become

\begin{equation}\label{eqn:Motion regularized McGehee Collision}
    \begin{aligned}
    r'&= r\mcGeheeRadialMomentum\\
    \theta' &= \mcGeheeAngularMomentum\\
    \mcGeheeRadialMomentum'&= \frac{\mcGeheeRadialMomentum^2}{2} + \mcGeheeAngularMomentum^2 + 2\mcGeheeAngularMomentum r^{\frac 3 2 } + r^3 - 1 + \mu \left[1-r^2\left(\cos \theta + \frac{r-\cos \theta}{(1+r^2-2r\cos\theta)^{\frac 3 2}}\right)\right]\\
    \mcGeheeAngularMomentum' &= -\frac{\mcGeheeAngularMomentum\mcGeheeRadialMomentum}{2} -2\mcGeheeRadialMomentum r^{\frac 3 2} + \mu r^2\sin\theta\left[1-\frac{1}{(1+r^2-2r\cos\theta)^{\frac 3 2}}\right],
    \end{aligned}
\end{equation}
where $\phantom{3}^{'}$ denotes $\frac{d}{d\mcGeheeTimeCollision}$. Observe that \eqref{eqn:Motion regularized McGehee Collision} is now regular at $r=0$.

The change of variables in \eqref{eqn:McGehee map Collision} is not symplectic but the Hamiltonian $\mathcal{H}$ in \eqref{eqn: Hamiltonian function in rotating polar coordinates centered at $P_1$} is still a first integral of \eqref{eqn:Motion regularized McGehee Collision}. 
Moreover, the energy level $\{\hamiltonianPolarRotatingCoordinatesCenteredSun = h\}$ is now given by $(\mathcal{H}-h) \circ \psi = 0$, where
\begin{equation*}\label{eqn:McGehee Collision map to the hamiltonian}
(\hamiltonianPolarRotatingCoordinatesCenteredSun-h) \circ \mcGeheeTransformationCollision(r,\theta,\mcGeheeRadialMomentum,\mcGeheeAngularMomentum) = - h+ \frac{\mcGeheeRadialMomentum^2+\mcGeheeAngularMomentum^2}{2r} -\frac{r^2}{2} -\frac{1-\mu}{r} + \mu\left[-\frac{\mu}{2} +r\cos\theta - \frac{1}{\sqrt{1+r^2-2r\cos\theta}}\right].
\end{equation*}
We now multiply by $r$ to remove the singularity, obtaining
\begin{equation}\label{eqn:Energy relation McGehee Collision}
\begin{aligned}
    \Tilde{\mcGeheeFirstIntegral}(r,\theta,\mcGeheeRadialMomentum,\mcGeheeAngularMomentum;\mu,h) =& -rh + \frac{\mcGeheeRadialMomentum^2+\mcGeheeAngularMomentum^2}{2} -\frac{r^3}{2} -1 + \mu + \mu r\left[-\frac \mu 2 + r\cos\theta - \frac{1}{\sqrt{1+r^2-2r\cos\theta}}\right].
\end{aligned}
\end{equation}
The orbits belonging to the hypersurface $\{\mathcal{H}(r,\theta,R,\Theta;\mu) = h\}$, including the ejection and collision ones, now lie in $\{\Tilde{\mcGeheeFirstIntegral}(r,\theta,v,u;\mu,h) = 0\}$. Therefore, we study \eqref{eqn:Motion regularized McGehee Collision} restricted to this manifold.
It is convenient to introduce a last change of coordinates

\begin{equation}\label{eqn:polar change of coordinates for u,v into rho alpha}
    \begin{aligned}
    \Tilde{\psi}\colon  \mathds{R}^+ \times \mathds{T}^2 \times \mathds{R}^+ &\to \mathds{R} \times \mathds{T}\times \mathds{R}^2 \\
    (s,\theta,\mcGeheePolarAngle,\mcGeheePolarRadius) &\mapsto (r,\theta,\mcGeheeRadialMomentum,\mcGeheeAngularMomentum) = \left(s^2,\theta, \sqrt{2(1-\mu) +\mcGeheePolarRadius}\sin\mcGeheePolarAngle, \sqrt{2(1-\mu) +\mcGeheePolarRadius}\cos\mcGeheePolarAngle\right),
\end{aligned}
\end{equation}
such that $\{\Tilde{\mcGeheeFirstIntegral}=0\}$ becomes
\begin{equation}\label{eqn:Relation rho with (r,theta,H,mu)}
    0= \mcGeheeFirstIntegral(\mcGeheeRegularRadius,\theta,\mcGeheePolarAngle, \mcGeheePolarRadius;\mu,h) = -\mcGeheePolarRadius+ 2\mcGeheeRegularRadius^2 h + \mcGeheeRegularRadius^6 -2\mu \mcGeheeRegularRadius^2\left[-\frac{\mu}{2} +\mcGeheeRegularRadius^2\cos\theta - \frac{1}{\sqrt{1+\mcGeheeRegularRadius^4-2\mcGeheeRegularRadius^2\cos\theta}}\right].
\end{equation}

\begin{remark}\label{remark: Sign of s}
Note that we have taken $r = s^2$ so, from now on, it is enough to analyze $s \geq 0$.    
\end{remark}
To study the motion in coordinates $(s,\theta,\alpha,\rho)$, we define the $3$-dimensional submanifold
\begin{equation}\label{eqn: 3dimensional submanifold McGehee in coordinates (r,theta,rho,alpha)}
    \generalMcGeheeInvariantManifold =\{(\mcGeheeRegularRadius,\theta,\mcGeheePolarRadius,\mcGeheePolarAngle) \in \mathds{R}^+\times \mathds{T} \times \mathds{R}\times \mathds{T}  \colon \mcGeheeFirstIntegral(\mcGeheeRegularRadius,\theta,\mcGeheePolarRadius,\mcGeheePolarAngle;\mu,h) = 0\},
\end{equation}
Therefore, using $(s,\theta,\alpha)$ as coordinates in $\generalMcGeheeInvariantManifold$, the vector field \eqref{eqn:Motion regularized McGehee Collision} writes

\begin{equation}\label{eqn:Motion in coordinates (s,theta,alpha) McGehee Collision}
    \begin{aligned}
        \mcGeheeRegularRadius'&=\frac{\mcGeheeRegularRadius}{2}\sqrt{2(1-\mu) + \mcGeheePolarRadius} \sin \mcGeheePolarAngle\\
        &\\
        \theta' &= \sqrt{2(1-\mu) + \mcGeheePolarRadius} \cos \mcGeheePolarAngle \\
        &\\
        \mcGeheePolarAngle' &= \frac{\mcGeheePolarRadius'}{2[2(1-\mu)+\mcGeheePolarRadius] \tan \mcGeheePolarAngle} + \frac{\sqrt{2(1-\mu) + \mcGeheePolarRadius}}{2}\cos \mcGeheePolarAngle + 2\mcGeheeRegularRadius^3 \\
        &- \frac{\mu \mcGeheeRegularRadius^4 \sin \theta}{\sqrt{2(1-\mu) + \mcGeheePolarRadius} \sin \mcGeheePolarAngle}\left[1-\frac{1}{(1+\mcGeheeRegularRadius^4-2\mcGeheeRegularRadius^2\cos\theta)^{\frac 3 2}}\right],
    \end{aligned}
\end{equation}
where $\rho$ can be obtained from (\ref{eqn:Relation rho with (r,theta,H,mu)}).

The collision manifold $\{r=0\}$ expressed in coordinates $(s,\theta,\alpha)$  becomes the invariant torus
\begin{equation}\label{eqn: Collision manifold}
    \collisionManifold = \{(0,\theta,\mcGeheePolarAngle) \colon \theta  \in \mathds{T}, \mcGeheePolarAngle \in \mathds{T}\} \subset \generalMcGeheeInvariantManifold
\end{equation}
whose dynamics is given by
\begin{equation}\label{eqn:Dynamics on the collision}
\left.
    \begin{aligned}
        \theta' &= \constantEquilibrium \cos \mcGeheePolarAngle\\
        \mcGeheePolarAngle' &= \frac{\constantEquilibrium}{2} \cos \mcGeheePolarAngle
    \end{aligned}
\right.
\quad \text{where    } m_0 = \sqrt{2(1-\mu)}.
\end{equation}
This system has two circles of critical points

\begin{equation}\label{eqn: Circles of equilibrium points for mu neq 0}
    \begin{aligned}
        \circleSplus= \bigg\{S_{\initialtheta}^+ = \left(0, \initialtheta, \frac \pi 2\right)\colon \initialtheta \in \mathds{T}\bigg\}, \;\;\;\;\; \circleSminus = \bigg\{S_{\initialtheta}^- = \left(0,\initialtheta, -\frac{\pi}{2}\right)\colon \initialtheta \in \mathds{T}\bigg\}.
    \end{aligned}
\end{equation}
Next lemma analyzes the dynamics of system \eqref{eqn:Motion in coordinates (s,theta,alpha) McGehee Collision} close to these circles.

\begin{lemma}\label{lemma: Linear part of motion close to S+-}
Consider the system \eqref{eqn:Motion in coordinates (s,theta,alpha) McGehee Collision} for $0\leq \mu \leq \frac 1 2$. Then

\begin{itemize}
    \item The invariant circles $\circlesSpm$ in \eqref{eqn: Circles of equilibrium points for mu neq 0} are normally hyperbolic. Moreover, they have $2$-dimensional stable and unstable manifolds $\invariantManifoldsSpm = \underset{\initialtheta \in \mathds{T}}{\bigcup}W_\mu^{u,s}(S_{\initialtheta}^\pm)$.
    
    \item $\stableManifoldCollisionSplusmu$ and $\unstableManifoldCollisionSminusmu$ are contained in $\collisionManifold$. Moreover, they coincide 
   
    \begin{equation*}
        \Gamma = \stableManifoldCollisionSplusmu = \unstableManifoldCollisionSminusmu \subset \collisionManifold.
    \end{equation*}
    Therefore

    \begin{equation*}
        \collisionManifold = \circleSplus \cup \circleSminus \cup \Gamma,
    \end{equation*}
    and $\Gamma$ is foliated by a family of heteroclinic orbits between $\equilibriumPointSminus$ and $\equilibriumPointSplus$, for $\initialtheta\in \mathds{T}$. The heteroclinic orbits in $\Gamma\cap \{\alpha\in (-\pi/2,\pi/2)\}$ can be parameterized as
    \begin{equation}\label{eqn:Heteroclinic Orbits inside the collision manifold}
    \begin{aligned}
        &\heteroclinicConnectionCollisionmu\left(\mcGeheeTimeCollision;\initialtheta\right) = \left(0,\thetaHeteroclinicConnectionCollisionmu\left(\mcGeheeTimeCollision;\initialtheta\right),\alphaHeteroclinicConnectionCollisionmu\left(\mcGeheeTimeCollision;\initialtheta\right)\right)
    \end{aligned}
    \end{equation}
    such that

    \begin{equation*}
        \begin{aligned}
            \thetaHeteroclinicConnectionCollisionmu\left(\mcGeheeTimeCollision;\initialtheta\right) &= \initialtheta + \pi +2\alphaHeteroclinicConnectionCollisionmu\left(\mcGeheeTimeCollision;\initialtheta\right) \\
            \alphaHeteroclinicConnectionCollisionmu\left(\mcGeheeTimeCollision;\initialtheta\right) &= 2\tan^{-1}\left(\tanh\left(\frac{\constantEquilibrium \tau}{4}\right)\right).
        \end{aligned}
    \end{equation*}
\item $W_\mu^s(S^-)$ and  $W_\mu^u(S^+)$ belong to $\mathcal{M}\setminus \collisionManifold$. When  $\mu=0$, in coordinates $(r,\theta, v,u)$, $W_0^u(S^+)$  can be parameterized by its trajectories as \begin{equation}\label{eqn: Unpertubed solution in regularized coordinates}
\left(\tilde r_h(\tau), \tilde \theta_h(\tau;\initialtheta),\tilde  v_h(\tau), \tilde u_h(\tau)\right) = \left(\constantValue e^{\sqrt{2}\tau}, \initialtheta - e^{\frac{3}{\sqrt{2}}\tau}, \sqrt{2}, -\constantValue^{\frac 3 2}e^{\frac{3}{\sqrt{2}}\tau}\right)
\end{equation}
with $\initialtheta\in\mathds{T}$ and $\tau \in \mathds{R}$ satisfying
\begin{equation*}
    \underset{\tau \to -\infty}{\lim} \left(\tilde r_h(\tau),\tilde \theta_h(\tau;\initialtheta), \tilde v_h(\tau), \tilde u_h(\tau)\right) = (0,\initialtheta,\sqrt{2},0) \in S_{\initialtheta}^+.
\end{equation*}
Symmetry \eqref{eqn: Symmetry of the PCRTBP in synodical polar coordinates centered at P1} gives us an analogous result for $W_0^s(S^-)$.
\end{itemize}
\end{lemma}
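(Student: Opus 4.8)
The plan is to treat the three bullet points in turn, the common engine being a careful linearization of \eqref{eqn:Motion in coordinates (s,theta,alpha) McGehee Collision} along $\circlesSpm$, together with standard invariant‑manifold theory and an explicit integration of the collision dynamics. \emph{Normal hyperbolicity.} First I would expand \eqref{eqn:Motion in coordinates (s,theta,alpha) McGehee Collision} near $S^\pm_{\initialtheta}=(0,\initialtheta,\pm\pi/2)$, writing $\mcGeheePolarAngle=\pm\pi/2+\beta$. The crucial point is that the seemingly singular terms of the $\mcGeheePolarAngle'$‑equation, $\tfrac{\mcGeheePolarRadius'}{2[2(1-\mu)+\mcGeheePolarRadius]\tan\mcGeheePolarAngle}$ and $\tfrac{\mu\mcGeheeRegularRadius^4\sin\theta}{\sqrt{2(1-\mu)+\mcGeheePolarRadius}\,\sin\mcGeheePolarAngle}[\cdots]$, are in fact $\mathcal{O}(\mcGeheeRegularRadius^2)$: from \eqref{eqn:Relation rho with (r,theta,H,mu)} one gets $\mcGeheePolarRadius=\mathcal{O}(\mcGeheeRegularRadius^2)$ and $\partial_\theta\mcGeheePolarRadius=\mathcal{O}(\mcGeheeRegularRadius^6)$, hence $\mcGeheePolarRadius'=\partial_{\mcGeheeRegularRadius}\mcGeheePolarRadius\cdot\mcGeheeRegularRadius'+\partial_\theta\mcGeheePolarRadius\cdot\theta'=\mathcal{O}(\mcGeheeRegularRadius^2)$, and near $\mcGeheePolarAngle=\pm\pi/2$ we have $\sin\mcGeheePolarAngle/\tan\mcGeheePolarAngle=\cos\mcGeheePolarAngle$ bounded and $\sin\mcGeheePolarAngle$ bounded away from zero. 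Consequently, in coordinates $(\mcGeheeRegularRadius,\theta-\initialtheta,\beta)$ the linearization of \eqref{eqn:Motion in coordinates (s,theta,alpha) McGehee Collision} at $S^\pm_{\initialtheta}$ is governed by
\[
M_\pm=\begin{pmatrix}\pm\tfrac{m_0}{2} & 0 & 0\\[2pt] 0 & 0 & \mp m_0\\[2pt] 0 & 0 & \mp\tfrac{m_0}{2}\end{pmatrix},\qquad m_0=\sqrt{2(1-\mu)}.
\]
Since $m_0>0$ for $\mu\in[0,\tfrac12]$, the eigenvalues are $\pm m_0/2$ and a simple $0$ whose eigenvector $\partial_\theta$ is tangent to $\circlesSpm$; thus $\circlesSpm$ are normally hyperbolic, and standard invariant‑manifold theory produces the $2$‑dimensional (analytic) manifolds $\invariantManifoldsSpm=\bigcup_{\initialtheta\in\mathds{T}}W_\mu^{u,s}(S^\pm_{\initialtheta})$, the $\initialtheta$‑fiber being the $1$‑dimensional (un)stable manifold of $S^\pm_{\initialtheta}$.

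\emph{The manifolds inside $\collisionManifold$.} From $\mcGeheeRegularRadius'=\tfrac{\mcGeheeRegularRadius}{2}\sqrt{2(1-\mu)+\mcGeheePolarRadius}\,\sin\mcGeheePolarAngle$ both $\collisionManifold=\{\mcGeheeRegularRadius=0\}$ and $\{\mcGeheeRegularRadius>0\}$ are invariant, and near $S^+_{\initialtheta}$ one has $\mcGeheeRegularRadius'>0$ for $\mcGeheeRegularRadius>0$; so no orbit with $\mcGeheeRegularRadius>0$ can be forward‑asymptotic to $\circleSplus$, whence $W_\mu^s(\circleSplus)\subset\collisionManifold$, and symmetrically $W_\mu^u(\circleSminus)\subset\collisionManifold$ (consistently, the eigenvectors of $M_\pm$ show that the stable direction of $S^+_{\initialtheta}$ and the unstable one of $S^-_{\initialtheta}$ are $2\partial_\theta+\partial_{\mcGeheePolarAngle}\in T\collisionManifold$). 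On $\collisionManifold$ the flow is \eqref{eqn:Dynamics on the collision}; since $\mcGeheePolarAngle=\pi/2$ is a hyperbolic sink and $\mcGeheePolarAngle=-\pi/2$ a hyperbolic source of $\mcGeheePolarAngle'=\tfrac{m_0}{2}\cos\mcGeheePolarAngle$, every orbit outside $\circleSplus\cup\circleSminus$ is heteroclinic from $\circleSminus$ to $\circleSplus$, and integrating on $\{\mcGeheePolarAngle\in(-\pi/2,\pi/2)\}$ gives $\alphaHeteroclinicConnectionCollisionmu(\mcGeheeTimeCollision)=2\tan^{-1}(\tanh(m_0\mcGeheeTimeCollision/4))$, while $\theta'=m_0\cos\mcGeheePolarAngle=2\mcGeheePolarAngle'$ and the $\mcGeheeTimeCollision\to-\infty$ limit give $\thetaHeteroclinicConnectionCollisionmu=\initialtheta+\pi+2\alphaHeteroclinicConnectionCollisionmu$ (the $\mcGeheeTimeCollision\to+\infty$ limit of $\theta$ being, mod $2\pi$, the same $\initialtheta$). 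Such an orbit therefore lies in both $W_\mu^u(S^-_{\initialtheta})$ and $W_\mu^s(S^+_{\initialtheta})$, which, being $1$‑dimensional, coincide with it; taking the union over $\initialtheta$ yields $\Gamma:=W_\mu^s(\circleSplus)=W_\mu^u(\circleSminus)=\collisionManifold\setminus(\circleSplus\cup\circleSminus)$ and hence $\collisionManifold=\circleSplus\cup\circleSminus\cup\Gamma$.

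\emph{The manifolds off $\collisionManifold$ and the case $\mu=0$.} By $M_\pm$ the unstable direction of $S^+_{\initialtheta}$ and the stable direction of $S^-_{\initialtheta}$ are $\partial_{\mcGeheeRegularRadius}$, transverse to $\collisionManifold$; since moreover \eqref{eqn:Dynamics on the collision} shows that $\circleSplus$ carries no unstable orbit and $\circleSminus$ no stable orbit inside $\collisionManifold$, invariance of $\{\mcGeheeRegularRadius>0\}$ gives $W_\mu^u(\circleSplus)\setminus\circleSplus,\ W_\mu^s(\circleSminus)\setminus\circleSminus\subset\generalMcGeheeInvariantManifold\setminus\collisionManifold$. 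For $\mu=0$, \eqref{eqn:Motion regularized McGehee Collision} reduces to $r'=rv$, $\theta'=u$, $v'=\tfrac{v^2}{2}+u^2+2ur^{3/2}+r^3-1$, $u'=-\tfrac{uv}{2}-2vr^{3/2}$, and I would verify \eqref{eqn: Unpertubed solution in regularized coordinates} by direct substitution: along that curve $u=-r^{3/2}$, so $u^2+2ur^{3/2}+r^3=(u+r^{3/2})^2=0$ and $\tfrac{v^2}{2}-1=0$, whence $v'\equiv0$, and the remaining equations for $r$, $u$, $\theta$ become scalar linear ODEs in $e^{\sqrt2\mcGeheeTimeCollision}$ and $e^{(3/\sqrt2)\mcGeheeTimeCollision}$. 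Since \eqref{eqn: Unpertubed solution in regularized coordinates} converges to $(0,\initialtheta,\sqrt2,0)=S_{\initialtheta}^+$ as $\mcGeheeTimeCollision\to-\infty$ and is tangent there to $\partial_r$ (the unstable eigenvector), uniqueness of the unstable manifold makes it a parametrization of $W_0^u(\circleSplus)$; the analogous statement for $W_0^s(\circleSminus)$ then follows from the reversor \eqref{eqn: Symmetry of the PCRTBP in synodical polar coordinates centered at P1}, which in coordinates $(r,\theta,\mcGeheeRadialMomentum,\mcGeheeAngularMomentum;\mcGeheeTimeCollision)$ reads $(r,\theta,v,u;\mcGeheeTimeCollision)\mapsto(r,-\theta,-v,u;-\mcGeheeTimeCollision)$ and swaps $\circleSplus\leftrightarrow\circleSminus$ and $W^u\leftrightarrow W^s$.

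\emph{Main obstacle.} The single genuinely delicate step is the linearization of \eqref{eqn:Motion in coordinates (s,theta,alpha) McGehee Collision} in the first point: one must justify that the terms of the $\mcGeheePolarAngle'$‑equation carrying $\tan\mcGeheePolarAngle$, $\sin\mcGeheePolarAngle$ in the denominator, together with the factors $\mcGeheePolarRadius,\mcGeheePolarRadius'$ which vanish as $\mcGeheeRegularRadius\to0$, do not contribute at first order near $\mcGeheePolarAngle=\pm\pi/2$; this relies on the $\mcGeheeRegularRadius$‑expansions of $\mcGeheePolarRadius$ and $\mcGeheePolarRadius'$ read off from \eqref{eqn:Relation rho with (r,theta,H,mu)}. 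Everything else is explicit integration, the one‑dimensional phase portrait of \eqref{eqn:Dynamics on the collision}, and the reversibility \eqref{eqn: Symmetry of the PCRTBP in synodical polar coordinates centered at P1}.
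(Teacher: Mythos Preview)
Your proposal is correct and follows essentially the same approach as the paper: compute the Jacobian of \eqref{eqn:Motion in coordinates (s,theta,alpha) McGehee Collision} at $S^\pm_{\initialtheta}$ to obtain the matrix with eigenvalues $\pm m_0/2,\,0,\,\mp m_0/2$, integrate \eqref{eqn:Dynamics on the collision} for the heteroclinic structure inside $\collisionManifold$, and verify the explicit $\mu=0$ trajectories in \eqref{eqn:Motion regularized McGehee Collision} (with the symmetry giving $W_0^s(\circleSminus)$). Your write-up is in fact more detailed than the paper's, which simply states the Jacobian and says the rest ``follows from integrating''; in particular your justification that the $\rho'/\tan\alpha$ and $\mu s^4\sin\theta/\sin\alpha$ terms are $\mathcal{O}(s^2)$ and hence do not affect the linearization is a point the paper leaves implicit.
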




\begin{proof}
    The proof of the first item is a direct consequence of the expression of the differential of the vector field $F_\mu$ associated to  \eqref{eqn:Motion in coordinates (s,theta,alpha) McGehee Collision} evaluated at the equilibrium points $S_{\overline{\theta}}^{\pm}$ in \eqref{eqn: Circles of equilibrium points for mu neq 0}, which is given by
    \begin{equation*}
    D\vectorFieldMcGeheePolarCoordinatesmu(\circlesSpm_{\overline{\theta}}) = \begin{pmatrix} \pm\frac{\constantEquilibrium}{2} & 0 & 0\\ 0&0&\mp \constantEquilibrium \\ 0&0&\mp \frac{\constantEquilibrium}{2}\end{pmatrix},
    \end{equation*}
    where we recall that $\constantEquilibrium = \sqrt{2(1-\mu)}$, and whose corresponding eigenvalues are 
    \begin{equation*}\label{eqn:Eigenvalues of the motion (r,theta,alpha)}
   \eigenvaluerSpmGeneralVectorField = \pm \frac{\constantEquilibrium}{2}, \;\; \eigenvalueThetaSpmGeneralVectorField = 0, \;\; \eigenvalueAlphaSpmGeneralVectorField = \mp \frac{\constantEquilibrium}{2}.
\end{equation*}
The proof of the second item follows from integrating \eqref{eqn:Dynamics on the collision} and the third item from integrating \eqref{eqn:Motion regularized McGehee Collision} for $\mu=0$.
\end{proof}
\begin{remark}\label{remark: definition of the invariant manifolds of collision in rotating polar coordinates centered at P1}
The definitions of $\stableManifoldSminusmu$ and $\unstableManifoldSplusmu$ can be translated to coordinates $(r,\theta,R,\Theta)$ by means of the changes \eqref{eqn:McGehee map Collision} and \eqref{eqn:polar change of coordinates for u,v into rho alpha}. Abusing the notation, we denote the collision and ejection manifolds as
\begin{equation}\label{eqn: definition of the invariant manifolds of collision in rotating polar coordinates centered at P1}
\begin{aligned}
    \stableManifoldSminusmu =& \Bigg\{(r,\theta,R,\Theta) \in \mathds{R}^+ \times \mathds{T} \times \mathds{R}^2 \colon \exists t_* = t_*(r,\theta,R,\Theta) >0 \text{ such that }  \underset{t \to t_*^-}{\lim} \Phi_t^r(r,\theta,R,\Theta) = 0,\\
    &\underset{t \to t_*^-}{\lim}\Phi_t^R(r,\theta,R,\Theta) = - \infty\Bigg\},\\
    &\\
    \unstableManifoldSplusmu =& \Bigg\{(r,\theta,R,\Theta) \in \mathds{R}^+ \times \mathds{T} \times \mathds{R}^2 \colon \exists t_* = t_*(r,\theta,R,\Theta)<0 \text{ such that } \underset{t \to t_*^+}{\lim} \Phi_t^r(r,\theta,R,\Theta) = 0, \\
    &\underset{t \to t_*^+}{\lim}\Phi_t^R(r,\theta,R,\Theta) = + \infty\Bigg\},
\end{aligned}
\end{equation}
where $\Phi_t$ refers to the flow of the equations of motion associated to the Hamiltonian $\mathcal{H}$ in \eqref{eqn: Hamiltonian function in rotating polar coordinates centered at $P_1$}.

We stress that, although invariant, they are not stable and unstable manifolds of any invariant objects since $\circleSplus$ and $\circleSminus$ collapse to the singular set $\{r=0\}$.

\end{remark}

\subsection{The unperturbed case $\mu = 0$}\label{subsec: Flow of the 2-body rotating problem}

As we will see in Sections \ref{sec: Proof of Theorem for ECO orbits} and \ref{sec: Proof of Theorem parabolic orbits}, both proofs of Theorems \ref{thm: Existence of infinite sequence of ECO orbits for small energies} and \ref{thm: Existence of parabolic, oscillatory and periodic orbits for small energies} are based on the analysis of the invariant manifolds of infinity and collision respectively. To this end, the purpose of this section is to study them when  $\mu = 0$. Since both synodical polar coordinates \eqref{eqn: Change from synodical cartesian to synodical polar centered at CM} and \eqref{eqn: Change from synodical cartesian to synodical polar centered at P1} are identical when $\mu = 0$, in this section we will use the notation for the synodical polar coordinates $(r,\theta,R,\Theta)$ to study the dynamics.

When $\mu = 0$, the PCRTBP in synodical polar coordinates is defined by the integrable Hamiltonian 
\begin{equation}\label{eqn:Hamiltonian rotating polar coordinates for mu = 0}
    \hamiltonianPolarRotatingCoordinatesCenteredSun(r,\theta,R,\Theta;0) = \frac{1}{2} \left(R^2 + \frac{\Theta^2}{r^2}\right) - \frac 1 r - \Theta.
\end{equation}
The ejection and collision orbits of this Hamiltonian (see Definition \ref{def: Ejection-Collision orbits in cartesian synodical coordinates}) belong to $\{\Theta = 0\}$ and, at the energy level $\mathcal{H}=0$, correspond to ``heteroclinic connections'' between $\circleSpm$ in \eqref{eqn: Circles of equilibrium points for mu neq 0} and $\Alpha_0$ in \eqref{eqn: Definition of Alpha_Theta0 in mcgehee coordinates at infinity}.


\begin{lemma}\label{lemma: Flow of the 2 body problem}
The stable manifold $\stableManifoldInftymuO$ of the system associated to Hamiltonian \eqref{eqn:Hamiltonian rotating polar coordinates for mu = 0} can be written as

\begin{equation}
\stableManifoldInftymuO = \Big\{\orbitPolarRotatingUnperturbedProblemRpos(t,\initialtheta)\colon t >0, \initialtheta \in \mathds{T}\Big\}\quad \text{and}\quad \unstableManifoldInftymuO = \Big\{\orbitPolarRotatingUnperturbedProblemRneg(t,\initialtheta)\colon t<0,\initialtheta \in \mathds{T}\Big\}
\end{equation}
where the trajectories $\orbitPolarRotatingUnperturbedProblemRpos(t;\initialtheta)$ and $\orbitPolarRotatingUnperturbedProblemRneg(t;\initialtheta)$ are given by

\begin{equation}\label{eqn: Parabolic ejection and collision orbit unpertubed case}
\begin{aligned}
    \orbitPolarRotatingUnperturbedProblemRpos(t;\initialtheta)=& \left(\rOrbitPolarRotatingUnperturbedProblemRpos(t),\thetaOrbitPolarRotatingUnperturbedProblemRpos(t;\initialtheta),\ROrbitPolarRotatingUnperturbedProblemRpos(t),\ThetaOrbitPolarRotatingUnperturbedProblemRpos(t)\right) = \left(\constantValue t^{\frac 2 3}, \initialtheta-t,\sqrt{\frac{2}{\constantValue}}\frac{1}{t^{\frac 1 3}},0\right)\quad &\forall \;t>0,\\
    \orbitPolarRotatingUnperturbedProblemRneg(t;\initialtheta) =& \left(\rOrbitPolarRotatingUnperturbedProblemRneg(t),\thetaOrbitPolarRotatingUnperturbedProblemRneg(t;\initialtheta),\ROrbitPolarRotatingUnperturbedProblemRneg(t),\ThetaOrbitPolarRotatingUnperturbedProblemRneg(t)\right) = \left(\constantValue t^{\frac 2 3}, \initialtheta-t,-\sqrt{\frac{2}{\constantValue}}\frac{1}{|t|^{\frac 1 3}},0\right)\quad &\forall \;t<0,
\end{aligned}
\end{equation}
where $\constantValue= \frac{3^{\frac 2 3}}{2^{\frac 1 3}}$.

Therefore, 
\begin{equation}\label{eqn: Relation between the stable and unstable manifolds of infinity and collision for mu=0}
    \begin{aligned}
        \unstableManifoldInftymuO &= W_0^s(\circleSminus) = \underset{\initialtheta\in \mathds{T}}{\bigcup} W_0^s(\equilibriumPointSminus),\\
        \stableManifoldInftymuO &= W_0^u(\circleSplus) = \underset{\initialtheta\in \mathds{T}}{\bigcup} W_0^u(\equilibriumPointSplus).
    \end{aligned}
\end{equation}
where $W_0^{s,u}(\circlesSpm)$ are defined in \eqref{eqn: definition of the invariant manifolds of collision in rotating polar coordinates centered at P1} 

\end{lemma}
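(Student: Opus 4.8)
The plan is to exploit the complete integrability of the $\mu=0$ system \eqref{eqn:Hamiltonian rotating polar coordinates for mu = 0}. Since $\theta$ is cyclic, the angular momentum $\Theta$ is a first integral, and one checks directly from the equations of motion that the ejection and collision orbits lie on $\{\Theta=0\}$; restricting in addition to the energy level $\hamiltonianPolarRotatingCoordinatesCenteredSun=0$ (which is forced, since $\Alpha_0\subset\{\hamiltonianPolarRotatingCoordinatesCenteredSun=0\}$ and invariant manifolds preserve the energy), the $(r,R)$ motion decouples into the one–dimensional parabolic Kepler problem $\dot r = R$, $\dot R = -r^{-2}$ with $\tfrac12 R^2 - r^{-1}=0$, while $\dot\theta=-1$. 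First I would integrate this explicitly: from $R=\pm\sqrt{2/r}$ one gets $\tfrac23 r^{3/2}=\pm\sqrt2\,(t-t_0)$, hence $r(t)=\constantValue|t-t_0|^{2/3}$ with $\constantValue=3^{2/3}2^{-1/3}$, $R(t)=\pm\sqrt{2/\constantValue}\,|t-t_0|^{-1/3}$, $\theta(t)=\bar\theta-(t-t_0)$ and $\Theta\equiv0$. Normalizing the collision time to $t_0=0$ and separating the outgoing branch ($t>0$, $R>0$) from the incoming one ($t<0$, $R<0$) yields precisely the two families $\orbitPolarRotatingUnperturbedProblemRpos(t;\bar\theta)$ and $\orbitPolarRotatingUnperturbedProblemRneg(t;\bar\theta)$ of \eqref{eqn: Parabolic ejection and collision orbit unpertubed case}.

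Next I would identify these families with $\stableManifoldInftymuO$ and $\unstableManifoldInftymuO$. For one inclusion, passing to the McGehee variable $\xi=\sqrt{2/r}$ of \eqref{eqn: McGehee Coordinates at infinity} one checks that along $\orbitPolarRotatingUnperturbedProblemRpos$ one has $\xi\to0$, $\hat R=R\to0$, $\hat\Theta=\Theta\equiv0$ as $t\to+\infty$, so the orbit converges to $\Alpha_0$ and hence lies in $\stableManifoldInftymuO$. For the converse, any orbit in $\stableManifoldInftymuO$ converges to $\Alpha_0$, so $\Theta\to0$; since $\Theta$ is conserved this forces $\Theta\equiv0$, and conservation of energy on $\{\hamiltonianPolarRotatingCoordinatesCenteredSun=0\}$ then forces $\tfrac12 R^2-r^{-1}=0$, so the orbit is one of the parabolic solutions found above, and the requirement $r\to+\infty$ selects the outgoing branch. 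Thus $\stableManifoldInftymuO=\{\orbitPolarRotatingUnperturbedProblemRpos(t;\bar\theta):t>0,\ \bar\theta\in\mathbb{T}\}$; applying the reversor \eqref{eqn: Symmetry of the PCRTBP in synodical polar coordinates centered at P1} (which maps the outgoing branch to the incoming one and $\bar\theta\mapsto-\bar\theta$) gives the analogous description of $\unstableManifoldInftymuO$.

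Finally, to prove the identities \eqref{eqn: Relation between the stable and unstable manifolds of infinity and collision for mu=0} I would compare with the parameterization of $W_0^u(S^+)$ from \lemref{lemma: Linear part of motion close to S+-}, equation \eqref{eqn: Unpertubed solution in regularized coordinates}. Pushing that curve forward through the McGehee collision maps \eqref{eqn:McGehee map Collision} and \eqref{eqn:polar change of coordinates for u,v into rho alpha} — which for $\mu=0$ read $R=vr^{-1/2}$, $\Theta=ur^{1/2}+r^2$ — gives, along the orbit, $r=\constantValue e^{\sqrt2\tau}$, $R=\sqrt{2/\constantValue}\,e^{-\tau/\sqrt2}$, $\Theta=-\constantValue^2e^{2\sqrt2\tau}+\constantValue^2e^{2\sqrt2\tau}=0$ and $\theta=\bar\theta-e^{3\tau/\sqrt2}$. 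Under the substitution $t=e^{3\tau/\sqrt2}$, which is compatible with the time rescaling \eqref{eqn:change of time McGehee Collision} because $\constantValue^{3/2}=3/\sqrt2$, this is exactly $\orbitPolarRotatingUnperturbedProblemRpos(t;\bar\theta)$; hence $W_0^u(S^+)=\stableManifoldInftymuO$, and since each value of $\bar\theta$ singles out a single orbit asymptotic to $S_{\bar\theta}^+$ we also get $\stableManifoldInftymuO=\bigcup_{\bar\theta}W_0^u(S_{\bar\theta}^+)$; the statement for $W_0^s(S^-)$ follows once more from the reversor. Every computation here is explicit, so the only genuinely delicate points are keeping track of the two time variables (physical time $t$ versus McGehee time $\tau$) in this last step, and making sure the converse inclusion in the second step really exhausts $\stableManifoldInftymuO$ — that is, that conservation of $\Theta$ and of the energy leaves no room for orbits in $\stableManifoldInftymuO$ other than the explicit parabolic ones.
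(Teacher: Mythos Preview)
Your proposal is correct. The paper states this lemma without proof, treating it as a direct computation of the explicit parabolic Kepler orbits on $\{\Theta=0,\ \hamiltonianPolarRotatingCoordinatesCenteredSun=0\}$; your argument carries out precisely that computation and correctly matches the result with the McGehee parameterization \eqref{eqn: Unpertubed solution in regularized coordinates} via the substitution $t=e^{3\tau/\sqrt2}$ (using $\constantValue^{3/2}=3/\sqrt2$), so nothing is missing.
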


\subsection{The perturbed invariant manifolds of infinity}\label{subsec: Invariant manifolds of infinity in synodical polar coordinates}


The next proposition gives a parameterization, in the synodical polar coordinates \eqref{eqn: Change from synodical cartesian to synodical polar centered at CM}, of the invariant manifolds $W_{\mu}^{s,u}(\Alpha_{\hat{\Theta}_0})$ close to the unperturbed ones obtained in Lemma \ref{lemma: Flow of the 2 body problem}, for $\mu>0$ small enough and $\hat{\Theta}_0$ of order $\mu$. We provide the statement for the stable manifold. One can deduce an analogous result for the unstable one using that the system is reversible with respect to \eqref{eqn: Symmetry of the PCRTBP in synodical polar coordinates centered at CM}. 

\begin{proposition}\label{proposition: Parameterization of the invariant manifolds of infinity in synodical polar coordinates}
   Fix $a,b$ with $a<b \in \mathds{R}$, $\hat{D}\in (0,1/2)$, $\hat{W} > 0$ and  $\overline{\Theta}_0 \in [a,b]$. Then, there exists $\mu_0 >0$ small enough such that for $0<\mu<\mu_0$ and  $\hat{\Theta}_0 =\mu\overline{\Theta}_0$, a subset of the invariant manifold $\stableManifoldInftymu$, which we denote by $\reducedStableManifoldInfinitymu$, can be written as
    
    \begin{equation}\label{eqn: Parameretization of the reduced invariant manifolds of infinity}
        \reducedStableManifoldInfinitymu = \Big\{(\hat r,\hat \theta, \hat R, \hat \Theta) = \hat{\Upsilon}^+(\hat{w},\hat{\theta},\hat{\Theta}_0)\colon (\hat{w},\hat{\theta})\in  \hat{\mathcal{D}}^{+}_{\hat{W},\hat{D}}\Big\}
    \end{equation}
    where $\hat{\Upsilon}^+$ is a $C^\infty$ function of the form 
    \begin{equation}\label{eqn: Relation between the parameterization of the invariant manifolds of infinity and mu=0}
        \hat{\Upsilon}^+(\hat{w},\hat{\theta},\hat{\Theta}_0) = \left( \constantValue\hat{w}^{\frac 2 3} ,\hat{\theta},\hat{R}_\infty^s(\hat{w},\hat{\theta},\hat{\Theta}_0), \hat{\Theta}_\infty^s(\hat{w},\hat{\theta},\hat{\Theta}_0)\right),
    \end{equation}
    with
    \begin{equation}\label{eqn: Parameterization of the angular momentum for WinftysMu}
    \begin{aligned}
        \hat{R}_\infty^s(\hat{w},\hat{\theta},\hat{\Theta}_0) =&\sqrt{\frac{2}{\constantValue}}\frac{1}{\hat{w}^{\frac 1 3}} + \mathcal{O}_1(\mu)\\
       \hat{\Theta}_\infty^s(\hat{w},\hat{\theta},\hat{\Theta}_0) =& \hat{\Theta}_0- \mu\constantValue \bigintss_{+\infty}^{\hat{w}}\left( \frac{\hat{s}^{\frac 2 3}\sin(\hat{\theta}+\hat{w}-\hat{s})}{\left(\constantValue^2 \hat{s}^{\frac 4 3}- 2\constantValue\hat{s}^{\frac 2 3}\cos(\hat{\theta}+\hat{w}-\hat{s}) + 1\right)^{\frac 3 2}}\right.\\
       &\left.- \frac{\sin(\hat \theta + \hat w - \hat s)}{\constantValue^3 \hat{s}^{\frac 4 3}}\right)\;d\hat{s} + \mathcal{O}_2(\mu)\\
    \end{aligned}
    \end{equation}
    where $\constantValue= \frac{3^{\frac 2 3}}{2^{\frac 1 3}}$, 
    and the domain $\hat{\mathcal{D}}_{\hat{W},\hat{D}}^+$ is defined as
    \begin{equation*}
         \hat{\mathcal{D}}^{+}_{\hat{W},\hat{D}} = \left[\hat{W},+\infty\right) \times \hat{I}_{\hat{D}}^+(\hat{w})
    \end{equation*}
    with

    \begin{equation}\label{eqn: Domain Ihat}
    \begin{aligned}
        \hat{I}_{\hat{D}}^+(\hat{w}) = \begin{cases} 
        \mathds{T} - \left(\frac{\sqrt{2}}{3}-\hat{w} - \hat{D}, \frac{\sqrt{2}}{3} - \hat{w} + \hat{D}\right)\quad &\text{if}\quad \hat{W} \leq \hat{w} \leq \frac{\sqrt{2}}{3} (1-\mu)^{\frac 3 2} + \hat{D},\\
        \mathds{T}\quad &\text{if}\quad \hat{w} > \frac{\sqrt{2}}{3} (1-\mu)^{\frac 3 2} + \hat{D}.
        \end{cases}
    \end{aligned}
    \end{equation}

\end{proposition}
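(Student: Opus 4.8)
The plan is to obtain the parameterization of $\reducedStableManifoldInfinitymu$ by a perturbative fixed-point argument in a suitable time reparameterization, starting from the explicit unperturbed solution $\orbitPolarRotatingUnperturbedProblemRpos(t;\initialtheta)$ of Lemma \ref{lemma: Flow of the 2 body problem}. First I would work in the synodical polar coordinates \eqref{eqn: Change from synodical cartesian to synodical polar centered at CM} and use $\hat w$ essentially as the unperturbed ``time'' along the parabolic ejection orbit: since for $\mu=0$ one has $\hat r_0^+(t)=\constantValue t^{2/3}$, it is natural to set $\hat r = \constantValue \hat w^{2/3}$ and to seek $\hat R$, $\hat\Theta$ as graphs over $(\hat w,\hat\theta)$. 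Substituting the ansatz \eqref{eqn: Relation between the parameterization of the invariant manifolds of infinity and mu=0} into the equations of motion of \eqref{eqn:Hamiltonian Polar Rotating Coordinates centered at CM} (or equivalently into \eqref{eqn: Eq motion in McGehee coordinates of infinity} after the change \eqref{eqn: McGehee Coordinates at infinity}), and using the first integral \eqref{eqn: Hamiltonian function in mcgehee coordinates at infinity} to eliminate one equation, the invariance condition becomes a quasilinear PDE for $(\hat R_\infty^s,\hat\Theta_\infty^s)$; along the characteristics (which to leading order are $\hat\theta = \initialtheta - t$, i.e. $\hat\theta + \hat w = \mathrm{const}$) this reduces to an ODE system that is a small ($\mathcal O(\mu)$) perturbation of the unperturbed one.

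Next I would set up the contraction. Writing $\hat\Theta_\infty^s = \hat\Theta_0 + \mu\, g(\hat w,\hat\theta)$ and $\hat R_\infty^s = \sqrt{2/\constantValue}\,\hat w^{-1/3} + \mu\, f(\hat w,\hat\theta)$, the invariance equations take the form $\linearOperator(f,g) = \mu\, N(f,g)$ where $\linearOperator$ is the transport operator $\partial_{\hat w} - \partial_{\hat\theta}$ (plus lower-order terms) and $N$ collects the perturbation $\hat V$ together with the nonlinear corrections. Imposing the asymptotic condition that the manifold converges to $\Alpha_{\hat\Theta_0}$ as $\hat w\to+\infty$ (i.e. $\hat R\to 0$, $\hat\Theta\to\hat\Theta_0$) fixes the boundary condition at $\hat w = +\infty$, so $\linearOperator$ is inverted by integrating from $+\infty$: this is exactly the integral $\bigintss_{+\infty}^{\hat w}$ appearing in \eqref{eqn: Parameterization of the angular momentum for WinftysMu}, and the leading term of $g$ is precisely $-\constantValue\int_{+\infty}^{\hat w}(\cdots)\,d\hat s$ evaluated along the unperturbed characteristic $\hat\theta+\hat w-\hat s$. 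One checks that $\inverseOperator = \linearOperator^{-1}$ is bounded on an appropriate weighted $C^0$ (or $C^1$) space of functions on $\hat{\mathcal D}^+_{\hat W,\hat D}$ — the weights encoding the polynomial decay rates of $\hat R$ and the boundedness of $\hat\Theta$ — because the integrand decays like $\hat s^{-2/3}$ near $\hat s=+\infty$ after the cancellation of the two terms in the big parentheses (the $\hat s^{2/3}/(\constantValue^2\hat s^{4/3}-\cdots)^{3/2}$ term minus $\sin(\cdots)/(\constantValue^3 \hat s^{4/3})$), so the improper integral converges. Then $\fixedPointOperator(f,g) := \mu\,\inverseOperator\,N(f,g)$ is a contraction on a small ball for $\mu<\mu_0$, giving a unique fixed point, which is $C^\infty$ by bootstrapping (the vector field is $C^\infty$ in $\xi$ and analytic away from collision, and $\hat{\mathcal D}^+$ stays bounded away from the small primary by construction of $\hat I^+_{\hat D}$).

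The role of the domain $\hat I^+_{\hat D}(\hat w)$ in \eqref{eqn: Domain Ihat} is to excise the angular sector where the unperturbed parabolic orbit would pass through the small primary $P_2$ (located at distance $1-\mu$ from the center of mass, reached when $\constantValue\hat w^{2/3}\approx 1-\mu$ and $\hat\theta$ points toward it): there the potential $\hat V$ is singular and the perturbation is not small, so one removes a $\hat D$-neighborhood of that bad angle, tracked along characteristics by the shift $\hat\theta \mapsto \hat\theta + \hat w$. For $\hat w$ large enough the orbit is always outside the unit disk and no excision is needed, which is the second case in \eqref{eqn: Domain Ihat}. The statement for $\reducedUnstableManifoldInfinitymu$ then follows from the reversibility \eqref{eqn: Symmetry of the PCRTBP in synodical polar coordinates centered at CM}, as noted before the proposition.

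The main obstacle I anticipate is controlling the inversion of $\linearOperator$ uniformly on the non-compact domain $[\hat W,+\infty)\times\hat I^+_{\hat D}(\hat w)$ with the correct weights: one must choose function spaces in which (i) the improper integral from $+\infty$ converges and defines a bounded operator despite the slow ($\hat s^{-2/3}$) decay — which crucially relies on the exact cancellation built into the integrand of \eqref{eqn: Parameterization of the angular momentum for WinftysMu} — and (ii) the nonlinear and angular-derivative terms in $N$ remain small perturbations, including near the boundary of $\hat I^+_{\hat D}$ where $\hat V$ grows. Handling the $C^1$ (and higher) dependence on $(\hat w,\hat\theta)$ and on the parameter $\hat\Theta_0 = \mu\overline\Theta_0$ simultaneously, so that $\hat\Upsilon^+$ is genuinely $C^\infty$ jointly, is the other technical point; this is done by differentiating the fixed-point equation and running the same contraction on the derivative spaces.
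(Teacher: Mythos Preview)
Your approach is sound but takes a different route from the paper. The paper does \emph{not} set up a contraction on the invariance equation; instead it proceeds in two elementary steps: (i) it invokes McGehee's theorem \cite{MR0362403} directly to obtain existence, smoothness, and $\mu$-closeness of $\stableManifoldInftymu$ to the unperturbed manifold in a region $\hat r > \hat r_0 > 1$, and (ii) it extends this parameterization to smaller $\hat r$ by flowing backward for a \emph{finite} time with a standard Gronwall-type perturbation estimate, after excising the initial angles that would flow into a neighborhood of $P_2$ (your reading of $\hat I^+_{\hat D}$ is correct). The formula for $\hat\Theta_\infty^s$ is then obtained simply by the fundamental theorem of calculus, integrating $d\hat\Theta/dt = \partial_{\hat\theta}\hat V$ from $+\infty$ along the unperturbed orbit and using $z_\mu = z_0 + \mathcal O(\mu)$. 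The paper's route thus offloads the delicate part --- existence of a stable manifold for a \emph{parabolic} invariant circle --- to McGehee's black box and reduces everything else to compact-time ODE regularity, so no weighted spaces or operator bounds are ever needed; your route is self-contained but would force you essentially to reprove McGehee's result inside the contraction. One minor slip in your sketch: each term in the integrand of \eqref{eqn: Parameterization of the angular momentum for WinftysMu} is already $O(\hat s^{-4/3})$ at infinity, hence integrable without any cancellation; after the cancellation the decay is $O(\hat s^{-2})$, not $O(\hat s^{-2/3})$.
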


\begin{proof}
    We denote  by $\hat{F}$ the vector field associated to the Hamiltonian  \eqref{eqn:Hamiltonian Polar Rotating Coordinates centered at CM}, which can be written as
    \begin{equation*}
        \hat{F}(\hat{r},\hat{\theta},\hat{R},\hat{\Theta}) = \hat{F}_0(\hat{r},\hat{\theta},\hat{R},\hat{\Theta}) + \hat{F}_1(\hat{r},\hat{\theta},\hat{R},\hat{\Theta})
    \end{equation*}
    such that
    
    \begin{equation}\label{eqn: hat(F0) and hat(F1)}
        \begin{aligned}
            \hat{F}_0(\hat{r},\hat{R},\hat{\Theta}) =& \left(\hat{R},\frac{\hat{\Theta}}{\hat{r}^2}-1, \frac{\hat{\Theta}^2}{\hat{r}^3} - \frac{1}{\hat{r}^2}, 0\right)^T\\
            \hat{F}_1(\hat{r},\hat{\theta}) =& \left(0,0,\partial_{\hat{r}}V(\hat{r},\hat{\theta};\mu), \partial_{\hat{\theta}}V(\hat{r},\hat{\theta};\mu)\right)^T,
        \end{aligned}
    \end{equation}
    where $V(\hat{r},\hat{\theta};\mu)$ is the potential in \eqref{eqn:Perturbed Potential in polar rotating coordinates centered at CM}.
        The vector field $\hat{F}$ is regular at
    is regular for $\hat{r}\geq \hat{r}_0, \hat{\theta} \in \mathds{T}$ for any $\hat{r}_0 > 1$.

    Observe that for $\mu=0$, the 2-dimensional invariant manifold 
    $W_\mu^s(\Alpha_{\hat{\Theta}_0})$ can be parameterized 
    as a graph over $\hat r$, $\hat\theta$. For $\hat r>1$, we can make use of the  regularity with respect to parameters proven by McGehee in \cite{MR0362403} to ensure that the perturbed manifold is $\mu$-close to the unperturbed one given in Lemma \ref{lemma: Flow of the 2 body problem}, and therefore still a graph over the same variables. For convenience, we change the parameter $\hat r$ into  $\hat w$ by
    \[
\hat{r}= \constantValue\hat{w}^{\frac 2 3}, \quad \hat{w} > \hat{W}.
    \]
    Then, the graph parameterization of the perturbed invariant manifold is given by 
        \begin{equation}\label{eqn: Parameterization of the stable manifold of infinity r0}
       \hat{r}(\hat{w})= \constantValue\hat{w}^{\frac 2 3} ,\quad \hat{\theta} = \hat{\theta},\quad
        \hat{R}(\hat{w},\hat{\theta})= \sqrt{\frac{2}{\constantValue}}\frac{1}{\hat{w}^{\frac 1 3}} + \mathcal{O}_1(\mu),\quad \hat{\Theta}(\hat{w},\hat{\theta}) = \mathcal{O}_1(\mu).
    \end{equation}
    Hence, the proof consists on extending the previous parameterization of $\stableManifoldInftymu$ from $\{\hat{r}=\hat{r}_0>1\}$
    to a section $\{\hat{r} = \hat{r}_F\}$ satisfying $\mu < \hat{r}_F < 1-\mu$.

    By \eqref{eqn: Parameterization of the stable manifold of infinity r0}, we can fix $\hat{w}_0> \constantValue^{-\frac 3 2}$ such that $\hat{r}_0 = \hat{r}(\hat{w}_0)$ and we consider the set of points of the form
        \begin{equation}\label{eqn: Init conditions in hat(r0)}
       \stableManifoldInftymu \bigcap \{\hat{w} = \hat{w}_0\} = \Bigg\{\left(\hat{r}_0,\hat{\theta}_0,\hat{R}(\hat{w}_0,\hat{\theta}_0),\hat{\Theta}(\hat{w}_0,\hat{\theta}_0)\right),\hat{\theta}_0\in\mathds{T}\Bigg\}.
    \end{equation}
    Recall that in Section \ref{subsec: Flow of the 2-body rotating problem}, for $\mu = 0$ we have analyzed the invariant manifolds of infinity for the Hamiltonian $\mathcal{\hat{H}}$ in \eqref{eqn:Hamiltonian Polar Rotating Coordinates centered at CM} restricted to  the plane $\{\hat{\mathcal{H}}=\hat{\Theta}= 0\}$. Then, if we denote   
    \begin{equation*}
        z_0(t;\hat{\theta}_0) = \hat{\Phi}_0\left(t, \left(\hat{r}(\hat{w}_0),\hat{\theta}_0,\hat{R}(\hat{w}_0,\hat{\theta}_0),\hat{\Theta}(\hat{w}_0,\hat{\theta}_0)\right)\right) = \left(z_0^{\hat{r}},z_0^{\hat{\theta}},z_0^{\hat{R}},z_0^{\hat{\Theta}}\right),
    \end{equation*}
    we have that
    \begin{equation}\label{eqn: Unperturbed z0}
    \begin{aligned}
        z_0^{\hat{r}}(t) = \left(\hat{r}(\hat{w}_0)^{\frac 3 2} + \frac{3t}{\sqrt{2}}\right)^{\frac 2 3},\quad z_0^{\hat{\theta}}(t;\hat{\theta}_0) = \hat{\theta}_0 - t,\quad z_0^{\hat{R}}(t) = \sqrt{\frac{2}{z_0^{\hat{r}}(t)}},\quad z_0^{\hat{\Theta}}(t;\hat{\theta}_0) = 0,
    \end{aligned}
    \end{equation}
    where $\hat{\Phi}_0$ is the flow of the vector field $\hat{F}_0$ in \eqref{eqn: hat(F0) and hat(F1)}. 
    
   Then we can  compute $(t_0,\hat{\theta}_0^0)$ such that
   \begin{equation*}
       z_0^r(t_0;\hat{\theta}_0^0) = 1,\quad z_0^\theta(t_0;\hat{\theta}_0^0) = 0,
   \end{equation*}
    which corresponds to the position of the primary $P_2$ (when $\mu=0)$ and gives
    \begin{equation}\label{eqn: Expression of hat(theta)}
        \hat{\theta}_0^0 = t_0 = -\frac{\sqrt{2}}{3}\left(\hat{r}(\hat{w}_0)^{\frac 3 2} - 1\right)<0.
    \end{equation}
    Taking $0<\hat{d}< \frac 1 4$, we define the following set of ``bad'' initial conditions close to $\hat{\theta}_0^0$

    \begin{equation*}
        B_{\hat{d}}(\hat{\theta}_0^0) = \Big\{\hat{\theta}_0\in \mathds{T}\colon |\hat{\theta}_0-\hat{\theta}_0^0| < \hat{d}\Big\}.
    \end{equation*}
    Any $\hat{\theta}_0 \in B_{\hat{d}}(\hat{\theta}_0^0)$ satisfies that
    \begin{equation*}
        z_0^r(t_0;\hat{\theta}_0) = 1,\quad z_0^{\hat{\theta}}(t_0;\hat{\theta}_0) = \hat{\theta}_0-t_0 \in \left(-\hat{d},\hat{d}\right).
    \end{equation*}
    That is, the unperturbed flow sends $ B_{\hat{d}}(\hat{\theta}_0^0)$ to a neighborhood of $P_2$ at the section $\hat r=1$ of the form
    \begin{equation}\label{eqn: neighborhood of collision for mu=0}
        B_{\hat{d}}(0) = \Big\{\hat{\theta}\in\mathds{T}\colon |\hat{\theta}|<\hat{d}\Big\}.
    \end{equation}
    Now we consider $\mu > 0 $ small enough. In this case, for each point in \eqref{eqn: Init conditions in hat(r0)}, we denote its trajectory by
    \begin{equation}\label{eqn: zmu}
        z_\mu(t;\hat{\theta}_0) = \hat{\Phi}_\mu\left(t, \left(\hat{r}(\hat{w}_0),\hat{\theta}_0,\hat{R}(\hat{w}_0,\hat{\theta}_0),\hat{\Theta}(\hat{w}_0,\hat{\theta}_0)\right)\right) = \left(z_\mu^{\hat{r}},z_\mu^{\hat{\theta}},z_\mu^{\hat{R}},z_\mu^{\hat{\Theta}}\right),
    \end{equation}
    where $\hat{\Phi}_\mu$ is the flow of the vector field $\hat F$ in \eqref{eqn: hat(F0) and hat(F1)}.
    
    Note that, for $\mu = 0$, any point in $\stableManifoldInftymuO$ has $\frac{d}{dt}\hat{r} = \hat{R}>0$. Since for $\hat{\theta}\notin B_{\hat{d}}(0)$ the vector field $\hat{F}$ in \eqref{eqn: hat(F0) and hat(F1)} is regular with respect to $\mu$, we have the same behaviour at $\stableManifoldInftymu$ for $\mu > 0$ small enough. 
    
    Therefore, if we fix $\hat{D}= 2\hat{d} < \frac 1 2$ and we consider a set of initial conditions of the form

    \begin{equation*}
        B_{\hat{D}}^{\mathrm{init}}(\hat{\theta}_0^0) = \Big\{\hat{\theta} \in \mathds{T}\colon |\hat{\theta}_0 - \hat{\theta}_0^0| < \hat{D}\Big\}
    \end{equation*}
    then, for every $\hat{\theta}_0 \notin B_{\hat{D}}^{\mathrm{init}}(\hat{\theta}_0^0)$, there exists $t_F^\mu (\hat{\theta}_0) < 0$ such that $z_\mu^{\hat{r}}(t_F^\mu(\hat{\theta}_0),\hat{\theta}_0) = \hat{r}_F < 1-\mu$. In particular, if we denote by $t_\mu^*(\hat{\theta}_0) \in \left[t_F^\mu(\hat{\theta}_0),0\right]$ the time such that $z_\mu^{\hat{r}}(t_\mu^*(\hat{\theta}_0),\hat{\theta}_0) = 1-\mu$, we know that $z_\mu^{\hat{\theta}}(t_\mu^*(\hat{\theta}_0),\hat{\theta}_0) = \hat{\theta} \notin B_{\hat{d}}(0)$.
    
    Hence, the perturbed vector field $\hat{F}_1$ in \eqref{eqn: hat(F0) and hat(F1)} is uniformly bounded. Namely, there exist $C_1,C_2,C_3>0$ such that, for $t\in[t_F^\mu(\hat{\theta}_0),0]$ and $\hat{\theta}_0\notin B_{\hat{D}}(\hat{\theta}_0^0)$,
    \begin{equation*}
        \left|\left|\hat{F}_1\left(z_\mu^{\hat{r}}(t;\hat{\theta}_0), z_\mu^{\hat{\theta}}(t;\hat{\theta}_0)\right)\right|\right| \leq C_1 + \frac{C_2}{\left(\hat{r}(\hat{w}_0)-\mu\right)^3} + \mu \frac{C_3}{\left(1-\cos \hat{d}\right)^{\frac 3 2}}.
    \end{equation*}
    This implies that, for $\hat{\theta}_0 \notin B_{\hat{D}}(\hat{\theta}_0^0)$ we have

    \begin{equation}\label{eqn: zmu = z0+ O(mu)}
        z_\mu(t;\hat{\theta}_0) = z_0(t;\hat{\theta}_0) + \mathcal{O}_1(\mu).
    \end{equation}
    Therefore, the parameterization \eqref{eqn: Relation between the parameterization of the invariant manifolds of infinity and mu=0} is well-defined for $\hat{\theta}$ defined in \eqref{eqn: Domain Ihat} until the section $\{\hat{r}=\hat{r}_F\}$.

    The second part of the proof comes as a result of the fundamental theorem of calculus, along with the fact that 
    \begin{equation*}
        \underset{t\to +\infty}{\lim} z_\mu^{\hat{\Theta}}(t,\hat{\theta}_0) = \hat{\Theta}_0
    \end{equation*}
    by definition of $\reducedStableManifoldInfinitymu$. The $\mu$-expansion for the equation of $\frac{d}{dt}\hat\Theta$, which corresponds to the fourth component of $\hat{F}_1$ in \eqref{eqn: hat(F0) and hat(F1)}, the expression of the potential $\hat{V}$ in \eqref{eqn:Perturbed Potential in polar rotating coordinates centered at CM} and the approximation \eqref{eqn: zmu = z0+ O(mu)} allows us to write the $\mu$-expansion of the $\hat{\Theta}$-component of the curve $\hat{\Upsilon}^+$ in \eqref{eqn: Parameterization of the angular momentum for WinftysMu}.
\end{proof}

Although in Proposition \ref{proposition: Parameterization of the invariant manifolds of infinity in synodical polar coordinates} we give a parameterization of $\reducedStableManifoldInfinitymu$, in order to prove the main results stated in Section \ref{subsec: Main results}, we need to compare it with the invariant manifolds of collision (see Section \ref{sec: Asymptotic formula for the distance between the manifolds}) in a common set of coordinates. We do the comparison in polar coordinates centered at $P_1$ \eqref{eqn: Change from synodical cartesian to synodical polar centered at P1} and therefore we must reparameterize the invariant manifold of infinity $\reducedStableManifoldInfinitymu$ obtained in Proposition \ref{proposition: Parameterization of the invariant manifolds of infinity in synodical polar coordinates}.


\begin{proposition}\label{proposition: Parameterization of the invariant manifolds of infinity in rotating polar coordinates centered at P1}
    Fix $a,b \in \mathds{R}$ with $a<b$, $D\in (0,1/2)$, $W > 0$ and $\overline{\Theta}_0 \in [a,b]$. Then there exists $\mu_0 >0$ small enough such that for $0<\mu<\mu_0$ and $\hat{\Theta}_0 =\mu\overline{\Theta}_0$, a subset of the stable manifold $\stableManifoldInftymu$, which we denote by $\reducedStableManifoldInfinitymu$, can be written as

    \begin{equation*}
        \reducedStableManifoldInfinitymu =\Big\{(r,\theta,R,\Theta) = \Upsilon^+(w,\theta,\hat{\Theta}_0)\colon (w,\theta) \in \mathcal{D}^+_{W,D}\Big\},
    \end{equation*}
    where $\Upsilon^+$ is a $C^\infty$-function of the form
    \begin{equation}\label{eqn: Parameterization of the stable manifold of infinity in terms of (w,theta)}
        \Upsilon^+(w,\theta,\hat{\Theta}_0) = \left( \constantValue{w}^{\frac 2 3} ,\theta,R_\infty^s(w,\theta,\hat{\Theta}_0), \Theta_\infty^s(w,\theta,\hat{\Theta}_0)\right),
    \end{equation}
    where
    \begin{equation}\label{eqn: Angular momentum of Ws(infty) as a graph of (w,theta)}
    \begin{aligned}
        R_\infty^s(w,\theta,\hat{\Theta}_0) &=\sqrt{\frac{2}{\constantValue}}\frac{1}{{w}^{\frac 1 3}} + \mathcal{O}_1(\mu),\\
        \Theta_\infty^s(w,\theta,\hat{\Theta}_0) =&\hat{\Theta}_0 -\mu \constantValue\bigintss_{+\infty}^{\timeParametrization} \frac{s^{\frac{2}{3}} \sin(\theta+ w -s)}{\left(1+\constantValue^2s^{\frac 4 3} - 2\constantValue s^{\frac 2 3}\cos(\theta + w -s)\right)^{\frac{3}{2}}}\; ds \\
        &-\mu\sqrt{\frac{2}{\constantValue}}\bigintss_{+\infty}^{\timeParametrization} \frac{\cos(\theta + w-s)}{s^{\frac{1}{3}}}\; ds + \mathcal{O}_2(\mu),
    \end{aligned}
    \end{equation}
    with $\constantValue=  \frac{3^{\frac 2 3}}{2^{\frac 1 3}}$, and the domain $\mathcal{D}^+_{W,D}$ is defined as    
    \begin{equation}\label{eqn: parameter region (w,theta)}
        \mathcal{D}^+_{W,D} =  \left[W,+\infty\right) \times I_D^+(w)
    \end{equation}
    with
    \begin{equation}\label{eqn: domain Ip}
    \begin{aligned}
        I_D^+(w) = \begin{cases} \mathds{T} - \left(\frac{\sqrt{2}}{3}-w-D,\frac{\sqrt{2}}{3}-w+D\right)\quad &\text{if}\quad W\leq w \leq \frac{\sqrt{2}}{3}+D,\\\mathds{T}\quad &\text{if } w > \frac{\sqrt{2}}{3}+D. \end{cases}
    \end{aligned}
    \end{equation}
    Analogously, due to the symmetry \eqref{eqn: Symmetry of the PCRTBP in synodical polar coordinates centered at P1}, a subset of the unstable manifold $\unstableManifoldInftymu$, which we denote by $\reducedUnstableManifoldInfinitymu$, can be written as

    \begin{equation}
        \reducedUnstableManifoldInfinitymu = \Big\{\Upsilon^-(w,\theta)\colon (w,\theta) \in \mathcal{D}^-_{W,D}\Big\},
    \end{equation}
where    \begin{equation}\label{eqn: Parameterization of the unstable manifold of infinity in terms of (w,theta)}
        \Upsilon^-(w,\theta,\hat{\Theta}_0) = \Upsilon^+(-w,-\theta,\hat{\Theta}_0)
    \end{equation}
    and the domain $\mathcal{D}^-_{W,D}$ is defined as 
    \begin{equation*}
        \mathcal{D}^-_{W,D} = \left(-\infty,-W\right] \times I_D^-(w)
    \end{equation*}
    with
    \begin{equation}\label{eqn: domain Im}
    \begin{aligned}
        I_D^-(w) = \begin{cases}\mathds{T} - \left(-\frac{\sqrt{2}}{3} + w- D, - \frac{\sqrt{2}}{3} -w +D\right)\quad &\text{if } -\frac{\sqrt{2}}{3} - D \leq w \leq -W\\\mathds{T}\quad &\text{if }w < - \frac{\sqrt{2}}{3} - D \end{cases}
    \end{aligned}
    \end{equation}
    In particular, the $\Theta$-component of $\Upsilon^-$ can be written as 

    \begin{equation}\label{eqn: Angular momentum of Wu(infty) as a graph of (w,theta)}
        \Theta_\infty^u(w,\theta,\hat{\Theta}_0) = \Theta_\infty^s(-w,-\theta,\hat{\Theta}_0).
    \end{equation}



     

\end{proposition}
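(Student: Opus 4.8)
The plan is to observe that $\reducedStableManifoldInfinitymu$ is literally the same invariant set produced in \propref{proposition: Parameterization of the invariant manifolds of infinity in synodical polar coordinates}; the only work is to re‑express its parameterization in the coordinates \eqref{eqn: Change from synodical cartesian to synodical polar centered at P1} centered at $P_1$ and to replace the ``time'' variable $\hat w$ by the one $w$ normalised by $r=\constantValue w^{2/3}$. To do this I would transport $\hat{\Upsilon}^{+}$ through the explicit change $\upsilon$ relating the two polar systems. In Cartesian synodical variables $\upsilon$ is the translation $q\mapsto q+(\mu,0)$, $p\mapsto p$; hence in polar coordinates it reads $r=(\hat r^{2}+2\mu\hat r\cos\hat\theta+\mu^{2})^{1/2}$, $\theta=\hat\theta+\mathcal{O}(\mu/\hat r)$, while $R=\hat R\cos(\hat\theta-\theta)+\tfrac{\hat\Theta}{\hat r}\sin(\hat\theta-\theta)$ and, using that the angular momentum about $P_1$ equals the one about the center of mass plus $(\mu,0)\times p$,
\[
\Theta=\hat\Theta+\mu\,p_2,\qquad p_2=\hat R\sin\hat\theta+\tfrac{\hat\Theta}{\hat r}\cos\hat\theta .
\]
On $\{\hat r\ge\rho_0\}$ with $\rho_0>0$ fixed, $\upsilon$ is a $C^\infty$ diffeomorphism onto its image, $\mathcal{O}(\mu)$‑close to the identity, the angular correction being $\mathcal{O}(\mu/\hat r)$.

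First I would apply $\upsilon$ to the graph $\{\hat{\Upsilon}^{+}(\hat w,\hat\theta,\hat\Theta_0)\}$ of \propref{proposition: Parameterization of the invariant manifolds of infinity in synodical polar coordinates}; this is legitimate since on the relevant domain $\hat r=\constantValue\hat w^{2/3}\ge\constantValue\hat W^{2/3}>0$. Because $\upsilon$ is $\mathcal{O}(\mu)$‑close to the identity the image is again a graph, whose $r$‑component equals $\constantValue\hat w^{2/3}+\mu\cos\hat\theta+\mathcal{O}_2(\mu)$; this is $C^\infty$, monotone in $\hat w$ and $\mathcal{O}(\mu)$‑close to $\hat w\mapsto\constantValue\hat w^{2/3}$, so by the implicit function theorem the equation ``$r$‑component $=\constantValue w^{2/3}$'', together with $\theta=\theta$, defines a $C^\infty$, $\mu$‑close‑to‑identity reparameterization $(\hat w,\hat\theta)\leftrightarrow(w,\theta)$ with $\hat w=w+\mathcal{O}(\mu)$ and $\hat\theta=\theta+\mathcal{O}(\mu w^{-2/3})$. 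Substituting gives the form \eqref{eqn: Parameterization of the stable manifold of infinity in terms of (w,theta)}, and since $R=\hat R+\mathcal{O}_2(\mu)$ and $\hat{R}^{s}_{\infty}=\sqrt{2/\constantValue}\,\hat w^{-1/3}+\mathcal{O}_1(\mu)$, the expansion of $R^{s}_{\infty}$ in \eqref{eqn: Angular momentum of Ws(infty) as a graph of (w,theta)} follows after absorbing $\hat w-w=\mathcal{O}(\mu)$ into $\mathcal{O}_1(\mu)$.

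The only computation demanding care is the $\Theta$‑component. From $\Theta^{s}_{\infty}(w,\theta,\hat\Theta_0)=\hat{\Theta}^{s}_{\infty}(\hat w,\hat\theta,\hat\Theta_0)+\mu p_2$, and since $\hat{\Theta}^{s}_{\infty}=\hat\Theta_0+\mathcal{O}(\mu)$ has $\mathcal{O}(\mu)$ derivatives it is insensitive at first order in $\mu$ to the $\mathcal{O}(\mu)$ change of arguments, while $p_2$ evaluated along the $\mu=0$ orbit \eqref{eqn: Parabolic ejection and collision orbit unpertubed case} equals $\sqrt{2/\constantValue}\,w^{-1/3}\sin\theta$; hence $\Theta^{s}_{\infty}(w,\theta,\hat\Theta_0)=\hat{\Theta}^{s}_{\infty}(w,\theta,\hat\Theta_0)+\mu\sqrt{2/\constantValue}\,w^{-1/3}\sin\theta+\mathcal{O}_2(\mu)$. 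Integrating by parts the term $-\sin(\theta+w-s)/(\constantValue^{3}s^{4/3})$ inside \eqref{eqn: Parameterization of the angular momentum for WinftysMu} (using $\tfrac{d}{ds}\sin(\theta+w-s)=-\cos(\theta+w-s)$) and the identity $\constantValue^{-2}=\tfrac13\sqrt{2/\constantValue}$ (recall $\constantValue=3^{2/3}/2^{1/3}$), the boundary term cancels the $\mu\sqrt{2/\constantValue}\,w^{-1/3}\sin\theta$ contribution and what remains recombines into $-\mu\sqrt{2/\constantValue}\int_{+\infty}^{w}\tfrac{\cos(\theta+w-s)}{s^{1/3}}\,ds$, yielding \eqref{eqn: Angular momentum of Ws(infty) as a graph of (w,theta)}. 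Equivalently, and perhaps more transparently, one may re‑run the variation‑of‑constants argument of \propref{proposition: Parameterization of the invariant manifolds of infinity in synodical polar coordinates} directly in the $P_1$‑centered coordinates: with the potential \eqref{eqn: Potential hamiltonian in rotating polar coordinates cetnered at P1} one has $\dot\Theta=\partial_\theta V=-\mu p_1-\mu\,r\sin\theta\,(1+r^{2}-2r\cos\theta)^{-3/2}$, and integrating this along the $\mu=0$ orbit \eqref{eqn: Parabolic ejection and collision orbit unpertubed case} from $+\infty$ down to $w$ produces the two integrals of \eqref{eqn: Angular momentum of Ws(infty) as a graph of (w,theta)} in one stroke.

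For the domain, $\mathcal{D}^{+}_{W,D}$ of \eqref{eqn: parameter region (w,theta)}--\eqref{eqn: domain Ip} is the image of $\hat{\mathcal{D}}^{+}_{\hat W,\hat D}$ under this reparameterization: the bad angles of \eqref{eqn: Domain Ihat}, i.e.\ $\hat\theta$ in a $\hat D$‑ball around $\tfrac{\sqrt2}{3}-\hat w$ (the angle for which the $\mu=0$ orbit runs into $P_2$, located at $(r,\theta)=(1,0)$ in these coordinates), become $\theta$ in a $D$‑ball around $\tfrac{\sqrt2}{3}-w$ after relabeling $\hat D\rightsquigarrow D$ (shrinking it slightly and shrinking $\mu_0$), while the threshold $\tfrac{\sqrt2}{3}(1-\mu)^{3/2}+\hat D$ becomes $\tfrac{\sqrt2}{3}+D$. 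The statement for $\reducedUnstableManifoldInfinitymu$ follows from the reversor \eqref{eqn: Symmetry of the PCRTBP in synodical polar coordinates centered at P1}, which interchanges $\stableManifoldInftymu$ and $\unstableManifoldInftymu$ and preserves $\hat\Theta_0$: it forces $\Upsilon^{-}(w,\theta,\hat\Theta_0)=\Upsilon^{+}(-w,-\theta,\hat\Theta_0)$ and turns $\mathcal{D}^{+}_{W,D}$ into the $\mathcal{D}^{-}_{W,D}$ of \eqref{eqn: domain Im}. The step I expect to be the real obstacle is the first‑order bookkeeping in the third paragraph: one must verify that every term discarded as $\mathcal{O}_2(\mu)$ is of that size \emph{uniformly} on the non‑compact interval $w\in[W,+\infty)$, which uses the decay estimates $|\hat{\Theta}^{s}_{\infty}-\hat\Theta_0|=\mathcal{O}(\mu)$ and $\hat{R}^{s}_{\infty}=\mathcal{O}(\hat w^{-1/3})$ from \propref{proposition: Parameterization of the invariant manifolds of infinity in synodical polar coordinates}, together with the integrability in $s$ of the kernels appearing in \eqref{eqn: Angular momentum of Ws(infty) as a graph of (w,theta)}.
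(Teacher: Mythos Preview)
Your proposal is correct and follows essentially the same approach as the paper: transport the parameterization of \propref{proposition: Parameterization of the invariant manifolds of infinity in synodical polar coordinates} through the explicit change between the two polar systems, reparameterize via $r=\constantValue w^{2/3}$ using the implicit function theorem, use $\Theta=\hat\Theta+\mu\hat R\sin\theta+\mathcal{O}_2(\mu)$, and then integrate by parts (with the identity $3/\constantValue^{2}=\sqrt{2/\constantValue}$) to obtain \eqref{eqn: Angular momentum of Ws(infty) as a graph of (w,theta)}. Your alternative route of redoing the variation-of-constants directly in $P_1$-centered coordinates is a valid shortcut the paper does not take, and your remark about checking uniformity of the $\mathcal{O}_2(\mu)$ terms on $[W,+\infty)$ is a reasonable caveat that the paper leaves implicit.
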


\begin{proof}
    We consider the transformation from rotating polar coordinates centered at $P_1$ \eqref{eqn: Change from synodical cartesian to synodical polar centered at P1} to the ones centered at the center of mass \eqref{eqn: Change from synodical cartesian to synodical polar centered at CM}
    \begin{equation}\label{eqn: Transformation from P1-CM}
    \begin{aligned}
       r &=\sqrt{\hat{r}^2 + 2\hat{r}\mu\cos\hat{\theta}+\mu^2}\\
       \theta &=\tan^{-1}\left(\frac{\hat{r}\sin\hat\theta}{\hat r \cos\hat{\theta}+\mu}\right)\\
       R&= \hat{R} \frac{r-\mu\cos\theta}{\sqrt{r^2-2\mu r\cos\theta+\mu^2}} - \mu \hat{\Theta} \frac{r\sin\theta}{r^2-2\mu r\cos\theta + \mu^2},\\
      \Theta&= \mu \hat{R} \frac{r\sin\theta}{\sqrt{r^2-2\mu r\cos\theta + \mu^2}} + \hat{\Theta} \frac{r(r-\mu\cos\theta)}{r^2-2\mu r\cos\theta + \mu^2}.
    \end{aligned}
    \end{equation}
    This transformation satisfies

    \begin{equation}\label{eqn: Approximation of the transformation rotating CM P1}
        \begin{aligned}
            r&= \hat r + \mu\cos\hat\theta + \mathcal{O}\left(\frac{\mu^2}{\hat r}\right),\\
            \theta &= \hat \theta - \mu \frac{\sin\theta}{\hat r} + \mathcal{O}\left(\frac{\mu^2}{\hat{r}^2}\right),\\
            R &= \hat{R} + \mathcal{O}_1(\mu),\\
            \Theta &=\hat \Theta\left(1 + \frac{\mu \cos\theta}{r}\right) + \mu \hat{R}\sin\theta + \mathcal{O}_2(\mu).
        \end{aligned}
    \end{equation}
%
For any $\hat{W}>0$, Proposition \ref{proposition: Parameterization of the invariant manifolds of infinity in synodical polar coordinates} gives us the parameterization of the invariant manifold in \eqref{eqn: Relation between the parameterization of the invariant manifolds of infinity and mu=0} such that $\hat{r} = \constantValue \hat{w}^{\frac 2 3}$. Therefore, for $\hat{w}> \hat{W}$, equation \eqref{eqn: Approximation of the transformation rotating CM P1} leads to
\[
r=\constantValue\hat{w}^{\frac 2 3}+\mathcal{O}_1(\mu).
\]
To have a graph parameterization analogous to the one in Proposition \ref{proposition: Parameterization of the invariant manifolds of infinity in synodical polar coordinates}, we define $w$ such that $r=\constantValue w^{\frac 2 3}$. Substituting in \eqref{eqn: Transformation from P1-CM} we obtain
%
\begin{equation}\label{eqn: Relation w hat(w)}
    \begin{aligned}
        \hat{w}(w,\theta) = \frac{\left(\constantValue^2 w^{\frac 4 3} - 2\constantValue\mu w^{\frac 2 3}\cos \theta +\mu^2\right)^{\frac 3 4}}{\constantValue^{\frac 3 2}} =  w + \mathcal{O}_1(\mu).
    \end{aligned}
\end{equation}
Hence, given $W > 0$, one can find $\hat{W} = W + \mathcal{O}_1(\mu)$ so that $\hat{w}> \hat{W}$ when $w > W$. From this result and \eqref{eqn: Transformation from P1-CM}, we can also relate the parameter $\hat \theta$ as follows

\begin{equation}\label{eqn: Relation w hat(theta)}
    \hat \theta (w,\theta) = \tan^{-1}\left(\frac{\constantValue w^{\frac 2 3}\sin \theta}{\constantValue w^{\frac 2 3}\cos \theta - \mu}\right) = \theta + \mathcal{O}_1(\mu).
\end{equation}
Regarding now $\hat{R},\hat{\Theta}$ as the parameterizations of the invariant manifold $\stableManifoldInftymu$ in \eqref{eqn: Parameterization of the angular momentum for WinftysMu}, substituting them in \eqref{eqn: Approximation of the transformation rotating CM P1} and changing the parameters $(\hat{w},\hat{\theta})$ to $(w,\theta)$ using the relations in \eqref{eqn: Relation w hat(w)} and \eqref{eqn: Relation w hat(theta)}, we obtain that
\begin{equation*}
\begin{aligned}
    R_\infty^s(w,\theta,\hat{\Theta}_0) =& \sqrt{\frac{2}{\constantValue}}\frac{1}{w^{\frac 1 3}} + \mathcal{O}_1(\mu)\\
    \\
    \Theta_\infty^s(w,\theta,\hat{\Theta}_0) =& \hat{\Theta}_\infty^s\left(\hat{w}(w,\theta),\hat{\theta}(w,\theta),\hat{\Theta}_0\right)\left(1+\frac{\mu\cos\theta}{\constantValue w^{\frac 2 3}}\right) + \mu \hat R_\infty^s\left(\hat{w}(w,\theta),\hat{\theta}(w,\theta),\hat{\Theta}_0\right) \sin\theta \\
    &+ \mathcal{O}_2(\mu) = \hat{\Theta}_\infty^s(w,\theta,\hat{\Theta}_0) + \mu \sqrt{\frac{2}{\constantValue}}\frac{\sin\theta}{w^{\frac 1 3}} + \mathcal{O}_2(\mu),
\end{aligned}
\end{equation*}
which implies
\begin{equation*}
\begin{aligned}
    \hat{\Theta}_\infty^s(w,\theta,\hat{\Theta}_0) =& \hat{\Theta}_0- \mu\constantValue \bigintss_{+\infty}^{w}\frac{s^{\frac 2 3}\sin(\theta + w- s)}{\left(1+\constantValue^2s^{\frac 4 3} - 2\constantValue s^{\frac 2 3}\cos(\theta + w -s)\right)^{\frac{3}{2}}}ds + \mu \frac{1}{\constantValue^2}\bigintss_{+\infty}^{w}\frac{\sin(\theta + w - s)}{s^{\frac 4 3}}ds\\
    &+ \mathcal{O}_2(\mu).
\end{aligned}
\end{equation*}
Integrating by parts the second integral and using the fact that $\frac{3}{\constantValue^2} = \sqrt{\frac 2 \constantValue}$ lead to \eqref{eqn: Angular momentum of Ws(infty) as a graph of (w,theta)}, completing the proof.
\end{proof}

\subsection{The invariant manifolds of the collision}\label{subsec: Invariant manifolds of collision in synodical polar coordinates}

This section is devoted to prove the following proposition, which gives a parameterization, in polar coordinates centered at $P_1$ (see \eqref{eqn: Change from synodical cartesian to synodical polar centered at P1}), of the invariant manifolds of the collision, that is   $W_\mu^u(S^+)$ and $W_\mu^s(S^-)$  (see Remark \ref{remark: definition of the invariant manifolds of collision in rotating polar coordinates centered at P1}) perturbatively from those  obtained in Lemma \ref{lemma: Flow of the 2 body problem} for $\mu=0$.

\begin{proposition}\label{proposition: Perturbed invariant manifolds of collision in synodical polar coordinates}
    Fix $r^*\in(0,1)$. There exists $\mu_0 > 0$ such that for $0<\mu<\mu_0$, the invariant manifolds $W_\mu^u(S^+)$, $W_\mu^s(S^-)$, written in polar coordinates centered at $P_1$ (see \eqref{eqn: Change from synodical cartesian to synodical polar centered at P1}),  intersect the section $r=r^*$. Moreover, the intersections are graphs over $\theta$ of the  form 
    \begin{equation}\label{def:invmancol_section}
    \begin{split}
(r,\theta,R,\Theta)&=(r^*, \theta, R_{S^+}^{u}(\theta), \Theta_{S^+}^{u}(\theta))\\
(r,\theta,R,\Theta)&=(r^*, \theta, R_{S^-}^{s}(\theta), \Theta_{S^-}^{s}(\theta)),
 \end{split}
    \end{equation}
   which depend smoothly on $\mu$. 
%
    
 Moreover, $\Theta_{\circleSplus}^u$ can be written as
 \begin{equation}\label{def:halfMelnikovCol}
    \begin{aligned}
\Theta_{\circleSplus}^u(\theta) &= \mu  \bigintss_{t(r^*)}^0 \left(\frac{\constantValue s^{\frac 2 3}\sin\left(\theta+t(r^*)-s\right)}{\left(1+\constantValue^2 s^{\frac 4 3} - 2\constantValue s^{\frac 2 3}\cos\left(\theta+t(r^*)-s\right)\right)^{\frac 3 2}} + \sqrt{\frac 2 \constantValue} \frac{\cos\left(\theta+t(r^*)-s\right)}{s^{\frac 1 3}}\right)\;ds + \mathcal{O}_1(\mu^2),
    \end{aligned}
    \end{equation}
    where 
    \[
    t(r^*)=\left(\frac{r^*}{\kappa}\right)^{3/2}\qquad \text{and}\qquad\constantValue = \frac{3^{\frac 2 3}}{2^{\frac 1 3}}
    .\]
  The expression for 
  $\Theta_{\circleSminus}^s(\theta)$ comes from the symmetry \eqref{eqn: Symmetry of the PCRTBP in synodical polar coordinates centered at P1} and is given by
    \begin{equation}\label{eqn: Angular momentum of Ws(S-) as a graph of theta}
\Theta_{\circleSminus}^s(\theta) = \Theta_{\circleSplus}^u(-\theta).
    \end{equation}
\end{proposition}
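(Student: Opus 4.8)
The plan is to obtain $W_\mu^u(S^+)$ and $W_\mu^s(S^-)$ by continuing the explicit unperturbed manifolds of \lemref{lemma: Flow of the 2 body problem} (equivalently \eqref{eqn: Unpertubed solution in regularized coordinates}) from a neighborhood of the circles $\circlesSpm$ out to the section $\{r=r^*\}$, and then to read off the $\Theta$-component to first order in $\mu$ as a Melnikov integral along the unperturbed ejection orbit $\orbitPolarRotatingUnperturbedProblemRpos$ given in \eqref{eqn: Parabolic ejection and collision orbit unpertubed case}. Since the reversor \eqref{eqn: Symmetry of the PCRTBP in synodical polar coordinates centered at P1} exchanges $\circleSplus\leftrightarrow\circleSminus$ and $W_\mu^u(S^+)\leftrightarrow W_\mu^s(S^-)$ and fixes each section $\{r=\mathrm{const}\}$, it is enough to treat $W_\mu^u(S^+)$; the formula \eqref{eqn: Angular momentum of Ws(S-) as a graph of theta} for $\Theta_{\circleSminus}^s$ and the relation $R_{\circleSminus}^s(\theta)=-R_{\circleSplus}^u(-\theta)$ then follow by the change $(\theta,R)\mapsto(-\theta,-R)$. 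Throughout we work on the fixed regularized energy surface $\mathcal{M}$ of \eqref{eqn: 3dimensional submanifold McGehee in coordinates (r,theta,rho,alpha)}, on which, by \lemref{lemma: Linear part of motion close to S+-}, $S^+$ carries a two-dimensional unstable manifold.

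\textbf{Existence and graph structure.} By \lemref{lemma: Linear part of motion close to S+-} the circle $\circleSplus$ is, for every $\mu\in[0,1/2]$, a normally hyperbolic invariant manifold of the regularized field \eqref{eqn:Motion regularized McGehee Collision} with normal rates $\pm m_0/2$, $m_0=\sqrt{2(1-\mu)}$, bounded away from $0$ uniformly in $\mu$; since that field is analytic in $(r,\theta,v,u,\mu)$ near $r=0$, the unstable manifold $W_\mu^u(S^+)=\bigcup_{\initialtheta\in\mathds{T}}W_\mu^u(S^+_{\initialtheta})$ exists, is $C^\infty$ and depends smoothly on $\mu$, reducing at $\mu=0$ to \eqref{eqn: Unpertubed solution in regularized coordinates}. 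The fibers $W_\mu^u(S^+_{\initialtheta})$ are tangent at $S^+_{\initialtheta}$ to the expanding eigendirection (the $s$-direction), along which $s$, hence $r=s^2$, increases from $0$; fixing a small $\delta\in(0,r^*)$, the slice $W_{\mu,\mathrm{loc}}^u(S^+)\cap\{r=\delta\}$ is a smooth circle, a graph over $\theta$, $\mathcal{O}(\mu)$-close to the unperturbed one. Flowing from $\{r=\delta\}$ to $\{r=r^*\}$ takes a bounded amount of (regularized) time, on which the orbits stay $\mathcal{O}(\mu)$-close to $\orbitPolarRotatingUnperturbedProblemRpos$, where $\dot r=rv$ with $v=\sqrt2+\mathcal{O}(\mu)>0$; hence each fiber meets $\{r=r^*\}$ once and transversally, the crossing time $T_\mu(\initialtheta)$ being $C^\infty$ in $(\initialtheta,\mu)$ by the implicit function theorem with $T_0(\initialtheta)=t(r^*)=(r^*/\kappa)^{3/2}$. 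Since at the crossing $\theta=\initialtheta-t(r^*)+\mathcal{O}(\mu)$, the correspondence $\initialtheta\mapsto\theta$ is a diffeomorphism for $\mu$ small, so $W_\mu^u(S^+)\cap\{r=r^*\}$ is the graph $(r^*,\theta,R_{\circleSplus}^u(\theta),\Theta_{\circleSplus}^u(\theta))$ of \eqref{def:invmancol_section}, smooth in $\mu$, equal at $\mu=0$ to $\orbitPolarRotatingUnperturbedProblemRpos(t(r^*);\theta+t(r^*))$; in particular $\Theta_{\circleSplus}^u=\mathcal{O}(\mu)$. One keeps $\mu_0<1-r^*$ so that the orbit, confined to $\{r\le r^*\}$, avoids the singularity at $P_2$ (at distance $1-\mu$ from $P_1$).

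\textbf{The Melnikov term.} From the equations of motion of the Hamiltonian in polar coordinates centered at $P_1$, with potential $V$ as in \eqref{eqn: Potential hamiltonian in rotating polar coordinates cetnered at P1}, one has $\dot\Theta=\partial_\theta V=-\mu\big(R\cos\theta-\tfrac{\Theta}{r}\sin\theta+\tfrac{r\sin\theta}{(1+r^2-2r\cos\theta)^{3/2}}\big)$. In the parametrization above, the orbit of $W_\mu^u(S^+)$ crossing $\{r=r^*\}$ at angle $\theta$ is the one with base angle $\initialtheta=\theta+t(r^*)$, and $\Theta\to0$ at its ejection point, so $\Theta_{\circleSplus}^u(\theta)=\int_0^{t(r^*)}\dot\Theta\,ds$. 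Differentiating in $\mu$ at $\mu=0$, and using that $\partial_\theta V$ is already $\mathcal{O}(\mu)$ while $\dot\Theta|_{\mu=0}\equiv0$ (so the variation of the crossing time contributes nothing and, since all unperturbed ejection orbits have $\Theta\equiv0$, the energy level is irrelevant at this order), the $\mathcal{O}(\mu)$ part of $\Theta_{\circleSplus}^u(\theta)$ is obtained by evaluating $-\big(R\cos\theta+\tfrac{r\sin\theta}{(1+r^2-2r\cos\theta)^{3/2}}\big)$ along $\orbitPolarRotatingUnperturbedProblemRpos(s;\initialtheta)=(\kappa s^{2/3},\initialtheta-s,\sqrt{2/\kappa}\,s^{-1/3},0)$ and integrating over $s\in(0,t(r^*))$; substituting $\initialtheta-s=\theta+t(r^*)-s$ and reversing the limits gives precisely \eqref{def:halfMelnikovCol}. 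Here one checks convergence at $s=0^+$ (the term $R_0\cos\theta_0=\mathcal{O}(s^{-1/3})$ is integrable and $\tfrac{r_0\sin\theta_0}{(\,\cdot\,)^{3/2}}\to0$) and that the denominator $1+\kappa^2 s^{4/3}-2\kappa s^{2/3}\cos(\,\cdot\,)=|1-\kappa s^{2/3}e^{i(\,\cdot\,)}|^2\ge(1-r^*)^2>0$ on $(0,t(r^*)]$ (its only possible zero is at $s=\kappa^{-3/2}>t(r^*)$ because $r^*<1$); the same integrability, now along orbits $\mathcal{O}(\mu)$-close to $\orbitPolarRotatingUnperturbedProblemRpos$, controls the $\mathcal{O}(\mu^2)$ remainder.

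\textbf{The main difficulty.} The delicate point is the continuation of the unstable manifold of $S^+$: $\circlesSpm$ lie on the boundary $\{r=0\}$ of the regularized phase space and $W_\mu^u(S^+)$ leaves the collision manifold, so one must splice the local unstable manifold theory at $\circlesSpm$ --- needed to control uniformly in $\mu$ the infinite $\tau\to-\infty$ tail as the orbit approaches $\circleSplus$ --- with ordinary smooth dependence of the flow on $(\initialtheta,\mu)$ over the compact slice $\{\delta\le r\le r^*\}$, verifying throughout that the intersection with $\{r=r^*\}$ stays transverse and a graph over $\theta$. Once this is in place the Melnikov computation is routine, the only subtlety being the mild (integrable) singularity at collision caused by the blow-up of $R$ along the ejection orbit.
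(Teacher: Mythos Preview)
Your proposal is correct and follows essentially the same approach as the paper: normal hyperbolicity of $\circleSplus$ (via \lemref{lemma: Linear part of motion close to S+-}) for existence and smooth $\mu$-dependence of the unstable manifold, transversal crossing of $\{r=r^*\}$ read off the unperturbed case, and a Melnikov integral along $\orbitPolarRotatingUnperturbedProblemRpos$ for the $\mathcal{O}(\mu)$ term in $\Theta^u_{\circleSplus}$. The one technical difference worth noting is in the Melnikov step: the paper carries out the integration in McGehee time $\tau$ (using $\Theta=ur^{1/2}+r^2-\mu r\cos\theta$ and its $\tau$-evolution, integrating from $\tau=-\infty$ where the tail is exponentially convergent) and only at the end applies the change $\kappa s^{2/3}=r^*e^{\sqrt{2}\tau}$ to obtain \eqref{def:halfMelnikovCol}, whereas you integrate $\dot\Theta=\partial_\theta V$ directly in physical time over $(0,t(r^*))$ and handle the collision endpoint via the integrable $s^{-1/3}$ singularity; both routes give the same formula, with the McGehee route making the uniform-in-$\mu$ control of the remainder slightly more transparent.
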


\begin{proof}
We provide the proof for the unstable manifold, since the statements for the stable  manifold are just a consequence of the symmetry.
We rely on McGehee coordinates to prove this proposition, and consider then the vector field \eqref{eqn:Motion in coordinates (s,theta,alpha) McGehee Collision} and the transverse section $\Sigma_{s^*} = \{s = s^*\}$ with $s^*=\sqrt{r^*}<1$. Note that the vector field is regular for $0<s<s^*$.

The first observation is that $\circleSplus$ is a normally hyperbolic invariant manifold  (by Lemma \ref{lemma: Linear part of motion close to S+-}) and, by Fenichel's theory \cite{MR0343314,MR0426056,MR0287106,MR1278264}, its unstable and stable invariant manifolds are regular and depend also smoothly on $\mu$.  As a consequence, since in the unperturbed case $\mu=0$ (see \eqref{eqn: Unpertubed solution in regularized coordinates}) the invariant manifold intersect the section $s=s^*=\sqrt{r^*}<1$ and, at the section, it is a graph over $\theta$, the same happens for $\mu$ small enough.
Returning to polar coordinates \eqref{eqn: Change from synodical cartesian to synodical polar centered at P1} yields the same result since they are regular away from $\mathcal{S}$. This gives the curve \eqref{def:invmancol_section}.




The second part of the proof is devoted to obtain the expression of $\Theta_{\circleSplus}^u$ in \eqref{def:halfMelnikovCol}. 
We perform a Melnikov-like approach. That is, we consider a point of the form $z(\theta_0)=(r^*, \theta_0, R_{S^+}^{u}(\theta_0), \Theta_{S^+}^{u}(\theta_0))$ (see \eqref{def:invmancol_section}) and describe its $\Theta$-component by a backward integral. 

Since polar coordinates become undefined at collision, we express $\Theta$ in McGehee coordinates \eqref{eqn:McGehee map Collision} as 
\[
\Theta=ur^{1/2}+r^2-\mu r\cos\theta.
\]
Then, by \eqref{eqn:Motion regularized McGehee Collision}, its evolution in these coordinates is described by the equation
\begin{equation*}
    \frac{d}{d\tau}\Theta = \mu \left(r^{\frac 5 2}\sin\theta - \frac{r^{\frac 5 2}\sin\theta}{\left(r^2-2r\cos\theta + 1\right)^{\frac 3 2}} - rv\cos\theta + ru\sin\theta\right).
\end{equation*}
Now, we consider the trajectory departing from the point $z(\theta_0)$ expressed in McGehee coordinates, 
\begin{equation*}
    {\varphi}_\mu(\tau;\theta_0) = (r(\tau;\theta_0), \theta(\tau;\theta_0), v(\tau;\theta_0), u(\tau;\theta_0)).
\end{equation*}
Since $\underset{\tau \to -\infty}{\lim}r(\tau;\theta_0) = 0$ exponentially and therefore  $\underset{\tau \to -\infty}{\lim}\Theta(\tau;\theta_0) = 0$, one has 
\[
\begin{split}
\Theta_{S^+}^{u}(\theta_0) = \mu \int_{-\infty}^0\Bigg(& r(\tau;\theta_0)^{\frac 5 2}\sin\theta(\tau;\theta_0) - \frac{r^{\frac 5 2}\sin\theta(\tau;\theta_0)}{\left(r(\tau;\theta_0)^2-2r(\tau;\theta_0)\cos\theta(\tau;\theta_0) + 1\right)^{\frac 3 2}} \\
&-r(\tau;\theta_0)v(\tau;\theta_0)\cos\theta(\tau;\theta_0) + r(\tau;\theta_0)u(\tau;\theta_0)\sin\theta(\tau;\theta_0)\Bigg)d\tau.
\end{split}
\]
When $\mu=0$, shifting time to fix the initial condition in \eqref{eqn:Motion regularized McGehee Collision}, one has  
\begin{equation*}
    {\varphi}_0(\tau;\theta_0) = \left(r^*e^{\sqrt{2}\tau}, \theta_0+\left(\frac{r^*}{\kappa}\right)^{3/2}\left(1-e^{\frac{3}{\sqrt{2}}\tau}\right),\sqrt{2},-(r^*)^{3/2}e^{\frac{3}{\sqrt{2}}\tau}\right).
\end{equation*}
Moreover, since we are considering points in the invariant manifold, we have
\begin{equation}\label{eqn: Perturbed parabolic ejection solution in regularized coordinates}
    \begin{aligned}
        r(\tau;\theta_0) &= r^*e^{\sqrt{2}\tau} + \mathcal{O}_1(\mu e^{C\tau})\\
        \theta(\tau;\theta_0) &= \theta_0+\left(\frac{r^*}{\kappa}\right)^{3/2}\left(1-e^{\frac{3}{\sqrt{2}}\tau}\right)+ \mathcal{O}_1(\mu )\\
        v(\tau;\theta_0) &= \sqrt{2} + \mathcal{O}_1(\mu)\\
        u(\tau;\theta_0) &= -(r^*)^{3/2}e^{\frac{3}{\sqrt{2}}\tau} + \mathcal{O}_1(\mu ),
    \end{aligned}
\end{equation}
where $C>0$ is an adequate constant.
Then, we obtain 
\begin{equation*}
\begin{aligned}
    \Theta_{\circleSplus}^u(\theta_0) 
    =& -\mu \bigintss_{-\infty}^0 \Bigg[
    \frac{(r^*)^{\frac 5 2}e^{\frac{5}{\sqrt{2}}\tau}\sin\left(\theta_0+\left(\frac{r^*}{\kappa}\right)^{3/2}\left(1-e^{\frac{3}{\sqrt{2}}\tau}\right)\right)}{\left((r^*)^2e^{2\sqrt{2}\tau} - 2(r^*)e^{\sqrt{2}\tau}\cos\left(\theta_0+\left(\frac{r^*}{\kappa}\right)^{3/2}\left(1-e^{\frac{3}{\sqrt{2}}\tau}\right)\right) + 1\right)^{\frac 3 2}} \\
    &+ \sqrt{2}r^*e^{\sqrt{2}\tau}
    \cos\left(\theta_0+\left(\frac{r^*}{\kappa}\right)^{3/2}\left(1-e^{\frac{3}{\sqrt{2}}\tau}\right)\right)\Bigg]\;d\tau\\
    &+ \mathcal{O}_2(\mu).
\end{aligned}
\end{equation*}
To recover formula \eqref{def:halfMelnikovCol}, it is enough to apply the change of variables $\kappa s^{2/3}=r^* e^{\sqrt{2}\tau}$.

\end{proof}


\section{The distance between the invariant manifolds}\label{sec: Asymptotic formula for the distance between the manifolds}

Once we have characterized the invariant manifolds of infinity and collision, we analyze their intersections for $\mu > 0$ small enough at a section $r=r^*=\delta^2$ for some small $0<\delta<1$ independent of $\mu$. To this end we define, for a fixed energy $h = -\hat{\Theta}_0 = -\mu\overline{\Theta}_0$, the following curves 

\begin{equation}\label{eqn: Notation for the intersection of the invariant manifolds with the section r=2}
\begin{aligned}
    \Delta_\infty^{s,u}(\mu) =& \reducedInvariantManifoldsInfinity\;\bigcap\; \Sigma_{h} = \{r=\delta^2, \mathcal{H}(\delta^2,\theta,R,\Theta;\mu) = h\},\\
    \Delta_{\circleSplus}^u(\mu) =& \unstableManifoldSplusmu \;\bigcap\; \Sigma_{h} = \{r=\delta^2, \mathcal{H}(\delta^2,\theta,R,\Theta;\mu) = h\},\\
    \Delta_{\circleSminus}^s(\mu) =& \stableManifoldSminusmu \;\bigcap\; \Sigma_{h} = \{r=\delta^2, \mathcal{H}(\delta^2,\theta,R,\Theta;\mu) = h\},
\end{aligned}    
\end{equation}
where 
$\mathcal{H}$ is the Hamiltonian \eqref{eqn: Hamiltonian function in rotating polar coordinates centered at $P_1$}. This definition only involves the first intersection with the section $\Sigma_{h}$.
Note that for a fixed $h=-\hat{\Theta}_0$, the surface $\hamiltonianPolarRotatingCoordinatesCenteredSun(\delta^2,\theta,R,\Theta;\mu) = h$ is $2$-dimensional and can be locally defined by the coordinates $(\theta,\Theta)$. Moreover, by Propositions \ref{proposition: Parameterization of the invariant manifolds of infinity in rotating polar coordinates centered at P1} and \ref{proposition: Perturbed invariant manifolds of collision in synodical polar coordinates}, the curves $\Delta_\infty^{s,u}(\mu)$, $\Delta_{\circleSplus}^{u}(\mu)$ and $\Delta_{\circleSminus}^s(\mu)$ can be written as graphs with respect to $\theta$. We characterize the distance between the curves using the $\Theta$-component.

Hence, we can define $ \Delta_{\circleSplus}^u(\mu)$ (the definition of $\Delta_{\circleSminus}^s(\mu)$ comes from the symmetry \eqref{eqn: Symmetry of the PCRTBP in synodical polar coordinates centered at P1}) as
\begin{equation}\label{eqn: definition of the intersection of the invariant manifolds of collision with delta2 as graphs}
         \Delta_{\circleSplus}^u(\mu) = \Big\{(\theta,\Theta_{\circleSplus}^u(\theta)), \theta\in\mathds{T}\Big\},\quad
        \Delta_{\circleSminus}^s(\mu) = \Big\{(\theta,\Theta_{\circleSminus}^s(\theta)),\theta\in\mathds{T}\Big\},
\end{equation}
where $\Theta_{\circleSplus}^u$, $\Theta_{\circleSminus}^s$  are defined in Proposition \ref{proposition: Perturbed invariant manifolds of collision in synodical polar coordinates} for $r^* = \delta^2$. 

Note that the constant $t(r^*)$ introduced in Proposition  \ref{proposition: Perturbed invariant manifolds of collision in synodical polar coordinates}  is given by 
\begin{equation}\label{eqn: Time perturbed collision to reach delta2} 
    t(\delta^2)= \frac{\sqrt{2}}{3}\delta^3.
\end{equation}
Then, we have
\begin{equation}\label{eqn: Value of the angular momentum of Wu(S+) at r=2}
\begin{aligned}
    \Theta_{S^+}^u(\theta) = \mu I_{\circleSplus}^u(\theta) + \mathcal{O}_2(\mu)
\end{aligned}
\end{equation}
with
\begin{equation*}
    I_{\circleSplus}^u\left(\theta\right) = \mathcal{I}_{\circleSplus}^u\left(\theta + \frac{\sqrt{2}}{3} \delta^3\right)
\end{equation*}
where
\begin{equation}\label{eqn: Integral I_S+^u}
\begin{aligned}
    \mathcal{I}_{\circleSplus}^u(\alpha) =\constantValue\bigintss_{\frac{\sqrt{2}}{3}\delta^3}^0 \frac{s^{\frac 2 3}\sin\left(\alpha - s\right)}{\left(1+\constantValue^2 s^{\frac 4 3} - 2\constantValue s^{\frac 2 3}\cos\left(\alpha - s\right)\right)^{\frac 3 2}}\;ds + \sqrt{\frac 2 \constantValue} \bigintss_{\frac{\sqrt{2}}{3}\delta^3}^0\frac{\cos\left(\alpha - s\right)}{s^{\frac 1 3}}\;ds.
\end{aligned}
\end{equation}
The expression for $\Theta_{\circleSminus}^s(\theta)$ is given by the symmetry in \eqref{eqn: Angular momentum of Ws(S-) as a graph of theta}.

On the other hand, by Proposition \ref{proposition: Parameterization of the invariant manifolds of infinity in rotating polar coordinates centered at P1}, the curves $\Delta_\infty^{s,u}(\mu)$ are defined as

\begin{equation}\label{eqn: definition of the intersection of the invariant manifolds of infinity with delta2 as graphs}
\begin{aligned}
    \Delta_\infty^s(\mu) =& \Big\{(\theta,\Theta_\infty^s(w_\Sigma,\theta,\hat{\Theta}_0)), \theta\in I_D^+(w_\Sigma)\Big\},\\
    \Delta_\infty^u(\mu) =& \Big\{(\theta,\Theta_\infty^s(w_\Sigma,\theta,\hat{\Theta}_0)), \theta\in I_D^-(w_\Sigma)\Big\},
\end{aligned}
\end{equation}
where $I^\pm_D(w)$ are defined in \eqref{eqn: domain Ip} and \eqref{eqn: domain Im}, $\Theta_\infty^{\ast}(w,\theta,\hat{\Theta}_0)$,  $\ast=u,s$, are in \eqref{eqn: Angular momentum of Ws(infty) as a graph of (w,theta)} and \eqref{eqn: Angular momentum of Wu(infty) as a graph of (w,theta)}, and $w_\Sigma$ is the value such that $r^*=\constantValue w^{\frac 2 3} = \delta^2$, that is
\begin{equation}\label{eqn: definition of wSigma}
    w_\Sigma = \frac{\sqrt{2}}{3}\delta^3.
\end{equation}
Therefore we obtain

\begin{equation}\label{eqn: Value of the angular momentum of the Ws(infty) at r=2}
\begin{aligned}
    \Theta_\infty^s(w_\Sigma,\theta,\hat{\Theta}_0) = \hat{\Theta}_0 -\mu I_\infty^s(\theta)+ \mathcal{O}_2(\mu)
\end{aligned}
\end{equation}
with
\begin{equation*}
    I_\infty^s(\theta) = \mathcal{I}_\infty^s\left(\theta + \frac{\sqrt{2}}{3}\delta^3\right)
\end{equation*}
and
\begin{equation}\label{eqn: Integral I_infty^s}
    \mathcal{I}_\infty^s(\alpha) =\constantValue\bigintss_{+\infty}^{\frac{\sqrt{2}}{3}\delta^3} \frac{s^{\frac{2}{3}} \sin\left(\alpha -s\right)}{\left(1+\constantValue^2s^{\frac 4 3} - 2\constantValue s^{\frac 2 3}\cos(\alpha -s)\right)^{\frac{3}{2}}}\; ds +\sqrt{\frac{2}{\constantValue}}\bigintss_{+\infty}^{\frac{\sqrt{2}}{3}\delta^3} \frac{\cos(\alpha-s)}{s^{\frac{1}{3}}}\; ds.
\end{equation}
Now we measure the distance between the invariant manifolds of infinity and collision. Note that the sign of the radial velocity $R$ for the curves $\Delta_\infty^s(\mu)$ and $\Delta^u_{\circleSplus}(\mu)$ is positive, and negative for the other two. 
Hence, we define the transverse sections
        \begin{equation}\label{eqn: definition of Sigma2Rpos and Sigma2Rneg}
    \begin{aligned}
        \sectionrTwohRpos &= \{(r,\theta,R,\Theta) \colon r = \delta^2, \mathcal{H}(\delta^2,\theta,R,\Theta;\mu) = h , R > 0\} \subset \sectionrTwo_{h},\\
        \sectionrTwohRneg &= \{(r,\theta,R,\Theta) \colon r = \delta^2, \mathcal{H}(\delta^2,\theta,R,\Theta;\mu) = h , R < 0\} \subset \sectionrTwo_{h}.
    \end{aligned}
    \end{equation}
In these sections, we provide an asymptotic formula for the distance between $\Delta_\infty^{s,u}(\mu)$ and $\Delta_{\circleSpm}^{u,s}(\mu)$ at the sections $\sectionrTwohRpos$ and $\sectionrTwohRneg$ respectively for a given angle $\theta$. That is,
\begin{equation}\label{eqn: Explicit formula for the distance}
d_+(\theta,\hat{\Theta}_0) = \Theta_\infty^s(\theta,\hat{\Theta}_0) - \Theta_{\circleSplus}^u(\theta),\quad d_-(\theta,\hat{\Theta}_0) = \Theta_\infty^u(\theta,\hat{\Theta}_0) - \Theta_{\circleSminus}^s(\theta).
\end{equation}
The next theorem, whose proof is straightforward, gives an asymptotic formula for this distance.
\begin{theorem}\label{thm: asymptotic formula for the distance}

Fix $\eta> 0$. There exists $\mu_0 > 0$ such that for any $\mu \in (0,\mu_0)$ and $\hat{\Theta}_0 \in [-\eta \mu, \eta\mu]$, the distances $\distancepos$ and $\distanceneg$ in \eqref{eqn: Explicit formula for the distance} are given by

\begin{equation}\label{eqn: Asymptotic formula for the distances}
\begin{aligned}
    \distancepos(\theta,\hat{\Theta}_0) &= \hat{\Theta}_0 +\mu \melnikovpos\left(\theta \right) + \mathcal{O}_2(\mu)\quad \text{for }\theta\in I_D^+(w_\Sigma),\\
    \distanceneg(\theta; \hat{\Theta}_0) &= \hat{\Theta}_0 - \mu \melnikovneg(\theta) + \mathcal{O}_2(\mu)\quad \text{for }\theta\in I_D^-(w_\Sigma)
\end{aligned}
\end{equation}
where $w_\Sigma$ is defined in \eqref{eqn: definition of wSigma} and
\begin{equation}\label{eqn: translation melnikov}
    M_\pm(\theta)=\mathcal{M}_\pm\left(\theta \pm w_\Sigma\right)
\end{equation}
where
\begin{equation}\label{eqn: melnikov function}
 \mathcal{M}_+\left(\alpha\right)=\constantValue \bigintss_{0}^{+\infty} \frac{s^{\frac{2}{3}} \sin\left(\alpha-s\right)}{\left(1+\constantValue^2s^{\frac 4 3} - 2\constantValue s^{\frac 2 3}\cos\left(\alpha-s\right)\right)^{\frac{3}{2}}}\; ds +\sqrt{\frac{2}{\constantValue}}\bigintss_{0}^{+\infty} \frac{\cos\left(\alpha-s\right)}{s^{\frac{1}{3}}}\; ds,
\end{equation}
with $\constantValue = \frac{3^{\frac 2 3}}{2^{\frac 1 3}}$. The functions $M_-(\theta)$ and $\mathcal{M}_-\left(\alpha\right)$ are given by the symmetry
\begin{equation}\label{eqn: Relation between M- and M+}
    M_-(\theta) = -M_+(-\theta), \qquad \mathcal{M}_-\left(\alpha\right) = -\mathcal{M}_+\left(-\alpha\right).
\end{equation}
\end{theorem}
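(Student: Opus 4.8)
The statement is essentially an accounting exercise: one substitutes the parameterizations already obtained for the manifolds of infinity (Proposition~\ref{proposition: Parameterization of the invariant manifolds of infinity in rotating polar coordinates centered at P1}) and of collision (Proposition~\ref{proposition: Perturbed invariant manifolds of collision in synodical polar coordinates}) into the definition \eqref{eqn: Explicit formula for the distance} of $d_\pm$, and checks that the two $\mathcal{O}(\mu)$ contributions combine into a single Melnikov integral. The plan is as follows.

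\emph{Step 1 (assemble the two expansions).} By \eqref{eqn: Value of the angular momentum of the Ws(infty) at r=2} (evaluated at $w=w_\Sigma$) and \eqref{eqn: Value of the angular momentum of Wu(S+) at r=2} (with $r^*=\delta^2$), one has $\Theta_\infty^s(w_\Sigma,\theta,\hat{\Theta}_0)=\hat{\Theta}_0-\mu\,\mathcal{I}_\infty^s(\theta+\tfrac{\sqrt2}{3}\delta^3)+\mathcal{O}_2(\mu)$ and $\Theta_{S^+}^u(\theta)=\mu\,\mathcal{I}_{S^+}^u(\theta+\tfrac{\sqrt2}{3}\delta^3)+\mathcal{O}_2(\mu)$, with $\mathcal{I}_\infty^s$, $\mathcal{I}_{S^+}^u$ given by \eqref{eqn: Integral I_infty^s}, \eqref{eqn: Integral I_S+^u}. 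Substituting into $d_+=\Theta_\infty^s-\Theta_{S^+}^u$ yields $d_+(\theta,\hat{\Theta}_0)=\hat{\Theta}_0-\mu\bigl(\mathcal{I}_\infty^s(\alpha)+\mathcal{I}_{S^+}^u(\alpha)\bigr)+\mathcal{O}_2(\mu)$ with $\alpha=\theta+w_\Sigma$, recalling $w_\Sigma=\tfrac{\sqrt2}{3}\delta^3$ from \eqref{eqn: definition of wSigma}.

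\emph{Step 2 (merge the integrals).} The crucial observation is that the breakpoint is the same for both manifolds: the lower limit $\tfrac{\sqrt2}{3}\delta^3$ of $\mathcal{I}_\infty^s$ equals the upper limit $t(\delta^2)$ of $\mathcal{I}_{S^+}^u$ by \eqref{eqn: Time perturbed collision to reach delta2}, and the two integrands coincide as functions of $(s,\alpha)$. Hence
\[ \mathcal{I}_\infty^s(\alpha)+\mathcal{I}_{S^+}^u(\alpha)=\constantValue\int_{+\infty}^{0}\frac{s^{2/3}\sin(\alpha-s)}{\bigl(1+\constantValue^2 s^{4/3}-2\constantValue s^{2/3}\cos(\alpha-s)\bigr)^{3/2}}\,ds+\sqrt{\tfrac{2}{\constantValue}}\int_{+\infty}^{0}\frac{\cos(\alpha-s)}{s^{1/3}}\,ds=-\mathcal{M}_+(\alpha), \]
where $\mathcal{M}_+$ is \eqref{eqn: melnikov function} and the sign comes from reversing the orientation of integration. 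Therefore $d_+(\theta,\hat{\Theta}_0)=\hat{\Theta}_0+\mu\,\mathcal{M}_+(\theta+w_\Sigma)+\mathcal{O}_2(\mu)$, which is the first line of \eqref{eqn: Asymptotic formula for the distances} once we invoke \eqref{eqn: translation melnikov}.

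\emph{Step 3 (convergence and uniformity) --- the point requiring care.} One must still check that $\mathcal{M}_+(\alpha)$ is well defined and that the error is uniform. Near $s=0$ the integrands are $O(s^{2/3})$ and $O(s^{-1/3})$, both integrable; as $s\to+\infty$ the first is $O(s^{-4/3})$ (absolutely convergent) and the second is a conditionally convergent oscillatory integral. Writing the denominator as $(\constantValue s^{2/3}-\cos(\alpha-s))^2+\sin^2(\alpha-s)$, it vanishes only for $s=\constantValue^{-3/2}=\tfrac{\sqrt2}{3}$ and $\alpha\equiv\tfrac{\sqrt2}{3}\pmod{2\pi}$, i.e.\ precisely along the direction in which the unperturbed ejection orbit passes through the position of the second primary; since $\alpha=\theta+w_\Sigma$ with $\theta\in I_D^+(w_\Sigma)$ (see \eqref{eqn: domain Ip}), $\alpha$ stays at distance at least $D$ from this value, so the integrand is bounded there and the integral converges, uniformly. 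Because $\overline{\Theta}_0=\hat{\Theta}_0/\mu$ lies in the fixed compact interval $[-\eta,\eta]$, the uniform remainders supplied by the two propositions transfer to the uniform $\mathcal{O}_2(\mu)$ above. I expect this verification that the excluded arcs of $I_D^\pm(w_\Sigma)$ are exactly those needed to avoid the collision direction $\tfrac{\sqrt2}{3}$ to be the only genuinely delicate point; everything else is bookkeeping of limits of integration and of signs, which is why the paper labels the proof straightforward.

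\emph{Step 4 (the minus distance).} For $d_-$ I would invoke the reversibility \eqref{eqn: Symmetry of the PCRTBP in synodical polar coordinates centered at P1}: on the (symmetric) section $\{r=\delta^2\}$ it sends the configuration defining $d_+$ at angle $-\theta$ to the one defining $d_-$ at angle $\theta$, leaving $\Theta$ (hence $\hat{\Theta}_0$) fixed --- this is exactly what relations \eqref{eqn: Angular momentum of Wu(infty) as a graph of (w,theta)} and \eqref{eqn: Angular momentum of Ws(S-) as a graph of theta} encode. Thus $d_-(\theta,\hat{\Theta}_0)=d_+(-\theta,\hat{\Theta}_0)=\hat{\Theta}_0+\mu M_+(-\theta)+\mathcal{O}_2(\mu)$, which is the second line of \eqref{eqn: Asymptotic formula for the distances} with $M_-(\theta)=-M_+(-\theta)$, as in \eqref{eqn: Relation between M- and M+}.
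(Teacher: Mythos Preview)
Your proposal is correct and follows exactly the approach the paper intends: the paper itself declares the proof ``straightforward'' and gives no further argument, because the theorem is precisely the substitution of the expansions \eqref{eqn: Value of the angular momentum of the Ws(infty) at r=2} and \eqref{eqn: Value of the angular momentum of Wu(S+) at r=2} into \eqref{eqn: Explicit formula for the distance} and the concatenation of the two half-integrals into \eqref{eqn: melnikov function}, followed by the symmetry \eqref{eqn: Symmetry of the PCRTBP in synodical polar coordinates centered at P1} for $d_-$. Your Step~3 on convergence and the role of the excluded arc in $I_D^\pm(w_\Sigma)$ is a useful addition that the paper leaves implicit.
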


Once we have an asymptotic formula for the distance between the invariant manifolds of infinity and collision, we analyze their first order to  prove that they intersect transversally. 
The next lemma is proven in Section \ref{sec: Computer Assisted Proofs} and it is computer assisted.


\begin{lemma}\label{lemma: derivative of the melnikov functions not 0 for almost every theta}
    For every $\theta_+ \in B_+ = [-1.72851,-0.583065] \cup [-0.407155,0.0578054] \cup [0.921743,4.15633]$
    \begin{equation*}
        \frac{d}{d\theta} \mathcal{M}_+\left(\theta_+\right) \neq 0.
    \end{equation*}
    Moreover,
    \begin{equation*}
        \frac{d}{d\theta} \mathcal{M}_+\left(0\right)<0.
    \end{equation*}
    The result $\mathcal{M}_-'(\theta_-)\neq 0$ for $\theta_- \in B_-$, where $B_- = 2\pi - B_+$, comes as a consequence of \eqref{eqn: translation melnikov} and \eqref{eqn: Relation between M- and M+}.
\end{lemma}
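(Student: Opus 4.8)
The statement is computer-assisted, so the plan is to reduce it to finitely many rigorously verifiable inequalities on the single function $\mathcal{M}_+'$ and to check them with interval arithmetic. The starting point is an explicit formula for $\mathcal{M}_+'$. The first integral in \eqref{eqn: melnikov function} is absolutely convergent --- its integrand is $\mathcal{O}(s^{-4/3})$ as $s\to\infty$ and bounded near $s=0$ --- and the same holds after differentiating in $\alpha$, locally uniformly on $B_+$, so one may differentiate it under the integral sign. This is where one uses that $B_+$ avoids the bad angle: the denominator $1+\constantValue^2 s^{4/3}-2\constantValue s^{2/3}\cos(\alpha-s)=|1-\constantValue s^{2/3}e^{i(\alpha-s)}|^2$ can vanish for some $s\ge 0$ only when $\constantValue s^{2/3}=1$ and $\alpha-s\in 2\pi\mathds{Z}$, i.e.\ only for $s=\sqrt2/3$ and $\alpha\equiv\sqrt2/3\pmod{2\pi}$, and $\sqrt2/3\approx 0.4714$ lies in the gap $(0.0578054,0.921743)$ not contained in $B_+$; hence on $B_+\times[0,\infty)$ the integrand and its $\alpha$-derivative are $C^\infty$ and the denominator is bounded below. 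The second (Fresnel-type) integral is evaluated in closed form using $\int_0^\infty s^{-1/3}e^{-is}\,ds=\Gamma(2/3)e^{-i\pi/3}$, which gives $\int_0^\infty s^{-1/3}\cos(\alpha-s)\,ds=\Gamma(2/3)\cos(\alpha-\pi/3)$. Altogether,
\begin{align*}
\mathcal{M}_+'(\alpha) &= \constantValue\bigintss_0^{+\infty}\partial_\alpha\!\left[\frac{s^{\frac 2 3}\sin(\alpha-s)}{\big(1+\constantValue^2 s^{\frac 4 3}-2\constantValue s^{\frac 2 3}\cos(\alpha-s)\big)^{\frac 3 2}}\right]\,ds \\
&\qquad -\sqrt{\tfrac{2}{\constantValue}}\,\Gamma\!\left(\tfrac23\right)\sin\!\left(\alpha-\tfrac\pi3\right),
\end{align*}
so the lemma reduces to sign information about one explicit absolutely convergent integral plus an elementary term.

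For the verified computation I would cover each of the three intervals comprising $B_+$ by finitely many sub-intervals $[\alpha_j,\alpha_{j+1}]$ and, on each, produce a guaranteed interval enclosure of $\mathcal{M}_+'([\alpha_j,\alpha_{j+1}])$ and check that it does not contain $0$; for the sign statement one runs the same procedure with $\alpha=0$ as a thin interval and checks that the enclosure is contained in $(-\infty,0)$. To enclose the integral I would split $[0,\infty)=[0,T]\cup[T,\infty)$ for a fixed cutoff $T$. On $[0,T]$ one uses verified numerical integration (adaptive subdivision in $s$ with interval evaluation of the integrand, or a higher-order interval Taylor scheme); this is harmless since the denominator is bounded away from $0$ on $B_+\times[0,T]$. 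On $[T,\infty)$ one uses a rigorous tail estimate: expanding the integrand for large $s$, its leading part is $\mathcal{O}(s^{-4/3})$ and oscillatory while the non-oscillatory contributions enter only at higher order, so one integration by parts in $s$ (or subtraction of the leading asymptotic term, integrated in closed form) gives a tail bound that is negligible for moderate $T$. Summing the two contributions yields the enclosure of $\mathcal{M}_+'$ on each $[\alpha_j,\alpha_{j+1}]$; since $\mathcal{M}_+'$ depends only on $\constantValue$, $\Gamma(2/3)$ and the elementary $\alpha$-derivative of the integrand, the rest is routine interval arithmetic once the quadrature is set up.

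The main obstacle I expect is obtaining enclosures tight enough near the endpoints of the intervals of $B_+$ --- these are, by design, the values of $\alpha$ at which $\mathcal{M}_+'$ comes closest to vanishing --- and near $s\sim\sqrt2/3$, where the denominator, although it never vanishes on $B_+$, does become small for $\alpha$ near the edges of the collision gap, so the integrand develops a sharp peak that forces heavy adaptive refinement of the $s$-grid there. A secondary point is choosing $T$ and the form of the tail estimate so that the slowly decaying algebraic tail is controlled without an impractically fine computation; exploiting the oscillation of the trigonometric factors, rather than a purely absolute bound, is what makes this feasible. Everything else in the argument is mechanical.
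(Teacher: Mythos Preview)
Your proposal is correct and follows essentially the same computer-assisted strategy as the paper: evaluate the Fresnel-type integral in closed form via $\int_0^\infty s^{-1/3}e^{-is}\,ds=\Gamma(2/3)e^{-i\pi/3}$, differentiate the first integral under the integral sign, split the $s$-domain to isolate an explicitly bounded tail from a bulk handled by interval arithmetic, and cover $B_+$ by finitely many sub-intervals on which the sign of the enclosure is checked. The only cosmetic differences are that the paper uses a three-way split $[0,c]\cup[c,C]\cup[C,\infty)$ with crude absolute tail bounds (no integration by parts needed, since $C=100$ already makes the $\mathcal{O}(C^{-1/3})$ tail negligible) rather than your two-way split with an oscillatory tail estimate, and it relies on the CAPD library with $N=10000$ angular sub-intervals; neither changes the substance of the argument.
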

Note that $B^\pm \subset I_{D}^\pm(w_\Sigma)$ in \eqref{eqn: domain Ip} and \eqref{eqn: domain Im} (with $w_\Sigma$ given in \eqref{eqn: definition of wSigma}) for $D = \frac{3}{10}$. Smaller values of $D$ will lead to larger intervals of $B_\pm$, at the cost of increased computational time to obtain the derivatives.

The following theorem, which is a direct consequence of Theorem \ref{thm: asymptotic formula for the distance}, Lemma \ref{lemma: derivative of the melnikov functions not 0 for almost every theta} and the Implicit Function Theorem, gives the transversality of the intersection between the invariant manifolds $\invariantManifoldsSpm$ and $\invariantManifoldsInfinity$ for some values of $\hat{\Theta}_0$.
\begin{theorem}\label{thm: transverse intersection of the invariant manifolds}
Consider $U_+ \subset B_+ \subset I_{\frac{3}{10}}^+(w_\Sigma)$ an open set satisfying that, for any $\theta^+\in U_+$, $\theta^+-w_\Sigma\in I_{\frac{3}{10}}^+(w_\Sigma)$ and $-\theta^+ + w_\Sigma\in I_{\frac{3}{10}}^-(w_\Sigma)$. Then, for any $\theta^+ \in U_+$, there exists $\mu_0>0$ such that, for any $\mu\in(0,\mu_0)$ and energy $h = -\hat{\Theta}_0 = -\mu\overline{\Theta}_0= \mu M_+(\theta^+-w_\Sigma)$,
\begin{itemize}
    \item $\reducedStableManifoldInfinitymu \transv \unstableManifoldSplusmu$ at a point \[p_>(\theta^+,\mu) = (\theta_>(\theta^+,\mu),\Theta_\infty^s(\theta_>(\theta^+,\mu),\hat{\Theta}_0))=(\theta_>(\theta^+,\mu),\Theta_{S^+}^u(\theta_>(\theta^+,\mu))),\] where $\theta_>(\theta^+,\mu) = \theta^+ - w_\Sigma + \mathcal{O}_1(\mu)$.
    \item $\reducedUnstableManifoldInfinitymu\transv \stableManifoldSminusmu$ at a point \[p_<(\theta^+,\mu) =  (\theta_<(\theta^+,\mu),\Theta_\infty^u(\theta_<(\theta^+,\mu),\hat{\Theta}_0))= (\theta_<(\theta^+,\mu), \Theta_{S^-}^s(\theta_<(\theta^+,\mu))),\] where $\theta_<(\theta^+,\mu) = -\theta^+ + w_\Sigma + \mathcal{O}_1(\mu)$.
    \item Both points $p_>(\theta^+,\mu)$ and $p_<(\theta^+, \mu)$ belong to the surface $\mathcal{H}(\delta^2,\theta,R,\Theta;\mu) = h = \mu M_+(\theta^+-w_\Sigma)$.
\end{itemize}
    
    

\end{theorem}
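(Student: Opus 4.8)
The plan is to extract both intersection points from a single mechanism --- applying the Implicit Function Theorem to the (suitably rescaled) distance functions $d_\pm$ furnished by \thmref{thm: asymptotic formula for the distance} --- and then to read off transversality from the nonvanishing of the Melnikov derivatives in \lemref{lemma: derivative of the melnikov functions not 0 for almost every theta}. Fix $\theta^+\in U_+$ and prescribe the energy by $h=-\hat{\Theta}_0:=\mu M_+(\theta^+-w_\Sigma)=\mu\mathcal{M}_+(\theta^+)$, the last identity being \eqref{eqn: translation melnikov}; note that $\{\mathcal{H}=h\}$ is precisely the level containing both $\Alpha_{\hat{\Theta}_0}$ and the collision manifold at energy $h$, so $\reducedStableManifoldInfinitymu$, $\reducedUnstableManifoldInfinitymu$, $\unstableManifoldSplusmu$ and $\stableManifoldSminusmu$ all lie in it. With this choice, \thmref{thm: asymptotic formula for the distance} yields
\[
d_+(\theta,\hat{\Theta}_0)=\mu\bigl(\mathcal{M}_+(\theta+w_\Sigma)-\mathcal{M}_+(\theta^+)\bigr)+\mathcal{O}_2(\mu),
\]
where $d_+$ is the signed $\Theta$-gap between the trace curves $\reducedStableManifoldInfinitymu\cap\sectionrTwohRpos$ and $\unstableManifoldSplusmu\cap\sectionrTwohRpos$; by Propositions~\ref{proposition: Parameterization of the invariant manifolds of infinity in rotating polar coordinates centered at P1} and~\ref{proposition: Perturbed invariant manifolds of collision in synodical polar coordinates} these are graphs over $\theta$, and both carry $R>0$ (as noted just before \eqref{eqn: definition of Sigma2Rpos and Sigma2Rneg}).

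Since $d_+$ is smooth in $(\theta,\mu)$ and vanishes identically for $\mu=0$, Hadamard's lemma writes $d_+=\mu\,g(\theta,\mu)$ with $g$ smooth and $g(\theta,0)=\mathcal{M}_+(\theta+w_\Sigma)-\mathcal{M}_+(\theta^+)$. At $(\theta^+-w_\Sigma,0)$ one has $g=0$ and $\partial_\theta g=\mathcal{M}_+'(\theta^+)\neq0$ by \lemref{lemma: derivative of the melnikov functions not 0 for almost every theta} (here $\theta^+\in U_+\subset B_+$, and the hypothesis $\theta^+-w_\Sigma\in I_{\frac{3}{10}}^+(w_\Sigma)$ keeps the point inside the domain of the parameterization of $\reducedStableManifoldInfinitymu$). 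The Implicit Function Theorem then produces $\mu_0>0$ and a smooth branch $\theta_>(\theta^+,\mu)=\theta^+-w_\Sigma+\mathcal{O}_1(\mu)$ with $d_+(\theta_>,\hat{\Theta}_0)=0$; since both trace curves sit in $\sectionrTwohRpos$, equality of their $(\theta,\Theta)$-coordinates at $\theta_>$ forces equality of $R$ as well, so this is a genuine intersection point $p_>$ in phase space, contained in $\{\mathcal{H}=h\}$. For transversality, observe that inside the $2$-dimensional section $\sectionrTwohRpos$ (coordinatized by $(\theta,\Theta)$) the two curves are graphs over $\theta$, hence meet transversally at $p_>$ iff the slope difference $\partial_\theta d_+(\theta_>,\hat{\Theta}_0)=\mu\mathcal{M}_+'(\theta^+)+\mathcal{O}_2(\mu)$ is nonzero, which holds for small $\mu$ by the same lemma; and since $\sectionrTwoh$ is a cross-section of the flow inside the $3$-dimensional energy level with both $2$-dimensional manifolds containing the flow direction, transversality of the trace curves in $\sectionrTwoh$ is equivalent to $\reducedStableManifoldInfinitymu\transv\unstableManifoldSplusmu$ at $p_>$, the intersection being the orbit through $p_>$. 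This is the first bullet.

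The second bullet follows from the identical argument applied to $d_-(\theta,\hat{\Theta}_0)=-\mu\bigl(\mathcal{M}_+(\theta^+)+M_-(\theta)\bigr)+\mathcal{O}_2(\mu)$ on $\sectionrTwohRneg$, using $M_-(\theta)=-M_+(-\theta)$ and $\mathcal{M}_-(\alpha)=-\mathcal{M}_+(-\alpha)$ from \eqref{eqn: Relation between M- and M+}: the zero lies at $\theta_<(\theta^+,\mu)=-\theta^++w_\Sigma+\mathcal{O}_1(\mu)$ (the hypothesis $-\theta^++w_\Sigma\in I_{\frac{3}{10}}^-(w_\Sigma)$ placing it in the relevant domain), and the $\theta$-derivative of $d_-$ there equals $-\mu\mathcal{M}_+'(\theta^+)+\mathcal{O}_2(\mu)\neq0$. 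Equivalently, $p_<$ and its transversality can be deduced directly from $p_>$ via the reversibility \eqref{eqn: Symmetry of the PCRTBP in synodical polar coordinates centered at P1}, which interchanges $\reducedStableManifoldInfinitymu\leftrightarrow\reducedUnstableManifoldInfinitymu$, $\unstableManifoldSplusmu\leftrightarrow\stableManifoldSminusmu$ and $\sectionrTwohRpos\leftrightarrow\sectionrTwohRneg$. The third bullet is immediate: by construction $p_>,p_<\in\sectionrTwoh=\{r=\delta^2,\ \mathcal{H}(\delta^2,\theta,R,\Theta;\mu)=h\}$ with $h=\mu M_+(\theta^+-w_\Sigma)$.

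The only step that is more than bookkeeping is the rescaling by $\mu$: because $d_\pm$ are themselves $\mathcal{O}(\mu)$, one cannot apply the Implicit Function Theorem to $d_\pm=0$ at $\mu=0$ directly, and must first factor out $\mu$ and check that the quotient is $C^1$ up to $\mu=0$ with the correct nonzero $\theta$-derivative --- this is exactly where the first-order Melnikov term of \thmref{thm: asymptotic formula for the distance} together with the computer-assisted non-degeneracy of \lemref{lemma: derivative of the melnikov functions not 0 for almost every theta} enters. A milder subtlety is the reduction from transversality of two $2$-dimensional invariant manifolds in the $3$-dimensional energy level to transversality of two graphs in the $2$-dimensional section, for which it matters that the sections are split by the sign of $R$ into $\sectionrTwohRpos$, $\sectionrTwohRneg$, so that the trace curves genuinely intersect in phase space and not merely in their $(\theta,\Theta)$-projection.
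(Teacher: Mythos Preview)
Your proof is correct and follows essentially the same approach as the paper: rescale $d_\pm$ by $\mu$ and apply the Implicit Function Theorem at $(\theta^+-w_\Sigma,0)$ (respectively $(-\theta^++w_\Sigma,0)$), using $\mathcal{M}_+'(\theta^+)\neq 0$ from \lemref{lemma: derivative of the melnikov functions not 0 for almost every theta}. Your version is slightly more explicit in invoking Hadamard's lemma for the division by $\mu$ and in justifying why transversality of the trace curves in the $2$-dimensional section $\sectionrTwohR$ upgrades to transversality of the $2$-dimensional invariant manifolds in the energy level, but the argument is the same.
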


\begin{proof}
Finding a transverse intersection between $\reducedStableManifoldInfinitymu$ and $\unstableManifoldSplusmu$ is equivalent to find a non-degenerate zero of $d_+(\theta,\hat{\Theta}_0)$ (see \eqref{eqn: Asymptotic formula for the distances}). To this end, consider $\theta^+ \in U_+$, denote $\hat{\Theta}_0 = \mu\overline{\Theta}_0$ with $\overline{\Theta}_0 = - M_+(\theta^+-w_\Sigma)$ and define
    
    \begin{equation*}
    \begin{aligned}
        \mathcal{F}_+(\theta,\mu) = \mu^{-1}\cdot d_+(\theta, \hat{\Theta}_0) = -M_+(\theta^+-w_\Sigma) + M_+(\theta) + \mathcal{O}(\mu),
    \end{aligned}
    \end{equation*}
    which satisfies
    \begin{itemize}
        \item $\mathcal{F}_+(\theta^+-w_\Sigma,0) = 0$.
        \item $\frac{d}{d\theta}\mathcal{F}_+(\theta^+-w_\Sigma,0) = M_+'(\theta^+-w_\Sigma) = \mathcal{M}_+'(\theta^+) \neq 0$ due to \eqref{eqn: translation melnikov} and Lemma \ref{lemma: derivative of the melnikov functions not 0 for almost every theta}.
    \end{itemize}
    Then, the Implicit Function Theorem ensures that there exists $\mu_0 > 0$ such that, for any $\mu\in(0,\mu_0)$, there is $\theta_>(\mu)$ with $\theta_>(0) = \theta^+-w_\Sigma$ such that $d_+(\theta_>(\mu), \mu\overline{\Theta}_0) = 0$.

    The procedure to obtain a transverse intersection between $\reducedUnstableManifoldInfinitymu$ and $\stableManifoldSminusmu$ at the same energy level is completely analogous.


\end{proof}

\section{Proof of Theorem \ref{thm: Existence of infinite sequence of ECO orbits for small energies}}\label{sec: Proof of Theorem for ECO orbits}

To prove Theorem \ref{thm: Existence of infinite sequence of ECO orbits for small energies}, we show that there exist ejection-collision orbits  ``close'' to the invariant manifolds of infinity. To this end, we rely on a suitable return map.

We fix $\mu\in(0,\mu_0)$ so that Theorem \ref{thm: transverse intersection of the invariant manifolds} holds and we denote by $p_> = \left(\theta_>, \Theta_>\right)$ the point of intersection of $\curveIntersectionStableInfinitySectionrTwohRPos(\mu)$ and $\curveIntersectionUnstableSplusSectionrTwohRPos(\mu)$ at $\sectionrTwohRpos$ (see \eqref{eqn: Notation for the intersection of the invariant manifolds with the section r=2} and \eqref{eqn: definition of Sigma2Rpos and Sigma2Rneg}). Since $p_> \in\curveIntersectionStableInfinitySectionrTwohRPos(\mu)$, then $\Theta_> = \Theta_\infty^s(\theta_>)$, where $\Theta_\infty^s(\theta)$ is defined in \eqref{eqn: Value of the angular momentum of the Ws(infty) at r=2}. We consider a sufficiently small ``rectangle'' $U \subset \sectionrTwohRpos$ defined as 
\begin{equation*}\label{eqn: neighborhood of the intersection upper section proof thm 1.2}
    U = \bigg\{(\theta,\Theta) \in \sectionrTwohRpos \colon |\theta - \theta_>| < \varepsilon, \Theta_\infty^s(\theta) - \varepsilon < \Theta <\Theta_\infty^s(\theta)\bigg\},
\end{equation*}
for some $\varepsilon>0$ small enough. Then, we define the  Poincaré map
\begin{equation*}\label{eqn: Poincare map from sectionrTwohRpos to sectionrTwohRneg}
\begin{aligned}
    \poincareMapRposRneg \colon U \subset \sectionrTwohRpos &\to \sectionrTwohRneg\\
    p= (\theta, \Theta) &\mapsto \poincareMapRposRneg(p) = \Phi_{\mu}(t_\mu(p), (\delta^2,\theta,R(\theta,\Theta;h,\mu),\Theta)),
\end{aligned}
\end{equation*}
where $\Phi_\mu$ is the flow of the equations of motion associated to the Hamiltonian $\mathcal{H}$ in \eqref{eqn: Hamiltonian function in rotating polar coordinates centered at $P_1$}; $R(\theta,\Theta;h,\mu)$ is the radial velocity, which can also be computed from \eqref{eqn: Hamiltonian function in rotating polar coordinates centered at $P_1$}; $\sectionrTwohRpos$ and $\sectionrTwohRneg$ are the sections defined in \eqref{eqn: definition of Sigma2Rpos and Sigma2Rneg} and $t_\mu(p)$ is the time needed for the orbit with initial condition at $p \in U$ to reach $\sectionrTwohRneg$. By construction, $t_\mu(p)$ is well-defined and finite for $p \in U$ (but becomes unbounded as $p$ gets closer to the invariant manifold $\curveIntersectionStableInfinitySectionrTwohRPos(\mu)$).





Now we consider the $C^1$- curve
\begin{equation*}    
\gamma_{u,>} = U \cap \curveIntersectionUnstableSplusSectionrTwohRPos(\mu),
\end{equation*}
that, by Theorem \ref{thm: transverse intersection of the invariant manifolds}, intersects transversally $\curveIntersectionStableInfinitySectionrTwohRPos(\mu)$ at $p_>$.

Since the points of the curve $\gamma_{u,>}$ are close enough to the point $p_>$, they are close to $\curveIntersectionStableInfinitySectionrTwohRPos(\hat{\Theta}_0,\mu)$. Hence, the study of the image $\mathcal{P}(\gamma_{u,>})$  is  reduced to the analysis of the dynamics ``close'' to $\Alpha_{\hat{\Theta}_0}$. Thus, one can easily adapt the approach done by Moser for the Sitnikov problem in \cite{moser2001stable} (see also \cite{MR3455155}) to this case, and prove that the image $\poincareMapRposRneg(\gamma_{u,>})$ ``spirals'' towards $\Delta^u_\infty(\mu)$ in \eqref{eqn: Notation for the intersection of the invariant manifolds with the section r=2}. In particular, we have that

\begin{equation}\label{eqn: Lambda Lemma Moser}
    \Delta_\infty^u(\mu) \subset \overline{\poincareMapRposRneg(\gamma_{u,>})}.
\end{equation}
Theorem \ref{thm: transverse intersection of the invariant manifolds} ensures that $\Delta^s_{\circleSminus}(\mu)$ and $\curveIntersectionUnstableInfinitySectionrTwohRNeg(\mu)$ intersect transversally in $\sectionrTwohRneg$ at a point that we denote by $q_< \in \stableManifoldSminusmu \transv \unstableManifoldInftymu$. 
\begin{figure}[h!]
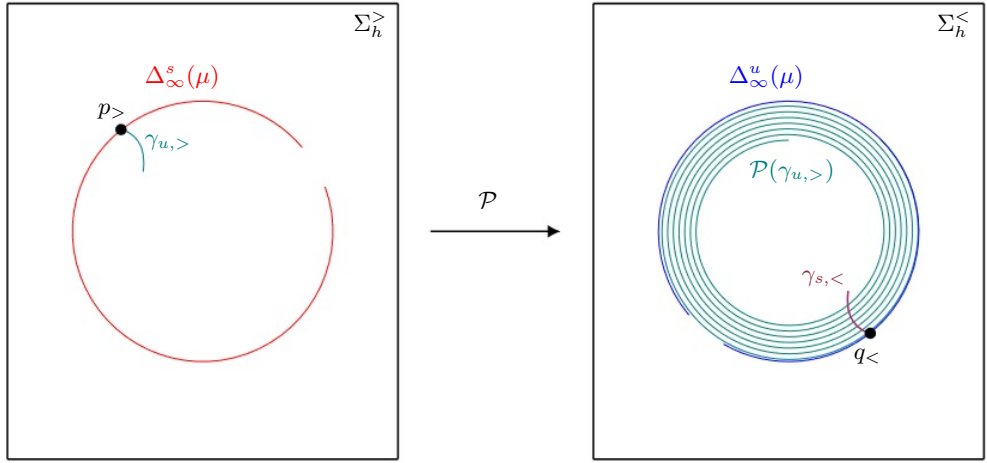

    \centering
\begin{overpic}[scale=0.6]{transition_MAP_INFTY.JPG}
\put (12.5,34){\footnotesize $p_>$}
\put (85,10.5){\footnotesize $q_<$}
\put(17,37) {\color{red} \footnotesize $\Delta_\infty^s(\mu)$}
\put(73,37) {\color{blue} \footnotesize $\Delta_\infty^u(\mu)$}
\put(17,31) {\color{teal} \footnotesize $\gamma_{u,>}$}
\put(75,28) {\color{teal} \footnotesize $\mathcal{P}(\gamma_{u,>})$}
\put (49,25) {\footnotesize $\mathcal{P}$}
\put (80,18) {\color{RedViolet} \footnotesize $\gamma_{s,<}$}
\put (37,42) {\footnotesize $\Sigma_h^>$}
\put (93,42) {\footnotesize $\Sigma_h^<$}
\end{overpic}
\caption{Representation of the ``spiraling effect'' of the transition map $\mathcal{P}$ on the curve $\gamma_{u,>}\subset \Delta_{\circleSplus}^u(\mu)\subset \Sigma_h^>$ and its transverse intersections with the curve $\gamma_{s,<}\subset \Delta_{\circleSminus}^s(\mu)\subset \Sigma_h^<$.}
\label{fig:Transition map close to infinity}
\end{figure}

Therefore, if we consider a $C^1$-curve $\gamma_{s,<}$ defined as follows
\begin{equation*}
    \gamma_{s,<} = V \cap \Delta^s_{\circleSminus}(\mu),
\end{equation*}
where $V$ is a sufficiently small neighborhood of $q_<$,
%
there exists a sequence $q_k \in \gamma_{s,<}\cap \mathcal{P}(\gamma_{u,>})$ such that $\underset{k\to+\infty}{\lim} q_k = q_<$ (see Figure \ref{fig:Transition map close to infinity}). In particular, the closer is $q_k$ to $\Delta_\infty^u(\mu)$, the larger is the time $t_\mu(p_k)$, where $p_k$ is such that $\mathcal{P}(p_k) = q_k$.
Since $\mathcal{P}(\gamma_{u,>}) \subset \unstableManifoldSplusmu$, the points $q_k\in \unstableManifoldSplusmu \cap\stableManifoldSminusmu$ and, therefore, give rise to ejection-collision orbits.

\section{Dynamics close to collision}\label{sec: Dynamics close to collision}
To prove Theorem \ref{thm: Existence of parabolic, oscillatory and periodic orbits for small energies}, it is necessary to study first the dynamics close to collision, which is the purpose of this section. To this end, we work with the coordinates $(s,\theta,\alpha)$ in \eqref{eqn:polar change of coordinates for u,v into rho alpha} and in a neighborhood of the collision manifold \eqref{eqn: Collision manifold} (defined by $s=0$) of the form
\begin{equation}\label{eqn: neighborhood of collision in McGehee coordinates}
\mathcal{B}_\delta = \big\{(s,\alpha,\theta) \in \generalMcGeheeInvariantManifold\colon s \in (0,2\delta)\big\},   
\end{equation}
where $\generalMcGeheeInvariantManifold$ is the $3$ dimensional manifold \eqref{eqn: 3dimensional submanifold McGehee in coordinates (r,theta,rho,alpha)} and $\delta>0$ is small enough and defined in \eqref{eqn: Notation for the intersection of the invariant manifolds with the section r=2}.

A first step towards a proper description of the dynamics close to collision consists on ``symplifying'' equations \eqref{eqn:Motion in coordinates (s,theta,alpha) McGehee Collision} in a neighborhood of the circles of equilibrium points $\circlesSpm$ in \eqref{eqn: Circles of equilibrium points for mu neq 0} by means of suitable changes of coordinates. This is done in Section \ref{subsec: Local straightening change of coordinates}, where we perform two changes of coordinates, each of them defined around the circles $\circlesSpm$ respectively, which straighten the invariant manifolds of $\circlesSpm$ (see Lemma \ref{lemma: Linear part of motion close to S+-}). In Section \ref{subsec: Transition map close to collision}, we define a transition map between $\Sigma_h^<$ and $\Sigma_h^>$ and show that it sends transverse curves to $\stableManifoldSminusmu$ to transverse curves to $\unstableManifoldSplusmu$.

\subsection{Local straightening of the invariant manifolds}\label{subsec: Local straightening change of coordinates}

We consider the following neighborhoods of the circles $\circlesSpm$ in \eqref{eqn: Circles of equilibrium points for mu neq 0} respectively

\begin{equation}\label{eqn: neighborhoods of S+- in coordinates (s,theta,alpha)}
    \begin{aligned}
        \mathcal{B}_\delta^+ &=\bigg\{(s,\alpha,\theta) \in \mathcal{B}_\delta \colon \alpha \in \left(\frac{\pi}{2}-\delta, \frac{\pi}{2}+\delta\right)\bigg\},\\
        \mathcal{B}_\delta^- &= \bigg\{(s,\alpha,\theta)\in \mathcal{B}_\delta \colon \alpha \in \left(-\frac{\pi}{2}-\delta, -\frac{\pi}{2} + \delta\right)\bigg\}.
    \end{aligned}
\end{equation}
The following proposition ensures the existence of two sets of coordinates defined in these neighborhoods straightening the invariant manifolds $\invariantManifoldsSpm$.

\begin{proposition}\label{proposition: Straightening local diffeomorphisms and motions}
    Fix any $\mu_0\in (0,1/2]$ and take $\delta>0$ small enough. Then, for any $\mu\in[0,\mu_0]$, there exists a change of variables

    \begin{equation}\label{eqn: Diffeo (s,alpha,theta)-(s,Tilde(beta),z)}
        \begin{aligned}
            \mapStraightenSminus\colon \mathcal{B}_{\delta}^- &\to (0,2\delta) \times (-2\delta,2\delta) \times \mathds{T}\\
            (s,\alpha,\theta) &\mapsto \mapStraightenSminus(s,\alpha,\theta) = (s,\angleStraightWsSminus,\straightenFiberWsSminus)
        \end{aligned}
    \end{equation}
    which transforms  the system \eqref{eqn:Motion in coordinates (s,theta,alpha) McGehee Collision} into
        \begin{equation}\label{eqn: Eq motion in coordinates (s,Tilde(beta),z)}
        \begin{aligned}
            s' &= -\frac{\constantEquilibrium}{2}s\left(1 + \Tilde{g_1}(s,\angleStraightWsSminus,\straightenFiberWsSminus)\right)\\
            \angleStraightWsSminus' &= \frac{\constantEquilibrium}{2}\angleStraightWsSminus\left(1 + \Tilde{g_2}(s,\angleStraightWsSminus,\straightenFiberWsSminus)\right)\\
            \straightenFiberWsSminus' &= s\angleStraightWsSminus\left(4\lambda(\mu,h)s +  \Tilde{g_3}(s,\angleStraightWsSminus,\straightenFiberWsSminus)\right)
        \end{aligned}
    \end{equation}
    with $m_0 = \sqrt{2(1-\mu)}$, $\lambda(\mu,h) = \frac{\mu^2 + 2h+2\mu}{4\constantEquilibrium}$ and

    \begin{equation}\label{eqn: Approximation of the nonlinear part for (s,Tilde(beta),z)}
        \Tilde{g_1}(s,\angleStraightWsSminus,\straightenFiberWsSminus) = \mathcal{O}_2(s,\angleStraightWsSminus);\quad \Tilde{g_2}(s,\angleStraightWsSminus,\straightenFiberWsSminus) = \mathcal{O}_1(s^2,\angleStraightWsSminus);\quad \Tilde{g_3}(s,\angleStraightWsSminus,\straightenFiberWsSminus) = \mathcal{O}_3(s,\angleStraightWsSminus).
    \end{equation}
    Moreover, in these coordinates, $S^-=\{(0,0,z), z\in \mathds{T}\}$ and its invariant manifolds become

    \begin{equation}\label{eqn: straightening Wus S-}
        \begin{aligned}
            \stableManifoldSminusmu &= \{\angleStraightWsSminus = 0\},\\
            \unstableManifoldCollisionSminusmu &= \{s = 0\}.\\
        \end{aligned}
    \end{equation}
    Analogously, there exists a diffeomorphism

    \begin{equation}\label{eqn: Diffeo (s,alpha,theta)-(s,iota,w)}
        \begin{aligned}
            \mapStraightenSplus\colon \mathcal{B}_{\delta}^+ &\to (0,2\delta) \times (-2\delta,2\delta) \times \mathds{T}\\
            (s,\alpha,\theta) &\mapsto \mapStraightenSplus(s,\alpha,\theta) = (s,\angleStraightWuSplus,\straightenFiberWuSplus)
        \end{aligned}
    \end{equation}
    so that in the coordinates $(s,\angleStraightWuSplus,\straightenFiberWuSplus)$, system \eqref{eqn:Motion in coordinates (s,theta,alpha) McGehee Collision} becomes

    \begin{equation}\label{eqn: Eq motion in coordinates (s,iota,w)}
        \begin{aligned}
            s' &= \frac{\constantEquilibrium}{2}s\left(1 + \Tilde{j_1}(s,\angleStraightWuSplus,\straightenFiberWuSplus)\right)\\
            \angleStraightWuSplus' &= -\frac{\constantEquilibrium}{2}\angleStraightWuSplus\left(1 + \Tilde{j_2}(s,\angleStraightWuSplus,\straightenFiberWuSplus)\right)\\
            \straightenFiberWuSplus' &= s\angleStraightWuSplus\left(-4\lambda(\mu,h)s + \Tilde{j_3}(s,\angleStraightWuSplus,\straightenFiberWuSplus)\right),
        \end{aligned}
    \end{equation}
    where
    \begin{equation}\label{eqn: Approximation of the nonlinear part for (s,Tilde(iota),w)}
        \Tilde{j_1}(s,\angleStraightWuSplus,\straightenFiberWuSplus) =  \mathcal{O}_2(s,\angleStraightWuSplus);\quad \Tilde{j_2}(s,\angleStraightWuSplus,\straightenFiberWuSplus) = \mathcal{O}_1(s,\angleStraightWuSplus);\quad \Tilde{j_3}(s,\angleStraightWuSplus,\straightenFiberWuSplus) = \mathcal{O}_3(s,\angleStraightWuSplus).
    \end{equation}
    In these coordinates, $\circleSplus = \{(0,0,w), w\in \mathds{T}\}$ and its invariant manifolds become
    \begin{equation}\label{eqn: straightening Wus S+}
        \begin{aligned}
            \unstableManifoldSplusmu &= \{\angleStraightWuSplus = 0\},\\
            \stableManifoldCollisionSplusmu &= \{s = 0\}.
        \end{aligned}
    \end{equation}
\end{proposition}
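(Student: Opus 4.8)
I will prove all the assertions near $S^-$; those near $S^+$ then follow from the reversibility symmetry \eqref{eqn: Symmetry of the PCRTBP in synodical polar coordinates centered at P1}, which in the coordinates $(\mcGeheeRegularRadius,\theta,\alpha)$ of \eqref{eqn:Motion in coordinates (s,theta,alpha) McGehee Collision} reads $(\mcGeheeRegularRadius,\theta,\alpha;\tau)\mapsto(\mcGeheeRegularRadius,-\theta,-\alpha;-\tau)$, maps a neighbourhood of $S^-$ onto one of $S^+$, exchanges the stable and the unstable directions, and preserves $\mcGeheeRegularRadius$. Conjugating $\mapStraightenSminus$ by this involution produces $\mapStraightenSplus$, carries \eqref{eqn: Eq motion in coordinates (s,Tilde(beta),z)} into \eqref{eqn: Eq motion in coordinates (s,iota,w)} and \eqref{eqn: straightening Wus S-} into \eqref{eqn: straightening Wus S+}, the sign change of the coefficient $4\lambda(\mu,h)$ being exactly the effect of the time reversal. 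So from now on I work near $S^-$.

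\textbf{Diagonalizing coordinates.} By \lemref{lemma: Linear part of motion close to S+-} the linearization at $S^-_{\overline\theta}$ has eigenvalues $-\constantEquilibrium/2,\ \constantEquilibrium/2,\ 0$, with eigenspaces spanned by $\partial_\mcGeheeRegularRadius$, by $2\partial_\theta+\partial_\alpha$, and by $\partial_\theta$. I therefore set $\overline\alpha=\alpha+\pi/2$ and introduce the center variable $z_0=\theta-2\alpha$ --- the combination that, in the collision dynamics \eqref{eqn:Dynamics on the collision}, is constant along the heteroclinics of $\Gamma$, i.e. along the unstable fibers of $S^-$. In the coordinates $(\mcGeheeRegularRadius,\overline\alpha,z_0)$ the linear part of \eqref{eqn:Motion in coordinates (s,theta,alpha) McGehee Collision} becomes $\operatorname{diag}(-\constantEquilibrium/2,\constantEquilibrium/2,0)$, $S^-=\{\mcGeheeRegularRadius=0,\overline\alpha=0\}$, and $\{\mcGeheeRegularRadius=0\}$ is exactly $\collisionManifold$, which on $\mathcal{B}_\delta^-$ coincides with $\unstableManifoldCollisionSminusmu$ by \lemref{lemma: Linear part of motion close to S+-}; thus the second identity in \eqref{eqn: straightening Wus S-} is automatic and only $\stableManifoldSminusmu$ has to be straightened.

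\textbf{Straightening.} Since $S^-$ is normally hyperbolic for every $\mu\in[0,\mu_0]$ and \eqref{eqn:Motion in coordinates (s,theta,alpha) McGehee Collision} is smooth and smooth in $\mu$ (including the regularized value $\mu=0$), Fenichel theory \cite{MR0343314,MR0426056,MR0287106,MR1278264} gives: (i) $\stableManifoldSminusmu$ is a $C^k$ graph $\overline\alpha=\phi_\mu(\mcGeheeRegularRadius,z_0)$ over $E^s\oplus E^c$, smooth in $\mu$, with $\phi_\mu=\mathcal{O}_2(\mcGeheeRegularRadius)$ by tangency to $E^s\oplus E^c$ along $S^-$ (in fact $\mathcal{O}_3(\mcGeheeRegularRadius)$: substituting the graph into the $\overline\alpha$-equation, which has the shape $\overline\alpha'=\tfrac{\constantEquilibrium}{2}\overline\alpha(1+\ldots)+2\mcGeheeRegularRadius^3+\ldots$, forces the $\mcGeheeRegularRadius^2$-coefficient to vanish); set $\angleStraightWsSminus=\overline\alpha-\phi_\mu(\mcGeheeRegularRadius,z_0)$, so $\stableManifoldSminusmu=\{\angleStraightWsSminus=0\}$. (ii) The stable fibration of $\stableManifoldSminusmu$ and the unstable fibration of $\unstableManifoldCollisionSminusmu=\{\mcGeheeRegularRadius=0\}$ are $C^k$ and their base projections onto $S^-$ agree on $S^-=\stableManifoldSminusmu\cap\unstableManifoldCollisionSminusmu$; fixing a coordinate on $S^-$ and pulling it back along both projections defines $\straightenFiberWsSminus$ on $\stableManifoldSminusmu\cup\{\mcGeheeRegularRadius=0\}$, which I extend smoothly to $\mathcal{B}_\delta^-$. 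Because $S^-$ consists of equilibria, every stable and every unstable fiber is flow-invariant, so $\straightenFiberWsSminus$ is constant along the flow on both $\{\angleStraightWsSminus=0\}$ and $\{\mcGeheeRegularRadius=0\}$. The triple $\mapStraightenSminus=(\mcGeheeRegularRadius,\angleStraightWsSminus,\straightenFiberWsSminus)$ is the desired diffeomorphism.

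\textbf{Reading off the equations, and the main difficulty.} Invariance of $\{\mcGeheeRegularRadius=0\}$ forces $\mcGeheeRegularRadius'=\mcGeheeRegularRadius\,A$, invariance of $\{\angleStraightWsSminus=0\}$ forces $\angleStraightWsSminus'=\angleStraightWsSminus\,B$, and since $\straightenFiberWsSminus'$ vanishes on $\{\mcGeheeRegularRadius=0\}\cup\{\angleStraightWsSminus=0\}$ it factors as $\straightenFiberWsSminus'=\mcGeheeRegularRadius\,\angleStraightWsSminus\,C$. The values $A(0,0,\cdot)=-\constantEquilibrium/2$ and $B(0,0,\cdot)=\constantEquilibrium/2$ are the eigenvalues of \lemref{lemma: Linear part of motion close to S+-}; using $\mcGeheeRegularRadius'=\tfrac{\mcGeheeRegularRadius}{2}\sqrt{\constantEquilibrium^2+\mcGeheePolarRadius}\sin\alpha$ with $\sin\alpha=-\cos\overline\alpha$, and, from \eqref{eqn:Relation rho with (r,theta,H,mu)}, $\mcGeheePolarRadius=4\constantEquilibrium\lambda(\mu,h)\mcGeheeRegularRadius^2+\mathcal{O}(\mcGeheeRegularRadius^6)$, one obtains $A=-\tfrac{\constantEquilibrium}{2}\bigl(1+\mathcal{O}_2(\mcGeheeRegularRadius,\angleStraightWsSminus)\bigr)$; the $\overline\alpha$-equation, after subtracting $\dot\phi_\mu$, yields $B=\tfrac{\constantEquilibrium}{2}\bigl(1+\mathcal{O}_1(\mcGeheeRegularRadius^2,\angleStraightWsSminus)\bigr)$ (the terms not proportional to $\overline\alpha$ are $\mathcal{O}(\mcGeheeRegularRadius^3)$ and are absorbed by the graph correction, while the term from $\mcGeheePolarRadius'/\tan\alpha$ contributes the $\mathcal{O}(\mcGeheeRegularRadius^2)$ part); and in $z_0'=\theta'-2\alpha'$ the two $\sqrt{\constantEquilibrium^2+\mcGeheePolarRadius}\cos\alpha$ terms cancel identically, leaving $z_0'=4\lambda(\mu,h)\mcGeheeRegularRadius^2\overline\alpha+\mathcal{O}(\mcGeheeRegularRadius^3)+\ldots$, so that after the $\mathcal{O}(\mcGeheeRegularRadius^3)$ correction to $z_0$ built into the fibration coordinate $\straightenFiberWsSminus$ one gets $C=4\lambda(\mu,h)\mcGeheeRegularRadius+\mathcal{O}_3(\mcGeheeRegularRadius,\angleStraightWsSminus)$. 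The hard part is precisely this last step: showing that the perturbations $\tilde g_1,\tilde g_2,\tilde g_3$ have \emph{exactly} the stated orders, uniformly in $\mu\in[0,\mu_0]$ --- above all the mixed order $\mathcal{O}_1(\mcGeheeRegularRadius^2,\angleStraightWsSminus)$ for $\tilde g_2$ (no linear-in-$\mcGeheeRegularRadius$ term survives in $\angleStraightWsSminus'/\angleStraightWsSminus$) and $\mathcal{O}_3(\mcGeheeRegularRadius,\angleStraightWsSminus)$ for $\tilde g_3$ (no order-$4$ correction to the coefficient $4\lambda(\mu,h)\mcGeheeRegularRadius$ of $\mcGeheeRegularRadius\,\angleStraightWsSminus$ in $\straightenFiberWsSminus'$). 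This requires tracking the first few Taylor coefficients of $\phi_\mu$ and of the two fibrations through the changes of variables; the choice $z_0=\theta-2\alpha$ and the expansion of $\mcGeheePolarRadius$ from \eqref{eqn:Relation rho with (r,theta,H,mu)} are exactly what force these cancellations. A minor point is that one needs the Fenichel fibrations to be $C^k$ with $k$ large enough and smooth in $\mu$ down to $\mu=0$, which holds here because \eqref{eqn:Motion in coordinates (s,theta,alpha) McGehee Collision} is smooth and the normal hyperbolicity of \lemref{lemma: Linear part of motion close to S+-} is uniform on $[0,\mu_0]$.
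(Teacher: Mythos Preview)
Your overall strategy coincides with the paper's: both first pass to diagonalizing variables $\beta=\alpha+\tfrac{\pi}{2}$, $y=\theta-2\beta$ (your $\bar\alpha,z_0$ up to a constant), then straighten $\stableManifoldSminusmu$ as a graph $\tilde\beta=\beta-\psi_-(s,y)$ with $\psi_-=\mathcal{O}_2(s)$, and finally replace $y$ by the stable-fiber label to get $z$. Your use of the reversibility to transport everything to $S^+$ is a legitimate shortcut; the paper instead repeats the construction explicitly near $S^+$.

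There is, however, a real gap in your step (ii). You define $z$ on $\stableManifoldSminusmu\cup\{s=0\}$ by pulling back a coordinate on $S^-$ along the two Fenichel projections and then ``extend smoothly to $\mathcal{B}_\delta^-$''. Any such extension does give $z'=s\tilde\beta\,C$ for some smooth $C$, but the precise form $C=4\lambda s+\mathcal{O}_3(s,\tilde\beta)$ claimed in \eqref{eqn: Eq motion in coordinates (s,Tilde(beta),z)} is \emph{not} automatic: if $z=z_{\mathrm{paper}}+s\tilde\beta\,\eta$ for a generic smooth $\eta$, one computes
\[
z'=s\tilde\beta\Bigl(4\lambda s-\tfrac{m_0}{2}\,\partial_s\eta\big|_{S^-}\,s+\mathcal{O}_1(\tilde\beta)+\mathcal{O}_2(s,\tilde\beta)\Bigr),
\]
so both the leading coefficient and the remainder order are spoiled. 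The paper avoids this by making one specific choice: it uses the stable fibration alone to define $z$ \emph{globally} through the $\tilde\beta$-independent change $y=\chi_-(s,z)$ with $\chi_-(s,z)=z+\tfrac{8}{3m_0}s^3+\mathcal{O}_4(s)$, the coefficient $\tfrac{8}{3m_0}$ being fixed precisely so that the $-4s^3$ term in $y'$ cancels. It is this $\tilde\beta$-independence, together with the invariance $Z(s,0,z)=0$, that forces $\tilde g_3=\mathcal{O}_3$. Your later phrase ``the $\mathcal{O}(s^3)$ correction to $z_0$ built into the fibration coordinate'' suggests you have this in mind, but it is not what your step (ii) actually constructs; you should replace the arbitrary smooth extension by the explicit $\tilde\beta$-independent one.
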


\begin{proof}
We start by straightening the local invariant manifolds of $\circleSminus$. Lemma \ref{lemma: Linear part of motion close to S+-} implies that $\unstableManifoldCollisionSminusmu$ is already straightened in the coordinates $(s,\theta,\mcGeheePolarAngle)$, since it lies in the hyperplane $\{s=0\}$. Hence, we only need to straighten $\stableManifoldSminusmu$ and the fibers $W_\mu^u(\circleSminus_\theta)$ and $W_\mu^s(\circleSminus_\theta)$. We define the following change of coordinates

\begin{equation}\label{eqn: change of coordinates alpha-beta for Sminus}
\begin{aligned}
    \angleSminus &= \mcGeheePolarAngle + \frac{\pi}{2}\\
    y &= \theta - 2\left(\alpha + \frac{\pi}{2}\right),
\end{aligned}
\end{equation}
to straighten the fibers $W_\mu^u(\circleSminus_\theta)$. In coordinates $(s,\beta,y)$, system \eqref{eqn:Motion in coordinates (s,theta,alpha) McGehee Collision} becomes

\begin{equation}\label{eqn: Eq motion (s,beta,y) taylor s}
    \begin{aligned}
        s' &=-\frac{\constantEquilibrium}{2} \cos\angleSminus s -\lambda(\mu,h)\cos\angleSminus s^3 + \mathcal{O}_5(s)\\
        \angleSminus'&=  \frac{\constantEquilibrium}{2}\sin\angleSminus - 3\lambda(\mu,h)\sin\angleSminus s^2 + 2s^3 + \mathcal{O}_4(s)\\
        y' &= 4\lambda(\mu,h)\sin\angleSminus s^2 - 4s^3 + \mathcal{O}_4(s),
    \end{aligned}
\end{equation}
where $\constantEquilibrium = \sqrt{2(1-\mu)}$ and $\lambda(\mu,h) = \frac{\mu^2 + 2h+2\mu}{4\constantEquilibrium}$. 

Moreover, the  circle $\circleSminus$ in \eqref{eqn: Circles of equilibrium points for mu neq 0} and $\mathcal{B}_\delta^-$ in \eqref{eqn: neighborhoods of S+- in coordinates (s,theta,alpha)} become
\begin{equation}\label{eqn:  Description of the circle S- and its neighborhood Bdelta- in coordinates (s,theta,beta)}
\begin{aligned}
    \circleSminus &= \bigg\{S_\theta^- = \left(0,0, y\right)\colon y \in \mathds{T}\bigg\},\\
    \mathcal{B}_\delta^- &= \bigg\{(s,\beta,y) \colon s\in(0,2\delta), \beta \in \left(-\delta,\delta\right), y\in\mathds{T}\bigg\}.
\end{aligned}
\end{equation}
Expanding around $s=0, \beta=0$, we write system \eqref{eqn: Eq motion (s,beta,y) taylor s} as

\begin{equation}\label{eqn: Reduced equations of motion in coordinates (s,beta,y)}
    \begin{aligned}
        s' &= -\frac{\constantEquilibrium}{2}s\left(1+g_1(s,\angleSminus,y)\right)\\
        \angleSminus' &= \frac{m_0}{2}\angleSminus + g_2(s,\angleSminus,y)\\
        y' &= s g_3(s,\angleSminus,y),
    \end{aligned}
\end{equation}
where

\begin{equation}\label{eqn: Approximation of g_1 and g_3}
    \begin{aligned}
        g_1(s,\angleSminus,y) &= \frac{2\lambda(\mu,h)}{\constantEquilibrium}s^2 - \frac 1 2 \angleSminus^2 + \mathcal{O}_4(s,\angleSminus),\\
        g_2(s,\angleSminus,y) &= \mathcal{O}_1\left(s^3,\beta s^2, \beta^3\right),\\
        g_3(s,\angleSminus,y) &= 4\lambda(\mu,h) s\angleSminus - 4s^2 + \mathcal{O}_4(s,\angleSminus).
    \end{aligned}
\end{equation}
Note that in these coordinates $W_\mu^u(S^-_{y_0}) = \{s=0,y=y_0\in \mathds{T}\}$.

On the other hand, to straighten $\stableManifoldSminusmu$, we use the fact that it is tangent to the plane $\{\angleSminus = 0\}$. Therefore, by the stable manifold theorem, $\stableManifoldSminusmu$ can be written as the graph of a function as
\begin{equation}\label{eqn: graph of W^s(S-)}
    \stableManifoldSminusmu = \bigg\{(s,\angleSminus,y) \in \ballCollisionSminus \colon \angleSminus = \graphOfStableManifoldSminus(s,y)\bigg\},
\end{equation}
where $\graphOfStableManifoldSminus$ satisfies
\begin{equation}\label{eqn: Approximation of psi-(s,y) in terms of s}
        \graphOfStableManifoldSminus(s,y) = \mathcal{O}_2(s).
\end{equation}
Hence if we perform the change of coordinates

\begin{equation}\label{eqn: Change of coordinates beta-Tilde(beta)}
    (s,\angleStraightWsSminus,y) = (s,\angleSminus - \graphOfStableManifoldSminus(s,y),y),
\end{equation}
then  $\stableManifoldSminusmu$ becomes $\{\Tilde{\beta} = 0\}$ and, using  the expressions of $g_1,g_2,g_3$ in \eqref{eqn: Approximation of g_1 and g_3},  \eqref{eqn: Reduced equations of motion in coordinates (s,beta,y)} becomes

\begin{equation}\label{eqn: Eq motion in coordinates (s,Tilde(beta),y)}
    \begin{aligned}
    s' &= -\frac{\constantEquilibrium}{2}s\left(1+\overline{g}_1(s,\angleStraightWsSminus,y)\right)\\
    \angleStraightWsSminus'&=\frac{\constantEquilibrium}{2}\angleStraightWsSminus\left(1 + \overline{g}_2(s,\angleStraightWsSminus,y)\right)\\
    y' &= s\overline{g}_3(s,\angleStraightWsSminus,y),
    \end{aligned}
\end{equation}
with
\begin{equation}\label{eqn: Expression of tilde g_1,G_2 and g_3 in coordinates (s,Tilde(beta),y)}
\begin{aligned}
    \overline{g}_1(s,\angleStraightWsSminus,y) &= g_1(s,\angleStraightWsSminus + \graphOfStableManifoldSminus(s,y),y) = \frac{2\lambda(\mu,h)}{\constantEquilibrium}s^2 - \frac 1 2 \angleStraightWsSminus^2 + \mathcal{O}_4(s,\angleStraightWsSminus),\\
    \overline{g}_2(s,\angleStraightWsSminus,y) &= \partial_{\Tilde\beta} g_2 (s,\Tilde\beta +\psi_-(s,y),y)\big|_{\Tilde\beta = 0} + \mathcal{O}_1(\Tilde{\beta})= \mathcal{O}_1(s^2,\angleStraightWsSminus),\\
    \overline{g}_3(s,\angleStraightWsSminus,y) &= g_3(s,\angleStraightWsSminus + \graphOfStableManifoldSminus(s,y),y) = 4\lambda(\mu,h)s\angleStraightWsSminus - 4s^2 + \mathcal{O}_4(s,\angleStraightWsSminus).
\end{aligned}
\end{equation}
Finally, we straighten the fibers $\localStableInvariantManifoldFiberSminusReferencey$. Indeed, Lemma \ref{lemma: Linear part of motion close to S+-} allows us to define $\stableManifoldSminusmu$ as the union of fibers $\localStableInvariantManifoldFiberSminusReferencey$, that is
\begin{equation*}
    \stableManifoldSminusmu = \underset{y_0\in \mathds{T}}{\bigcup}\localStableInvariantManifoldFiberSminusReferencey.
\end{equation*}
Moreover, since $\stableManifoldSminusmu$ corresponds to the plane $\{\angleStraightWsSminus = 0\}$, the stable manifold theorem ensures that $W_\mu^s(S^-_{y_0})$ can be described by the graph of a function as

\begin{equation}\label{eqn: Graph of W_y0^s}
    W_\mu^s(S^-_{y_0}) = \bigg\{(s,0,y) \in \mathcal{B}_\delta^- \colon y = \graphOfFiberStableManifoldSminus(s,y_0)\bigg\},
\end{equation}
for some function $\graphOfFiberStableManifoldSminus$ satisfying

\begin{equation}\label{eqn: Approximation of chi(s,z) in terms of s}
    \graphOfFiberStableManifoldSminus(s,y_0) = y_0 + a_0(y_0)s^2 + a_1(y_0) s^3 + \mathcal{O}_4(s).
\end{equation}
Now we consider the following change of coordinates
\begin{equation}\label{eqn: Change of coordinates y-z}
    (s,\Tilde{\beta},y) =\left(s,\Tilde{\beta}, \graphOfFiberStableManifoldSminus(s,\straightenFiberWsSminus)\right),
\end{equation}
so that system \eqref{eqn: Eq motion in coordinates (s,Tilde(beta),y)} becomes

\begin{equation}\label{eqn: Eq motion not reduced in coordinates (s,Tilde beta,z)}
    \begin{aligned}
        s' &= -\frac{\constantEquilibrium}{2}s\left(1 + \Tilde{g}_1(s,\angleStraightWsSminus,\straightenFiberWsSminus)\right)\\
        \angleStraightWsSminus' &= \frac{\constantEquilibrium}{2}\angleStraightWsSminus \left(1+\Tilde{g}_2(s,\angleStraightWsSminus,\straightenFiberWsSminus)\right)\\
        \straightenFiberWsSminus' &= \frac{s}{\partial_{\straightenFiberWsSminus}\graphOfFiberStableManifoldSminus(s,z)}\left(\Tilde{g}_3(s,\angleStraightWsSminus,\straightenFiberWsSminus) + \frac{\constantEquilibrium}{2}\partial_s \graphOfFiberStableManifoldSminus(s,z)\left(1+ \Tilde{g}_1(s,\angleStraightWsSminus,\straightenFiberWsSminus)\right)\right) = s \cdot Z(s,\angleStraightWsSminus,\straightenFiberWsSminus),
    \end{aligned}
\end{equation}
with
\begin{equation}\label{eqn: Expression for overline g_1,G_2 and g_3 in (s,Tilde(beta),z)}
    \begin{aligned}
    \Tilde{g}_1(s,\angleStraightWsSminus,\straightenFiberWsSminus) &= \overline{g}_1(s,\angleStraightWsSminus,\graphOfFiberStableManifoldSminus(s,\straightenFiberWsSminus)) = \frac{2\lambda(\mu,h)}{\constantEquilibrium} s^2- \frac 1 2 \angleStraightWsSminus^2+  \mathcal{O}_4(s,\angleStraightWsSminus)\\
    \Tilde{g}_2(s,\angleStraightWsSminus,\straightenFiberWsSminus) &= \overline{g}_2(s,\angleStraightWsSminus,\graphOfFiberStableManifoldSminus(s,\straightenFiberWsSminus)) =  \mathcal{O}_1(s^2,\angleStraightWsSminus)\\
    \Tilde{g}_3(s,\angleStraightWsSminus,\straightenFiberWsSminus) &= \overline{g}_3(s,\angleStraightWsSminus,\graphOfFiberStableManifoldSminus(s,\straightenFiberWsSminus)) = 4\lambda(\mu,h)s\angleStraightWsSminus - 4s^2 +  \mathcal{O}_4(s,\angleStraightWsSminus).
    \end{aligned}
\end{equation}
The invariance property ensures that $Z(s,0,z) = 0$. Therefore

\begin{equation*}
    \Tilde{g}_3(s,0,z) + \frac{\constantEquilibrium}{2}\partial_s\graphOfFiberStableManifoldSminus(s,z)\left(1 + \Tilde{g}_1(s,0,z)\right) = 0.
\end{equation*}
Using \eqref{eqn: Approximation of chi(s,z) in terms of s}, \eqref{eqn: Expression for overline g_1,G_2 and g_3 in (s,Tilde(beta),z)} and comparing coefficients, we have

\begin{equation*}
    \constantEquilibrium a_0(z) \cdot s + \left(-4+\frac{3\constantEquilibrium}{2}a_1(z)\right)s^2 + \mathcal{O}_3(s) = 0.
\end{equation*}
Hence

\begin{equation}\label{eqn: Values a0, a1}
    a_0(z) = 0,\;\;\;\; a_1(z) = \frac{8}{3\constantEquilibrium}.
\end{equation}
Moreover, the fact that $Z(s,0,z) = 0$ implies that $Z(s,\angleStraightWsSminus,z)$ cannot have independent terms in neither $s$ nor $z$. Therefore, $Z(s,\angleStraightWsSminus,z)$ in \eqref{eqn: Eq motion not reduced in coordinates (s,Tilde beta,z)} can be computed ignoring the independent terms in $s$ for $\Tilde{g_1}$ and $\Tilde{g_3}$ from \eqref{eqn: Expression for overline g_1,G_2 and g_3 in (s,Tilde(beta),z)}. Namely

\begin{equation*}
\begin{aligned}
    Z(s,\angleStraightWsSminus,z) &= \frac{1}{1+\mathcal{O}_4(s)}\left(4\lambda(\mu,h)s\angleStraightWsSminus + \mathcal{O}_4(s,\angleStraightWsSminus) + \left(4s^2+ \mathcal{O}_3(s)\right)\left(-\frac 1 2 \angleStraightWsSminus^2 + \mathcal{O}_4(s,\angleStraightWsSminus)\right)\right)\\
    &=\angleStraightWsSminus\left(4\lambda(\mu,h)s + \mathcal{O}_3(s,\angleStraightWsSminus)\right),
\end{aligned}
\end{equation*}
\\
which implies that we can rewrite system \eqref{eqn: Eq motion not reduced in coordinates (s,Tilde beta,z)} as in \eqref{eqn: Eq motion in coordinates (s,Tilde(beta),z)}, completing the proof.


The procedure to straighten the invariant manifolds for $\circleSplus$ is completely analogous to the one explained for $\circleSminus$, involving the following changes instead:
\begin{itemize}
    \item The changes to translate $\circleSplus$ and to straighten the fibers $\localUnstableInvariantManifoldFiberSplusReferencey$
    \begin{equation} \label{eqn:changes iota-x}
    \begin{aligned}
        \angleSplus &= \mcGeheePolarAngle - \frac{\pi}{2},\\
        x &= \theta - 2\left(\mcGeheePolarAngle - \frac{\pi}{2}\right).
    \end{aligned}
    \end{equation}

    \item The change to straighten the unstable manifold $\unstableManifoldSplusmu$
    \begin{equation}\label{eqn: change iota-tilde(iota)}
        \angleStraightWuSplus = \angleSplus - \graphOfUnstableManifoldSplus(s,x),
    \end{equation}
    where $\graphOfUnstableManifoldSplus(s,x)$ has analogous properties as $\graphOfStableManifoldSminus(s,y)$ in \eqref{eqn: Approximation of psi-(s,y) in terms of s}, that is
    \begin{equation}\label{eqn: Approximation of psi+(s,x)}
    \graphOfUnstableManifoldSplus(s,x) = \mathcal{O}_2(s).
    \end{equation}

    \item The change to straighten the fibers $\localUnstableInvariantManifoldFiberSplusReferencey$
    \begin{equation}\label{eqn: change x-w}
        x = \graphOfFiberUnstableManifoldSplus(s,\straightenFiberWuSplus),
    \end{equation} 
    where $\graphOfFiberUnstableManifoldSplus(s,\straightenFiberWuSplus)$ has similar properties as $\graphOfFiberStableManifoldSminus(s,\straightenFiberWsSminus)$ from \eqref{eqn: Approximation of chi(s,z) in terms of s}, implying that
    \begin{equation}\label{eqn: Approximation of chi(s,w)}
        \graphOfFiberUnstableManifoldSplus(s,\straightenFiberWuSplus) = \straightenFiberWuSplus + \mathcal{O}_3(s).
    \end{equation}
\end{itemize}
\end{proof}
\begin{remark}\label{remark: Translation from (s,Tilde(beta),z) to (s,Tilde(iota),w)}
We want to stress the relation between coordinates $(s,\Tilde{\beta},z)$ and $(s,\Tilde{\iota},w)$, which will be of major importance for the computations in Section \ref{subsec: Transition map close to collision}. It follows from \eqref{eqn: change of coordinates alpha-beta for Sminus} and \eqref{eqn:changes iota-x} that
\begin{equation}\label{eqn: Relation iota-beta, x-y}
\begin{aligned}
    \iota &= \beta + \pi,\\
    x &= y \mod 2\pi.
\end{aligned}
\end{equation}
Therefore, one can use \eqref{eqn: Change of coordinates beta-Tilde(beta)} and \eqref{eqn: Relation iota-beta, x-y}, along with equations \eqref{eqn:changes iota-x} and \eqref{eqn: change iota-tilde(iota)}, to relate $\Tilde{\iota}$ and $\Tilde{\beta}$ as 
\begin{equation*}
    \Tilde{\iota} = \Tilde{\beta}+\pi + (\psi_-(s,y)-\psi_+(s,x)),
\end{equation*}
where we recall 
\begin{equation*}
    \begin{aligned}
        \psi_-(s,y) = \mathcal{O}_2(s),\;\; \psi_+(s,x) = \mathcal{O}_2(s).
    \end{aligned}
\end{equation*}
Additionally, one can relate the variables $w$ and $z$ using \eqref{eqn: Change of coordinates y-z} and  \eqref{eqn: change x-w}, to obtain
\begin{equation}\label{eqn: Transformation w-z}
    w = z+ \mathcal{O}_3(s).
\end{equation}
\end{remark}



\subsection{Transition map close to collision}\label{subsec: Transition map close to collision}
The main purpose of this section is to define a transition map from $\Sigma_{h}^<$ to $\Sigma_{h}^>$ and prove that it sends transverse curves to $\stableManifoldSminusmu$ to transverse curves to $\unstableManifoldSplusmu$.
To this end, we consider the submanifold $\generalMcGeheeInvariantManifold$ (see \eqref{eqn: 3dimensional submanifold McGehee in coordinates (r,theta,rho,alpha)}) for a fixed $h$ and we define, in the straightened coordinates $(s,\Tilde{\beta},z)$ and $(s,\Tilde{\iota},w)$, the following  sections


\begin{equation}\label{eqn: Definition of Tilde(Cs) and Tilde(Cu)}
    \begin{aligned}
        \Tilde{\Sigma}_{h}^< &= \{(s,\Tilde{\beta},z) \colon s=\delta, -\delta < \Tilde{\beta} < \delta, z \in \mathds{T}\} =\Tilde{\Sigma}_{h}^{<,+} \cup \Tilde{\Sigma}_{h}^{<,-},\\
        \Tilde{\Sigma}_{h}^{>}&= \{(s,\Tilde{\iota},w) \colon s=\delta, -\delta < \Tilde{\iota} < \delta, w \in \mathds{T}\} = \Tilde{\Sigma}_{h}^{>,+} \cup \Tilde{\Sigma}_{h}^{>,-},
    \end{aligned}
\end{equation}
which are transverse to the invariant manifolds. We also define

\begin{equation}\label{eqn: definition of Tilde(Csu+-)}
    \begin{aligned}
       \Tilde{\Sigma}_{h}^{<,+} &= \{(s,\Tilde{\beta},z) \colon s=\delta, 0 < \Tilde{\beta} < \delta, z \in \mathds{T}\},\\
        \Tilde{\Sigma}_{h}^{<,-} &= \{(s,\Tilde{\beta},z) \colon s=\delta, -\delta < \Tilde{\beta} < 0, z \in \mathds{T}\},\\
        \Tilde{\Sigma}_{h}^{>,+} &= \{(s,\Tilde{\iota},w) \colon s=\delta, 0 < \Tilde{\iota} < \delta, w \in \mathds{T}\},\\
        \Tilde{\Sigma}_{h}^{>,-} &= \{(s,\Tilde{\iota},w) \colon s=\delta, -\delta < \Tilde{\iota} < 0, w \in \mathds{T}\}.
    \end{aligned}
\end{equation}
This section is devoted to prove the following theorem.
\begin{theorem}\label{thm: Transition map close to collision}
    Let $\delta, \sigma > 0$ be small enough with $0 < \sigma \ll \delta$. Consider a curve $\Tilde{\gamma}_\mathrm{in}$ of the form
    \begin{equation}\label{eqn: transverse curve gamma_in(nu)}
         \left(s,\Tilde{\beta}, z\right) =\Tilde{\gamma}_{\mathrm{in}}(\nu) = \left(\delta,\nu,z_{\mathrm{in}}(\nu)\right),\qquad \nu\in (-\sigma,\sigma)
    \end{equation}
    where $z_{\mathrm{in}}$ is a $C^1$ function, which is transverse to $W_\mu^s(S^-)\cap \Tilde{\Sigma}_{h}^{<}$ at 
    \begin{equation}\label{eqn: Intersection point ps Theorem 5.3}
        \Tilde{p}_s =\Tilde{\gamma}_{\mathrm{in}}(0) = \left(\delta, 0, z_{\mathrm{in}}(0)\right).
    \end{equation}
    Then, if we restrict the curve $\Tilde{\gamma}_\mathrm{in}$ for $\nu\in (0,\sigma)$, we have
    \begin{itemize}
    \item The transition map $\Tilde{f}\colon \Tilde{\Sigma}_{h}^{<,+} \to \Tilde{\Sigma}_{h}^{>,-}$ maps the curve $\Tilde{\gamma}_\mathrm{in}$    to a curve $\Tilde{\gamma}_{\mathrm{out}} \subset \Tilde{\Sigma}_{h}^{>}$, parameterized as
        \begin{equation*}
    \begin{aligned}
    \Tilde{\gamma}_{\mathrm{out}}(\nu) = \Tilde{f}(\Tilde{\gamma}_{\mathrm{in}}(\nu))= \left(s,\Tilde{\iota}, w\right) = \left(\delta, -\nu +\mathcal{O}_1(\delta \nu), z_{\mathrm{in}}(\nu) + \mathcal{O}_1(\delta^2\nu,\nu^2)\right)\qquad \nu\in (0,\sigma).
    \end{aligned}
    \end{equation*}
Moreover, it has a well-defined limit 
        \begin{equation}\label{eqn: Image of the intersection point Theorem 5.3}
        \Tilde{p}_u =   \lim_{\nu\to 0^+} \Tilde{\gamma}_{\mathrm{out}}(\nu) = \left(\delta, 0, z_{\mathrm{in}}(0)\right) \in \unstableManifoldSplusmu\cap\Tilde{\Sigma}_h^>.
    \end{equation}
    \item 
    The tangent vector $\Tilde{\gamma}_{\mathrm{out}}'(\nu)$ has a well-defined limit as $\nu\to 0$, which is of the form
   \begin{equation*}
    \begin{aligned}
        \Tilde{\gamma}_{\mathrm{out}}'(0) =&\lim_{\nu\to 0^+}\Tilde{\gamma}_{\mathrm{out}}'(\nu)=\left(0,-1 +\mathcal{O}_1(\delta), z_{\mathrm{in}}'(0) + \mathcal{O}_1(\delta)\right).\\
    \end{aligned}
    \end{equation*}   
 and, therefore, $\Tilde{\gamma}_{\mathrm{out}}$   is transverse to $W_\mu^u(S^+)\cap \Tilde{\Sigma}_{h}^{>}$ at $\Tilde{p}_u$.

    \end{itemize}
\end{theorem}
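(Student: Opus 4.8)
The plan is to realise $\Tilde{f}$ as a composition of three maps following the heteroclinic chain from $\circleSminus$ along $\Gamma$ to $\circleSplus$: a local passage near $\circleSminus$ governed by \eqref{eqn: Eq motion in coordinates (s,Tilde(beta),z)}, a regular ``global'' flow map along the compact part of $\Gamma$ that stays away from both $\circlesSpm$, and a local passage near $\circleSplus$ governed by \eqref{eqn: Eq motion in coordinates (s,iota,w)}. Everything takes place inside the fixed energy level $\generalMcGeheeInvariantManifold$, so no additional constraint has to be tracked (the $h$-dependence sits in $\lambda(\mu,h)$). The two local passages are the core of the argument; the global one is a bounded-time regular flow map and amounts to bookkeeping.

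First I would analyse the local map near $\circleSminus$: follow an orbit of \eqref{eqn: Eq motion in coordinates (s,Tilde(beta),z)} from the entry face $\{s=\delta\}$, where $\angleStraightWsSminus=\nu>0$ is the transversality parameter, to the exit cross-section $\{\angleStraightWsSminus\sim\delta\}$ of $\mathcal{B}_\delta^-$. Two quantitative facts drive everything. \emph{(i) Near-conservation of $s\angleStraightWsSminus$:} one has $\tfrac{d}{d\tau}\log(s\angleStraightWsSminus)=\tfrac{\constantEquilibrium}{2}(\Tilde g_2-\Tilde g_1)$, and since along the orbit $s\approx\delta e^{-\constantEquilibrium\tau/2}$ and $\angleStraightWsSminus\approx\nu e^{\constantEquilibrium\tau/2}$, the bounds $\Tilde g_1=\mathcal{O}_2(s,\angleStraightWsSminus)$, $\Tilde g_2=\mathcal{O}_1(s^2,\angleStraightWsSminus)$ give $\int(\Tilde g_2-\Tilde g_1)\,d\tau=\mathcal{O}(\delta)$; hence the exit value of $s$ equals $\nu\,(1+\mathcal{O}(\delta))$ and the transit time is $\tfrac{2}{\constantEquilibrium}\log(\delta/\nu)+\mathcal{O}(1)$. \emph{(ii)} The $\straightenFiberWsSminus$-equation has right-hand side $s\angleStraightWsSminus(4\lambda s+\Tilde g_3)$, of size $\mathcal{O}(s\cdot s\angleStraightWsSminus)$, so $\Delta\straightenFiberWsSminus=\mathcal{O}\!\big(\delta\nu\int s\,d\tau\big)=\mathcal{O}(\delta^2\nu)$ and $\straightenFiberWsSminus$ is essentially preserved. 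Read in ``mixed'' section coordinates — entry coordinates $(\angleStraightWsSminus,\straightenFiberWsSminus)$ on $\{s=\delta\}$, exit coordinates $(s,\straightenFiberWsSminus)$ on $\{\angleStraightWsSminus\sim\delta\}$ — the local map is thus $C^0$-close to the correspondence $\angleStraightWsSminus\leftrightarrow s$, $\straightenFiberWsSminus\leftrightarrow\straightenFiberWsSminus$ with errors of the stated orders. The main obstacle is upgrading this to uniform $C^1$ control although the transit time blows up as $\nu\to0^+$: one writes the variational equations along the orbit and checks that the exponentially contracting $s$-direction and the ``used-up'' expanding $\angleStraightWsSminus$-direction contribute bounded amounts to the section-to-section derivative, while the degenerate $\straightenFiberWsSminus$-direction picks up only $\mathcal{O}(\delta^2)$ because its variational right-hand side is again of the form $s\angleStraightWsSminus$ times bounded terms; a Gronwall argument using the near-invariant $s\angleStraightWsSminus$ as a crutch closes the estimates, and the $\mathcal{O}(\nu^2)$ correction to $\straightenFiberWsSminus$ drops out of the same computation (it enters through $\int_0^T s\,d\tau=\tfrac{2}{\constantEquilibrium}(\delta-\nu)$).

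Next I would treat the global map along $\Gamma$. On the compact part of $\Gamma$ away from $\circlesSpm$ the flow is regular with uniformly bounded transit time, so the corresponding section-to-section map is a diffeomorphism preserving the invariant set $\{s=0\}\subset\collisionManifold$. Two observations make it ``trivial'' to the needed order. Plugging $\thetaHeteroclinicConnectionCollisionmu(\tau;\initialtheta)=\initialtheta+\pi+2\alphaHeteroclinicConnectionCollisionmu(\tau)$ and $\alphaHeteroclinicConnectionCollisionmu(\tau)=2\tan^{-1}(\tanh(\constantEquilibrium\tau/4))$ from Lemma \ref{lemma: Linear part of motion close to S+-} into the straightening changes of Proposition \ref{proposition: Straightening local diffeomorphisms and motions}, one finds that on $\Gamma$ both $\straightenFiberWsSminus=\initialtheta$ and $\straightenFiberWuSplus=\initialtheta$, so the global map is the identity on the heteroclinic label up to $\mathcal{O}(s)$ corrections — this is the content of Remark \ref{remark: Translation from (s,Tilde(beta),z) to (s,Tilde(iota),w)} ($\straightenFiberWuSplus=\straightenFiberWsSminus+\mathcal{O}_3(s)$). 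Moreover $\tfrac{d}{d\tau}\log s=\tfrac{\constantEquilibrium}{2}\sin\alphaHeteroclinicConnectionCollisionmu(\tau)\,(1+\cdots)$ with $\alphaHeteroclinicConnectionCollisionmu$ odd in $\tau$, and the neighbourhoods $\mathcal{B}_\delta^{\pm}$ — hence the entry/exit times along $\Gamma$ — are symmetric under the reversibility \eqref{eqn: Symmetry of the PCRTBP in synodical polar coordinates centered at P1}, so $\int\sin\alphaHeteroclinicConnectionCollisionmu\,d\tau=0$ over the transit and the $s$-multiplier of the global map is $1+\mathcal{O}(\delta)$. Hence, after the $\circleSminus$-passage and the global map, the orbit enters the neighbourhood of $\circleSplus$ with $s\asymp\nu(1+\mathcal{O}(\delta))$, with $\straightenFiberWuSplus$-label $z_{\mathrm{in}}(\nu)+\mathcal{O}_1(\delta^2\nu,\nu^2)$ and with its $\angleStraightWuSplus$-value of order $\delta$, all $C^1$ in $\nu$.

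Finally, the local map near $\circleSplus$ is the time-reverse of the one near $\circleSminus$: applying the same analysis to \eqref{eqn: Eq motion in coordinates (s,iota,w)} from the entry face of $\mathcal{B}_\delta^+$ to $\{s=\delta\}=\Tilde{\Sigma}_h^>$ gives, via near-conservation of $s\angleStraightWuSplus$, $\angleStraightWuSplus_{\mathrm{out}}\asymp-\,s_{\mathrm{in}}$ — the sign placing the image in $\Tilde{\Sigma}_h^{>,-}$ as claimed, which is part of the tracking through the three passages — together with $\straightenFiberWuSplus$ changed by $\mathcal{O}(\delta^2\nu)$ and $C^1$-closeness to the correspondence $s\leftrightarrow\angleStraightWuSplus$, $\straightenFiberWuSplus\leftrightarrow\straightenFiberWuSplus$. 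Composing the three maps yields $\Tilde{\gamma}_{\mathrm{out}}(\nu)=(\delta,\,-\nu+\mathcal{O}_1(\delta\nu),\,z_{\mathrm{in}}(\nu)+\mathcal{O}_1(\delta^2\nu,\nu^2))$, exactly as stated. Letting $\nu\to0^+$ all error terms vanish, so $\Tilde{\gamma}_{\mathrm{out}}(\nu)\to(\delta,0,z_{\mathrm{in}}(0))=\Tilde{p}_u$, which lies in $\{\angleStraightWuSplus=0,\ s=\delta\}=\unstableManifoldSplusmu\cap\Tilde{\Sigma}_h^>$; differentiating, $\Tilde{\gamma}_{\mathrm{out}}'(\nu)=(0,-1+\mathcal{O}(\delta),z_{\mathrm{in}}'(\nu)+\mathcal{O}(\delta))$ has the stated limit, whose $\angleStraightWuSplus$-component $-1+\mathcal{O}(\delta)$ is nonzero for $\delta$ small, and since $\unstableManifoldSplusmu\cap\Tilde{\Sigma}_h^>=\{(\delta,0,\straightenFiberWuSplus)\}$ has tangent vector $(0,0,1)$, the two tangent vectors span $T_{\Tilde{p}_u}\Tilde{\Sigma}_h^>$ and the asserted transversality follows. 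The genuinely delicate point is the uniform $C^1$ control of the two saddle passages; everything else is composition and the limit arguments just indicated.
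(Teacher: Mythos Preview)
Your three-part decomposition --- local passage near $S^-$, global map along the heteroclinic $\Gamma$, local passage near $S^+$ --- is exactly what the paper does: it introduces intermediate sections $\tilde{\Sigma}_1^\pm=\{\tilde{\beta}=\pm\delta\}$ and $\tilde{\Sigma}_2^\pm$ and writes $\tilde f=\mathcal{T}^-_{2,u}\circ\mathcal{T}^+_{1,2}\circ\mathcal{T}^+_{s,1}$ (Lemmas \ref{lemma: A lambda lemma for the transition map close to collision}--\ref{lemma: transition map from C2- to Cu}). The difference lies in how the local passages are made rigorous. You invoke the near-conserved quantity $s\tilde{\beta}$ and sketch a Gronwall argument for the variational equations; the paper instead eliminates $\tau$, writes the orbit as an integral equation with $\tilde{\beta}$ as independent variable, and runs a Banach fixed-point argument in the weighted space $\|f\|_{\mathcal{Z}_s}=\sup_{\tilde{\beta}\in[\nu,\delta]}\tfrac{\tilde{\beta}}{\delta\nu}|f(\tilde{\beta})|$, then repeats for the variational system with a second weighted norm $\|f\|_{\mathcal{Z}_\partial}=\sup_{\tilde{\beta}}\tfrac{\tilde{\beta}}{\delta}|f(\tilde{\beta})|$ to obtain the uniform-in-$\nu$ $C^1$ control you correctly flag as the delicate point. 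Your near-invariant heuristic is sound at the $C^0$ level, but note that a naive Gronwall on the variational system over a transit time $\sim\tfrac{2}{m_0}\log(\delta/\nu)$ would blow up like $\nu^{-C}$; the weighted fixed-point argument is precisely what circumvents this, and your sketch would need an equivalent device. For the global map the paper explicitly integrates the first variation along the heteroclinic and finds the $s$-multiplier equals $1$ exactly (equation \eqref{eqn: Expression of the first component of the s variational at r=0}), which is your reversibility observation made concrete; it also gives $z\mapsto z+\mathcal{O}_2(s)$ there, and since $s\sim\nu$ along that piece this --- not the first local passage as you suggest --- is where the $\mathcal{O}(\nu^2)$ contribution to the final $w$-component originates.
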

\begin{remark}\label{rem:extensioftilde}
In view of Theorem    \ref{thm: Transition map close to collision}, and in particular of \eqref{eqn: Image of the intersection point Theorem 5.3}, we can extend continuously the map $\tilde f$ to points in $(\delta,0,z)\in W^s_{\mu}(S^-) \cap \Tilde{\Sigma}_h^>$ as
\[
\tilde f (\delta,0,z)=(\delta,0,z) \in W^u_{\mu}(S^+)\cap \Tilde{\Sigma}_h^>.
\]
\end{remark}

To prove Theorem \ref{thm: Transition map close to collision}, we are going to consider the following intermediate sections

\begin{equation}\label{eqn: Intermediate subsets transition map C1pm and C2pm}
\begin{aligned}
    \Tilde{\Sigma}_{1}^{\pm} &= \{(s,\angleStraightWsSminus,\straightenFiberWsSminus) \colon 0<s<\delta, \angleStraightWsSminus = \pm \delta, \straightenFiberWsSminus\in \mathds{T}\},\\
    \Tilde{\Sigma}_{2}^{\pm} &= \{(s,\angleStraightWuSplus,\straightenFiberWuSplus) \colon 0 <s<\delta, \angleStraightWsSminus(s,\angleStraightWuSplus,\straightenFiberWuSplus) = \pi \pm \delta , \straightenFiberWuSplus\in \mathds{T}\},
\end{aligned}    
\end{equation}
where $\angleStraightWsSminus(s,\Tilde{\iota},w)$ is obtained from Remark \ref{remark: Translation from (s,Tilde(beta),z) to (s,Tilde(iota),w)} (see Figure \ref{fig:Dynamics close to collision}). We split the transition map into three maps: from $ \Tilde{\Sigma}_{h}^{<,+}$ to $ \Tilde{\Sigma}_{1}^{+}$, from $ \Tilde{\Sigma}_{1}^{+}$ to $ \Tilde{\Sigma}_{2}^{-}$ and from $ \Tilde{\Sigma}_{2}^{-}$ to $\Tilde{\Sigma}_{h}^{>,-}$. The other case, i.e., the transition map from $\Tilde{\Sigma}_{h}^{<,-}$ to $\Tilde{\Sigma}_{h}^{>,+}$, is built analogously.






The following lemma (whose proof is done in Appendix \ref{appendix: A lambda lemma for the transition map close to collision}), provides information regarding the  first transition map.

\begin{lemma}\label{lemma: A lambda lemma for the transition map close to collision}
The transition map 
\begin{equation}\label{eqn: Map from Tilde(Cs) to Tilde(C1)}
    \begin{aligned}
        \mathcal{T}_{s,1}^+ \colon\Tilde{\Sigma}_{h}^{<,+}&\to  \Tilde{\Sigma}_{1}^{+}\\
        (\delta,\Tilde{\beta}_0,z_0) &\mapsto (s_1,\delta,z_1)
    \end{aligned}
\end{equation}
takes the curve $\Tilde{\gamma}_{\mathrm{in}}(\nu)$ defined in \eqref{eqn: transverse curve gamma_in(nu)} with $\nu\in (0,\sigma)$ to a curve $\Tilde{\gamma}_1(\nu)$ defined as

\begin{equation}\label{eqn: Image curve gamma_1(nu)}
    \begin{aligned}
        \Tilde{\gamma}_1(\nu) = \left(s_1(\nu),\delta, z_1(\nu)\right),
    \end{aligned}
\end{equation}
with\begin{equation*}
    \begin{aligned}
        s_1(\nu) &= \nu\left(1+\mathcal{O}_1(\delta)\right)\\
        z_1(\nu) &= z_{\mathrm{in}}(\nu) + \mathcal{O}_1(\delta^2 \nu),
    \end{aligned}
\end{equation*}
where $z_{\mathrm{in}}$ is the $z$-component of the curve $\Tilde{\gamma}_{\mathrm{in}}(\nu)$ described in \eqref{eqn: transverse curve gamma_in(nu)}.

Moreover, for $\nu\in (0,\sigma)$, its tangent vector is of the form

\begin{equation}\label{eqn: Image tangent vector gamma1}
    \begin{aligned}
       \Tilde{\gamma}_1'(\nu) = \left(s_1'(\nu),0,z_1'(\nu)\right),
    \end{aligned}
\end{equation}
where
\begin{equation*}
    \begin{aligned}
        s_1'(\nu) &= 1 + \mathcal{O}_1(\delta)\\
        z_1'(\nu) &= z_{\mathrm{in}}'(\nu) + \mathcal{O}_1(\delta)
    \end{aligned}
\end{equation*}
and it has a well-defined limit as $\nu\to 0$. Thus, $\Tilde{\gamma}_1$ is transverse to $\unstableManifoldCollisionSminusmu \cap \Tilde{\Sigma}_{1}^{+}$ at $\underset{\nu\to 0^+}{\lim}\Tilde{\gamma}_1(\nu)=(0,\delta, z_{\mathrm{in}}(0))$.
\end{lemma}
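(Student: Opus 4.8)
The plan is to pass to the expanding coordinate $\Tilde{\beta}$ as independent variable and to exploit that, by \eqref{eqn: Eq motion in coordinates (s,Tilde(beta),z)}--\eqref{eqn: Approximation of the nonlinear part for (s,Tilde(beta),z)}, the product $s\,\Tilde{\beta}$ is an \emph{approximate first integral} near $\circleSminus$. Since $\Tilde{\beta}'=\frac{m_0}{2}\Tilde{\beta}(1+\Tilde{g_2})$ with $|\Tilde{g_2}|<1$ on $\Tilde{\Sigma}_{h}^{<,+}$ when $\delta$ is small, along the orbit through $\Tilde{\gamma}_{\mathrm{in}}(\nu)=(\delta,\nu,z_{\mathrm{in}}(\nu))$ the coordinate $\Tilde{\beta}$ increases strictly, and $\Tilde{\beta}'\geq c\,\Tilde{\beta}$ with $c=\frac{m_0}{2}(1-C\delta)>0$ makes it reach $\delta$ in finite time $T(\nu)=\int_\nu^\delta\frac{2\,d\Tilde{\beta}}{m_0\,\Tilde{\beta}(1+\Tilde{g_2})}=\frac{2}{m_0}\log(\delta/\nu)\bigl(1+\mathcal{O}_1(\delta)\bigr)$. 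Thus the map $\mathcal{T}_{s,1}^+$ in \eqref{eqn: Map from Tilde(Cs) to Tilde(C1)} is well defined, and taking $\Tilde{\beta}$ as the new ``time'' it becomes the ``time $\delta$'' map (from initial ``time'' $\nu$) of the non-autonomous system
\begin{equation*}
\frac{ds}{d\Tilde{\beta}}=-\frac{s}{\Tilde{\beta}}\bigl(1+G_1(s,\Tilde{\beta},z)\bigr),\qquad
\frac{dz}{d\Tilde{\beta}}=\frac{2s\bigl(4\lambda(\mu,h)s+\Tilde{g_3}\bigr)}{m_0(1+\Tilde{g_2})}=\mathcal{O}_1(s^2),
\end{equation*}
with $G_1=\frac{\Tilde{g_1}-\Tilde{g_2}}{1+\Tilde{g_2}}=\mathcal{O}_1(s^2,\Tilde{\beta})$ and initial data $s(\nu)=\delta$, $z(\nu)=z_{\mathrm{in}}(\nu)$, integrated over $\Tilde{\beta}\in[\nu,\delta]$.

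For the a priori bounds I would set $\rho(\Tilde{\beta})=s(\Tilde{\beta})\,\Tilde{\beta}$, so $\rho(\nu)=\delta\nu$ and $\frac{d}{d\Tilde{\beta}}\log\rho=-\frac{G_1}{\Tilde{\beta}}$ with $|G_1|\leq C\bigl(\rho^2/\Tilde{\beta}^2+\Tilde{\beta}\bigr)$, and close a bootstrap: as long as $\rho\leq 2\delta\nu$ on $[\nu,\delta]$,
\begin{equation*}
\int_\nu^\delta\frac{|G_1|}{\Tilde{\beta}}\,d\Tilde{\beta}\leq C\int_\nu^\delta\Bigl(\frac{4\delta^2\nu^2}{\Tilde{\beta}^3}+1\Bigr)\,d\Tilde{\beta}\leq C\bigl(2\delta^2+\delta\bigr)=\mathcal{O}_1(\delta),
\end{equation*}
so $\rho(\delta)=\delta\nu\,e^{\mathcal{O}_1(\delta)}<2\delta\nu$ for $\delta$ small. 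Hence $s_1(\nu)=\rho(\delta)/\delta=\nu\bigl(1+\mathcal{O}_1(\delta)\bigr)$; and since $s(\Tilde{\beta})\leq 2\delta\nu/\Tilde{\beta}$, integrating the $z$--equation gives $|z_1(\nu)-z_{\mathrm{in}}(\nu)|\leq C\int_\nu^\delta s^2\,d\Tilde{\beta}\leq 4C\delta^2\nu^2\int_\nu^\delta\Tilde{\beta}^{-2}\,d\Tilde{\beta}\leq 4C\delta^2\nu$, i.e. $z_1(\nu)=z_{\mathrm{in}}(\nu)+\mathcal{O}_1(\delta^2\nu)$. These are the expansions for $\Tilde{\gamma}_1$ in \eqref{eqn: Image curve gamma_1(nu)}, and in particular $s_1(\nu)\to0$, $z_1(\nu)\to z_{\mathrm{in}}(0)$, so $\Tilde{\gamma}_1(\nu)\to(0,\delta,z_{\mathrm{in}}(0))\in\unstableManifoldCollisionSminusmu\cap\Tilde{\Sigma}_{1}^{+}$ by \eqref{eqn: straightening Wus S-}.

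For the tangent vector $\Tilde{\gamma}_1'(\nu)=(s_1'(\nu),0,z_1'(\nu))$ I would differentiate the reduced flow in $\nu$, noting the dependence of the lower endpoint and the initial condition on $\nu$: $\partial_\nu s(\nu;\nu)=-\frac{ds}{d\Tilde{\beta}}\big|_{\Tilde{\beta}=\nu}=\frac{\delta}{\nu}\bigl(1+\mathcal{O}_1(\delta)\bigr)$, $\partial_\nu z(\nu;\nu)=z_{\mathrm{in}}'(\nu)+\mathcal{O}_1(\delta^2)$. The resulting linear variational equation in $\Tilde{\beta}$ has dominant diagonal part $\frac{d}{d\Tilde{\beta}}\partial_\nu s=-\Tilde{\beta}^{-1}\partial_\nu s+\cdots$, with fundamental solution $\Tilde{\beta}\mapsto\nu/\Tilde{\beta}$; by the a priori estimates the off-diagonal and higher-order coefficients are controlled by $\mathcal{O}_1(s^2/\Tilde{\beta}+s)$ and $\mathcal{O}_1(s^2)$, whose integrals over $[\nu,\delta]$ are $\mathcal{O}_1(\delta)$ and $\mathcal{O}_1(\delta^2\nu)$ \emph{uniformly in $\nu$}. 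Variation of constants then yields $s_1'(\nu)=\partial_\nu s(\delta;\nu)=\frac{\delta}{\nu}\cdot\frac{\nu}{\delta}\bigl(1+\mathcal{O}_1(\delta)\bigr)=1+\mathcal{O}_1(\delta)$ and $z_1'(\nu)=z_{\mathrm{in}}'(\nu)+\mathcal{O}_1(\delta)$; rescaling $\Tilde{\beta}=\nu u$ turns the relevant integrals into integrals over $u\in[1,\delta/\nu]$ of pointwise convergent, dominated integrands, so $\Tilde{\gamma}_1'(\nu)$ has a well-defined limit as $\nu\to0^+$ with nonzero $s$--component. Since $\unstableManifoldCollisionSminusmu\cap\Tilde{\Sigma}_{1}^{+}=\{(0,\delta,z):z\in\mathds{T}\}$ has tangent $(0,0,1)$, transversality of $\Tilde{\gamma}_1$ to it at $(0,\delta,z_{\mathrm{in}}(0))$ follows.

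The main obstacle is precisely this uniformity in $\nu$ as $\nu\to0^+$: the transition time $T(\nu)\sim\frac{2}{m_0}\log(\delta/\nu)$ diverges, so merely using the size of $\Tilde{g_1},\Tilde{g_2},\Tilde{g_3}$ in the box $\mathcal{B}_\delta^-$ produces errors of order $\mathcal{O}(\delta\log(1/\nu))$, which is useless. What saves the argument is the vanishing orders in \eqref{eqn: Approximation of the nonlinear part for (s,Tilde(beta),z)}: after the substitution $s\approx\delta\nu/\Tilde{\beta}$ the relevant error integrals $\int_\nu^\delta(\,\cdot\,)\,d\Tilde{\beta}$ concentrate geometrically near $\Tilde{\beta}=\nu$ and are $\mathcal{O}_1(\delta)$ independently of $\nu$; the bootstrap on $\rho=s\,\Tilde{\beta}$ is what makes this rigorous. (Alternatively, a $C^1$ inclination lemma for the normally hyperbolic circle $\circleSminus$ gives transversality abstractly, but not the explicit $\mathcal{O}_1(\delta)$ control needed in the sequel.)
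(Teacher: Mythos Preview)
Your proof is correct and follows essentially the same strategy as the paper's: both pass to $\Tilde{\beta}$ as independent variable and exploit that $s\,\Tilde{\beta}$ is an approximate first integral, which yields the uniform bound $s(\Tilde{\beta};\nu)\leq C\delta\nu/\Tilde{\beta}$ that makes all error integrals $\mathcal{O}_1(\delta)$ independently of $\nu$. The only difference is packaging: the paper encodes this bound in a weighted norm $\|f\|_{\mathcal{Z}_s}=\sup_{\Tilde{\beta}\in[\nu,\delta]}\frac{\Tilde{\beta}}{\delta\nu}|f(\Tilde{\beta})|$ and runs a Banach fixed point argument (and a second one, with norm $\|f\|_{\mathcal{Z}_\partial}=\sup\frac{\Tilde{\beta}}{\delta}|f|$, for the variational equation), whereas you obtain the same control by a direct bootstrap on $\rho=s\,\Tilde{\beta}$ and then integrate the variational system by variation of constants. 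Your route is slightly more elementary; the paper's is more systematic and makes the uniformity in $\nu$ automatic once the norm is chosen.
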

The next step is to define a map from $ \Tilde{\Sigma}_{1}^{+}$ to $ \Tilde{\Sigma}_{2}^{-}$ (see \eqref{eqn: Intermediate subsets transition map C1pm and C2pm}), which we denote by $\mathcal{T}^+_{1,2}$. To this end, we consider $\varepsilon>0$ small enough and the following domain

\begin{equation*}
    U = \bigg\{(s,\Tilde{\beta},z) \colon \delta \leq \Tilde{\beta}\leq \pi -\delta, 0<s<\varepsilon, |z-z_1(\nu)|< \varepsilon, \  \nu\in (0,\sigma)\bigg\},
\end{equation*}
where $z_1$ is given in \eqref{eqn: Image curve gamma_1(nu)}. 
\begin{figure}[h!]
    \centering
    \includegraphics[width=0.75\linewidth]{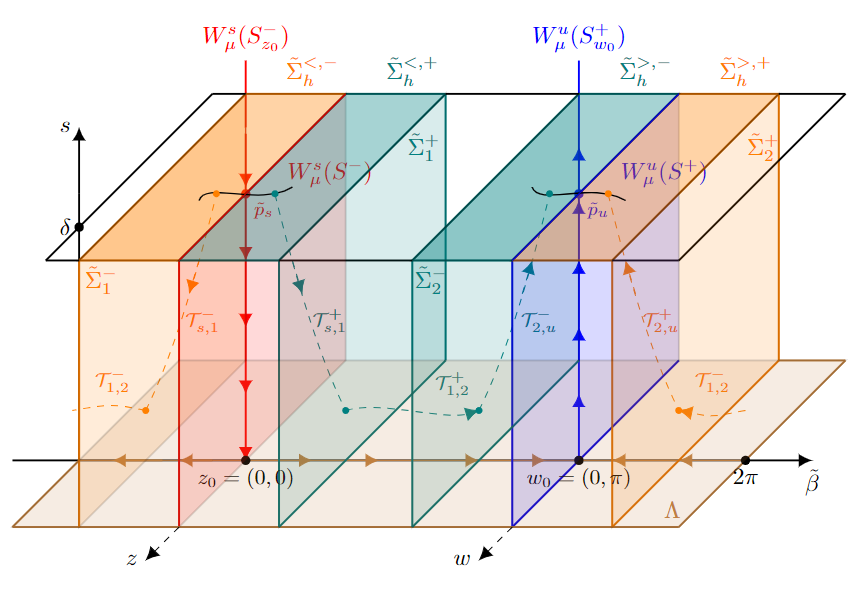}
    \caption{Representation of the dynamics near the collision manifold. The line $\Tilde{\beta} = 2\pi$ is identified with $\Tilde{\beta} = 0$. The transition maps are given by the composition of the maps $\mathcal{T}_{2,u}^{\mp} \circ \mathcal{T}_{1,2}^\pm \circ \mathcal{T}_{s,1}^\pm$. Recall that we are considering different systems coordinates in neighborhoods of the two circles $S^\pm$.}
    \label{fig:Dynamics close to collision}
\end{figure}

Note that the values of the variable $\angleStraightWsSminus$ in $U$ are no longer small. However, since  $s$  is close to $0$, the diffeomorphism $\Gamma_-$ in Proposition \ref{proposition: Straightening local diffeomorphisms and motions}, which is linear in $\Tilde{\beta}$ and  straightens the invariant manifolds of $S^-$, is still well-defined. 
Performing the changes \eqref{eqn: Change of coordinates beta-Tilde(beta)} and \eqref{eqn: Change of coordinates y-z} to  system \eqref{eqn: Eq motion (s,beta,y) taylor s}, as we did in  the proof of Proposition \ref{proposition: Straightening local diffeomorphisms and motions}, but without Taylor expanding in the $\tilde \beta$ coordinate, we obtain the system
\begin{equation}\label{eqn: Eq motion in coordinates (s,Tilde(beta),z) from C1 to C2}
    \begin{aligned}
        s' &= -\frac{m_0}{2}s\left(\cos\Tilde{\beta} + \Tilde{G_1}(s,\Tilde{\beta},z)\right)\\
        \Tilde{\beta}{}' &=\frac{m_0}{2}\sin\Tilde{\beta} + \Tilde{G_2}(s,\Tilde{\beta},z) \\
        z'&= s\left(\lambda(\mu,h)s\sin\Tilde{\beta} + \Tilde{G_3}(s,\Tilde{\beta},z)\right),
    \end{aligned}
\end{equation}
where
\begin{equation*}
    \Tilde{G_1}(s,\Tilde{\beta},z) = \mathcal{O}_2(s), \quad \Tilde{G_2}(s,\Tilde{\beta},z) = \mathcal{O}_2(s), \quad \Tilde{G_3}(s,\Tilde{\beta},z) = \mathcal{O}_3(s).
\end{equation*}

The following lemma (whose proof is done in Appendix \ref{appendix: An implicit function theorem for the transition map close to collision}), analyzes the transition map $\mathcal{T}_{1,2}^+$.

\begin{lemma}\label{lemma: Expression of gamma_2(nu) in coordinates (s,Tilde(beta),z)}
    The transition map $\mathcal{T}^{+}_{1,2}$ from $ \Tilde{\Sigma}_{1}^{+}$ to $ \Tilde{\Sigma}_{2}^{-}$, defined in \eqref{eqn: Intermediate subsets transition map C1pm and C2pm} and expressed in coordinates $(s,\Tilde{\beta},z)$,
    \begin{equation}\label{eqn: transition map T12}
        \begin{aligned}
            \mathcal{T}_{1,2}^{+}\colon  \Tilde{\Sigma}_{1}^{+} &\to  \Tilde{\Sigma}_{2}^{-}\\
            (s_1,\delta,z_1) &\mapsto (s_2,\pi -\delta,z_2),
        \end{aligned}
    \end{equation}
    maps the curve $\Tilde{\gamma}_1$ in \eqref{eqn: Image curve gamma_1(nu)} to a curve $\Tilde{\gamma}_2$ given by
    \begin{equation}\label{eqn: Curve gamma_2(nu)}
        \Tilde{\gamma}_2(\nu) = (s_2(\nu),\pi-\delta, z_2(\nu)), \quad \nu \in (0,\sigma)
    \end{equation}
    with
    \begin{equation}\label{eqn: Curve gamma_2(nu)eqn: Curve gamma_2(nu) in coordinates (s,tilde(beta),z)}
    \begin{aligned}
        s_2(\nu) &=\nu\left(1+\mathcal{O}_1(\delta)\right),\\
        z_2(\nu) &= z_{\mathrm{in}}(\nu) + \mathcal{O}_1(\delta^2\nu, \nu^2),
    \end{aligned}
    \end{equation}
    where $z_{\mathrm{in}}$ is to the $z$-component of the curve $\Tilde{\gamma}_{\mathrm{in}}$ in \eqref{eqn: transverse curve gamma_in(nu)}.
    
    Moreover, for $\nu\in (0,\sigma)$, its tangent vector is of the form
    \begin{equation}\label{eqn: Tangent vector of gamma_2}
        \Tilde{\gamma}_2'(\nu) = \left(s_2'(\nu), 0 , z_2'(\nu)\right)
    \end{equation}
    and
    \begin{equation*}
    \begin{aligned}
    s_2'(\nu) &= 1 + \mathcal{O}_1(\delta), \\
    z_2'(\nu) &= z_{\mathrm{in}}'(\nu) + \mathcal{O}_2(\delta).
    \end{aligned}
    \end{equation*} 
    and it has a well-defined limit as $\nu\to 0$. Thus, the curve $\Tilde{\gamma}_2$ is transverse to $\stableManifoldCollisionSplusmu \cap \Tilde{\Sigma}_{2}^{-}$ at  $\underset{\nu\to 0^+}{\lim}\Tilde{\gamma}_2(\nu)=(0,\pi-\delta,z_{\mathrm{in}}(0))$.
\end{lemma}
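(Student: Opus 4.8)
The plan is to realize $\mathcal{T}_{1,2}^{+}$ as a time-$\tau$ map of the regularized flow and to control it by using $\Tilde{\beta}$ as time. On the domain $U$ one has $\Tilde{\beta}\in[\delta,\pi-\delta]$, hence $\sin\Tilde{\beta}\geq\sin\delta>0$; since $\Tilde{G}_2=\mathcal{O}_2(s)$ in \eqref{eqn: Eq motion in coordinates (s,Tilde(beta),z) from C1 to C2} and, along the orbits issued from $\Tilde{\gamma}_{\mathrm{in}}(\nu)$ with $\nu\in(0,\sigma)$, the coordinate $s$ stays of order $\nu\ll\delta$ (see below), the $\Tilde{\beta}$-equation gives $\Tilde{\beta}{}'=\frac{m_0}{2}\sin\Tilde{\beta}+\mathcal{O}_2(s)>0$. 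Thus $\Tilde{\beta}$ is strictly increasing along the orbit, and the first time $\tau(\nu)$ at which it reaches $\pi-\delta$ is characterized by $\Tilde{\beta}\big(\tau(\nu);\Tilde{\gamma}_{\mathrm{in}}(\nu)\big)=\pi-\delta$. Because the regularized vector field \eqref{eqn:Motion regularized McGehee Collision} (equivalently \eqref{eqn: Eq motion in coordinates (s,Tilde(beta),z) from C1 to C2}) is smooth up to and across $\{s=0\}$, and because for $\nu=0$ the orbit lies in the collision torus $\{s=0\}$, where $z'=0$ and $\Tilde{\beta}{}'=\frac{m_0}{2}\sin\Tilde{\beta}$ so that it reaches $\{\Tilde{\beta}=\pi-\delta\}$ in the finite time $\int_\delta^{\pi-\delta}2(m_0\sin\Tilde{\beta})^{-1}\,d\Tilde{\beta}$, the Implicit Function Theorem provides a $C^1$ extension of $\tau$ to $[0,\sigma)$. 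Hence $\Tilde{\gamma}_2(\nu):=\mathcal{T}_{1,2}^{+}(\Tilde{\gamma}_1(\nu))$ is a $C^1$ curve on $(0,\sigma)$, its limit as $\nu\to0^+$ is $(0,\pi-\delta,z_{\mathrm{in}}(0))$ (since $z$ is constant along the $\{s=0\}$ orbit), and the tangent $\Tilde{\gamma}_2'(\nu)$ extends continuously to $\nu=0$.

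For the explicit asymptotics I would divide the $s$- and $z$-equations of \eqref{eqn: Eq motion in coordinates (s,Tilde(beta),z) from C1 to C2} by $\Tilde{\beta}{}'$, getting a non-autonomous system on $\Tilde{\beta}\in[\delta,\pi-\delta]$ of the form
\[
\frac{ds}{d\Tilde{\beta}}=-s\cot\Tilde{\beta}+s\,\mathcal{O}_2(s)\big/\sin^2\Tilde{\beta},\qquad \frac{dz}{d\Tilde{\beta}}=\frac{2\lambda(\mu,h)}{m_0}\,s^2+\mathcal{O}_4(s)\big/\sin\Tilde{\beta},
\]
with all remainders smooth in $(s,z)$ and uniform in $\Tilde{\beta}$. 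The $s$-equation integrates, to leading order, to $s(\Tilde{\beta})=s_1\sin\delta/\sin\Tilde{\beta}$, so $s\leq s_1\leq\sigma$ throughout and, since $\sin(\pi-\delta)=\sin\delta$, $s_2=s_1(1+\mathcal{O}_1(\delta))$; combined with $s_1(\nu)=\nu(1+\mathcal{O}_1(\delta))$ from Lemma~\ref{lemma: A lambda lemma for the transition map close to collision} this yields the stated $s_2(\nu)$. Substituting $s(\Tilde{\beta})$ into the $z$-equation, the leading term $\frac{2\lambda}{m_0}s_1^2\sin^2\delta\,\sin^{-2}\Tilde{\beta}$ integrates to a quantity of order $s_1^2\delta$ (the $\sin^{-2}$-integral producing a $\cot\delta$) and the remainder integrates to something of higher order, giving $z_2(\nu)=z_1(\nu)+\mathcal{O}_1(\nu^2\delta)$; with $z_1(\nu)=z_{\mathrm{in}}(\nu)+\mathcal{O}_1(\delta^2\nu)$ from Lemma~\ref{lemma: A lambda lemma for the transition map close to collision} this is the claimed $z_2(\nu)=z_{\mathrm{in}}(\nu)+\mathcal{O}_1(\delta^2\nu,\nu^2)$. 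Differentiating these relations in $\nu$, using the tangent estimates of Lemma~\ref{lemma: A lambda lemma for the transition map close to collision} for $\Tilde{\gamma}_1'$ and the fact that the Jacobian of $(s_1,z_1)\mapsto(s_2,z_2)$ is $\mathrm{Id}+\mathcal{O}(\delta)$, produces $\Tilde{\gamma}_2'(\nu)=(1+\mathcal{O}_1(\delta),0,z_{\mathrm{in}}'(\nu)+\mathcal{O}_2(\delta))$ with a limit at $\nu=0$.

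Finally, transversality: by \eqref{eqn: straightening Wus S+} together with Lemma~\ref{lemma: Linear part of motion close to S+-}, $\stableManifoldCollisionSplusmu$ is the branch $\{s=0\}$ of the collision torus, so $\stableManifoldCollisionSplusmu\cap\Tilde{\Sigma}_2^-$ is the curve $\{s=0,\ \Tilde{\beta}=\pi-\delta\}$, whose tangent direction inside the two-dimensional section $\Tilde{\Sigma}_2^-$ is the $z$-axis. Since $\Tilde{\gamma}_2'(0)$ has $s$-component $1+\mathcal{O}_1(\delta)\neq0$, the curve $\Tilde{\gamma}_2$ is transverse to $\stableManifoldCollisionSplusmu\cap\Tilde{\Sigma}_2^-$ at $\Tilde{p}_u=(0,\pi-\delta,z_{\mathrm{in}}(0))$, which completes the proof.

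The main obstacle is the uniform control of the remainder integrals near the two endpoints $\Tilde{\beta}\approx\delta$ and $\Tilde{\beta}\approx\pi-\delta$, where $\cot\Tilde{\beta}$ and $\csc\Tilde{\beta}$ are of order $\delta^{-1}$: one must check that each remainder, after being integrated against powers of $\csc\Tilde{\beta}$, is still bounded by the advertised powers of $\delta$ and $\nu$, which is what forces the scale separation $0<\sigma\ll\varepsilon\ll\delta$. By contrast, the regularity at $\nu=0$ (i.e. $s=0$) is comparatively benign here: unlike the flanking transitions $\mathcal{T}_{s,1}$ and $\mathcal{T}_{2,u}$, the transition time $\tau(\nu)$ stays uniformly bounded, so the limit curve and its tangent exist automatically once one invokes smoothness of the regularized flow across $\{s=0\}$.
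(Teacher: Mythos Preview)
Your argument is correct and reaches the same conclusions. The paper takes a slightly different (and somewhat shorter) route: instead of reparametrizing by $\tilde\beta$ and bounding the resulting integrals, it keeps the regularized time $\tau$ and Taylor-expands the flow $\Phi(\tau;s,\delta,z)$ in the initial value $s$ about $s=0$. Since the heteroclinic orbit on $\{s=0\}$ reaches $\{\tilde\beta=\pi-\delta\}$ in the finite time $\tau_h$ (your IFT step is identical here), one only needs the first variation $\xi_{s,0}^1(\tau)=\partial_s\Phi_s(\tau;0,\delta,z)$, which solves $\xi'=-\tfrac{m_0}{2}\cos\tilde\beta_h(\tau)\,\xi$, $\xi(0)=1$, and one checks $\xi_{s,0}^1(\tau_h)=1$ --- equivalently, $s\sin\tilde\beta$ is conserved at linear order, which is precisely your leading-order integral $s(\tilde\beta)=s_1\sin\delta/\sin\tilde\beta$ evaluated at $\tilde\beta=\pi-\delta$. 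Similarly $\xi_{s,0}^2=\xi_{s,0}^3=0$. This yields $\mathcal T_{1,2}^+(s,\delta,z)=(s+\mathcal O_2(s),\,\pi-\delta,\,z+\mathcal O_2(s))$, and the estimates for $\tilde\gamma_2$ and $\tilde\gamma_2'$ follow by composing with the $\tilde\gamma_1$ expansions of Lemma~\ref{lemma: A lambda lemma for the transition map close to collision}.

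The trade-off is exactly what you flag in your last paragraph. Your $\tilde\beta$-parametrization makes the $\delta$-dependence of every remainder explicit, at the price of having to control integrals carrying $\csc^2\tilde\beta$ factors near the endpoints (this is where the scale separation $\sigma\ll\delta$ does real work). The paper's $\tau$-parametrization avoids those endpoint singularities altogether --- the transition time is finite and the flow is smooth across $\{s=0\}$ --- so the write-up is shorter, but the uniformity in $\delta$ of the $\mathcal O_2(s)$ remainders is left implicit in the smoothness of the variational derivatives over the compact interval $[0,\tau_h]$.
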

Note that $\Tilde{\gamma}_2$ is expressed in coordinates $(s,\Tilde{\beta},z)$, whereas the section $ \Tilde{\Sigma}_{2}^{-}$ is defined in coordinates $(s,\Tilde{\iota},w)$. The following corollary, whose proof is straightforward from Remark \ref{remark: Translation from (s,Tilde(beta),z) to (s,Tilde(iota),w)}, gives an expression of the curve $\Tilde{\gamma}_2$ of Lemma \ref{lemma: Expression of gamma_2(nu) in coordinates (s,Tilde(beta),z)} in the latter variables.

\begin{corollary}\label{corollary: transition map from C1+ to C2-}
     The curve $\Tilde{\gamma}_2$ given in \eqref{eqn: Curve gamma_2(nu)} for $\nu \in (0,\sigma)$ can be expressed in coordinates $(s,\Tilde{\iota},w)$ as

    \begin{equation}\label{eqn: Curve gamma_2(nu) in coordinates (s,tilde(iota),w)}
        \Tilde{\gamma}_2(\nu) = (s_2(\nu),\Tilde{\iota}_2(\nu), w_2(\nu))
    \end{equation}
    with
    \begin{equation*}
    \begin{aligned}
        s_2(\nu) &= \nu\left(1+\mathcal{O}_1(\delta)\right),\\
        \Tilde{\iota}_2(\nu) &= -\delta + \mathcal{O}_2(\nu),\\
        w_2(\nu) &= z_{\mathrm{in}}(\nu) + \mathcal{O}_1(\delta^2\nu, \nu^2),
    \end{aligned}
    \end{equation*}
    and $z_{\mathrm{in}}$ is to the $z$-component of the curve $\Tilde{\gamma}_{\mathrm{in}}$ in \eqref{eqn: transverse curve gamma_in(nu)}.
    
    Moreover,  its tangent vector is of the form

    \begin{equation}
        \Tilde{\gamma}_2'(\nu) = \left(s_2'(\nu), 0 , w_2'(\nu)\right)
    \end{equation}
    where

    \begin{equation*}
    \begin{aligned}
    s_2'(\nu) &= 1 + \mathcal{O}_1(\delta) \\
    w_2'(\nu) &= z_{\mathrm{in}}'(\nu) + \mathcal{O}_1(\delta)
    \end{aligned}
    \end{equation*}
    and has a well-defined limit as $\nu\to 0$. Thus, the curve $\Tilde{\gamma}_2$ is transverse to $\stableManifoldCollisionSplusmu \cap \Tilde{\Sigma}_{2}^{-}$ at  $\underset{\nu\to 0^+}{\lim}\Tilde{\gamma}_2(\nu)=(0,-\delta,z_{\mathrm{in}}(0))$.
\end{corollary}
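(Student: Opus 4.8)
The plan is to deduce Corollary~\ref{corollary: transition map from C1+ to C2-} directly from Lemma~\ref{lemma: Expression of gamma_2(nu) in coordinates (s,Tilde(beta),z)} by translating the already established formulas for $\Tilde{\gamma}_2$ through the change of charts recorded in Remark~\ref{remark: Translation from (s,Tilde(beta),z) to (s,Tilde(iota),w)}. Recall that, although $\Tilde{\Sigma}_2^-$ sits near $\circleSplus$, it is contained in the region where $s$ is small, so the diffeomorphism $\Gamma_-$ remains valid there (as noted before Lemma~\ref{lemma: A lambda lemma for the transition map close to collision}) and the relations $\Tilde{\iota} = \Tilde{\beta} + \pi + (\psi_-(s,y)-\psi_+(s,x))$ with $\psi_\pm = \mathcal{O}_2(s)$, and $w = z + \mathcal{O}_3(s)$, hold on $\Tilde{\Sigma}_2^-$; the coordinate $s$ is common to both charts.

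First I would substitute $\Tilde{\gamma}_2(\nu) = (s_2(\nu),\pi-\delta,z_2(\nu))$, with $s_2(\nu)=\nu(1+\mathcal{O}_1(\delta))$ and $z_2(\nu)=z_{\mathrm{in}}(\nu)+\mathcal{O}_1(\delta^2\nu,\nu^2)$ from Lemma~\ref{lemma: Expression of gamma_2(nu) in coordinates (s,Tilde(beta),z)}, into these relations. Since the $\Tilde{\beta}$-component is the constant $\pi-\delta$, one gets $\Tilde{\iota}_2(\nu) = (\pi-\delta)+\pi+\mathcal{O}_2(s_2(\nu)) \equiv -\delta+\mathcal{O}_2(\nu) \pmod{2\pi}$, while $w_2(\nu) = z_2(\nu)+\mathcal{O}_3(s_2(\nu)) = z_{\mathrm{in}}(\nu)+\mathcal{O}_1(\delta^2\nu,\nu^2)$ after absorbing the $\mathcal{O}_3(\nu)$ correction into $\mathcal{O}_1(\nu^2)$, and $s_2(\nu)$ is unchanged. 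This yields the parameterization \eqref{eqn: Curve gamma_2(nu) in coordinates (s,tilde(iota),w)}.

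Next I would differentiate with respect to $\nu$. The transition between the two charts is a diffeomorphism whose differential is the identity up to $\mathcal{O}_1(s)$ corrections (the extra terms $\psi_\pm,\chi_\pm$ being $\mathcal{O}_2(s)$ or $\mathcal{O}_3(s)$), so from $\Tilde{\gamma}_2'(\nu) = (s_2'(\nu),0,z_2'(\nu))$ with $s_2'(\nu)=1+\mathcal{O}_1(\delta)$ and $z_2'(\nu)=z_{\mathrm{in}}'(\nu)+\mathcal{O}_2(\delta)$ one obtains that the $s$- and $w$-components of $\Tilde{\gamma}_2'(\nu)$ are $1+\mathcal{O}_1(\delta)$ and $z_{\mathrm{in}}'(\nu)+\mathcal{O}_1(\delta)$ respectively, while the $\Tilde{\iota}$-component is $\mathcal{O}_1(\nu)$, which vanishes as $\nu\to 0$; all three admit well-defined limits as $\nu\to 0^+$ because their right-hand sides do. Passing to the limit gives $\Tilde{\gamma}_2(\nu)\to(0,-\delta,z_{\mathrm{in}}(0))$ with limiting tangent $(1+\mathcal{O}_1(\delta),0,z_{\mathrm{in}}'(0)+\mathcal{O}_1(\delta))$.

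Finally, by \eqref{eqn: straightening Wus S+} one has $\stableManifoldCollisionSplusmu = \{s=0\}$ in the $(s,\Tilde{\iota},w)$ chart, so its tangent plane at $(0,-\delta,z_{\mathrm{in}}(0))$ is the $(\Tilde{\iota},w)$-plane; since the limiting tangent vector of $\Tilde{\gamma}_2$ has $s$-component $1+\mathcal{O}_1(\delta)\neq 0$ for $\delta$ small, it is not contained in that plane, and together with the transversality of $\Tilde{\Sigma}_2^-$ to $\{s=0\}$ this gives the claimed transverse intersection of $\Tilde{\gamma}_2$ with $\stableManifoldCollisionSplusmu\cap\Tilde{\Sigma}_2^-$ at the limit point. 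I do not expect a genuine obstacle here — the argument is essentially bookkeeping of orders of magnitude — and the only point deserving care, which I would flag explicitly, is that the formulas of Remark~\ref{remark: Translation from (s,Tilde(beta),z) to (s,Tilde(iota),w)} may be applied on $\Tilde{\Sigma}_2^-$ even though it lies near $\circleSplus$, which is legitimate precisely because there $s$ is close to $0$.
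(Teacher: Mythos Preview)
Your proposal is correct and follows exactly the route the paper indicates: the paper's proof consists solely of the sentence ``straightforward from Remark~\ref{remark: Translation from (s,Tilde(beta),z) to (s,Tilde(iota),w)}'', and you have carried out precisely that bookkeeping, substituting the expressions from Lemma~\ref{lemma: Expression of gamma_2(nu) in coordinates (s,Tilde(beta),z)} into the transition formulas $\Tilde{\iota}=\Tilde{\beta}+\pi+\mathcal{O}_2(s)$ and $w=z+\mathcal{O}_3(s)$. Your remark that the $\Tilde{\iota}$-component of the tangent is $\mathcal{O}_1(\nu)$ rather than literally $0$ is in fact a sharper reading than the corollary's stated ``$0$'', which should be understood at leading order; and your caveat about the validity of the $\Gamma_-$ chart on $\Tilde{\Sigma}_2^-$ is the same point the paper makes just before introducing system~\eqref{eqn: Eq motion in coordinates (s,Tilde(beta),z) from C1 to C2}.
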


Finally, the following lemma (whose proof is done in Appendix \ref{appendix: A lambda lemma for the transition map close to collision}), defines the transition map from $ \Tilde{\Sigma}_{2}^{-}$ to $\Tilde{\Sigma}_{h}^{>,-}$ in an analogous way as in Lemma \ref{lemma: A lambda lemma for the transition map close to collision}.

\begin{lemma}\label{lemma: transition map from C2- to Cu}
   The transition map 
   \begin{equation}\label{eqn: map from C2 to Cu}
       \begin{aligned}
           \mathcal{T}^-_{2,u}\colon  \Tilde{\Sigma}_{2}^{-} &\to \Tilde{\Sigma}_{h}^{>,-}\\
           (s_2,\Tilde{\iota}_2,w_2) &\mapsto (\delta, \Tilde{\iota}_u,w_u) 
       \end{aligned}
   \end{equation}
 (see \eqref{eqn: Intermediate subsets transition map C1pm and C2pm} and \eqref{eqn: definition of Tilde(Csu+-)})   takes the curve $\Tilde{\gamma}_2$ in \eqref{eqn: Curve gamma_2(nu) in coordinates (s,tilde(iota),w)} with $\nu\in (0,\sigma)$ to a curve $\Tilde{\gamma}_{\mathrm{out}}$ defined as

\begin{equation}\label{eqn: Image curve gamma_out(nu)}
    \begin{aligned}
       \Tilde{\gamma}_{\mathrm{out}}(\nu) = \left(\delta,\Tilde{\iota}_{\mathrm{out}}(\nu), w_{\mathrm{out}}(\nu)\right)
    \end{aligned}
\end{equation}
such that

\begin{equation*}
    \begin{aligned}
        \Tilde{\iota}_{\mathrm{out}}(\nu) &= -\nu + \mathcal{O}_1(\delta\nu)\\
        w_{\mathrm{out}}(\nu) &= z_{\mathrm{in}}(\nu) + \mathcal{O}_1(\delta^2 \nu,\nu^2).
    \end{aligned}
\end{equation*}
%
Moreover, its tangent vector is of the form
\begin{equation*}
    \begin{aligned}
        \Tilde{\gamma}_{\mathrm{out}}'(\nu) = \left(0,\Tilde{\iota}_{\mathrm{out}}'(\nu),w_{\mathrm{out}}'(\nu)\right)
    \end{aligned}
\end{equation*}
where
\begin{equation*}
    \begin{aligned}
        \Tilde{\iota}_{\mathrm{out}}'(\nu) &=-1 + \mathcal{O}_1(\delta)\\
        w_{\mathrm{out}}'(\nu) &= z_{\mathrm{in}}'(\nu) + \mathcal{O}_1(\delta)
    \end{aligned}
\end{equation*}
and its limit as $\nu\to 0$ is well-defined and, therefore,  $\Tilde{\gamma}_{\mathrm{out}}$ is transverse to $\unstableManifoldSplusmu$ at $\underset{\nu\to0}{\lim} \Tilde{\gamma}_{\mathrm{out}}(\nu)=(\delta,0,z_{\mathrm{in}}(0)) = \Tilde{p}_u$.
\end{lemma}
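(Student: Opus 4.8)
The plan is to prove this lemma exactly as a quantitative $\lambda$‑lemma for the passage near the normally hyperbolic circle $\circleSplus$, mirroring the argument for Lemma~\ref{lemma: A lambda lemma for the transition map close to collision} with the roles of the expanding and contracting coordinates now played by $s$ and $\angleStraightWuSplus$ respectively. I would work throughout in the straightened coordinates $(s,\angleStraightWuSplus,\straightenFiberWuSplus)$ of Proposition~\ref{proposition: Straightening local diffeomorphisms and motions}, in which the flow is generated by \eqref{eqn: Eq motion in coordinates (s,iota,w)}, $\unstableManifoldSplusmu=\{\angleStraightWuSplus=0\}$, $\stableManifoldCollisionSplusmu=\{s=0\}$ (see \eqref{eqn: straightening Wus S+}), and $\Tilde{\Sigma}_h^{>,-}=\{s=\delta,\ \angleStraightWuSplus<0\}$. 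By Corollary~\ref{corollary: transition map from C1+ to C2-} the incoming curve is $\Tilde{\gamma}_2(\nu)=(s_2(\nu),\Tilde{\iota}_2(\nu),w_2(\nu))$ with $s_2(\nu)=\nu(1+\mathcal{O}_1(\delta))$ small, $\Tilde{\iota}_2(\nu)=-\delta+\mathcal{O}_2(\nu)$, $w_2(\nu)=z_{\mathrm{in}}(\nu)+\mathcal{O}_1(\delta^2\nu,\nu^2)$, and $\Tilde{\gamma}_2'(\nu)=(1+\mathcal{O}_1(\delta),0,z_{\mathrm{in}}'(\nu)+\mathcal{O}_1(\delta))$. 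Thus $\mathcal{T}_{2,u}^-$ is the map that follows the orbit of \eqref{eqn: Eq motion in coordinates (s,iota,w)} through such a point forward in the time $\mcGeheeTimeCollision$ until $s$ first equals $\delta$; I would begin by checking this is well defined for $\nu\in(0,\sigma)$, which is immediate because $s$ is strictly increasing, $\angleStraightWuSplus$ is contracting (hence stays below $\delta$ in modulus), and, as shown below, $\straightenFiberWuSplus$ moves only slightly, so the orbit never leaves $\mathcal{B}_\delta^+$.

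The core is the flow estimate. From \eqref{eqn: Eq motion in coordinates (s,iota,w)} and \eqref{eqn: Approximation of the nonlinear part for (s,Tilde(iota),w)}, inside $\mathcal{B}_\delta^+$ one has $s'=\tfrac{\constantEquilibrium}{2}s(1+\mathcal{O}(\delta))$ and $\angleStraightWuSplus{}'=-\tfrac{\constantEquilibrium}{2}\angleStraightWuSplus(1+\mathcal{O}(\delta))$, so the first hitting time satisfies $T(\nu)=\tfrac{2}{\constantEquilibrium}\log(\delta/s_2(\nu))+\mathcal{O}(\delta)=\tfrac{2}{\constantEquilibrium}\log(\delta/\nu)+\mathcal{O}_1(\delta)$, and, using $s$ as independent variable, $\tfrac{d\angleStraightWuSplus}{ds}=-\tfrac{\angleStraightWuSplus}{s}(1+\mathcal{O}(\delta))$ yields by Gronwall $\angleStraightWuSplus(s)=\Tilde{\iota}_2(\nu)\,\tfrac{s_2(\nu)}{s}\,(1+\mathcal{O}(\delta))$ for $s_2(\nu)\le s\le\delta$; in particular $\Tilde{\iota}_{\mathrm{out}}(\nu)=\angleStraightWuSplus(\delta)=-\nu+\mathcal{O}_1(\delta\nu)$. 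For the $\straightenFiberWuSplus$‑component the decisive point — which is exactly what makes the $\lambda$‑lemma work — is that $\straightenFiberWuSplus{}'=s\angleStraightWuSplus(-4\lambda(\mu,h)s+\Tilde{j_3})$ vanishes to high order at both invariant manifolds: every monomial in $\straightenFiberWuSplus{}'$ is of the form $s^a\angleStraightWuSplus^b$ with $a\ge1$, $b\ge1$, $a+b\ge3$, and $a\ge2$ whenever $b=1$. Combining $s(\mcGeheeTimeCollision)\le\delta$ with $|\angleStraightWuSplus(\mcGeheeTimeCollision)|\le C|\Tilde{\iota}_2|\,s_2/s(\mcGeheeTimeCollision)$ from the Gronwall estimate, and changing variables to $s$, each such term integrates over the transit to $\mathcal{O}\big(|\Tilde{\iota}_2|^b s_2^{\min(a,b)}\delta^{\max(a,b)-\min(a,b)}\big)$, and to $\mathcal{O}\big(|\Tilde{\iota}_2|^a s_2^a\log(\delta/s_2)\big)$ in the exceptional case $a=b$ (which forces $a\ge3$); since $|\Tilde{\iota}_2|\asymp\delta$, $s_2\asymp\nu$ and $\nu\log(1/\nu)\to0$, all of these are $\mathcal{O}_1(\delta^2\nu,\nu^2)$. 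Hence the total variation of $\straightenFiberWuSplus$ along the transit is $\mathcal{O}_1(\delta^2\nu,\nu^2)$ and $w_{\mathrm{out}}(\nu)=w_2(\nu)+\mathcal{O}_1(\delta^2\nu,\nu^2)=z_{\mathrm{in}}(\nu)+\mathcal{O}_1(\delta^2\nu,\nu^2)$.

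For the tangent vector I would differentiate the map ``flow of \eqref{eqn: Eq motion in coordinates (s,iota,w)} up to the hitting time $T(\nu)$'' with respect to $\nu$: the hitting time is $C^1$ in $\nu$ by the implicit function theorem (as $s'>0$ on $\Tilde{\Sigma}_h^{>,-}$), the variational equation of \eqref{eqn: Eq motion in coordinates (s,iota,w)} has the same block structure (expansion in $s$, contraction in $\angleStraightWuSplus$, a $\straightenFiberWuSplus$‑row vanishing to the order above), and the input tangent vector $\Tilde{\gamma}_2'(\nu)$ is bounded; propagating these bounds along the orbit gives $\Tilde{\gamma}_{\mathrm{out}}'(\nu)=(0,\Tilde{\iota}_{\mathrm{out}}'(\nu),w_{\mathrm{out}}'(\nu))$ with $\Tilde{\iota}_{\mathrm{out}}'(\nu)=-1+\mathcal{O}_1(\delta)$ and $w_{\mathrm{out}}'(\nu)=z_{\mathrm{in}}'(\nu)+\mathcal{O}_1(\delta)$, uniformly on $(0,\sigma)$, whence $\Tilde{\gamma}_{\mathrm{out}}$ and $\Tilde{\gamma}_{\mathrm{out}}'$ extend continuously to $\nu=0$. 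To identify the limit, note that as $\nu\to0^+$ the initial point $\Tilde{\gamma}_2(\nu)$ tends to $(0,-\delta,z_{\mathrm{in}}(0))\in\stableManifoldCollisionSplusmu=\{s=0\}$, the transit time $T(\nu)\to\infty$, the contracting coordinate is driven to $\angleStraightWuSplus=0$, and the $\straightenFiberWuSplus$‑drift tends to $0$; hence $\Tilde{\gamma}_{\mathrm{out}}(\nu)\to(\delta,0,z_{\mathrm{in}}(0))=\Tilde{p}_u\in\unstableManifoldSplusmu\cap\Tilde{\Sigma}_h^{>}$. Finally, $\unstableManifoldSplusmu\cap\Tilde{\Sigma}_h^{>}=\{s=\delta,\angleStraightWuSplus=0\}$ is the $\straightenFiberWuSplus$‑axis, with tangent $(0,0,1)$ at $\Tilde{p}_u$, while $\Tilde{\gamma}_{\mathrm{out}}'(0)=(0,-1+\mathcal{O}_1(\delta),z_{\mathrm{in}}'(0)+\mathcal{O}_1(\delta))$ has nonzero $\angleStraightWuSplus$‑component for $\delta$ small, so $\Tilde{\gamma}_{\mathrm{out}}$ is transverse to $\unstableManifoldSplusmu$ in $\Tilde{\Sigma}_h^{>}$ at $\Tilde{p}_u$, as claimed.

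The main obstacle is precisely the one already present in Lemma~\ref{lemma: A lambda lemma for the transition map close to collision}: the transit time $T(\nu)$ diverges as $\nu\to0^+$, so a crude bound on the $\straightenFiberWuSplus$‑equation picks up a logarithmic loss that would ruin the $\mathcal{O}_1(\delta^2\nu,\nu^2)$ control (and, after differentiation, the $\mathcal{O}_1(\delta)$ control of the tangent direction needed for transversality). What saves the estimate is the high vanishing order of $\straightenFiberWuSplus{}'$ at the two invariant manifolds, built into the straightened coordinates of Proposition~\ref{proposition: Straightening local diffeomorphisms and motions}, which makes the source integrable uniformly in $\nu$; keeping track of the exact powers $s^a\angleStraightWuSplus^b$ (rather than bounding $s$ and $\angleStraightWuSplus$ separately by $\delta$) is the only genuinely delicate part of the bookkeeping, and it is carried out in Appendix~\ref{appendix: A lambda lemma for the transition map close to collision}.
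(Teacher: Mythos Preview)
Your proposal is correct and follows essentially the same approach as the paper. The paper's proof (at the end of Appendix~\ref{appendix: A lambda lemma for the transition map close to collision}) simply states that Lemma~\ref{lemma: transition map from C2- to Cu} is ``completely analogous'' to Lemma~\ref{lemma: A lambda lemma for the transition map close to collision}, using instead the equations of the orbits of \eqref{eqn: Eq motion in coordinates (s,iota,w)} with $s$ as the independent variable (so that $d\angleStraightWuSplus/ds=-\tfrac{\angleStraightWuSplus}{s}(1+J_{\angleStraightWuSplus})$ and $d\straightenFiberWuSplus/ds=\tfrac{2}{\constantEquilibrium}\angleStraightWuSplus J_{\straightenFiberWuSplus}$), which is exactly the swap of expanding/contracting roles you describe. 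The only technical difference is packaging: the paper runs the Banach fixed-point argument of Lemma~\ref{lemma: A lambda lemma for the transition map close to collision} (with the weighted norm \eqref{eqn: Norm Zs} now applied to $\angleStraightWuSplus$ as a function of $s$, and then the companion norm \eqref{eqn: norm Zpartial} for the tangent vector), whereas you sketch direct Gronwall/monomial estimates; both handle the same singular $1/s$ behavior and exploit the same high-order vanishing of the $\straightenFiberWuSplus$-equation to beat the logarithmic loss from the diverging transit time.
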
 

To complete the proof of Theorem \ref{thm: Transition map close to collision}, we observe that the transition map $\Tilde{f}$ is obtained as the composition of the maps from Lemmas \ref{lemma: A lambda lemma for the transition map close to collision}, \ref{lemma: Expression of gamma_2(nu) in coordinates (s,Tilde(beta),z)} and \ref{lemma: transition map from C2- to Cu} respectively, i.e.

\begin{equation*}
\Tilde{f} =  \mathcal{T}^-_{2,u} \circ \mathcal{T}^{+}_{1,2}\circ \mathcal{T}^+_{s,1}.
\end{equation*}

\section{Proof of Theorem \ref{thm: Existence of parabolic, oscillatory and periodic orbits for small energies}}\label{sec: Proof of Theorem parabolic orbits}
The purpose of this section is to prove Theorem \ref{thm: Existence of parabolic, oscillatory and periodic orbits for small energies}. To this end, we recall the following notation which is used along the section:
\begin{itemize}
    \item The curves $\Delta_\infty^{s,u}(\mu), \Delta_{\circleSplus}^u(\mu)$ and $\Delta_{\circleSminus}^s(\mu)$ defined in \eqref{eqn: Notation for the intersection of the invariant manifolds with the section r=2} are the intersection of the corresponding invariant manifolds with $\Sigma_h$. They admit a graph parameterization in polar coordinates $(\theta,\Theta)$, as shown in \eqref{eqn: definition of the intersection of the invariant manifolds of collision with delta2 as graphs} and \eqref{eqn: definition of the intersection of the invariant manifolds of infinity with delta2 as graphs}.
    
    \item The sets $I_D^{\pm}(w_\Sigma)$, defined in \eqref{eqn: domain Ip} and \eqref{eqn: domain Im} (with $w_\Sigma$ as in \eqref{eqn: definition of wSigma}), where the graphs of the curves $\Delta_\infty^{s,u}(\mu)$ are defined.
    
    \item The graph parameterizations $\Theta^{s}_{\infty}(\theta,\hat{\Theta}_0), \Theta_{\circleSplus}^u(\theta)$ (defined in \eqref{eqn: Value of the angular momentum of the Ws(infty) at r=2} and \eqref{eqn: Value of the angular momentum of Wu(S+) at r=2} respectively) and $\Theta^{u}_{\infty}(\theta,\hat{\Theta}_0)$, $ \Theta_{\circleSminus}^s(\theta)$ (obtained from the symmetries in \eqref{eqn: Angular momentum of Wu(infty) as a graph of (w,theta)} and \eqref{eqn: Angular momentum of Ws(S-) as a graph of theta}).
    
\end{itemize}
The key step in the proof of Theorem \ref{thm: Existence of parabolic, oscillatory and periodic orbits for small energies} is Proposition \ref{prop: triple intersection} below. To state it, we introduce the terminology \emph{triple intersection} between invariant manifolds. Certainly, this is an abuse of language, since it is well known that two stable (or unstable) manifolds cannot intersect. Therefore, let us explain what we mean by that. 

\begin{definition}\label{def:tripleinter}
We say that the invariant manifolds $\unstableManifoldSplusmu$, $\Tilde{W}_\mu^u(\Alpha_{\hat{\Theta}_0^*})$ and $\Tilde{W}_\mu^s(\Alpha_{\hat{\Theta}_0^*})$ have a triple intersection  at  $p_>^* = (\theta_>, \Theta_>) \in \Sigma_{h^*}^>$ (see \eqref{eqn: definition of Sigma2Rpos and Sigma2Rneg}) if 
\begin{itemize}
    \item $\Delta_{S^+}^u(\mu)$ and $\Delta_\infty^s(\mu)$  intersect  at  $p_>^* = (\theta_>, \Theta_>) \in \Sigma_{h^*}^>$,
    \item $\Delta_{S^-}^s(\mu)$ and $\Delta_\infty^u(\mu)$  intersect  at  $p_<^* = (\theta_<, \Theta_<) \in \Sigma_{h^*}^<$ (in the usual sense),
    \item The point $p_>^*$ belongs to the unstable  fiber within $\unstableManifoldSplusmu$ of a point $(0,\bar\theta,\pi/2)\in S^+$ and the point $p_<^*$ belongs to the stable fiber within $\stableManifoldSminusmu$ of a point $(0,\bar\theta,-\pi/2)\in S^-$.
\end{itemize}
\end{definition}
Note that this can be phrased as saying that the continuous extension of the local map $\tilde f$ given in Theorem \ref{thm: Transition map close to collision} (see Remark \ref{rem:extensioftilde}) maps $p_<^*$ to  $p_>^*$. 
Moreover, note that $\tilde f$ maps $\Delta_\infty^u(\mu)\setminus \{p_<^*\}$ to a curve in $\Sigma^>_{h^*}$ which as $p_>^*$ as endpoint. Then, we say that the triple intersection is transverse if this curve and the curves $\Delta_\infty^{s}(\mu), \Delta_{\circleSplus}^u(\mu)$ intersect pairwise transversally at $p_>^*$.

Next proposition proves the existence of a transverse triple intersection at a suitable energy level. Note that, since we parameterize the curves as graphs, the angles are taken in $[-\pi/2,\pi/2]$.

\begin{proposition}\label{prop: triple intersection}
    There exist $\mu_0,\delta_0 > 0$ such that, for any $\mu\in(0,\mu_0)$, $\delta\in (0,\delta_0)$, there is an angular momentum $\hat{\Theta}_0^*$ satisfying 
    \[\hat{\Theta}_0^* = \hat{\Theta}_0 ^* (\mu,\delta) = \mu\left(\mathcal{M}_+(0) + \mathcal{O}_1(\mu,\delta^2)\right)\] (where $\mathcal{M}_+$ is defined in \eqref{eqn: melnikov function}) such that, for $h= h^*(\mu,\delta) = -\hat{\Theta}_0^*$
    \begin{itemize}
    \item The invariant manifold $\Tilde{W}_\mu^u(\Alpha_{\hat{\Theta}_0^*})$ intersects the section  $\Sigma_{h^*}^>$ and this intersection can be written locally as graph as $(\theta, \Theta^{u,>}_\infty(\theta))$ in a neighborhood of the image of the point $p_<^*\in \Delta_\infty^u(\mu) \transv \Delta_{S^-}^s(\mu)$ under the (continuous extension of the) local map provided by Theorem \ref{thm: Transition map close to collision}.
        \item The invariant manifolds $\unstableManifoldSplusmu$, $\Tilde{W}_\mu^u(\Alpha_{\hat{\Theta}_0^*})$ and $\Tilde{W}_\mu^s(\Alpha_{\hat{\Theta}_0^*})$  intersect transversally at  $p_>^* = (\theta_>, \Theta_>) \in \Sigma_{h^*}^>$ (see \eqref{eqn: definition of Sigma2Rpos and Sigma2Rneg}) where
        \begin{equation*}
            \theta_> = \mathcal{O}_1(\mu,\delta^2),\quad \Theta_> = \Theta_\infty^s(\theta_>,\hat{\Theta}_0^*) = \Theta_{\circleSplus}^u(\theta_>) = \Theta_\infty^{u,>}(\theta_>).
        \end{equation*}
        \item For a fixed $\varepsilon > 0$ small enough independent of $\mu$, denote by $B_\varepsilon(p_>^*) \subset \Sigma_{h^*}^>$ a $\varepsilon$-neighborhood of $p_>^*$, and define the following $C^1$-curves parameterized by $\theta$
        \begin{equation}\label{eqn: transversecurves}
        \begin{aligned}
            \gamma_\infty^{s,>}(\theta)=& \Tilde{W}_\mu^s(\Alpha_{\hat{\Theta}_0^*}) \cap B_\varepsilon(p_>^*),\\
            \gamma_\infty^{u,>}(\theta)= &\Tilde{W}_\mu^u(\Alpha_{\hat{\Theta}_0^*}) \cap B_\varepsilon(p_>^*),\\
            \gamma_{\circleSplus}^{u,>}(\theta)=& \unstableManifoldSplusmu\cap B_\varepsilon(p_>^*).
        \end{aligned}
        \end{equation}
        Hence, the angles
        \begin{equation}\label{eqn: angles trasnversality}
            A  = \measuredangle (\gamma_{\infty}^{u,>}(\theta_>)', \gamma_{\infty}^{s,>}(\theta_>)'),\quad B  = \measuredangle (\gamma_{\infty}^{u,>}(\theta_>)', \gamma_{\circleSplus}^{u,>}(\theta_>)')
        \end{equation}
        (taken in $[-\pi/2,\pi/2]$) satisfy
        \begin{equation}\label{eqn: Inequality angles}
         -\frac{\pi}{2}<  A <  B < 0,
        \end{equation}
        leading to the configuration depicted in Figure \ref{fig: trans intersection}.
    \end{itemize} 
\end{proposition}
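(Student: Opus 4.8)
The plan is to assemble the triple intersection from the two transverse heteroclinic intersections already produced by Theorem~\ref{thm: transverse intersection of the invariant manifolds}, glued together through the collision transition map $\tilde f$ of Theorem~\ref{thm: Transition map close to collision}, and then to compute, to leading order in $\mu$ and $\delta$, the three tangent directions at the resulting intersection point so as to verify \eqref{eqn: Inequality angles}.

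\emph{Choosing the energy and proving the first bullet.} For each admissible $\theta^+$, Theorem~\ref{thm: transverse intersection of the invariant manifolds} gives, at the energy $h=-\hat{\Theta}_0=\mu M_+(\theta^+-w_\Sigma)$, the points $p_>=p_>(\theta^+,\mu)\in \reducedStableManifoldInfinitymu\transv\unstableManifoldSplusmu$ on $\sectionrTwohRpos$ and $p_<=p_<(\theta^+,\mu)\in\reducedUnstableManifoldInfinitymu\transv\stableManifoldSminusmu$ on $\sectionrTwohRneg$, with $\theta$-coordinates $\theta^+-w_\Sigma+\mathcal{O}(\mu)$ and $-\theta^++w_\Sigma+\mathcal{O}(\mu)$. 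Using that, on $\unstableManifoldSplusmu$, the unstable fibre of $(0,\bar\theta,\pi/2)\in S^+$ meets $\{r=\delta^2\}$ at $\theta=\bar\theta-w_\Sigma+\mathcal{O}(\mu)$ and, symmetrically on $\stableManifoldSminusmu$ through \eqref{eqn: Symmetry of the PCRTBP in synodical polar coordinates centered at P1}, one sees that $p_>$ lies on the fibre over $\bar\theta_>=\theta^++\mathcal{O}(\mu)$ and $p_<$ on the fibre over $\bar\theta_<=-\theta^++\mathcal{O}(\mu)$. By Theorem~\ref{thm: Transition map close to collision} and Remark~\ref{rem:extensioftilde} the continuous extension of $\tilde f$ maps $\stableManifoldSminusmu$ onto $\unstableManifoldSplusmu$ preserving the fibre coordinate, so $\tilde f(p_<)$ and $p_>$ both lie on $\Delta_{S^+}^u(\mu)$, a graph over $\theta$, and hence coincide precisely when $\bar\theta_>=\bar\theta_<$, i.e. when $2\theta^+=\mathcal{O}(\mu,\delta^2)$. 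This is a single $C^1$ scalar equation in $\theta^+$ whose limit as $(\mu,\delta)\to(0,0)$ is $2\theta^+$, so the implicit function theorem yields a solution $\theta^+=\theta^+(\mu,\delta)=\mathcal{O}(\mu,\delta^2)$; substituting it into $h=-\hat{\Theta}_0=\mu M_+(\theta^+-w_\Sigma)$ and using $M_+(\cdot)=\mathcal{M}_+(\cdot+w_\Sigma)$ (see \eqref{eqn: translation melnikov}--\eqref{eqn: melnikov function}) gives the value of $\hat{\Theta}_0^*$ in the statement together with $\theta_>=\mathcal{O}(\mu,\delta^2)$. Finally, a neighbourhood of $p_<^*$ in $\Delta_\infty^u(\mu)$ is a $C^1$ curve transverse to $\stableManifoldSminusmu$, so Theorem~\ref{thm: Transition map close to collision} sends it to a $C^1$ curve $\gamma_\infty^{u,>}$ through $p_>^*=\tilde f(p_<^*)$ whose tangent is dominated by its fibre component; therefore $\reducedUnstableManifoldInfinitymu\cap\sectionrTwohRpos$ is, near $p_>^*$, a graph $(\theta,\Theta_\infty^{u,>}(\theta))$, which is the first bullet.

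\emph{Transversality.} Because $\hat{\Theta}_0^*=\mathcal{O}(\mu)$, the functions $\Theta_\infty^s$, $\Theta_{S^+}^u$ and $\Theta_\infty^{u,>}$ are $\mathcal{O}(\mu)$ and almost constant, so the curves $\gamma_\infty^{s,>}=\Delta_\infty^s(\mu)$, $\gamma_{S^+}^{u,>}=\Delta_{S^+}^u(\mu)$ and $\gamma_\infty^{u,>}$ through $p_>^*$ are graphs over $\theta$ with slopes $m_1,m_2,m_3=\mathcal{O}(\mu)$. By Proposition~\ref{proposition: Parameterization of the invariant manifolds of infinity in rotating polar coordinates centered at P1}, $m_1=\partial_\theta\Theta_\infty^s(\theta_>,\hat{\Theta}_0^*)=\mu\mathcal{M}_+'(0)+\mathcal{O}(\mu^2,\mu\delta^2)$, which is negative by Lemma~\ref{lemma: derivative of the melnikov functions not 0 for almost every theta}. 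By Proposition~\ref{proposition: Perturbed invariant manifolds of collision in synodical polar coordinates}, the integrals defining $\Theta_{S^+}^u$ run over $[w_\Sigma,0]$, hence $m_2=\partial_\theta\Theta_{S^+}^u(\theta_>)=\mathcal{O}(\mu\delta^2)$. For $m_3$ one would express $\gamma_\infty^{u,>}=\tilde f(\gamma_\infty^{u,<})$ in the straightening coordinates of Proposition~\ref{proposition: Straightening local diffeomorphisms and motions}, where $\tilde f$ acts, to leading order, as $\tilde\iota\mapsto-\tilde\beta$ and $w\mapsto z$, while along $\gamma_\infty^{u,<}$ the coordinate $\tilde\beta$ equals, up to an $\mathcal{O}(1)$ factor, the distance $d_-$ (which is $\mathcal{O}(\mu)$), with leading slope $\tfrac{d\tilde\beta}{dz}$ proportional to $\mu M_-'(\theta_<)=\mu\mathcal{M}_+'(0)+\mathcal{O}(\mu^2,\mu\delta^2)$; transporting this back to $(\theta,\Theta)$ and using that the coordinates $\tilde\iota$ and $\tilde\beta$, built respectively around $\alpha=\pi/2$ and $\alpha=-\pi/2$ in \eqref{eqn:polar change of coordinates for u,v into rho alpha}, carry opposite orientations relative to $\Theta$, one gets $m_3=-\mu\mathcal{M}_+'(0)+\mathcal{O}(\mu^2,\mu\delta)$, which is positive. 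Thus, for $\mu$ and $\delta$ small, $m_1<m_2<m_3$ with the three slopes distinct; since all of them are $\mathcal{O}(\mu)$, the angles in \eqref{eqn: angles trasnversality} satisfy $A=m_1-m_3+\mathcal{O}(\mu^2)$ and $B=m_2-m_3+\mathcal{O}(\mu^2)$, whence \eqref{eqn: Inequality angles}, and the three curves are pairwise transverse at $p_>^*$.

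\emph{Main obstacle.} The difficulty lies entirely in the computation of $m_3$. Pairwise transversality already needs the $\mathcal{O}(\mu)$ leading terms and the inputs $\mathcal{M}_+'(0)\neq 0$ and $\mathcal{M}_-'(\theta_<)\neq 0$ of Lemma~\ref{lemma: derivative of the melnikov functions not 0 for almost every theta}, but the \emph{ordering} $m_1<m_2<m_3$ is more delicate: since the three curves are mutually $\mathcal{O}(\mu)$-close, one has to keep the $\mathcal{O}(1)$ scaling constants, together with their orientations, of the coordinate changes between $(\theta,\Theta)$ on $\sectionrTwohRpos,\sectionrTwohRneg$ and the straightened variables of Proposition~\ref{proposition: Straightening local diffeomorphisms and motions}, and compare them with the $\mathcal{O}(\delta^2)$ and $\mathcal{O}(\mu\delta)$ errors produced by $\tilde f$. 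The decisive structural fact will be the orientation reversal between the neighbourhoods of $S^+$ and $S^-$; once this is pinned down, the sign of every relevant quantity is controlled by the single computer-assisted inequality $\mathcal{M}_+'(0)<0$.
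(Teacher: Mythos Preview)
Your approach is essentially the one the paper takes: the existence part (your first paragraph) is exactly Section~7.1, where the matching condition $\theta_>^u(\theta^+,\mu,\delta)=\theta_>(\theta^+,\mu)$ reduces to $\theta^+ + \mathcal{O}(\mu,\delta^2)=0$ and is solved by the Implicit Function Theorem; and your slope computations $m_1\approx\mu\mathcal{M}_+'(0)$, $m_2=\mathcal{O}(\mu\delta^2)$, $m_3\approx-\mu\mathcal{M}_+'(0)$ agree with the paper's Section~7.2.

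The one place where your sketch is genuinely incomplete is the derivation of $m_3$. The paper does not rely on a qualitative ``orientation reversal'' argument; instead it isolates this computation as a separate technical lemma (Lemma~\ref{lemma: tangent vector at Sigma delta,h as a graph}), proved in Appendix~D, which follows $\gamma_\infty^{u,<}$ explicitly through the full chain $\psi,\tilde\psi,\Gamma_-,\tilde f,\Gamma_+^{-1},\tilde\psi^{-1},\psi^{-1}$ and produces the exact identity
\[
\Theta_\infty^{u,>}{}'(\theta_>)=\Theta_{S^+}^u{}'(\theta_>)+d_-'(\theta_>)+\mathcal{O}(\mu\delta^2).
\]
The key steps there are (i) writing $\tilde\beta_\infty^{u,<}=\tilde\beta_\infty^{u,<}-\tilde\beta_{S^-}^{s,<}$ so that $\tilde\beta_\infty^{u,<}{}'(\theta_<)=\tfrac{d_-'(\theta_<)}{\sqrt{2(1-\mu)}\,\delta}+\mathcal{O}(\mu\delta)$ (the $1/\delta$ factor you call ``$\mathcal{O}(1)$'' is essential), (ii) inverting $\tilde\beta_\infty^{u,<}(\theta)$ to parameterize by $\tilde\beta$ before applying Theorem~\ref{thm: Transition map close to collision}, and (iii) on the $S^+$ side, using $\tilde\iota_{S^+}^{u,>}\equiv 0$ to rewrite $\alpha_\infty^{u,>}$ relative to $\alpha_{S^+}^{u,>}$. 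Your intuition about the sign flip between the neighbourhoods of $S^+$ and $S^-$ is correct---it is the sign of $\sin\alpha$ in $\Theta\sim\delta\sqrt{2(1-\mu)}\cos\alpha$ that changes---but the bookkeeping of the $\delta^{\pm 1}$ factors and the reparameterizations has to be done carefully to get the $\mathcal{O}(\mu\delta^2)$ error, and your sketch does not yet do this. Once that formula is in hand, your conclusion $m_1<m_2<m_3$ and the angle inequality follow exactly as you wrote.
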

The proof of this proposition is divided into two parts: in Section \ref{subsec: First part of the proof of Theorem 1.4} we prove that the triple intersection $p_>^*$ (in the sense of Definition \ref{def:tripleinter})  exists and in Section \ref{subsec: Second part of the proof of Theorem 1.4} we establish the transversality between the curves in \eqref{eqn: transversecurves} in $p_>^*$ as well as the ordering given by \eqref{eqn: Inequality angles}. Now we prove Theorems \ref{thm: Existence of parabolic, oscillatory and periodic orbits for small energies} and  \ref{thm:chaos} relying on Proposition \ref{prop: triple intersection}.

\begin{proof}[Proof of Theorems \ref{thm: Existence of parabolic, oscillatory and periodic orbits for small energies} and Theorem \ref{thm:chaos}]

At $h=h^*$, there exist transverse homoclinic point between the stable invariant manifold of infinity and the unstable manifold of the collision set $\mathcal{S}$ and between the unstable manifold of infinity and the stable manifold of $\mathcal{S}$ which  guarantees the existence of a parabolic ejection-collision orbit. Moreover, this transversality ensures that, for $h= h^*$, the manifolds $\Tilde{W}_\mu^s(\Alpha_{\hat{\Theta}_0}), \Tilde{W}_\mu^u(\Alpha_{\hat{\Theta}_0})$ also intersect transversally (in the sense of Definition \ref{def:tripleinter}). 

Then, one can adapt the construction done by Moser in \cite{moser2001stable} to our setting (see also the work of Simó and Llibre in \cite{MR0573346}; and the work of Guardia, Martin and Seara in \cite{MR3455155}) to prove the existence of a hyperbolic set whose dynamics is conjugated to the shift of infinite symbols and which accumulates to the invariant manifolds of infinity. This leads to the existence of any combination of  hyperbolic, parabolic, periodic and oscillatory orbits in forward and backward time at the energy level $h^*$.

The classical Moser approach requires a transverse homoclinic point of the invariant manifolds of infinity. This does not exist in the present setting since we only have a transverse homoclinic to infinity in the sense of Definition \ref{def:tripleinter}. However, the Moser approach can be easily adapted to this setting.

First, recall that at the energy level $h= h^*$, the curves $\gamma_\infty^{s,>}$, $\gamma_\infty^{u,>}$ and $\gamma_{\circleSplus}^{u,>}$ intersect transversally at $p_>^*$ (in the sense of Definition \ref{def:tripleinter}) and have the ordering depicted in Figure \ref{fig: trans intersection}. 

\begin{figure}[h!]
        \centering
        \begin{overpic}[scale=0.7]{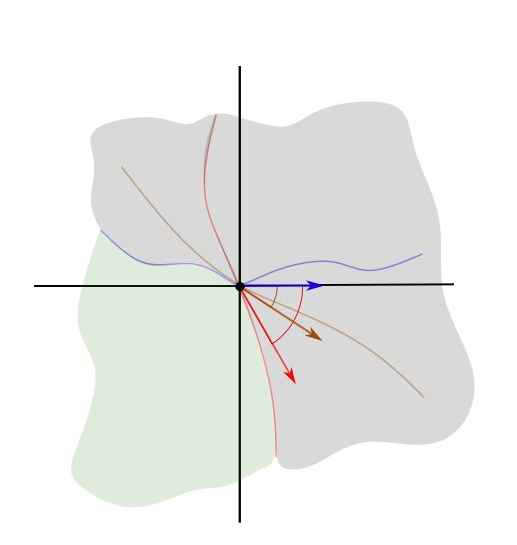}
            \put (10,90){$\Sigma_{h^*}^>$}
            \put (80,44) {$\theta$}
            \put (45,85) {$\Theta$}
            \put (39,44) {$p_>^*$}
            \put (58,44) {\color{blue} \footnotesize $\gamma_\infty^{u,>}{}'(\theta_>)$}
            \put (52,26) {\color{red} \footnotesize $\gamma_{\infty}^{s,>}{}'(\theta_>)$}
            \put (55,34)  {\color{RawSienna} \footnotesize $\gamma_{\circleSplus}^{u,>}{}'(\theta_>)$}
            \put (51,44)  {\color{brown} \footnotesize $B$}
            \put (52,37) {\color{red}\footnotesize $A$}
        \end{overpic}
        \caption{Representation of the ``ordering of the invariant manifolds'' given by the angles $A$ and $B$ in \eqref{eqn: angles trasnversality}. The region shaded in green represents the set $\Tilde{D}^<\cap D^>$  where the forward and backward return maps to the section $\Sigma_{h^*}^>$ are well defined. That is,  the region where the construction made by Moser in \cite{moser2001stable} is performed.}
        \label{fig: trans intersection}
    \end{figure}
    
To apply the construction 
made by Moser in \cite{moser2001stable} we need to characterize the domain in  $\Sigma_{h^*}^>$ where one can define a return map and the domain such that if one runs  the flow (either forward or backward) with initial condition in it, one hits again $\Sigma_{h^*}^>$.
To this end, we start by defining the domains $D^\diamond\subset \Sigma_{h^*}^\diamond$, $\diamond=<,>$ as follows. Fix $\varepsilon>0$, then
\begin{itemize}
    \item $D^>$ contains points in $\Sigma_{h^*}^>$, $\varepsilon$-close to $p_>^*$ (see \eqref{eqn: Point p>*}), whose forward orbit hits $\Sigma_{h^*}^<$.
    \item  $D^<$ contains points in $\Sigma_{h^*}^<$, $\varepsilon$-close to $p_<$ (see \eqref{eqn: Point p<}), whose backward orbit hits $\Sigma_{h^*}^>$.
\end{itemize}
These domains, proceeding as in \cite{moser2001stable}, can be characterized as 
\begin{equation*}
    \begin{aligned}
        D^> &= \Big\{(\theta,\Theta) \in B_\varepsilon(p_>^*) \colon R(\delta^2,\theta,\Theta;h^*) < R_\infty^{s}(\theta)\Big\} \subset \Sigma_{h^*}^>,\\
        D^< &= \Big\{(\theta,\Theta) \in B_\varepsilon(p_<) \colon R(\delta^2,\theta,\Theta;h^*) > R_\infty^u(\theta)\Big\}\subset \Sigma_{h^*}^<.
    \end{aligned}
\end{equation*}
where 
\begin{equation*}
 R_\infty^{s}(\theta) = R(\delta^2, \theta, \Theta_{\infty}^s(\theta), h^*) > 0 , \quad R_\infty^{u}(\theta) = R(\delta^2, \theta, \Theta_{\infty}^u(\theta), h^*) < 0 
\end{equation*}
are obtained from the Hamiltonian \eqref{eqn: Hamiltonian function in rotating polar coordinates centered at $P_1$}. Alternatively, by the conservation of the Hamiltonian \eqref{eqn: Hamiltonian function in rotating polar coordinates centered at $P_1$} they can be also defined as  
\begin{equation*}
    \begin{aligned}
        D^> &= \Big\{(\theta,\Theta) \in B_\varepsilon(p_>^*) \colon \Theta < \Theta_\infty^{s}(\theta)\Big\} \subset \Sigma_{h^*}^>,\\
        D^< &= \Big\{(\theta,\Theta) \in B_\varepsilon(p_<) \colon \Theta > \Theta_\infty^u(\theta)\Big\}\subset \Sigma_{h^*}^<.
    \end{aligned}
\end{equation*}
As the points in $D^<$ are close  to $p_<$, one can map this domain to $\Sigma_{h^*}^>$ by means of the (continuous extension of the) local map analyzed in Theorem \ref{thm: Transition map close to collision}. This gives the domain $\tilde D^<$, which can be characterized as
\begin{equation}\label{set Tilde D}
\tilde D^< = \Big\{(\theta,\Theta) \in B_\varepsilon(p_<) \colon \Theta < \Theta_\infty^{u,>}(\theta)\Big\}\subset \Sigma_{h^*}^>.
\end{equation}
Hence, proceeding as   Moser in \cite{moser2001stable} one has to analyze the return map in domain $D^>\cap\Tilde{D}^{<}$ and prove that one can construct a hyperbolic set in this domain, which accumulates to the transverse homoclinic point to infinity. Note that in this domain the return map is well defined. The only difference between the classical setting and the present one is the passage close to collision. However, by Theorem \ref{thm: Transition map close to collision}  we have a very precise description of local map close to collision, which in good coordinates is close to the identity (in $\mathcal{C}^1$ regularity). This allows to apply the approach in \cite{moser2001stable}, which leads to a hyperbolic set homeomorphic to $\mathbb{N}^\mathbb{Z}$ whose dynamics is conjugated to the shift of infinite symbols. Moreover, this set contains the homoclinic point in its closure. Indeed, the symbols in the sequence $\mathbb{N}^\mathbb{Z}$ labels the closeness of the corresponding orbit to the invariant manifolds. The higher the symbol, the closer is the point to the invariant manifold. 

Moreover, in the present setting the homoclinic point is a triple intersection (in the sense of Definition \ref{def:tripleinter}). This implies that the hyperbolic set also accumulates to the invariant manifolds of collision. 

Then, unbounded (forward and backward) sequences in $\mathbb{N}^\mathbb{Z}$ correspond to oscillatory orbits which accumulate also to collision. Periodic sequences correspond to periodic orbits which by taking high enough symbols can be as large and  as close to collision as prescribed. This completes the proof.


\end{proof}

\subsection{Existence of the triple intersection 
}\label{subsec: First part of the proof of Theorem 1.4}

Fix $\mu>0$ small enough. Then, for any $\theta^+ \in U_+$, fix an energy level $h=-\hat \Theta _0= -\mu \bar \Theta_0=\mu M_+(\theta^+-w_\Sigma)$ so that Theorem \ref{thm: transverse intersection of the invariant manifolds} holds and consider the point
\begin{equation}\label{eqn: Point p<}
    p_< = p_<(\theta^+,\mu) = (\theta_<(\theta^+,\mu),\Theta_<(\theta^+,\mu)) \in \Delta_{\circleSminus}^s(\mu) \transv \Delta_{\infty}^u(\mu),
\end{equation}
where $\Theta_<(\theta^+,\mu) = \Theta_\infty^u(\theta_<(\theta^+,\mu),\hat{\Theta}_0) = \Theta_{\circleSminus}^s(\theta_<(\theta^+,\mu))$.

Since the changes of coordinates $\psi, \Tilde{\psi}$ (defined in \eqref{eqn:McGehee map Collision} and \eqref{eqn:polar change of coordinates for u,v into rho alpha}) and $\Gamma_-$ (defined in \eqref{eqn: Diffeo (s,alpha,theta)-(s,Tilde(beta),z)}) are diffeomorphisms, we can apply Theorem \ref{thm: Transition map close to collision} and Remark \ref{rem:extensioftilde} 
so that the (continuous extension of the) transition map $\Tilde{f}$ 
sends 
$\Tilde{p}_s = \left(\Gamma_ - \circ \Tilde \psi \circ \psi \right)(p_<)$ to a point  $\Tilde{p}_u\in\Tilde{\Sigma}_h^> \cap W^u_\mu (S^+)$ defined in \eqref{eqn: Image of the intersection point Theorem 5.3}. Then, applying the change of coordinates $\Gamma_+$ in \eqref{eqn: Diffeo (s,alpha,theta)-(s,iota,w)}, this point can be written as $p^u_> = (\theta^u_>,\Theta^u_>) =  \left(\psi^{-1}\circ \Tilde \psi^{-1}\circ \Gamma_+ ^{-1}\right)(\tilde p_u)$, and therefore  $\Theta^u_>=\Theta ^u_{S^+}(\theta_>^*)$. Abusing notation (in the same sense as in Definition \ref{def:tripleinter})   we say that $p^u_>  \in \Delta_{\circleSplus}^u(\mu) \transv \reducedUnstableManifoldInfinitymu$. In particular, the $\theta$-component of both points can be related as
\begin{equation}\label{eqn: Relation theta>-theta<}
    \theta_>^u = \theta_>^u(\theta^+,\mu,\delta) = \theta_<(\theta^+,\mu) + \mathcal{O}_2(\delta).
\end{equation}
To guarantee the triple intersection we need that $p_>^u = p_>(\theta^+,\mu) \in \Delta_{\circleSplus}^u(\mu) \transv \Delta_\infty^s(\mu)$ from Theorem \ref{thm: transverse intersection of the invariant manifolds}. Namely, we look for $\theta^+$ such that
\begin{equation}\label{eqn: thetas}
\theta_>^u(\theta^+,\mu,\delta) = \theta_>(\theta^+,\mu)
\end{equation}
which is equivalent to solve
\begin{equation}\label{eqn: IFT triple intersection}
  \theta^+ + \mathcal{O}_1(\mu,\delta^2) = 0,  
\end{equation}
given that $w_\Sigma$ in \eqref{eqn: definition of wSigma} is of order $\delta^3$.

Hence, the Implicit Function Theorem ensures that there exist $\mu_0,\delta_0 > 0$ such that, for all $(\mu,\delta) \in (0,\mu_0)\times (0,\delta_0)$, there exists a unique $\theta^+(\mu,\delta)$ with $\theta^+(0,0) = 0$ satisfying \eqref{eqn: IFT triple intersection} and therefore \eqref{eqn: thetas}. Since $\theta^+(\mu,\delta) \in U^+$ for $\mu,\delta > 0$ small enough (see Theorem \ref{thm: transverse intersection of the invariant manifolds}), the point
\begin{equation}\label{eqn: Point p>*}
    p_>^* = (\theta_>(\mu,\delta), \Theta_>(\mu,\delta)),\quad \Theta_>(\mu,\delta) = \Theta_{\circleSplus}^u(\theta_>(\mu,\delta))
\end{equation}
belongs to $\Tilde{W}_\mu^s(\Alpha_{\hat{\Theta}_0^*}) \cap \Tilde{W}_\mu^u(\Alpha_{\hat{\Theta}_0^*}) \cap \unstableManifoldSplusmu$ at 
\begin{equation}\label{eqn: h transversality}
h^* = h(\mu,\delta) = - \hat{\Theta}_0^*(\mu,\delta) = \mu M_+(\theta^+(\mu,\delta)-w_\Sigma) = \mu(\mathcal{M}_+(0) + \mathcal{O}_1(\mu,\delta^2)).
\end{equation}

\subsection{Transversality and ordering at  the triple intersection
}\label{subsec: Second part of the proof of Theorem 1.4}


From now on, we consider $h^*$ as in \eqref{eqn: h transversality}, and use the expression for $\theta_>$ given in Theorem \ref{thm: transverse intersection of the invariant manifolds} with the value of $\theta^+$ obtained in \eqref{eqn: IFT triple intersection} and denote by 

\begin{equation}\label{eqn: Notation transv}
\begin{aligned}
\theta_> &= \theta_>(\mu,\delta) = \theta^+(\mu,\delta) - w_\Sigma + \mathcal{O}_1(\mu) = \mathcal{O}_1(\mu,\delta^2) \in I_{\frac{3}{10}}^+(w_\Sigma)\cap I^-_{\frac{3}{10}}(w_\Sigma),\\
\theta_< &= \theta_<(\mu,\delta) = \theta_>(\mu,\delta) + \mathcal{O}_2(\delta) = \theta_> + \mathcal{O}_2(\delta) = \mathcal{O}_1(\mu,\delta^2) \in I_{\frac{3}{10}}^+(w_\Sigma)\cap I_{\frac{3}{10}}^-(w_\Sigma),\\
d_\pm(\theta)&=d_\pm(\theta,\hat{\Theta}_0^*), \quad \Theta_\infty^{u,s}(\theta) = \Theta_\infty^{u,s}(\theta,\hat{\Theta}_0^*),
\end{aligned}
\end{equation}
the $\theta$-components of $p_<$ and $p_>^*$ in \eqref{eqn: Point p<} and \eqref{eqn: Point p>*} respectively, the distances in \eqref{eqn: Asymptotic formula for the distances} and the values of the angular momenta of the points in $\Delta_\infty^{u,s}(\mu)$. 
For a fixed $\varepsilon > 0$ small enough, we consider an $\varepsilon$-neighborhood of $\theta_>$ containing $\theta_<$ (see \eqref{eqn: Notation transv}) which we denote $B_\varepsilon(\theta_>)$, and  the following $C^1$-curves
\begin{equation}\label{eqn: Curves as graphs of theta in coordinates (theta,Theta)}
    \begin{aligned}
        \gamma^{u,<}_\infty &= \Big\{\left(\theta,\Theta^{u}_\infty(\theta)\right)\colon \theta \in B_\varepsilon(\theta_>)\Big\} \subset \Delta^{u}_\infty(\mu)\subset\Sigma_{h^*}^<,\\
        \gamma^{s,<}_{\circleSminus} &= \Big\{\left(\theta,\Theta^{s}_{\circleSminus}(\theta)\right)\colon \theta \in B_{\varepsilon}(\theta_>)\Big\} \subset \Delta^{s}_{\circleSminus}(\mu)\subset\Sigma_{h^*}^<,\\
        \gamma^{s,>}_\infty &= \Big\{\left(\theta,\Theta^{s}_\infty(\theta)\right)\colon \theta \in B_\varepsilon(\theta_>)\Big\} \subset \Delta^{s}_\infty(\mu)\subset\Sigma_{h^*}^>,\\
        \gamma^{u,>}_{\circleSplus} &= \Big\{\left(\theta,\Theta^{u}_{\circleSplus}(\theta)\right)\colon \theta \in B_{\varepsilon}(\theta_>)\Big\} \subset \Delta^{u}_{\circleSplus}(\mu)\subset\Sigma_{h^*}^>.
    \end{aligned}
\end{equation}
Observe that $\gamma^{u,<}_\infty$, $\gamma^{s,>}_\infty$ and $\gamma^{u,>}_{\circleSplus}$ are the curves in \eqref{eqn: transversecurves}.

Note that, by Theorem \ref{thm: transverse intersection of the invariant manifolds}, these curves satisfy

\begin{equation*}
    p_<(\mu,\delta) \in \gamma^{u,<}_\infty \transv \gamma^{s,<}_{\circleSminus}, \quad p_>(\mu,\delta) \in \gamma^{s,>}_\infty\transv \gamma^{u,>}_{\circleSplus}.
\end{equation*}
Moreover, in Section \ref{subsec: First part of the proof of Theorem 1.4} we have seen that
\begin{equation}\label{eqn: Points of intersection}
\begin{aligned}
    p_>(\mu,\delta) = p_>^* = (\theta_>, \Theta_{\circleSplus}^u(\theta_>)) &\in  \Delta_{\circleSplus}^u(\mu)\cap \Delta_{\infty}^s(\mu) \cap\Tilde{W}_\mu^u(\Alpha_{\hat{\Theta}_0^*}) \in \Sigma_{h^*}^>.
\end{aligned}
\end{equation}
Then, the following lemma (whose proof is done in Appendix \ref{appendix: From straightening coordinates to synodical polar coordinates}) gives us information regarding the image of $\gamma^{u,<}_\infty$ through the transition map $\Tilde{f}$ obtained in Theorem \ref{thm: Transition map close to collision}, translated into coordinates $(\theta,\Theta)$.

\begin{lemma}\label{lemma: tangent vector at Sigma delta,h as a graph}
Denote by 
\begin{equation*}
 \gamma^{u,>}_\infty = \left(\psi^{-1} \circ \Tilde{\psi}^{-1} \circ \Gamma_+^{-1} \circ \Tilde{f} \circ \Gamma_- \circ \Tilde{\psi} \circ \psi\right) \circ \gamma^{u,<}_\infty \subset \Sigma_{h^*}^>
\end{equation*}
where $ \gamma^{u,<}_\infty $ is the curve in \eqref{eqn: Curves as graphs of theta in coordinates (theta,Theta)} and
\begin{itemize}    
    \item The diffeomorphisms $\psi$ and $\Tilde{\psi}$ are defined in \eqref{eqn:McGehee map Collision} and \eqref{eqn:polar change of coordinates for u,v into rho alpha} respectively. 
    
    \item The diffeomorphisms $\Gamma_-$ and $\Gamma_+$ are given in Proposition \ref{proposition: Straightening local diffeomorphisms and motions}.
    
    \item The transition map $\Tilde{f}$ from $\Sigma_{h^*}^<$ to $\Sigma_{h^*}^>$ is given in Theorem \ref{thm: Transition map close to collision}.
\end{itemize}
Then, the curve $\gamma^{u,>}_\infty$ can be written as a graph as

\begin{equation*}
    \gamma^{u,>}_\infty = \Big\{\left(\theta, \Theta^{u,>}_\infty(\theta)\right)\colon \theta \in B_\varepsilon(\theta_>)\Big\},
\end{equation*}
where the function $\Theta_\infty^{u,>}(\theta)$ satisfies

\begin{equation}\label{eqn: Expression of Thetau> infty through f}
    \begin{aligned}
     \Theta^{u,>}_\infty(\theta) &= \Theta^u_{\circleSplus}(\theta) + \mathcal{O}_1(\mu \delta),\\
     \Theta^{u,>}_\infty{}'(\theta_>) &= \Theta^{u}_{\circleSplus}{}'(\theta_>) + d_-'(\theta_>) + \mathcal{O}_1(\mu\delta^2),
    \end{aligned}
\end{equation}
where $\distanceneg(\theta)$ is defined in \eqref{eqn: Notation transv}.
\end{lemma}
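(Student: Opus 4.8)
The strategy is to pull the curve $\gamma^{u,<}_\infty \subset \Delta_\infty^u(\mu) \subset \Sigma_{h^*}^<$ through the chain of diffeomorphisms $\Gamma_-\circ\Tilde\psi\circ\psi$ into the straightened coordinates $(s,\Tilde\beta,z)$ near $S^-$, apply the transition map $\Tilde f$ from Theorem~\ref{thm: Transition map close to collision}, and then push the resulting curve $\Tilde\gamma_{\mathrm{out}}$ back to synodical polar coordinates $(\theta,\Theta)$ via $\psi^{-1}\circ\Tilde\psi^{-1}\circ\Gamma_+^{-1}$, tracking the $\mathcal{O}$-orders at each stage. The key point is that $\gamma^{u,<}_\infty$ is a $C^1$ graph $\theta\mapsto(\theta,\Theta_\infty^u(\theta))$ transverse to $\Delta_{\circleSminus}^s(\mu)=W_\mu^s(S^-)\cap\Sigma_{h^*}^<$ at $p_<$, so when written in the coordinates of Proposition~\ref{proposition: Straightening local diffeomorphisms and motions} it becomes precisely a curve $\Tilde\gamma_{\mathrm{in}}$ of the form~\eqref{eqn: transverse curve gamma_in(nu)}, with $\nu$ proportional to $\Tilde\beta$ (by transversality, $\nu$ is a regular parameter along the curve). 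Theorem~\ref{thm: Transition map close to collision} then gives $\Tilde\gamma_{\mathrm{out}}(\nu)=(\delta,-\nu+\mathcal{O}_1(\delta\nu),z_{\mathrm{in}}(\nu)+\mathcal{O}_1(\delta^2\nu,\nu^2))$ with tangent limit $(0,-1+\mathcal{O}_1(\delta),z_{\mathrm{in}}'(0)+\mathcal{O}_1(\delta))$, and since these limits are well-defined the composed curve $\gamma^{u,>}_\infty$ is $C^1$ up to $\Tilde p_u$ and transverse to $W_\mu^u(S^+)$ there.

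First I would record the size of the perturbative terms: the point $p_<$ has $\Theta$-component of order $\mu$ (Theorem~\ref{thm: transverse intersection of the invariant manifolds}), hence in McGehee collision coordinates the corresponding $s=\sqrt{r}$ equals $\delta$, and the curve $\gamma^{u,<}_\infty$ lies within an $\mathcal{O}(\mu)$-tube of $W_0^u(S^+)$'s symmetric counterpart; translating through $\psi,\Tilde\psi,\Gamma_-$ (all independent of the flow and smooth, with $\Gamma_-$ linear in $\Tilde\beta$) preserves this, so $\Tilde\gamma_{\mathrm{in}}$ is an $\mathcal{O}(\mu)$-perturbation of the "model" curve and $z_{\mathrm{in}},z_{\mathrm{in}}'$ inherit $\mathcal{O}_1(\mu)$-control. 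Then I would feed this into Theorem~\ref{thm: Transition map close to collision}: the crucial feature is that $\Tilde f$ is $C^1$-close to $(\Tilde\iota,z)\mapsto(-\Tilde\iota,z)$ with errors of relative size $\mathcal{O}(\delta)$, so the $z$-component (equivalently $w$, by $w=z+\mathcal{O}_3(s)$ in Remark~\ref{remark: Translation from (s,Tilde(beta),z) to (s,Tilde(iota),w)}) is essentially transported unchanged. Finally, pushing $\Tilde\gamma_{\mathrm{out}}$ back through $\Gamma_+^{-1}\circ\Tilde\psi^{-1}\circ\psi^{-1}$ and re-expressing as a graph over $\theta$, the leading contribution to $\Theta^{u,>}_\infty(\theta)$ is exactly the value carried by $W_\mu^u(S^+)$, i.e. $\Theta^u_{\circleSplus}(\theta)$, plus an error governed by the difference between $\Tilde\gamma_{\mathrm{out}}$ and $W_\mu^u(S^+)\cap\Sigma_{h^*}^>$; since $\Tilde\gamma_{\mathrm{in}}$ sits at distance $\mathcal{O}(\mu)$ from $W_\mu^s(S^-)$ and the local map contracts/expands by factors of $\mathcal{O}(\delta)$ in the relevant directions, this difference is $\mathcal{O}_1(\mu\delta)$, giving the first line of~\eqref{eqn: Expression of Thetau> infty through f}.

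For the derivative statement I would differentiate the graph relation at $\theta_>$: the tangent to $\gamma^{u,>}_\infty$ at $p_>^*$ differs from the tangent to $\gamma^{u,>}_{\circleSplus}$ (i.e. $\Theta^u_{\circleSplus}{}'(\theta_>)$) by a term measuring how the incoming transversality "angle" of $\gamma^{u,<}_\infty$ with $W_\mu^s(S^-)$ is transported. Here one uses that $\gamma^{u,<}_\infty = \Delta_\infty^u(\mu)$ near $p_<$, so its slope relative to $\Delta_{\circleSminus}^s(\mu)$ is controlled by $d_-'(\theta_<)=d_-'(\theta_>)+\mathcal{O}_2(\delta)$ from~\eqref{eqn: Asymptotic formula for the distances}; combined with the near-identity ($C^1$) behavior of $\Tilde f$ in the $z$/$w$ variable and the $\mathcal{O}(\delta)$ distortions, one gets $\Theta^{u,>}_\infty{}'(\theta_>)=\Theta^u_{\circleSplus}{}'(\theta_>)+d_-'(\theta_>)+\mathcal{O}_1(\mu\delta^2)$. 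The main obstacle, and the step deserving the most care, is the bookkeeping that identifies the "extra slope" picked up in passing through collision with precisely $d_-'(\theta_>)$ (and not some other combination): this requires carefully matching the parameter $\nu$ along $\Tilde\gamma_{\mathrm{in}}$ to the transverse coordinate in which $d_-$ measures the separation of $\Delta_\infty^u(\mu)$ from $\Delta_{\circleSminus}^s(\mu)$, and checking that the change-of-variables Jacobians $D(\Gamma_-\circ\Tilde\psi\circ\psi)$ and $D(\psi^{-1}\circ\Tilde\psi^{-1}\circ\Gamma_+^{-1})$ contribute only to the higher-order error once the $\mu\to 0$, $\delta\to 0$ limits of these Jacobians (which are explicit and symmetric by~\eqref{eqn: Symmetry of the PCRTBP in synodical polar coordinates centered at P1}) are accounted for. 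This is a routine but delicate computation; everything else follows from assembling the quoted results.
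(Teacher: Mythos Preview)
Your proposal is correct and follows essentially the same route as the paper's proof in Appendix~D. The step you flag as delicate---identifying the extra slope with $d_-'(\theta_>)$---is handled in the paper by carrying along the reference curves $\gamma^{s,<}_{S^-}$ and $\gamma^{u,>}_{S^+}$ (which satisfy $\tilde\beta\equiv 0$ and $\tilde\iota\equiv 0$ respectively in the straightened coordinates) and writing $\tilde\beta^{u,<}_\infty = \tilde\beta^{u,<}_\infty - \tilde\beta^{s,<}_{S^-}$ on the input side and $\alpha^{u,>}_\infty = \alpha^{u,>}_{S^+} + \tilde\iota^{u,>}_\infty\circ\nu + \ldots$ on the output side, which converts the angle differences directly into $d_-'(\theta_<)/(\sqrt{2(1-\mu)}\,\delta)$ and $\Theta^u_{S^+}{}'$ and makes your ``routine but delicate'' bookkeeping mechanical.
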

As a result, the transversality condition of the triple intersection is guaranteed once we prove that
\begin{equation*}
    \begin{aligned}
        \Theta^u_{\circleSplus}{}'(\theta_>)-\Theta^{s}_{\infty}{}'(\theta_>)&\neq 0\\
        \Theta^s_\infty{}'(\theta_>)-\Theta^{u,>}_{\infty}{}'(\theta_>)  &\neq 0\\
        \Theta^u_{\circleSplus}{}'(\theta_>)-\Theta^{u,>}_{\infty}{}'(\theta_>)&\neq 0.
    \end{aligned}
\end{equation*}
By \eqref{eqn: Explicit formula for the distance} and \eqref{eqn: Expression of Thetau> infty through f}, these inequalities can be written as
\begin{equation}\label{eqn: Inequalities transversality}
    \begin{aligned}
    \Theta^u_{\circleSplus}{}'(\theta_>)-\Theta^{s}_{\infty}{}'(\theta_>) = -d_+'(\theta_>) \neq 0,\\
    \Theta^u_{\circleSplus}{}'(\theta_>)-\Theta^{u,>}_{\infty}{}'(\theta_>) = -d_-'(\theta_>) + \mathcal{O}_1(\mu\delta^2) \neq 0,\\
    \Theta^s_\infty{}'(\theta_>)-\Theta^{u,>}_{\infty}{}'(\theta_>)  = d_+'(\theta_>) - d_-'(\theta_>) + \mathcal{O}_1(\mu\delta^2) \neq 0.\\
    \end{aligned}
\end{equation}
Using \eqref{eqn: Notation transv}, the first and second inequalities are given by the first and second items of Theorem \ref{thm: transverse intersection of the invariant manifolds} respectively. Finally; by \eqref{eqn: Asymptotic formula for the distances}, \eqref{eqn: translation melnikov} and \eqref{eqn: Relation between M- and M+} we have
\begin{equation}\label{eqn: d_+' and d_'}
    d_+'(\theta_>) = -d_-'(\theta_>) = \mu \left(\mathcal{M}_+'(0) + \mathcal{O}_1(\mu,\delta^2)\right),
\end{equation}
and therefore the third inequality can be written as
\begin{equation*}
    \mu\left(2\mathcal{M}_+{}'(0) + \mathcal{O}_1(\mu,\delta^2)\right) \neq 0.
\end{equation*}
Lemma \ref{lemma: derivative of the melnikov functions not 0 for almost every theta} implies this inequality for $\mu,\delta$ small enough.

Following \eqref{eqn: Inequalities transversality} and \eqref{eqn: d_+' and d_'}, we compute the angles in \eqref{eqn: angles trasnversality} as follows
\begin{equation*}
\begin{aligned}
    \sin A =& \frac{\Theta_\infty^s{}'(\theta_>) - \Theta_\infty^{u,>}{}'(\theta_>)}{\sqrt{\left(1+ \left(\Theta_\infty^s{}'(\theta_>)\right)^2\right)\left(1+ \left(\Theta_\infty^{u,>}{}'(\theta_>)\right)^2\right)}} =\frac{\mu\left(2\mathcal{M}_+'(0) + \mathcal{O}_1(\mu,\delta^2)\right)}{\sqrt{\left(1+ \left(\Theta_\infty^s{}'(\theta_>)\right)^2\right)\left(1+ \left(\Theta_\infty^{u,>}{}'(\theta_>)\right)^2\right)}}\\
    \sin B =& \frac{\Theta_{\circleSplus}^u{}'(\theta_>) - \Theta_\infty^{u,>}{}'(\theta_>)}{\sqrt{\left(1+ \left(\Theta_{\circleSplus}^u{}'(\theta_>)\right)^2\right)\left(1+ \left(\Theta_\infty^{u,>}{}'(\theta_>)\right)^2\right)}} = \frac{\mu\left(\mathcal{M}_+'(0) + \mathcal{O}_1(\mu,\delta^2)\right)}{\sqrt{\left(1+ \left(\Theta_{\circleSplus}^u{}'(\theta_>)\right)^2\right)\left(1+ \left(\Theta_\infty^{u,>}{}'(\theta_>)\right)^2\right)}}.
\end{aligned}
\end{equation*}
Lemma \ref{lemma: derivative of the melnikov functions not 0 for almost every theta} ensures that $\mathcal{M}_+'(0) < 0$, leading to \eqref{eqn: Inequality angles}. See Figure \ref{fig: trans intersection} to see the relative position of the curves.

\appendix
\section{Proof of Lemma \ref{lemma: derivative of the melnikov functions not 0 for almost every theta}}\label{sec: Computer Assisted Proofs}

The proof of Lemma \ref{lemma: derivative of the melnikov functions not 0 for almost every theta} is computer-assisted. We rely on the CAPD libraries developed in \cite{MR4283203}.

We compute the integrals involved in the derivative of the Melnikov function $\mathcal{M}_+(\theta)$ in \eqref{eqn: melnikov function} (the result for $\mathcal{M}_-(\theta)$ follows from (\ref{eqn: Relation between M- and M+})), which can be written as 



\begin{equation}\label{eqn: Expression of the derivative of the Melnikov function}
\begin{aligned}
    \frac{d}{d\theta}\mathcal{M}_+(\theta) = \mathcal{I}_1(\theta) - \mathcal{I}_2(\theta),   
\end{aligned}
\end{equation}
with
\begin{equation}\label{eqn: definition of I1 and I2}
    \begin{aligned}
        \mathcal{I}_1(\theta) &= \constantValue \bigintss_0^{+\infty} \left(\frac{s^{\frac 2 3}\cos \left(\theta - s\right)}{\left(1+\constantValue^2s^{\frac 4 3} - 2\constantValue s^{\frac 2 3}\cos(\theta-s)\right)^{\frac 3 2}} - \frac{3\constantValue s^{\frac 4 3}\sin^2(\theta-s)}{\left(1+\constantValue^2s^{\frac 4 3} - 2\constantValue s^{\frac 2 3}\cos(\theta-s)\right)^{\frac 5 2}}\right)\; ds,\\
        \mathcal{I}_2(\theta) &= \sqrt{\frac 2 \constantValue}\bigintss_0^{+\infty} \frac{\sin(\theta-s)}{s^{\frac 1 3}}\;ds,
    \end{aligned}
\end{equation}
where $\constantValue = \frac{3^{\frac 2 3}}{2^{\frac 1 3}}$.

Note that $\mathcal{I}_2(\theta)$ can be computed explicitly (by means of a Laplace transform)



\begin{equation}\label{eqn: Expression for I2}
    \mathcal{I}_2(\theta) = \sqrt{\frac 2 \constantValue} \Gamma\left(\frac 2 3\right) \left(\frac{\sin\theta}{2} - \frac{\sqrt{3}}{2}\cos\theta\right).
\end{equation}
To compute $\mathcal{I}_1(\theta)$ we proceed as follows. We consider two constants $c,C>0$ and we split it as

\begin{equation}
    \constantValue^{-1}\mathcal{I}_1(\theta) = i(0,c;\theta) + i(c,C;\theta) + i(C,+\infty;\theta)
\end{equation}
with
\begin{equation}\label{eqn: bound notation for I1}
    \begin{aligned}
        i(0,c;\theta) &=\bigintss_0^{c} \left(\frac{s^{\frac 2 3}\cos \left(\theta - s\right)}{\left(1+\constantValue^2s^{\frac 4 3} - 2\constantValue s^{\frac 2 3}\cos(\theta-s)\right)^{\frac 3 2}} - \frac{3\constantValue s^{\frac 4 3}\sin^2(\theta-s)}{\left(1+\constantValue^2s^{\frac 4 3} - 2\constantValue s^{\frac 2 3}\cos(\theta-s)\right)^{\frac 5 2}}\right)\; ds,\\
        i(c,C;\theta) &= \bigintss_c^{C} \left(\frac{s^{\frac 2 3}\cos \left(\theta - s\right)}{\left(1+\constantValue^2s^{\frac 4 3} - 2\constantValue s^{\frac 2 3}\cos(\theta-s)\right)^{\frac 3 2}} - \frac{3\constantValue s^{\frac 4 3}\sin^2(\theta-s)}{\left(1+\constantValue^2s^{\frac 4 3} - 2\constantValue s^{\frac 2 3}\cos(\theta-s)\right)^{\frac 5 2}}\right)\; ds,\\
        i(C,+\infty;\theta) &= \bigintss_C^{+\infty} \left(\frac{s^{\frac 2 3}\cos \left(\theta - s\right)}{\left(1+\constantValue^2s^{\frac 4 3} - 2\constantValue s^{\frac 2 3}\cos(\theta-s)\right)^{\frac 3 2}} - \frac{3\constantValue s^{\frac 4 3}\sin^2(\theta-s)}{\left(1+\constantValue^2s^{\frac 4 3} - 2\constantValue s^{\frac 2 3}\cos(\theta-s)\right)^{\frac 5 2}}\right)\; ds.
    \end{aligned}
\end{equation}
Taking $c < \left(\frac 1 \constantValue\right)^{\frac 3 2} < C$,  we have the following uniform bounds for $i(0,c;\theta)$ and $i(C,+\infty;\theta)$.

\begin{equation}\label{eqn: Estimate of the tails}
    \begin{aligned}
        |i(0,c;\theta)| &\leq\bigintss_0^{c} \left(\frac{s^{\frac 2 3}}{\left(1-\constantValue s^{\frac 2 3}\right)^{3}} +  \frac{3\constantValue s^{\frac 4 3}}{\left(1-\constantValue s^{\frac 2 3}\right)^{5}}\right)\;ds \leq \frac{3}{5} \frac{c^{\frac 5 3}}{\left(1-\constantValue c^{\frac 2 3}\right)^{3}} + \frac{9\constantValue}{7} \frac{c^{\frac 7 3}}{\left(1-\constantValue c^{\frac 2 3}\right)^{5}},\\
        |i(C,+\infty;\theta)| &\leq \bigintss_C^{+\infty}\left(\frac{1}{s^{\frac 4 3}\left(\constantValue - s^{-\frac 2 3}\right)^3} + \frac{3\constantValue}{s^2\left(\constantValue - s^{-\frac 2 3}\right)^5}\right)\;ds\\
        &\leq \frac{3}{C^{\frac 1 3}\left(\constantValue - C^{-\frac 2 3}\right)^3} + \frac{3\constantValue}{C\left(\constantValue - C^{-\frac 2 3}\right)^5}.
    \end{aligned}
\end{equation}
From these estimates, we can obtain the intervals where $\mathcal{I}_1$ and $\mathcal{I}_2$ in \eqref{eqn: definition of I1 and I2} belong using computer computations\footnote{The code to compute the expressions in \eqref{eqn: bound notation for I1} and the data plotted in Figure \ref{fig: derivative_melnikov} can be found at \href{https://github.com/JoseLamasRodriguez/CAPD-Code.git}{https://github.com/JoseLamasRodriguez/CAPD-Code.git}.}.

\begin{figure}[h!]
    \centering
    \includegraphics[width=0.5\linewidth]{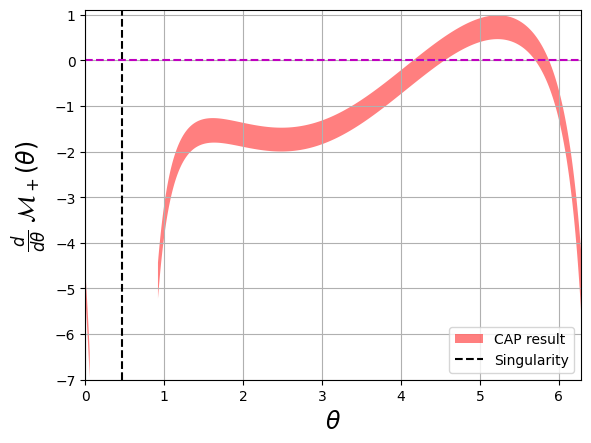}
    \caption{Representation of $\mathcal{M}_+'(\theta)$ from the computer assisted algorithm.}
    \label{fig: derivative_melnikov}
\end{figure} 
To this end note that, to have a well-defined formula for $\mathcal{M}_+(\theta)$, the conditions of Theorem \ref{thm: asymptotic formula for the distance} must be satisfied. In particular, we have to consider values of  $\theta \in I_D^+(w_\Sigma)$ defined in \eqref{eqn: domain Ip} (with $w_\Sigma$ given in \eqref{eqn: definition of wSigma}). In our case, we have taken $\theta \in \mathds{T} - \left(\frac{\sqrt{2}}{3} - 0.45, \frac{\sqrt{2}}{3}+0.45\right)\subset I_D^+(w_\Sigma)$ for $D = 0.3$.

For the estimates in \eqref{eqn: Estimate of the tails}, we have considered $c = 0.001$ and $C = 100$. Smaller values of $c$ or higher values of $C$ may result in better approximations for the derivative $\mathcal{M}_+'(\theta)$ in \eqref{eqn: Expression of the derivative of the Melnikov function} at the expense of increased computational time to evaluate $i(c,C;\theta)$, defined in \eqref{eqn: bound notation for I1}. Finally, the integral $i(c,C;\theta)$ is estimated using a computer-assisted algorithm.

The provided implementation iterates through a range of angles, dividing it into $N=10000$ equidistant intervals of size $\frac{2\pi}{N} + \mathrm{eps}$, where $\mathrm{eps} = 10^{-5}$ is introduced to ensure that these intervals intersect with one another. For each angle within the range, the code ensures that it belongs to $\mathds{T} - \left(\frac{\sqrt{2}}{3} - 0.45, \frac{\sqrt{2}}{3}+0.45\right)$ where, as previously stated, the Melnikov function can be computed. It then estimates the integrals $\mathcal{I}_1$ and $\mathcal{I}_2$ defined in \eqref{eqn: definition of I1 and I2}, where $\mathcal{I}_1$ is estimated using the previous results and $\mathcal{I}_2$ is estimated from \eqref{eqn: Expression for I2}. Finally, the difference between these integrals, and therefore the derivative $\mathcal{M}_+'(\theta)$, is computed and written in the corresponding files.

As a result, we obtain the representation of $\mathcal{M}_+'(\theta)$ depicted in Figure \ref{fig: derivative_melnikov}. Moreover,
\[
\mathcal{M}_+'(0)\in [-5.15341,-4.56572].
\]





\renewcommand{\thesection}{\Alph{section}}

\section{Proof of Lemma \ref{lemma: A lambda lemma for the transition map close to collision} and Lemma \ref{lemma: transition map from C2- to Cu}}\label{appendix: A lambda lemma for the transition map close to collision}

The first part of this section is devoted to prove Lemma \ref{lemma: A lambda lemma for the transition map close to collision}. 
The proof is divided into two parts. The first part consists on finding the curve $\Tilde{\gamma}_1$ in \eqref{eqn: Image curve gamma_1(nu)}. Then, we show that the tangent vector $\Tilde{\gamma}_1'$ is of the form \eqref{eqn: Image tangent vector gamma1}. Both proofs rely on  a fixed point argument.

To study the evolution of the curve $\Tilde{\gamma}_{\mathrm{in}}$ defined in \eqref{eqn: transverse curve gamma_in(nu)}, instead of using equation \eqref{eqn: Eq motion in coordinates (s,Tilde(beta),z)}, we consider  the equations of the orbits
\begin{equation}\label{eqn: Integral equations for s and z}
\begin{aligned}
    \frac{ds}{d\Tilde{\beta}} &= - \frac{s}{\Tilde{\beta}} \left(1+\Tilde{G}_s(s,\Tilde{\beta},z)\right)\\
    \frac{dz}{d\Tilde{\beta}} &= \frac{2}{m_0} s\Tilde{G}_z(s,\Tilde{\beta},z),
\end{aligned}
\end{equation}
where $\Tilde{G}_s$ and $\Tilde{G}_z$ satisfy
\begin{equation}\label{eqn: Estimates Gs and Gz}
\begin{aligned}
    \Tilde{G}_s(s,\Tilde{\beta},z) &= \mathcal{O}_1(s,\Tilde{\beta}), \quad \Tilde{G}_z(s,\Tilde{\beta},z) = 4\lambda(\mu,h)s + \mathcal{O}_3(s,\Tilde{\beta}),\\
    \partial_s \Tilde{G}_s(s,\Tilde{\beta},z) &= \mathcal{O}_0(s), \quad \partial_s \Tilde{G}_z(s,\Tilde{\beta},z) = 4\lambda(\mu,h) + \mathcal{O}_2(s,\Tilde{\beta}),\\
    \partial_z \Tilde{G}_s(s,\Tilde{\beta},z) &= \mathcal{O}_1(s,\Tilde{\beta}), \quad \partial_z \Tilde{G}_z(s,\Tilde{\beta},z) = \mathcal{O}_3(s,\Tilde{\beta}),
\end{aligned}
\end{equation}
and $\lambda(\mu,h) = \frac{\mu^2 + 2h + 2\mu}{4m_0}$. For simplicity we denote $\lambda =\lambda(\mu,h)$. 


We denote by $\Phi^{\Tilde{\beta}}(s_0,\Tilde\beta_0,z_0)$ the general solution of \eqref{eqn: Integral equations for s and z}. Then, 
\[
\Tilde{\gamma}_1(\nu) = (s_1(\nu),\delta,z_1(\nu))=\Phi^{\delta}(\delta,\nu,z_{\mathrm{in}}(\nu)).
\]
To compute the functions $s_1$ and $z_1$, we analyze the evolution of $\Phi^{\Tilde{\beta}}$ from $\Tilde{\beta}=\nu$ to $\Tilde{\beta}=\delta$ by a fixed point argument. 

To this end, for fixed $\nu\in (0,\sigma]$, we consider the set of continuous functions $f\colon[\nu,\delta] \to \mathds{R}$ 
and two different norms: the classical supremum norm, which we denote by $\|\cdot\|$, and 
\begin{equation}\label{eqn: Norm Zs}
    ||f||_{\mathcal{Z}_s} = \underset{\Tilde\beta\in  [\nu,\delta]}{\sup}\left(\frac{\Tilde{\beta}}{\delta \nu }\left|f(\Tilde{\beta};\nu)\right|\right),
\end{equation}
We denote by $\mathcal{C}^0$ and  $\mathcal{Z}_s$  the associated Banach spaces. 
%
Hence, the functional space $\mathcal{Z}_s \times \mathcal{C}^0$ is also a Banach space under the norm
\begin{equation*}
    ||(f_s,f_z)||_{\mathcal{Z}_s \times \mathcal{C}^0} = \max \left(||f_s||_{\mathcal{Z}_s}, ||f_z||\right).
\end{equation*}
Now we define the following operator acting on $\mathcal{Z}_s \times  \mathcal{C}^0$
\begin{equation}\label{eqn: Operator Fsz}
\begin{aligned}
    \mathcal{F}\colon \mathcal{Z}_s \times  \mathcal{C}^0 &\to \mathcal{Z}_s \times  \mathcal{C}^0\\
    (f_s,f_z) &\mapsto \mathcal{F}(f_s,f_z) = \left(\mathcal{F}^1(f_s,f_z),\mathcal{F}^2(f_s,f_z)\right),
\end{aligned}
\end{equation}
with

\begin{equation}\label{eqn: Integral form s and z}
    \begin{aligned}
        \mathcal{F}^1(f_s,f_z) &= \delta \frac{\nu}{\Tilde{\beta}} \exp\left(-\int_\nu^{\Tilde{\beta}} \frac{\Tilde{G}_s(f_s(\alpha;\nu),\alpha, f_z(\alpha;\nu))}{\alpha}\; d\alpha\right)\\
        \mathcal{F}^2(f_s,f_z) &= z_{\mathrm{in}}(\nu) + \frac{2}{m_0}\int_\nu^{\Tilde{\beta}} f_s(\alpha;\nu)\cdot \Tilde{G}_z(f_s(\alpha;\nu),\alpha,f_z(\alpha;\nu))\;d\alpha,
    \end{aligned}
\end{equation}
which corresponds to the solution $\Phi^{\Tilde \beta}(\delta,\nu,z_{\mathrm{in}}(\nu))$  of the equations of the orbits of \eqref{eqn: Integral equations for s and z} such that at the initial time $\Tilde\beta=\nu$  satisfy $s=\delta$ and $z=z_{\mathrm{in}}(\nu)$. 

Once we have defined the operator, a fixed point theorem argument allows us to estimate the components $s_1(\delta;\nu)$ and $z_1(\delta;\nu)$ in \eqref{eqn: Image curve gamma_1(nu)} and prove the first part of the lemma.

Relying on the estimates in \eqref{eqn: Estimates Gs and Gz}, we can estimate $\mathcal{F}(0,0)$ as
by
\begin{equation*}
\begin{aligned}
    ||\mathcal{F}^1(0,0)||_{\mathcal{Z}_s} &\leq \sup_{\Tilde{\beta}\in [\nu,\delta]}\exp\left(-\int_\nu^{\Tilde{\beta}} \frac{\Tilde{G}_s(0,\alpha,0)}{\alpha}\right)\leq e^{C(\delta-\nu)}\leq e^{C\delta} \\  ||\mathcal{F}^2(0,0)|| &=|z_{\mathrm{in}}(\nu)|.
\end{aligned}
\end{equation*}
for some constant $C> 0$. Therefore,
\begin{equation*}
    ||\mathcal{F}(0,0)||_{\mathcal{Z}_s\times  \mathcal{C}^0} \leq \max\left(e^{C\delta} ,\underset{\nu \in (0,\sigma]}{\max}\;|z_{\mathrm{in}}(\nu)|\right) = M.
\end{equation*}
We define now the ball
\begin{equation}\label{eqn: Ball BF}
    B_{2M} = \{(f_s,f_z) \in \mathcal{Z}_s\times  \mathcal{C}^0\colon ||(f_s,f_z)||_{\mathcal{Z}_s \times  \mathcal{C}^0} < 2M\},
\end{equation}
and we prove that the operator $\mathcal{F}$ is Lipschitz in $B_{2M}$.

Consider $(f_s,f_z),(g_s,g_z)\in B_{2M}$. 
The norm $||\mathcal{F}^1 (f_s,f_z) - \mathcal{F}^1(g_s,g_z)||_{\mathcal{Z}_s}$ can be estimated as follows

    \begin{equation}\label{eqn: Estimate F1}
        \begin{aligned}
            &||\mathcal{F}^1 (f_s,f_z) - \mathcal{F}^1(g_s,g_z)||_{\mathcal{Z}_s} \\
            &\leq \left|\exp\left(-\int_\nu^{\Tilde{\beta}} \frac{\Tilde{G}_s(f_s,\alpha,f_z)}{\alpha}\;d\alpha\right) - \exp\left(-\int_\nu^{\Tilde{\beta}} \frac{\Tilde{G}_s(g_s,\alpha,g_z)}{\alpha};d\alpha\right)\right|\\
            &\leq \left|\exp\left(-\int_\nu^{\Tilde{\beta}}\frac{\Tilde{G}_s(g_s,\alpha,g_z)}{\alpha}\;d\alpha\right)\right|\cdot\left|\exp\left(\int_\nu^{\Tilde{\beta}}\frac{\Tilde{G}_s(g_s,\alpha,g_z) - \Tilde{G}_s(f_s,\alpha,f_z)}{\alpha}\;d\alpha\right)-1\right|\\
            &\leq \left|\exp\left(-\int_\nu^{\Tilde{\beta}}\frac{\Tilde{G}_s(g_s,\alpha,g_z)}{\alpha}\;d\alpha\right)\right|\\
            &\quad\cdot\left|\int_{\Tilde{\beta}}^\nu\frac{\Tilde{G}_s(f_s,\alpha,f_z) - \Tilde{G}_s(g_s,\alpha,g_z)}{\alpha}\;d\alpha\right| \cdot \exp\left(\left|\int_{\Tilde{\beta}}^\nu\frac{\Tilde{G}_s(f_s,\alpha,f_z) - \Tilde{G}_s(g_s,\alpha,g_z)}{\alpha}\;d\alpha\right|\right).
        \end{aligned}
    \end{equation}
We now use the following properties, satisfied for any $(f_s,g_s) \in \mathcal{Z}_s$ and any $\alpha \in [\nu,\Tilde{\beta}]$ (we recall that $\Tilde{\beta}\in [\nu,\delta])$
    \begin{equation}\label{eqn: Estimate gs and fs-gs}
    \begin{aligned}
        |g_s(\alpha;\nu)| &\leq \frac{\delta \nu}{\alpha} ||g_s||_{\mathcal{Z}_s}\leq 2M \frac{\delta \nu}{\alpha}\leq 2M\delta,\\
        |f_s(\alpha;\nu) - g_s(\alpha;\nu)| &\leq \frac{\delta \nu}{\alpha}||f_s(\alpha;\nu)-g_s(\alpha;\nu)||_{\mathcal{Z}_s}.
    \end{aligned}
    \end{equation}
    The last inequality of $g_s(\alpha;\nu)$ is written to stress the fact that any function in $\mathcal{Z}_s$ is small (of size $\delta$) in terms of $|\cdot|$. Clearly, the function $f_s$ satisfies the same bound.
    
    Therefore, by \eqref{eqn: Estimates Gs and Gz}, the first factor in the last term in \eqref{eqn: Estimate F1} can be estimated as
    \begin{equation}\label{eqn: Bound 1st term F1}
        \begin{aligned}
            &\left|\exp\left(-\int_\nu^{\Tilde{\beta}}\frac{\Tilde{G}_s(g_s(\alpha;\nu),\alpha,g_z(\alpha;\nu))}{\alpha}\;d\alpha\right)\right| \leq \exp\left(C\int^\nu_{\Tilde{\beta}}\left(\frac{|g_s(\alpha;\nu)|}{\alpha}+1\right)\;d\alpha\right) \\
            &\leq \exp\left(C\int_{\Tilde{\beta}}^{\nu} \left(\frac{2M\delta \nu}{\alpha^2}+1\right)\;d\alpha\right)\leq \exp \left(2CM\delta \nu\left[\frac{1}{\nu} - \frac{1}{\Tilde{\beta}}\right] + C|\nu-\Tilde{\beta}|\right)\leq e^{C(2M+1)\delta},
        \end{aligned}
    \end{equation}
    for some constant $C>0$.

    For the rest of the estimates we use the following result, which is a consequence of \eqref{eqn: Estimates Gs and Gz} and \eqref{eqn: Estimate gs and fs-gs}. The function
    \[
    h^t(\alpha;\nu)=(h^t_s(\alpha;\nu),\alpha,h^t_z(\alpha;\nu))=(tf_s(\alpha;\nu)+(1-t)g_s(\alpha;\nu),\alpha,tf_z(\alpha;\nu)+(1-t)g_z(\alpha;\nu)),
    \]
    satisfies 
    \[
    |h^t_s|\leq 2M \frac{\delta \nu}{\alpha}\leq 2M\delta, \quad |h^t_z|\leq 2M.
    \]
     Then, there exists a constant $C > 0$ such that
    \begin{equation}\label{eqn: Tilde(Gs(f)) - Tilde(Gs(g))}
    \begin{aligned}
        \left|\Tilde{G}_s(f_s,\alpha,f_z) - \Tilde{G}_s(g_s,\alpha,g_z)\right|
        &\leq \int_0^1 \left(\left|\partial_s\Tilde{G}_s (h^t)\right|\cdot |f_s-g_s| + \left|\partial_z\Tilde{G}_s( h^t)\right|\cdot |f_z-g_z|\right) dt\\
        &\leq C\frac{\delta \nu}{\alpha}||f_s-g_s||_{\mathcal{Z}_s} + C\left(\frac{2M\delta\nu}{\alpha} + \alpha\right)||f_z-g_z||.
    \end{aligned}
    \end{equation}
    Then, the second factor can be bounded as follows
    \begin{equation}\label{eqn: Bound 2nd term F1}
    \begin{aligned}
    &\left|\int_{\Tilde{\beta}}^\nu\frac{\Tilde{G}_s(f_s,\alpha,f_z) - \Tilde{G}_s(g_s,\alpha,g_z)}{\alpha}\;d\alpha\right|\leq \int_{\Tilde{\beta}}^\nu \frac{|\Tilde{G}_s(f_s,\alpha,f_z) - \Tilde{G}_s(g_s,\alpha,g_z)|}{\alpha}\; d\alpha\\
    &\leq C\delta \nu||f_s-g_s||_{\mathcal{Z}_s} \int_{\Tilde{\beta}}^\nu \frac{1}{\alpha^2}\; d\alpha + C||f_z-g_z||\int_{\Tilde{\beta}}^\nu \left(1+ \frac{2M\delta \nu}{\alpha^2}\right)\;d\alpha \\
    &\leq C(2M+1)\delta \left(||f_s-g_s||_{\mathcal{Z}_s} + ||f_z-g_z||\right).
    \end{aligned}
    \end{equation} 
    Finally, the last factor can be bounded as

    \begin{equation}\label{eqn: Bound 3rd term F1}
        \exp\left(\left|\int_{\Tilde{\beta}}^{\nu}\frac{\Tilde{G}_s(f_s,\alpha,f_z) - \Tilde{G}_s(g_s,\alpha,g_z)}{\alpha}\;d\alpha\right|\right) \leq e^{8CM(2M+1)\delta},
    \end{equation}
    which follows from \eqref{eqn: Bound 2nd term F1}.
    
    Estimates \eqref{eqn: Bound 1st term F1}, \eqref{eqn: Bound 2nd term F1} and \eqref{eqn: Bound 3rd term F1} yield the following bound for the norm $||\mathcal{F}^1(f_s,f_z) - \mathcal{F}^1(g_s,g_z)||_{\mathcal{Z}_s}$

    \begin{equation*}
    \begin{aligned}
        ||\mathcal{F}^1(f_s,f_z) - \mathcal{F}^1(g_s,g_z)||_{\mathcal{Z}_s} \leq\delta K_1 \left(||f_s-g_s||_{\mathcal{Z}_s} + ||f_z-g_z||\right),
    \end{aligned}
    \end{equation*}
    with $ K_1 =  C(2M+1)e^{(8M+1)(2M+1)C\delta}.$
    
    On the other hand, the norm $||\mathcal{F}^2(f_s,f_z) - \mathcal{F}^2(g_s,g_z)||$ can be bounded as follows. 

First note that proceeding as for the other component,
    \begin{equation}\label{eqn: Tilde(Gz(f)) - Tilde(Gz(g))}
    \begin{aligned}
        &\left|\Tilde{G}_z(f_s,\alpha,f_z) - \Tilde{G}_z(g_s,\alpha,g_z)\right|\\
        &\leq \frac{\delta\nu}{\alpha} C\left(4\lambda + (|f_s|+|g_s|)^2 + \alpha^2 \right)||f_s-g_s||_{\mathcal{Z}_s}+ C\left((|f_s|+|g_s|)^3 +\alpha^3\right)||f_z-g_z|| \\
        &\leq C\frac{\delta\nu}{\alpha}\left(4\lambda + 16M^2\frac{\delta^2\nu^2}{\alpha^2} + \alpha^2\right)||f_s-g_s||_{\mathcal{Z}_s} + C\left(64M^3\frac{\delta^3\nu^3}{\alpha^3} + \alpha^3\right)||f_z-g_z||.
    \end{aligned}
    \end{equation}
    Therefore,
    \begin{equation*}
    \begin{aligned}
        &||\mathcal{F}^2(f_s,f_z) - \mathcal{F}^2(g_s,g_z)|| \leq \frac{2}{m_0}\left|\int_\nu^{\Tilde{\beta}}\left( f_s \cdot \Tilde{G}_z(f_s,\alpha,f_z) - g_s\cdot \Tilde{G}_z(g_s,\alpha,g_z)\right)d\alpha \right|\\
        &\leq \frac{2}{m_0}\int_\nu^{\Tilde{\beta}} |f_s-g_s|\cdot |\Tilde{G}_z(f_s,\alpha,f_z)|\;d\alpha + \frac{2}{m_0}\int_\nu^{\Tilde{\beta}}|g_s|\cdot\left|\Tilde{G}_z(f_s,\alpha,f_z) - \Tilde{G}_z(g_s,\alpha,g_z)\right|\;d\alpha.\\
        \end{aligned}
    \end{equation*}
    The first term can be estimated as
    \begin{equation*}
    \begin{aligned}
        &\frac{2}{m_0}\int_\nu^{\Tilde{\beta}} |f_s-g_s|\cdot |\Tilde{G}_z(f_s,\alpha,f_z)|\;d\alpha \leq  \frac{2}{m_0}\delta \nu ||f_s-g_s||_{\mathcal{Z}_s}\int_\nu^{\Tilde{\beta}}\frac{4\lambda |f_s| + C\left(|f_s|^3 + \alpha^3\right)}{\alpha}d\alpha\\
        &\leq \frac{2}{m_0}\delta^2 \nu \left(8M\lambda + C\left(8M^3+1\right)\delta^2\right)||f_s-g_s||_{\mathcal{Z}_{s}} \leq \delta k_1||f_s-f_g||_{\mathcal{Z}_s}
    \end{aligned}
    \end{equation*}
    and the second as
    \begin{equation*}
    \begin{aligned}
        \MoveEqLeft[1]\frac{2}{m_0}\int_\nu^{\Tilde{\beta}}|g_s|\cdot\left|\Tilde{G}_z(f_s,\alpha,f_z) - \Tilde{G}_z(g_s,\alpha,g_z)\right|\;d\alpha\\
        \leq& \Bigg(\frac{16MC}{m_0}\lambda \delta^2\nu^2\int_\nu^{\Tilde{\beta}} \frac{d\alpha }{\alpha^2} + \frac{64M^3C}{m_0}\delta^4\nu^4 \int_0^{\Tilde{\beta}}\frac{d\alpha }{\alpha^4}+ \frac{4MC}{m_0}\delta^2\nu^2\Bigg) ||f_s-g_s||_{\mathcal{Z}_s}\\
        &+ \left(\frac{256M^4 C}{m_0}\delta^4\nu^4 \int_\nu^{\Tilde{\beta}}\frac{d\alpha}{\alpha^4} + \frac{4MC}{m_0}\delta^4\nu\right)||f_z-g_z|| \leq \delta \left(k_2||f_s-g_s||_{\mathcal{Z}_s} + k_3 ||f_z-g_z||\right),
    \end{aligned}
    \end{equation*}
    where we have used \eqref{eqn: Estimates Gs and Gz}, \eqref{eqn: Estimate gs and fs-gs} and \eqref{eqn: Tilde(Gz(f)) - Tilde(Gz(g))}, and $k_1,k_2,k_3$ are defined as

    \begin{equation*}
        \begin{aligned}
            k_1 &=\frac{2}{m_0}\delta \nu \left(8M\lambda + C\left(8M^3+1\right)\delta^2\right),\\
            k_2 &=\frac{4MC}{m_0}\delta \nu\left(4\lambda + 16M^2\delta^2+\nu\right),\\
            k_3 &=\frac{4MC}{m_0}\delta^3\nu\left(64M^3+ 1\right).
        \end{aligned}
    \end{equation*}
    Denote by $K = \max \left(K_1, \max(k_1,k_2,k_3)\right)$. Then
   \begin{equation*}
    \begin{aligned}
        ||\mathcal{F}(f_s,f_z) - \mathcal{F}(g_s,g_z)||_{\mathcal{Z}_s \times \mathcal{C}^0} &\leq\delta K \left(||f_s-g_s||_{\mathcal{Z}_s} + ||f_z-g_z||\right) \\
        &\leq 2\delta K ||(f_s,f_z) - (g_s,g_z)||_{\mathcal{Z}_s\times \mathcal{C}^0}.
    \end{aligned}
    \end{equation*}
 That is, for $\delta>0$ such that $2K\delta<1/2$, the  operator $\mathcal{F}$ is contractive and satisfies $\mathcal{F}(B_{2M}) \subset B_{2M}$.

Then, the Banach fixed point theorem ensures that there exists a unique fixed point in $B_{2M}$ for the operator $\mathcal{F}$ in \eqref{eqn: Operator Fsz}, which we denote by $(s(\Tilde{\beta},\nu),z(\Tilde{\beta},\nu))$ and satisfies
\begin{equation}\label{def:fitessz}
\left|s(\Tilde{\beta},\nu)\right|\leq 2M\frac{\de\nu}{\Tilde \beta}\qquad \text{and}\qquad \left|z(\Tilde{\beta},\nu)\right|\leq 2M.
\end{equation}
Moreover
\begin{equation*}
    ||(s,z) - \mathcal{F}(0,0)||_{\mathcal{Z}_s\times \mathcal{C}^0} = ||\mathcal{F}(s,z) - \mathcal{F}(0,0)||_{\mathcal{Z}_s\times \mathcal{C}^0} \leq 4M K\delta.
\end{equation*}
Therefore, for all $\Tilde{\beta}\in [\nu,\delta]$,

\begin{equation*}
\begin{aligned}
    |s(\Tilde{\beta};\nu) - \mathcal{F}^1(0,0)(\Tilde{\beta};\nu)| &\leq \frac{\delta \nu}{\Tilde{\beta}} ||(s,z) - \mathcal{F}^1(0,0)||_{\mathcal{Z}_s} 
    \leq  4M K \delta^2 \frac{\nu}{\Tilde{\beta}}
\end{aligned}
\end{equation*}
so, using that \eqref{eqn: Estimates Gs and Gz}, we can write $s(\Tilde{\beta},\nu)$ as 
\begin{equation}\label{eqn: Estimates for s(tilde(beta),nu) and z(tilde(beta),nu)}
    \begin{aligned}
        s(\Tilde{\beta};\nu) &= \mathcal{F}^1(0,0) + \mathcal{O}\left(\delta^2\frac{\nu}{\Tilde{\beta}}\right) = \delta\frac{\nu}{\Tilde{\beta}}\exp\left(-\int_\nu^{\Tilde{\beta}}\frac{\Tilde{G}_s(0,\alpha,0)}{\alpha}\;d\alpha\right) + \mathcal{O}\left(\delta^2\frac{\nu}{\Tilde{\beta}}\right)\\
       & = \frac{\nu}{\Tilde{\beta}}\delta\left( 1+ \mathcal{O}\left(\delta\right)\right).
    \end{aligned}
\end{equation}
To obtain an expression for $z(\Tilde{\beta};\nu)$, we can replace \eqref{eqn: Estimates for s(tilde(beta),nu) and z(tilde(beta),nu)} in the integral equation for $z(\Tilde{\beta};\nu)$ in \eqref{eqn: Integral form s and z} obtaining

\begin{equation*}
\begin{aligned}
    &|z(\Tilde{\beta};\nu)-z_{\mathrm{in}}(\nu)| = \frac{2}{m_0}\left|\int_\nu^{\Tilde{\beta}}s(\alpha;\nu) \cdot \Tilde{G}_z(s(\alpha;\nu),\alpha,z(\alpha;\nu))\; d\alpha\right| \\
    &\leq \frac{2}{m_0}\int_\nu^{\Tilde{\beta}}\left(2M\frac{\delta\nu}{\alpha}\left(4\lambda|s(\alpha;\nu)| + C\left(|s(\alpha;\nu)|^3 + \alpha^3\right)\right)\right)d\alpha \leq \frac{4M\delta\nu}{m_0} \int_\nu^{\Tilde{\beta}}  \frac{8M\lambda \frac{\delta\nu}{\alpha}  + C\left(8M^3\frac{\delta^3\nu^3}{\alpha^3} + \alpha^3\right)}{\alpha} \; d\alpha \\
    &\leq \frac{32M^2}{m_0}\lambda \delta^2\nu + \frac{32M^4C}{m_0}\delta^4\nu + \frac{4MC}{m_0}\delta^5\nu,
\end{aligned}
\end{equation*}
where we used \eqref{eqn: Estimates Gs and Gz} and the fact that $s(\alpha;\nu) \in \mathcal{Z}_s$ (and therefore satisfies \eqref{eqn: Estimate gs and fs-gs}), leading to \eqref{eqn: Image curve gamma_1(nu)} and finishing the first part of the proof.

For the second part of the proof concerning the existence and expression of the tangent vector $\Tilde{\gamma}_1'(\nu) $ in \eqref{eqn: Image tangent vector gamma1} at the point $\tilde \gamma_1(\nu)$, we rake formally the corresponding derivative on the integral equations for $s(\Tilde{\beta};\nu)$ and $z(\Tilde{\beta};\nu)$ in \eqref{eqn: Integral form s and z}. We denote by $s_\partial (\Tilde{\beta};\nu) = \frac{d}{d\nu}s(\Tilde{\beta};\nu)$ and $z_\partial (\Tilde{\beta};\nu) = \frac{d}{d\nu}z(\Tilde{\beta};\nu)$ respectively, and they have the following expressions
\begin{equation*}\label{eqn: derivatives of s and z with respect to nu in terms of tilde(beta)}
    \begin{aligned}
        s_\partial(\Tilde{\beta};\nu) =& \frac{s(\Tilde{\beta};\nu)}{\nu}\left(1+ \Tilde{G}_s(\delta,\nu,z_{\mathrm{in}}(\nu)) - \nu\int_\nu^{\Tilde{\beta}} \frac{\partial_s \Tilde{G}_s(s,\alpha,z) \cdot s_\partial(\alpha;\nu) + \partial_z \Tilde{G}_s(s,\alpha,z) \cdot z_\partial(\alpha;\nu)}{\alpha}\; d\alpha\right)\\
        z_\partial(\Tilde{\beta};\nu) =& z_{\mathrm{in}}'(\nu)- \frac{2\delta}{m_0} \Tilde{G}_z(\delta,\nu,z_{\mathrm{in}}(\nu)) + \frac{2}{m_0}\bigintsss_\nu^{\Tilde{\beta}} \Bigg(s_\partial(\alpha;\nu) \cdot \Tilde{G}_z(s,\alpha,z) + s(\alpha;\nu) \bigg(\partial_s \Tilde{G}_z(s,\alpha,z)\cdot s_\partial(\alpha;\nu) \\
        &+ \partial_z\Tilde{G}_z(s,\alpha,z)\cdot z_\partial(\alpha;\nu)\bigg)\Bigg)\;d\alpha.
    \end{aligned}
\end{equation*}
where $s,z$ refer to $s(\alpha;\nu),z(\alpha;\nu)$ respectively.

To estimate the solutions, we proceed as before. To this end, for fixed $\nu \in (0,\sigma]$, we consider the set of continuous functions $f\colon [\nu,\delta] \to \mathds{R}$ and two different norms: the classical supremum norm and 

\begin{equation}\label{eqn: norm Zpartial}
    ||f||_{\mathcal{Z}_\partial} = \underset{\Tilde{\beta}\in[\nu,\delta]}{\sup}\left(\frac{\Tilde{\beta}}{\delta}\left|f(\Tilde{\beta};\nu)\right|\right),
\end{equation}
We denote by $\mathcal{C}^0$ and $\mathcal{Z}_\partial$ the associated Banach spaces. Hence, the functional space $\mathcal{Z}_\partial \times \mathcal{C}^0$ is also a Banach space under the norm

\begin{equation*}
    ||(f_s,f_z)||_{\mathcal{Z}_\partial \times \mathcal{C}^0} = \max \left(||f_s||_{\mathcal{Z}_\partial}, ||f_z||\right).
\end{equation*}
Now we define the following  affine operator acting on $\mathcal{Z}_\partial \times \mathcal{C}^0$

\begin{equation}\label{eqn: Operator D}
    \begin{aligned}
        \mathcal{F}_\partial\colon \mathcal{Z}_\partial \times \mathcal{C}^0 &\to \mathcal{Z}_\partial \times \mathcal{C}^0\\
        (f_s,f_z) &\mapsto \mathcal{F}_\partial(f_s,f_z) = \left(\mathcal{F}_\partial^1(f_s,f_z),\mathcal{F}_\partial^2(f_s,f_z)\right),
    \end{aligned}
\end{equation}
with

\begin{equation}\label{eqn: Operators for spartial, zpartial}
    \begin{aligned}
        \mathcal{F}_\partial^1(f_s,f_z) &= \frac{s(\Tilde{\beta};\nu)}{\nu}\left(1+\Tilde{G}_s(\delta,\nu,z_{\mathrm{in}}(\nu)) - \nu\int_\nu^{\Tilde{\beta}} \frac{\partial_s \Tilde{G}_s(s,\alpha,z)\cdot f_s + \partial_z \Tilde{G}_s(s,\alpha,z)\cdot f_z}{\alpha} \; d\alpha\right)\\
        \mathcal{F}_\partial^2(f_s,f_z) &= z_{\mathrm{in}}'(\nu)- \frac{2\delta}{m_0} \Tilde{G}_z(\delta,\nu,z_{\mathrm{in}}(\nu)) \\
        &\quad+ \frac{2}{m_0}\int_\nu^{\Tilde{\beta}}\bigg(f_s\cdot \Tilde{G}_z(s,\alpha,z) + s(\alpha;\nu) \bigg(\partial_s \Tilde{G}_z(s,\alpha,z) \cdot f_s+\partial_z \Tilde{G}_z(s,\alpha,z) \cdot f_z\bigg)\bigg)\;d\alpha,
    \end{aligned}
\end{equation}
which is well-defined and continuous.

Once we have defined the operator $\mathcal{F}_\partial$, we prove it has a fixed point. We start estimating $\mathcal{F}_\partial(0,0)$. Each component satisfies 
\begin{equation*}
\begin{aligned}
	&||\mathcal{F}_\partial^1(0,0)||_{\mathcal{Z}_\partial} = \underset{\Tilde{\beta}\in[\nu,\delta]}{\sup} \left(\frac{\Tilde{\beta}}{\delta} \left|\frac{s(\Tilde{\beta};\nu)}{\nu}\left(1+\Tilde{G}_s(\delta,\nu,z_{\mathrm{in}}(\nu))\right)\right|\right)\leq 2M \left(1+\mathcal{O}_1(\delta,\nu)\right) \leq 4M 
	&\\
	&||\mathcal{F}_\partial^2(0,0)|| \leq \left|z_{\mathrm{in}}'(\nu) - \frac{2\delta}{m_0} \Tilde{G}_z(\delta,\nu,z_{\mathrm{in}}(\nu))\right| \leq |z_{\mathrm{in}}'(\nu)| + \frac{2}{m_0}\delta^2\left(4\lambda + 2C\delta^2\right),
\end{aligned}
\end{equation*}
obtained from \eqref{eqn: Estimates Gs and Gz} and the estimate of $s(\Tilde{\beta};\nu)$ in \eqref{def:fitessz}. Therefore
\begin{equation*}
||\mathcal{F}_\partial(0,0)||_{\mathcal{Z}_\partial \times \mathcal{C}^0} \leq \max \left(4M, \underset{\nu \in (0,\sigma]}{\max}|z_{\mathrm{in}}'(\nu)| + \frac{2}{m_0}\delta^2\left(4\lambda + 2C\delta^2\right)\right) = M_\partial.
\end{equation*}
We define then the following ball
\begin{equation}\label{eqn: Ball BFpartial}
B_{2M^\partial} = \{(f_s,f_z)\in \mathcal{Z}_\partial \times \mathcal{C}^0\colon ||(f_s,f_z)||_{\mathcal{Z}_\partial \times \mathcal{C}^0} < 2M_\partial\},
\end{equation}
and we prove that the operator $\mathcal{F}_\partial$ is Lipschitz in $B_{2M^\partial}$. 

Take $(f_s,f_z),(g_s,g_z)\in B_{2M_\partial}$. 
Then, the norm $||\mathcal{F}_\partial^1(f_s,f_z) - \mathcal{F}_\partial^1(g_s,g_z)||_{\mathcal{Z}_\partial}$ can be bounded as follows

\begin{equation*}
\begin{aligned}
\MoveEqLeft[1]||\mathcal{F}_\partial^1(f_s,f_z) - \mathcal{F}_\partial^1(g_s,g_z)||_{\mathcal{Z}_\partial}\\ \leq & \underset{\Tilde{\beta}\in[\nu,\delta]}{\sup} \Bigg|\frac{\Tilde{\beta}}{\delta}\cdot s(\Tilde{\beta};\nu) \int_\nu^{\Tilde{\beta}} \frac{\partial_s \Tilde{G}_s(s,\alpha,z)\cdot (f_s-g_s) - \partial_z \Tilde{G}_s(f_s,\alpha,f_z)\cdot (f_s-g_s)}{\alpha}d\alpha\Bigg|\\
\leq& 2M  \nu\int_\nu^{\Tilde{\beta}}\frac{|\partial_s \Tilde{G}_s(s,\alpha,z)|\cdot |f_s-g_s| + |\partial_z \Tilde{G}_s(s,\alpha,z)|\cdot |f_z-g_z|}{\alpha}\;d\alpha\\
\leq& 2CM  \nu \int_\nu^{\Tilde{\beta}} \frac{|f_s-g_s|}{\alpha}\;d\alpha\\
&+ 2CM \nu\int_\nu^{\Tilde{\beta}}\frac{(|s(\alpha;\nu)| + \alpha)|f_z-g_z|}{\alpha}\; d\alpha \leq 2CM\delta\left(||f_s-g_s||_{\mathcal{Z}_s} + \nu\left(2M+1\right)||f_z-g_z||\right),
\end{aligned}
\end{equation*}
where we have used \eqref{eqn: Estimates Gs and Gz}, that $s(\alpha;\nu)\in \mathcal{Z}_s$ and therefore satisfies \eqref{def:fitessz} and also that, since  $f_s,g_s\in \mathcal{Z}_\partial$, for all $\alpha\in[\nu,\Tilde{\beta}]$
\begin{equation}\label{eqn: Estimates fs and fs-gs partial}
\begin{aligned}
|f_s(\alpha;\nu)-g_s(\alpha;\nu)| \leq \frac{\delta}{\alpha}||f_s-g_s||_{\mathcal{Z}_\partial}.
\end{aligned}
\end{equation}
Therefore, if we define $K_1 = 2CM \max(1,\nu(2M+1))$, we obtain the following estimate
\begin{equation*}
||\mathcal{F}_\partial^1(f_s,f_z) - \mathcal{F}_\partial^1(g_s,g_z)||_{\mathcal{Z}_\partial}\leq \delta K_1\left(||f_s-g_s||_{\mathcal{Z}_\partial} + ||f_z-g_z||\right).
\end{equation*}
On the other hand, the norm $||\mathcal{F}_\partial^2(f_s,f_z) - \mathcal{F}_\partial^2(g_s,g_z)||$ is bounded as follows
\begin{equation*}
\begin{aligned}
\MoveEqLeft[1]||\mathcal{F}_\partial^2(f_s,f_z) - \mathcal{F}_\partial^2(g_s,g_z)||\\
\leq& \frac{2}{m_0}\int_\nu^{\Tilde{\beta}}\bigg(|\Tilde{G}_z(s,\alpha,z)|\cdot|f_s-g_s| + |s(\alpha;\nu)|\bigg(|\partial_s \Tilde{G}_z(s,\alpha,z)| \cdot |f_s-g_s| +|\partial_z \Tilde{G}_z(s,\alpha,z)|\cdot|f_z-g_z|\bigg)\bigg)\;d\alpha \\
\leq& \frac{2}{m_0} \delta||f_s-g_s||_{\mathcal{Z}_\partial}\int_\nu^{\Tilde{\beta}}\frac{|\Tilde{G}_z(s,\alpha,z)|}{\alpha}\;d\alpha \\
&+ \frac{2}{m_0}\delta||f_s-g_s||_{\mathcal{Z}_\partial}\int_\nu^{\Tilde{\beta}}\frac{|s(\alpha;\nu)|}{\alpha}\cdot|\partial_s \Tilde{G}_z(s,\alpha,z)|\;d\alpha + ||f_z-g_z||\int_\nu^{\Tilde{\beta}} |s(\alpha;\nu)|\cdot |\partial_z \Tilde{G}_z(s,\alpha,z)|\;d\alpha.
\end{aligned}
\end{equation*}
Then, for the first term, we use that
\begin{equation*}
    \begin{aligned}
        \int_\nu^{\Tilde{\beta}}\frac{|\Tilde{G}_z(s,\alpha,z)|}{\alpha}\; d\alpha \leq \int_\nu^{\Tilde{\beta}}\frac{4\lambda|s(\alpha;\nu)| + C(|s(\alpha;\nu)|^3+ \alpha^3)}{\alpha} d\alpha \leq k_1,
    \end{aligned}
\end{equation*}
with $k_1 = 2M\delta^2 \left(8M\lambda  + C\delta^2\left(8M^3 + \nu\delta\right)\right)$. For the second term,
\begin{equation*}
    \begin{aligned}
        \int_\nu^{\Tilde{\beta}}\frac{|s(\alpha;\nu)|}{\alpha}\cdot |\partial_s \Tilde{G}_z(s,\alpha,z)| d\alpha \leq 2M\delta\nu \int_\nu^{\Tilde{\beta}} \frac{4\lambda + C\left(|s(\alpha;\nu)|^2+ \alpha^2\right)}{\alpha^2}d\alpha\leq k_2,
    \end{aligned}
\end{equation*}
with  $k_2 = 2M\delta\left(4\lambda + C\delta\left(4M^2\delta + \nu\right)\right)$ and, for the third,
\begin{equation*}
    \begin{aligned}
        \int_\nu^{\Tilde{\beta}}|s(\alpha;\nu)|\cdot |\partial_z\Tilde{G}_z(s,\alpha,z)|d\alpha \leq k_3\delta,
    \end{aligned}
\end{equation*}
with $k_3 = 2MC\delta^3\nu\left(8M^3 +1 \right)$. 

To obtain these estimates, we have used the results in \eqref{eqn: Estimates Gs and Gz} and the fact that $s(\alpha;\nu)$ satisfies \eqref{def:fitessz} and \eqref{eqn: Estimates fs and fs-gs partial}. Hence, if we denote by $K_2 = \max\left(\frac{2}{\constantEquilibrium}k_1,\frac{2}{\constantEquilibrium}k_2,k_3\right)$, we have
\begin{equation*}
||\mathcal{F}_\partial^2(f_s,f_z) - \mathcal{F}_\partial^2(g_s,g_z)||\leq \delta K_2  \left(||f_s-g_s||_{\mathcal{Z}_\partial} + ||f_z-g_z||\right),
\end{equation*}
leading to the following estimate of $||\mathcal{F}_\partial(f_s,f_z) - \mathcal{F}_\partial(g_s,g_z)||_{\mathcal{Z}_\partial \times \mathcal{C}^0}$

\begin{equation*}
||\mathcal{F}_\partial(f_s,f_z) - \mathcal{F}_\partial(g_s,g_z)||_{\mathcal{Z}_\partial \times \mathcal{C}^0} \leq 2\delta K ||(f_s,f_z) - (g_s,g_z)||_{\mathcal{Z}_\partial \times \mathcal{C}^0},
\end{equation*}
with $K=\max(K_1,K_2)$, proving that, taking $\delta$ small enough, it is contractive and satisfies $\mathcal{F}_\partial(B_{2M^\partial})\subset B_{2M^\partial}$.

Hence, there exists a unique fixed point in $B_{2M^\partial}$ of the operator $\mathcal{F}_\partial$, which we denote by $(s_\partial(\Tilde{\beta};\nu),z_\partial(\Tilde{\beta};\nu))$. We can estimate its value as follows
\begin{equation*}
\begin{aligned}
||(s_\partial,z_\partial) - \mathcal{F}_\partial(0,0)||_{\mathcal{Z}_\partial \times \mathcal{C}^0} = ||\mathcal{F}_\partial(s_\partial,z_\partial) - \mathcal{F}_\partial(0,0)||_{\mathcal{Z}_\partial \times \mathcal{C}^0} \leq  2\delta K ||(s_\partial,z_\partial)||_{\mathcal{Z}_\partial \times \mathcal{C}^0} \leq 4\delta M_\partial K.
\end{aligned}
\end{equation*}
Therefore, for all $\Tilde{\beta}\in [\nu,\delta]$
\begin{equation*}
\begin{aligned}
|s_\partial(\Tilde{\beta};\nu) - \mathcal{F}_\partial^1(0,0)| &\leq  4M_\partial K\frac{\delta^2}{\Tilde{\beta}}\\
|z_\partial(\Tilde{\beta};\nu) - \mathcal{F}_\partial^2(0,0)| &\leq  4M_\partial K\delta.
\end{aligned}
\end{equation*}
The expression of the tangent vector $\Tilde{\gamma}_1^\partial$ in \eqref{eqn: Image tangent vector gamma1} is obtained by taking $\Tilde{\beta} = \delta$,
\begin{equation}\label{eqn: Expressions for spartial, zpartial}
\begin{aligned}
s_\partial(\delta;\nu) =& \mathcal{F}_\partial^1(0,0) + \mathcal{O}_1\left(\delta\right) = \frac{s(\delta;\nu)}{\nu}\left(1+\Tilde{G}_s\left(\delta,\nu,z_{\mathrm{in}}(\nu)\right)\right)+\mathcal{O}_1\left(\delta\right)\\
z_\partial(\delta;\nu) =& \mathcal{F}_\partial^2(0,0) + \mathcal{O}_2(\delta) = z_{\mathrm{in}}'(\nu) + \frac{2\delta}{m_0}\Tilde{G}_z(\delta,\nu,z_{\mathrm{in}}(\nu)) + \mathcal{O}_1\left(\delta\right).
\end{aligned}
\end{equation}
Using \eqref{eqn: Estimates for s(tilde(beta),nu) and z(tilde(beta),nu)}, this leads to
\begin{equation}\label{eqn: Derivatives of s and z with respect to nu from Cs to C1}
\begin{aligned}
s_\partial(\delta;\nu) =& 1 + \mathcal{O}_1(\delta)\\
z_\partial(\delta;\nu) =& z_{\mathrm{in}}'(\nu) + \mathcal{O}_1(\delta),
\end{aligned}
\end{equation}
which implies that the curve $\Tilde{\gamma}_1$ in \eqref{eqn: Image curve gamma_1(nu)} is transverse with $\unstableManifoldCollisionSminusmu\cap \Tilde{\Sigma}_1^+$, which is given by  $\{s=0\}$, at the point $\lim_{\nu\to 0}\Tilde{\gamma}_1 (\nu)=(0,\delta, z_{\mathrm{in}}(0))$.

The proof of Lemma \ref{lemma: transition map from C2- to Cu} is completely analogous, using instead the equations of the orbits associated to system \eqref{eqn: Eq motion in coordinates (s,iota,w)} obtained in Proposition \ref{proposition: Straightening local diffeomorphisms and motions}, given by

\begin{equation}\label{eqn: Eq motion of Tilde(iota) and w in terms of s}
    \begin{aligned}
        \frac{d\Tilde{\iota}}{ds} &= -\frac{\Tilde{\iota}}{s}\left(1+J_{\Tilde{\iota}}(s,\Tilde{\iota},w)\right)\\
        \frac{dw}{ds} &= \frac{2}{m_0}\Tilde{\iota}J_w(s,\Tilde{\iota},w)
    \end{aligned}
\end{equation}
where

\begin{equation*}
J_{\Tilde{\iota}}(s,\Tilde{\iota},w) = \mathcal{O}_1(s,\Tilde{\iota}), \quad J_w(s,\Tilde{\iota},w) = -4\lambda(\mu,h)s + s\mathcal{O}_1(s,\Tilde{\iota}).
\end{equation*}

\section{Proof of Lemma \ref{lemma: Expression of gamma_2(nu) in coordinates (s,Tilde(beta),z)}}\label{appendix: An implicit function theorem for the transition map close to collision}

To prove Lemma \ref{lemma: Expression of gamma_2(nu) in coordinates (s,Tilde(beta),z)}, note that, for $s=0$, the equations of motion \eqref{eqn: Eq motion in coordinates (s,Tilde(beta),z) from C1 to C2} are reduced to

\begin{equation}\label{eqn: Eq motion for the heteroclinic connection in coordinates (s,Tilde(beta),z)}
    \begin{aligned}
        \Tilde{\beta}' &= \frac{m_0}{2}\sin\Tilde{\beta}\\
        z' &= 0,
    \end{aligned}
\end{equation}
which describe the heteroclinic connections between $\unstableManifoldCollisionSminusmu$ and $\stableManifoldCollisionSplusmu$ given in Lemma \ref{lemma: Linear part of motion close to S+-}. They can be parameterized as 
\begin{equation}\label{eqn: Heteroclinic connections Lemma 55}
    \begin{aligned}
        \Tilde{\beta}_h(\tau) &= 2\tan^{-1}\left(e^{\frac{\constantEquilibrium}{2}\tau} \tan \left(\frac{\delta}{2}\right)\right)\\
        z_h(\tau) &= z,
    \end{aligned}
\end{equation}
by fixing the initial condition  at $\tau = 0$ as $(0,\delta,z)\in\Tilde{\Sigma}_1^+$.

Denote by  $\tau_h>0$ the time such that $\Tilde{\beta}_h(\tau_h) = \pi-\delta$. 
Then, if we consider a point $(s,\delta,z)\in\Tilde{\Sigma}_1^+$, one can see that there exist a time $\bar \tau(s,z)$ so that the orbit hits $\Tilde{\Sigma}_2^-$ by an Implicit Function Theorem argument. 
Indeed, define,
\begin{equation*}
    \mathcal{F} (\tau,s;z) = \Phi_{\Tilde{\beta}}\left(\tau; s,\delta,z\right) - (\pi-\delta),
\end{equation*}
where $\Phi_{\Tilde{\beta}}$ refers to the $\Tilde{\beta}$-component of the flow $\Phi$, then
\begin{itemize}
    \item $\mathcal{F}(\tau_h,0;z) = 0$.
    \item $\frac{d}{d\tau} \mathcal{F}\left(\tau_h,0;z\right) = \frac{m_0}{2}\sin(\Tilde{\beta}_h(\tau_h)) = \frac{m_0}{2}\sin \delta \neq 0$.
\end{itemize}
Therefore, we can apply the Implicit Function Theorem to ensure that for $0<s<\delta$ and $\delta>0$ small enough, there exists a unique $\bar \tau = \bar \tau(s;z)$ with $\bar \tau(0;z) = \tau_h$ satisfying

\begin{equation}\label{eqn: Implicit function =0 App B}
    \mathcal{F}(\bar \tau(s;z),s;z) = 0.
\end{equation}
As a result, for any initial condition $(s,\delta,z) \in \Tilde{\Sigma}_1^+$, there exists a time $\bar \tau(s;z)$ satisfying $\Phi_{\Tilde{\beta}}\left(\bar \tau(s;z);s,\delta,z\right) = \pi - \delta$ for the flow $\Phi$. Therefore, we define the map $\mathcal{T}_{1,2}^+$ in \eqref{eqn: transition map T12} as
\begin{equation}\label{eqn: Relation T12-Phi}
    \mathcal{T}_{1,2}^+(s,\delta,z) = \Phi(\bar \tau(s;z);s,\delta,z) = \left(\Phi_s(\bar \tau(s;z);s,\delta,z),\de,  \Phi_z(\bar \tau(s;z);s,\delta,z) \right) \in \Tilde{\Sigma}_2^-.
\end{equation}
The remaining task is then to compute the expansion with respect to $s$ of  the map. Equivalently, $s$-expansions of $\bar\tau$ and the flow components $\Phi_s$ and $\Phi_z$.


Perform a Taylor expansion at $s=0$ of the flow $\Phi$  with initial condition at $\Tilde{\Sigma}_1^+$, we obtain
\begin{equation}\label{eqn: Taylor Expansion of the flow from C1 to C2}
    \begin{aligned}
        \Phi_s(\tau;s,\delta,z) &= s \cdot\xi_{s,0}^1(\tau;z) + \mathcal{O}_2(s)\\
        \Phi_{\Tilde{\beta}}(\tau;s,\delta,z) &= \Tilde{\beta}_h(\tau) + s \cdot\xi_{s,0}^2(\tau;z) + \mathcal{O}_2(s)\\
        \Phi_z(\tau;s,\delta,z) &= z + s\cdot  \xi_{s,0}^3(\tau;z) + \mathcal{O}_2(s),
    \end{aligned}
\end{equation}
with
\begin{equation*}
    \xi_{s,0}^1(\tau;z) =\partial_s \Phi_s(\tau;0,\delta,z), \quad  \xi_{s,0}^2(\tau;z) = \partial_s \Phi_{\Tilde{\beta}}(\tau;0,\delta,z),\quad 
\xi_{s,0}^3(\tau,z) = \partial_s\Phi_z(\tau;0,\delta,z).
\end{equation*}
To obtain $\xi_{s,0}^1(\tau;z)$, we expand each side of the equation
\begin{equation}\label{eqn: Variational s=0}
    \frac{d}{d\tau} \Phi_s(\tau;s,\delta,z) =  F_s(\Phi_s(\tau; s,\delta,z), \Phi_{\Tilde{\beta}}(\tau;s,\delta,z),\Phi_{z}(\tau;s,\delta,z)), 
\end{equation}
where $F_s$ corresponds to the $s$-component of the vector field associated to motion \eqref{eqn: Eq motion in coordinates (s,Tilde(beta),z) from C1 to C2}, around the unperturbed solution $(0,\tilde \beta_h(\tau),z)$. 
We obtain
\[
\begin{split}
   \frac{d}{d\tau} \Phi_s(\tau;s,\delta,z)&= s \cdot  \xi_{s,0}^1{}'(\tau,z) + \mathcal{O}_2(s)\\
   F_s(\Phi_s(\tau; s,\delta,z), \Phi_{\Tilde{\beta}}(\tau;s,\delta,z),\Phi_{z}(\tau;s,\delta,z)) &= -\frac{m_0}{2} s\cdot\xi_{s,0}^1(\tau;z) \cos(\Tilde{\beta}_h(\tau)) + \mathcal{O}_2(s).
\end{split}
\]
This gives the equation for $\xi_{s,0}^1{}$,
\begin{equation*}
    \begin{cases}
     \xi_{s,0}^1{}'(\tau;z) = -\frac{m_0}{2} \xi_{s,0}^1(\tau;z) \cos(\Tilde{\beta}_h(\tau))\\
      \xi_{s,0}^1(0;z) = 1.
    \end{cases}
\end{equation*}
whose solution is 
\begin{equation}\label{eqn: Expression of the first component of the s variational at r=0}
     \xi_{s,0}^1(\tau;z) = \xi_{s,0}^1(\tau) = \cosh\left(\frac{m_0}{2}\tau\right) - \cos \delta \sinh\left(\frac{m_0}{2}\tau\right).
\end{equation}
Following a similar procedure, we obtain 

\begin{equation} \label{eqn: Expression of the second/third components of the s variational at r=0}
        \xi_{s,0}^2(\tau;z) = 0, \quad \xi_{s,0}^3(\tau;z) = 0,
\end{equation}
leading to the following result

\begin{equation}\label{eqn: Image curve}
    \begin{aligned}
        \Phi_s(\bar \tau(s;z); s,\delta,z) &= s\cdot  \xi_{s,0}^1(\bar \tau(s;z);z) + \mathcal{O}_2(s)\\
        \Phi_{\Tilde{\beta}}(\tau;s,\delta,z) &= \Tilde{\beta}_h(\tau)  + \mathcal{O}_2(s)\\
        \Phi_z(\bar \tau(s;z); s,\delta,z) &= z + \mathcal{O}_2(s).
    \end{aligned}
\end{equation}
Then, by the second equation the function $\bar\tau$ obtained by the Implicit Function Theorem is of the form 
\begin{equation*}
    \bar \tau(s;z) = \tau_h + \mathcal{O}_2(s).
\end{equation*}
Hence, using the fact that $\tilde \beta_h(\tau_h)=\pi-\delta$, the expression of $\tilde \beta_h$ in \eqref{eqn: Heteroclinic connections Lemma 55} and the definition of $\xi_{s,0}^1(\tau;z)$ in \eqref{eqn: Expression of the first component of the s variational at r=0}, one can see that $\xi_{s,0}^1(\tau_h;z)=1$ and  therefore
\begin{equation}
    \xi_{s,0}^1(\bar \tau(s;z);z) = 1+ \mathcal{O}_2(s).
\end{equation}
This leads to 
\begin{equation}\label{eqn: Estimation of the image}
    \Phi(\bar \tau(s;z);s,\delta,z) = (s+\mathcal{O}_2(s), \pi-\delta, z + \mathcal{O}_2(s)) \in \Tilde{\Sigma}_2^-.
\end{equation}
Taking the curve $\Tilde{\gamma}_1(\nu) = (s_1(\delta;\nu),\delta,z_1(\delta;\nu))$ in \eqref{eqn: Image curve gamma_1(nu)} as the initial condition 
yields the curve 
\[
\Tilde{\gamma}_2(\nu)= \mathcal{T}_{1,2}^+(s_1(\delta;\nu),\delta,z_1(\delta;\nu))
 \]
satisfying \eqref{eqn: Curve gamma_2(nu)}, where $\mathcal{T}_{1,2}^+$ is defined in \eqref{eqn: Relation T12-Phi}. 

The regularity of the map $\mathcal{T}_{1,2}^+$ with respect to $s$ (consequence of \eqref{eqn: Taylor Expansion of the flow from C1 to C2}) and the curve $\tilde \gamma_1(\nu)$ (consequence of Lemma \ref{lemma: A lambda lemma for the transition map close to collision}) allows us to compute the tangent vector $\tilde \gamma_2'(\nu)$ directly from \eqref{eqn: Curve gamma_2(nu)}, resulting in \eqref{eqn: Tangent vector of gamma_2} and completing the proof.

\section{Proof of Lemma \ref{lemma: tangent vector at Sigma delta,h as a graph}}\label{appendix: From straightening coordinates to synodical polar coordinates}

This section is devoted to express the transition map provided by Theorem \ref{thm: Transition map close to collision} in polar coordinates centered at $P_1$. To this end, we follow the notation of Sections \ref{sec: Dynamics close to collision} and \ref{sec: Proof of Theorem parabolic orbits}, and we recall the following  facts.
\begin{itemize} 
    \item The parameterizations $\chi_\pm$ of the invariant manifolds $W_\mu^s(S_{y_0}^-)$ and $W_\mu^u(S_{x_0}^+)$ provided in   \eqref{eqn: Approximation of chi(s,z) in terms of s}, \eqref{eqn: Approximation of chi(s,w)} (see also \eqref{eqn: Values a0, a1}) satisfies the following properties
    \begin{equation}\label{eqn: Approximations of partial_z chi_- and partial_w chi_+}
    \begin{aligned}
        \partial_z \chi_-(\delta,z) &= \partial_w \chi_+(\delta,w) = 1 + \mathcal{O}_4(\delta)\\
        \partial_y \chi_-^{-1}(\delta,y) &= \partial_x\chi_+^{-1}(\delta,x) = 1 + \mathcal{O}_4(\delta),
    \end{aligned}
    \end{equation}
    where $\chi_\pm^{-1}$ denote the inverse of the functions $z\mapsto  \chi_\pm(\delta,z)$.
    
    \item By the definition and properties of $\psi_-$ and $\psi_+$ in \eqref{eqn: Approximation of psi-(s,y) in terms of s} and \eqref{eqn: Approximation of psi+(s,x)}, the derivatives $\partial_y\psi_-(\delta,y)$ and $\partial_x\psi_+(\delta,x)$  satisfy
    \begin{equation}\label{eqn: Approximations of partial_y psi_- and partial_x psi_+}
        \partial_y \psi_-(\delta,y) = \partial_x \psi_+(\delta,x) = \mathcal{O}_2(\delta).
    \end{equation}
\end{itemize}

To prove Lemma \ref{lemma: tangent vector at Sigma delta,h as a graph}, we first apply the changes of coordinates $\psi$ and $\Tilde\psi$ introduced in \eqref{eqn:McGehee map Collision} and \eqref{eqn:polar change of coordinates for u,v into rho alpha} respectively  to translate the curves $\gamma^{u,<}_\infty$ and $\gamma^{s,<}_{\circleSminus}$ defined in \eqref{eqn: Curves as graphs of theta in coordinates (theta,Theta)} into coordinates $(\theta,\alpha)$.

This leads to curves parameterized as graphs as
\begin{equation}\label{eqn: Curves gamma_u< and gamma_s< in coordinates (alpha,theta)}
    \begin{aligned}
        \overline{\gamma}^{u,<}_\infty = \Big\{ \left(\theta, \alpha^{u,<}_\infty(\theta)\right) \colon \theta \in B_{\varepsilon}\left(\theta_>\right)\Big\},\quad \overline{\gamma}^{s,<}_{\circleSminus} = \Big\{\left(\theta, \alpha^{s,<}_{\circleSminus}(\theta)\right) \colon \theta \in B_{\varepsilon}\left(\theta_>\right)\Big\}
    \end{aligned}
\end{equation}
such that

\begin{equation*}
    \begin{aligned}
        \alpha^{u,<}_\infty(\theta) = - \arccos\left(\frac{\Theta_\infty^u(\theta) + \mu\delta^2\cos\theta - \delta^4}{\delta \sqrt{2(1-\mu) + \rho(\theta)}}\right),\quad \alpha^{s,<}_{\circleSminus}(\theta) = - \arccos\left(\frac{\Theta_{\circleSminus}^s(\theta) + \mu\delta^2\cos\theta - \delta^4}{\delta \sqrt{2(1-\mu) + \rho(\theta)}}\right).
    \end{aligned}
\end{equation*}
To obtain approximations of $\alpha_\infty^{u,<}(\theta)$ and $\alpha_{\circleSminus}^{s,<}(\theta)$ (and their derivatives), we recall the results from Proposition \ref{proposition: Parameterization of the invariant manifolds of infinity in rotating polar coordinates centered at P1} and Proposition \ref{proposition: Perturbed invariant manifolds of collision in synodical polar coordinates}, which give us estimates for both angular momenta $\Theta_\infty^u(\theta)$ and $\Theta_{\circleSminus}^s(\theta)$ respectively. In fact, $\rho(\theta) = \rho(\delta,\theta)$ (defined in \eqref{eqn:Relation rho with (r,theta,H,mu)}) admits the following approximation

\begin{equation}\label{eqn: Approximation rho Appendix D}
    \rho(\theta) = \mathcal{O}_1(\delta^6,\mu\delta^2),\quad  \partial_\theta \rho(\theta) = \mathcal{O}_1(\mu\delta^4),
\end{equation}
since $h= -\hat{\Theta}_0(\mu) = \mathcal{O}_1(\mu)$. Note that

\begin{itemize}
    \item We take the $-$ sign for both $\alpha^{u,<}_\infty(\theta)$ and $\alpha^{s,<}_{\circleSminus}(\theta)$ since we are close to the circle $\circleSminus$ (which is characterized by $\alpha=-\frac{\pi}{2}$ as shown in \eqref{eqn: Circles of equilibrium points for mu neq 0}).
    \item For $\theta = \theta_< \in B_{\varepsilon}(\theta_>)$ it is satisfied that $\alpha^{u,<}_\infty(\theta_<) = \alpha^{s,<}_{\circleSminus}(\theta_<)$.
\end{itemize} 
For further computations it will be necessary to approximate both derivatives $\alpha^{u,<}_{\infty}{}'(\theta_<)$ and $\alpha^{s,<}_{\circleSminus}{}'(\theta_<)$, which are given by

\begin{equation}\label{eqn: Approximations of the derivatives of alpha_u< and alpha_s< at theta(mu)}
    \alpha^{u,<}_\infty{}'(\theta_<) = \frac{\Theta_{\infty}^u{}'(\theta_<)}{\sqrt{2(1-\mu)}\delta} + \mathcal{O}_1(\mu\delta),\quad \alpha^{s,<}_{\circleSminus}{}'(\theta_<) = \frac{\Theta_{\circleSminus}^{s}{}'(\theta_<)}{\sqrt{2(1-\mu)}\delta} + \mathcal{O}_1(\mu\delta)
\end{equation}
where we have considered $0<\mu\ll \delta$ small enough, and we have used the approximations in \eqref{eqn: Approximation rho Appendix D} and the fact that both $\Theta_{\infty}^u{}'(\theta_<)$ and $\Theta_{\circleSminus}^{s}{}'(\theta_<)$ are of order $\mathcal{O}_1(\mu)$ (see Propositions \ref{proposition: Parameterization of the invariant manifolds of infinity in rotating polar coordinates centered at P1} and \ref{proposition: Perturbed invariant manifolds of collision in synodical polar coordinates}).

Once in coordinates $(\theta,\alpha)$, we apply the diffeomorphism $\Gamma_-$ from Proposition \ref{proposition: Straightening local diffeomorphisms and motions} to express the curves $\overline{\gamma}^{u,<}_\infty$ and $\overline{\gamma}^{s,<}_{\circleSminus}$ in \eqref{eqn: Curves gamma_u< and gamma_s< in coordinates (alpha,theta)} in  coordinates $(\Tilde{\beta},z)$, yielding the following parameterizations in terms of $\theta \in B_\varepsilon(\theta_<)$

\begin{equation}\label{eqn: Parameterizations of gammau< and gammas< in terms of theta in coordinates (Tilde(beta),z)}
    \begin{aligned}
        \Tilde{\gamma}^{u,<}_\infty(\theta) = \left(\Tilde{\beta}^{u,<}_{\infty} (\theta), z^{u,<}_\infty(\theta)\right),\quad \Tilde{\gamma}^{s,<}_{\circleSminus}(\theta) = \left(\Tilde{\beta}^{s,<}_{\circleSminus} (\theta), z^{s,<}_{\circleSminus}(\theta)\right)
    \end{aligned}
\end{equation}
such that

\begin{equation}\label{eqn: Expression of Tilde(beta) and z}
    \begin{aligned}
       \Tilde{\beta}^{u,<}_{\infty} (\theta) &= \alpha^{u,<}_\infty(\theta) + \frac{\pi}{2} - \psi_-\left(\delta,  y^{u,<}_\infty(\theta)\right),\quad z^{u,<}_\infty(\theta) = \chi_-^{-1}\left(\delta, y^{u,<}_\infty(\theta)\right),\\
       \Tilde{\beta}^{s,<}_{\circleSminus} (\theta) &= \alpha^{s,<}_{\circleSminus}(\theta) + \frac{\pi}{2} - \psi_-\left(\delta,  y^{s,<}_{\circleSminus}(\theta)\right),\quad z^{s,<}_{\circleSminus}(\theta)= \chi_-^{-1}\left(\delta, y^{s,<}_{\circleSminus}(\theta)\right),
    \end{aligned}
\end{equation}
with
\begin{equation*}
    \begin{aligned}
        y^{u,<}_\infty(\theta) = \theta - 2\left(\alpha^{u,<}_\infty(\theta) + \frac{\pi}{2}\right),\quad y^{s,<}_{\circleSminus}(\theta) = \theta - 2\left(\alpha^{s,<}_{\circleSminus}(\theta) + \frac{\pi}{2}\right).
    \end{aligned}
\end{equation*}
The goal is to express the curve $\Tilde{\gamma}^{u,<}_\infty$ in \eqref{eqn: Parameterizations of gammau< and gammas< in terms of theta in coordinates (Tilde(beta),z)} as a graph in terms of $\Tilde{\beta}$ to apply Theorem \ref{thm: Transition map close to collision}.

To this end, we make use of the following fact: since the curve $\Tilde{\gamma}^{s,<}_{\circleSminus}(\theta) \subset \stableManifoldSminusmu$ for all $\theta \in B_\varepsilon(\theta_>)$, it satisfies that $\Tilde{\beta}^{s,<}_{\circleSminus}(\theta) = 0$ as stated in equation \eqref{eqn: straightening Wus S-}. Therefore, we can rewrite $\Tilde{\beta}^{u,<}_\infty(\theta)$ in \eqref{eqn: Expression of Tilde(beta) and z} as
\begin{equation}\label{eqn: Tilde(beta)u< - Tilde(beta)s<}
    \Tilde{\beta}^{u,<}_\infty(\theta) = \Tilde{\beta}^{u,<}_\infty(\theta) - \Tilde{\beta}^{s,<}_{\circleSminus}(\theta) = \alpha^{u,<}_{\infty}(\theta) - \alpha^{s,<}_{\circleSminus}(\theta) - \left(\psi_-\left(\delta,  y^{u,<}_\infty(\theta)\right) - \psi_-\left(\delta,  y^{s,<}_{\circleSminus}(\theta)\right)\right)
\end{equation}
and therefore 
\begin{equation}\label{eqn: Expression Trick Tilde(beta)'}
\begin{aligned}
    \Tilde{\beta}^{u,<}_\infty{}'(\theta_<) =& \left(\alpha^{u,<}_{\infty}{}'(\theta_<) - \alpha^{s,<}_{\circleSminus}{}'(\theta_<)\right)\left(1-2\partial_y \psi_-(\delta, y_*)\right),
\end{aligned}
\end{equation}
where $y_* = y^{u,<}_\infty(\theta_<) = y^{s,<}_{\circleSminus}(\theta_<)$.



Hence, using \eqref{eqn: Approximations of partial_y psi_- and partial_x psi_+} and \eqref{eqn: Approximations of the derivatives of alpha_u< and alpha_s< at theta(mu)}, $\Tilde{\beta}^{u,<}_\infty{}'(\theta_<)$ admits the following approximation

\begin{equation}\label{eqn: Approximation Tilde(beta)'(theta(mu))}
    \Tilde{\beta}^{u,<}_\infty{}'(\theta_<) = \frac{d_-'(\theta_<)}{\sqrt{2(1-\mu)\delta}} + \mathcal{O}(\mu\delta),
\end{equation}
where $d_-(\theta) = d_-\left(\theta, \hat{\Theta}_0(\mu)\right)$ is defined in \eqref{eqn: Explicit formula for the distance}.  By \eqref{eqn: Asymptotic formula for the distances}, \eqref{eqn: translation melnikov} and Lemma \ref{lemma: derivative of the melnikov functions not 0 for almost every theta} it satisfies 
\begin{equation}\label{eqn: d_-' bound}
C^{-1}\mu\leq|d_-'(\theta_<)|\leq C\mu,
\end{equation}
for some $C>0$ independent of $\mu$ and $\delta$. This implies that, for $0<\mu\ll\delta$ small enough, $\Tilde{\beta}^{u,<}_\infty{}'(\theta_<) \neq 0$.


Hence, in a sufficiently small neighborhood of $\theta_<$ (for instance, we can consider $B_\varepsilon(\theta_>)$), the Inverse Function Theorem defines an inverse for $\Tilde{\beta}^{u,<}_\infty(\theta)$, which we denote $\theta^{u,<}_\infty(\Tilde{\beta})$, in an interval $\Tilde{B}_u$
with $0 \in \Tilde{B}_u$ (since $\Tilde{\beta}^{u,<}_\infty(\theta_<) = \Tilde{\beta}^{s,<}_{\circleSminus}(\theta_<) = 0$) and 
\begin{equation}\label{eqn: Expression of theta'(Tilde(beta))}
\theta^{u,<}_{\infty}(0) = \theta_<, \quad \theta^{u,<}_{\infty}{}'(0) = \frac{1}{\Tilde{\beta}^{u,<}_\infty{}'(\theta_<)}.    
\end{equation}
Therefore, we can express the curve $\Tilde{\gamma}^{u,<}_\infty$ in \eqref{eqn: Parameterizations of gammau< and gammas< in terms of theta in coordinates (Tilde(beta),z)} as a graph in terms of $\Tilde{\beta}$ of the form
\begin{equation}\label{eqn: Curve Tilde gamma_u,<}
    \Tilde{\gamma}^{u,<}_\infty = \Big\{\left(\Tilde{\beta},z^{u,<}_\infty(\Tilde{\beta})\right)\colon \Tilde{\beta} \in \Tilde{B}_u\Big\},
\end{equation}
with $z^{u,<}_\infty(\Tilde{\beta}) = z^{u,<}_\infty\circ \theta^{u,<}_\infty (\Tilde{\beta})$ satisfying
\begin{equation}\label{eqn: Approximation of zu<'(0)}
\begin{aligned}
    z^{u,<}_\infty{}'(0) =& z^{u,<}_\infty{}'(\theta_<)\cdot \theta^{u,<}_\infty{}'(0) = \partial_y \chi_-^{-1}(\delta,y_*)\cdot y_{\infty}^{u,<}{}'(\theta_<)\cdot \theta^{u,<}_\infty{}'(0)\\
    =& \left(1+\mathcal{O}_4(\delta)\right) \left(1-2\alpha_\infty^{u,<}{}'(\theta_<)\right) \cdot \left(\Tilde{\beta}_\infty^{u,<}{}'(\theta_<)\right)^{-1}\\
    =& \left(1+\mathcal{O}_4(\delta)\right)\left(\sqrt{2(1-\mu)}\frac{\delta}{d_-{}'(\theta_<)} - 2\frac{\Theta_\infty^u{}'(\theta_<)}{ d_-{}'(\theta_<)} + \mathcal{O}_2(\delta)\right) \\
    =& \left(1 + \mathcal{O}_4(\delta)\right)\sqrt{2(1-\mu)}\frac{\delta}{d_-{}'(\theta_<)},
\end{aligned}
\end{equation}
with $d_-'(\theta_<)$ satisfying \eqref{eqn: d_-' bound} and $y_* = y^{u,<}_\infty(\theta_<) = y^{s,<}_{\circleSminus}(\theta_<)$. The result follows from \eqref{eqn: Approximations of partial_z chi_- and partial_w chi_+}, \eqref{eqn: Approximations of the derivatives of alpha_u< and alpha_s< at theta(mu)}, \eqref{eqn: Expression of Tilde(beta) and z}, \eqref{eqn: Approximation Tilde(beta)'(theta(mu))} and \eqref{eqn: Expression of theta'(Tilde(beta))}.

Denoting $\Tilde{\beta} = \nu$, we are now ready to apply the transition map $\Tilde{f}$ from Theorem \ref{thm: Transition map close to collision} to the curve \eqref{eqn: Curve Tilde gamma_u,<}, which leads the following curve
\begin{equation}\label{eqn: Tilde(gamma)uinfty in coordinates (Tilde(iota),w)}
\begin{aligned}
    \Tilde{\gamma}^{u,>}_\infty(\nu) = (\Tilde{\iota}^{u,>}_\infty(\nu), w^{u,>}_\infty(\nu)) = \left(-\nu+\mathcal{O}_1(\delta\nu), z^{u,<}_\infty(\nu) + \mathcal{O}_1(\delta^2\nu,\nu^2)\right),
\end{aligned}
\end{equation}
for  $\nu \in (0,\nu_0)$ with $\nu_0 > 0$ small enough, which satisfies
\begin{equation}\label{eqn: tangent vector Tilde(gamma)uinfty at 0}
    \Tilde{\gamma}^{u,>}_\infty{}'(0)  = (\Tilde{\iota}^{u,>}_\infty{}'(0), w^{u,>}_\infty{}'(0))= \left(-1+\mathcal{O}_1(\delta), z^{u,<}_\infty{}'(0) + \mathcal{O}_1(\delta)\right).
\end{equation}
This curve is expressed in coordinates $(\Tilde{\iota},w)$. Now, we apply the inverse of the diffeomorphism $\Gamma_+$ from Proposition \ref{proposition: Straightening local diffeomorphisms and motions} to express the curve $\Tilde{\gamma}^{u,>}_\infty$ in coordinates $(\theta,\alpha)$, which yields to the following parameterization
\begin{equation}\label{eqn: Curve gammau> in coordinates (alpha,theta) parameterized by nu}
    \overline{\gamma}^{u,>}_\infty(\nu) = \left(\theta^{u,>}_\infty(\nu), \alpha^{u,>}_\infty(\nu)\right)
\end{equation}
such that
\begin{equation*}
    \begin{aligned}
        \theta^{u,>}_\infty(\nu) &= \chi_+(\delta, w^{u,>}_\infty(\nu)) - 2\left(\Tilde{\iota}^{u,>}_\infty(\nu) - \psi_+(\delta,\chi_+(\delta,w^{u,>}_\infty(\nu)))\right)\\
        \alpha^{u,>}_\infty(\nu) &= \Tilde{\iota}^{u,>}_\infty(\nu) + \psi_+(\delta,\chi_+(\delta,w^{u,>}_\infty(\nu)) + \frac{\pi}{2}
    \end{aligned}
\end{equation*}
where $\theta^{u,>}_{\infty}(\nu)$ satisfies $\theta_\infty^{u,>}(0) =\theta_> = \theta_< + \mathcal{O}_2(\delta)$ and
\begin{equation}\label{eqn: Expression of thetau>'(0)}
\begin{aligned}
    \theta^{u,>}_{\infty}{}'(0) =& \left(1 + 2\partial_{x}\psi_+(\delta,x_*)\right)\cdot \partial_{w}\chi_+\left(\delta, w_*\right)\cdot w_{\infty}^{u,>}{}'(0) -2\Tilde{\iota}_{\infty}^{u,>}{}'(0)\\
    =&\left(1+2\partial_{x}\psi_+(\delta,x_*)\right)\cdot(1+\mathcal{O}_4(\delta))\cdot\left(z^{u,<}_{\infty}{}'(0) + \mathcal{O}_1(\delta)\right) +2 + \mathcal{O}_1(\delta) \\
    =& z^{u,<}_{\infty}{}'(0) + 2 + \mathcal{O}_1(\delta),
\end{aligned}
\end{equation}
where $x_* = \chi_+(\delta,w_*)$ and $w_* = w^{u,>}_{\infty}(0)$. This expression is obtained from \eqref{eqn: Approximations of partial_z chi_- and partial_w chi_+}, \eqref{eqn: Approximations of partial_y psi_- and partial_x psi_+} and \eqref{eqn: tangent vector Tilde(gamma)uinfty at 0}. It follows from \eqref{eqn: Approximation of zu<'(0)} that, taking $0<\mu \ll \delta$ small enough, $\theta^{u,>}_{\infty}{}'(0) \neq 0$. 

Therefore, the Inverse Function Theorem ensures the existence of an inverse $\nu_\infty^{u,>}(\theta)$ for $\theta \in B_{\varepsilon}(\theta_>)$ satisfying
\begin{equation}\label{eqn: nu(theta)}
    \nu_\infty^{u,>}(\theta_>) = 0,\quad \nu_{\infty}^{u,>}{}'(\theta_>) = \frac{1}{\theta_\infty^{u,>}{}'(0)},
\end{equation}
allowing us to express the curve $\overline{\gamma}^{u,>}_\infty (\nu)$ in \eqref{eqn: Curve gammau> in coordinates (alpha,theta) parameterized by nu} as a graph of the form

\begin{equation}\label{eqn: Curve gammau> as a graph in coordinates (alpha,theta)}
    \overline{\gamma}^{u,>}_\infty = \Big\{\left(\theta, \alpha^{u,>}_{\infty}(\theta)\right) \colon \theta \in B_\varepsilon(\theta_>)\Big\},
\end{equation}
such that
\begin{equation}\label{eqn: Expression of alphau> in terms of theta}
    \alpha^{u,>}_\infty(\theta) = \alpha^{u,>}_\infty \circ \nu^{u,>}_\infty(\theta) = \Tilde{\iota}^{u,>}_\infty \circ \nu^{u,>}_\infty(\theta) + \psi_+(\delta,\chi_+(\delta,w^{u,>}_\infty\circ \nu^{u,>}_\infty(\theta))) + \frac{\pi}{2}.
\end{equation}
In order to get a proper approximation of both $\alpha^{u,>}_\infty(\theta_>)$ and $\alpha^{u,>}_\infty{}'(\theta_>)$, we use an analogous argument to \eqref{eqn: Tilde(beta)u< - Tilde(beta)s<}, using in this case the curve $\gamma^{u,>}_{\circleSplus}$ in \eqref{eqn: Curves as graphs of theta in coordinates (theta,Theta)} as a reference. To this end, following the changes \eqref{eqn:McGehee map Collision}, \eqref{eqn:polar change of coordinates for u,v into rho alpha} and the diffeomorphism $\Gamma_+$ in Proposition \ref{proposition: Straightening local diffeomorphisms and motions}, we obtain the curve $\gamma^{u,>}_{\circleSplus}$ expressed in coordinates $(\Tilde{\iota},w)$, parameterized by $\theta \in B_{\varepsilon}(\theta_>)$ as follows
\begin{equation*}
    \Tilde{\gamma}^{u,>}_{\circleSplus} = \Big\{\left(\Tilde{\iota}_{\circleSplus}^{u,>}(\theta), w^{u,>}_{\circleSplus}(\theta)\right) \colon \theta \in B_\varepsilon(\theta_>)\Big\},
\end{equation*}
such that
\begin{equation}\label{eqn: Tilde(iota)S+ and wS+}
    \begin{aligned}
        \Tilde{\iota}_{\circleSplus}^{u,>}(\theta) = \alpha_{\circleSplus}^{u,>}(\theta) - \frac{\pi}{2} - \psi_+\left(\delta, \chi_+(\delta,w_{\circleSplus}^{u,>}(\theta))\right),\quad w_{\circleSplus}^{u,>}(\theta) = \chi_+^{-1}(\delta, x_{\circleSplus}^{u,>}(\theta))
    \end{aligned}
\end{equation}
with 
\begin{equation}\label{eqn: alphaS+ and xS+}
    \alpha^{u,>}_{\circleSplus}(\theta) = \arccos\left(\frac{\Theta_{\circleSplus}^u(\theta) + \mu\delta^2\cos \theta - \delta^4}{\delta \sqrt{2(1-\mu) + \rho(\theta)}}\right),\quad x_{\circleSplus}^{u,>}(\theta) = \theta - 2\left(\alpha_{\circleSplus}^{u,>}(\theta) - \frac{\pi}{2}\right).
\end{equation}
We recall that $w_* = w^{u,>}_\infty(0) = w^{u,>}_{\circleSplus}(\theta_>)$ corresponds to the $w$-component of the intersection $p_>^*$ in \eqref{eqn: Points of intersection}.

Since $\Tilde{\gamma}_{\circleSplus}^{u,>}\subset \unstableManifoldSplusmu$, by \eqref{eqn: straightening Wus S+} the component $\Tilde{\iota}_{\circleSplus}^{u,>}(\theta) = 0$, yielding the following identity 
\begin{equation*}
    \alpha^{u,>}_{\circleSplus}(\theta) = 
    \frac{\pi}{2} + \psi_+\left(\delta, \chi_+(\delta,w_{\circleSplus}^{u,>}(\theta))\right).
\end{equation*}
Then, we can rewrite $\alpha^{u,>}_\infty(\theta)$ in \eqref{eqn: Expression of alphau> in terms of theta} as
\begin{equation}\label{eqn: Relation alpha_infty>, alpha_S+>}
\begin{aligned}
    \alpha^{u,>}_{\infty}(\theta) =& \alpha^{u,>}_{\circleSplus}(\theta) + \Tilde{\iota}^{u,>}_\infty \circ \nu^{u,>}_\infty(\theta) + \left(\psi_+(\delta,\chi_+(\delta,w^{u,>}_\infty\circ \nu^{u,>}_\infty(\theta)))-  \psi_+\left(\delta, \chi_+(\delta,w_{\circleSplus}^{u,>}(\theta))\right)\right)
\end{aligned}
\end{equation}
Moreover, the derivative at $\theta_>$ is given by
\begin{equation*}
\begin{aligned}
    \alpha^{u,>}_\infty{}'(\theta_>) =&  \alpha^{u,>}_{\circleSplus}{}'(\theta_>) + \Tilde{\iota}_{\infty}^{u,>}{}'(0)\cdot \nu_{\infty}^{u,>}{}'(\theta_>) \\
    &+ \left(\partial_{x}\psi_+(\delta,x_*)\cdot \partial_w \chi_+(\delta,w_*)\right)\left(w^{u,>}_\infty{}'(0)\cdot \nu_{\infty}^{u,>}{}'(\theta_>) - w_{\circleSplus}^{u,>}{}'(\theta_>)\right).
\end{aligned}
\end{equation*}
The expressions $\Tilde{\iota}^{u,>}_\infty{}'(0)$ and $w^{u,>}_\infty{}'(0)$ from \eqref{eqn: tangent vector Tilde(gamma)uinfty at 0} and $\nu_{\infty}^{u,>}{}'(\theta_>)$ from \eqref{eqn: nu(theta)} allow us to obtain
\begin{equation*}
\begin{aligned}
\Tilde{\iota}_{\infty}^{u,>}{}'(0)\cdot \nu_{\infty}^{u,>}{}'(\theta_>) =& (-1+\mathcal{O}_1(\delta))\cdot \frac{1}{\theta_\infty^{u,>}{}'(0)} = (-1+\mathcal{O}_1(\delta))\cdot \frac{1}{z_{\infty}^{u,<}{}'(0) + 2 + \mathcal{O}_1(\delta)} \\
=& (-1+\mathcal{O}_1(\delta))\cdot \frac{1}{\frac{\delta\sqrt{2(1-\mu)}}{d_-'(\theta_<)}-2\frac{\Theta_\infty^{u}{}'(\theta_<)}{d_-'(\theta_<)}+2 + \mathcal{O}_1(\delta)}\\
=& (-1+\mathcal{O}_1(\delta)) \cdot \frac{1}{\frac{\sqrt{2(1-\mu)}\delta}{d_-'(\theta_<)}\left(1 - 2\frac{\Theta_\infty^u{}'(\theta_<) - d_-'(\theta_<)}{\delta \sqrt{2(1-\mu)}} + \mathcal{O}_1(\mu)\right)}\\
=& (-1+\mathcal{O}_1(\delta))\left[\frac{d_-{}'(\theta_<)}{\delta\sqrt{2(1-\mu)}}\left(1 + \frac{2\Theta^{s}_{\circleSminus}{}'(\theta_<)}{\delta\sqrt{2(1-\mu)}} + \mathcal{O}_1(\mu)\right)\right]\\
=& (-1+\mathcal{O}_1(\delta))\left[\frac{d_-{}'(\theta_>)}{\delta\sqrt{2(1-\mu)}}\left(1 + \frac{2\Theta^{s}_{\circleSminus}{}'(\theta_>)}{\delta\sqrt{2(1-\mu)}} + \mathcal{O}_1(\mu)\right)\right],
&\\
w^{u,>}_\infty{}'(0)\cdot \nu_{\infty}^{u,>}{}'(\theta_>) =& \frac{z^{u,<}_\infty{}'(0) + \mathcal{O}_1(\delta)}{z^{u,<}_{\infty}{}'(0) + 2 + \mathcal{O}_1(\delta)} = 1 + \mathcal{O}_1\left(\frac{\mu}{\delta}\right).
\end{aligned}
\end{equation*}
since $\theta_< = \theta_> + \mathcal{O}_2(\delta)$ so 
\begin{equation*}
\begin{aligned}
d_-'(\theta_<) &= d_-'(\theta_> + \mathcal{O}_2(\delta)) = d_-'(\theta_>) + \mathcal{O}_1(\mu\delta^2),\\
\Theta^{s}_{\circleSminus}{}'(\theta_<) &= \Theta^{s}_{\circleSminus}{}'(\theta_> + \mathcal{O}_2(\delta)) = \Theta^{s}_{\circleSminus}{}'(\theta_>) + \mathcal{O}_1(\mu\delta^2).
\end{aligned}
\end{equation*}
The computation of $w_{\circleSplus}^{u,>}{}'(\theta_>)$ is obtained from \eqref{eqn: Tilde(iota)S+ and wS+} and \eqref{eqn: alphaS+ and xS+} as
\begin{equation*}
    w^{u,>}_{\circleSplus}{}'(\theta_>)  = \partial_x \chi_+^{-1}(\delta, x_*)\cdot x_{\circleSplus}^{u,>}{}'(\theta_>) = \left(1+\mathcal{O}_4(\delta)\right)\left(1-2\alpha^{u,>}_{\circleSplus}{}'(\theta_>)\right) = 1 + \mathcal{O}_1\left(\frac{\mu}{\delta}\right)
\end{equation*}
given $0<\mu\ll\delta$ small enough, where $x_*$ corresponds to the $x$-component of the intersection $p_>(\mu)$ in \eqref{eqn: Points of intersection}, $\partial_x \chi_+^{-1}(\delta,x_*)$ follows from the estimate \eqref{eqn: Approximations of partial_z chi_- and partial_w chi_+} and $\alpha^{u,>}_{\circleSplus}{}'(\theta_>)$ is obtained from \eqref{eqn: alphaS+ and xS+} (and whose approximation is equivalent to those in \eqref{eqn: Approximations of the derivatives of alpha_u< and alpha_s< at theta(mu)}). This result, along with the approximations of $\partial_w\chi_+(\delta,w)$ and $\partial_x\psi_+(\delta,x)$ in \eqref{eqn: Approximations of partial_z chi_- and partial_w chi_+} and \eqref{eqn: Approximations of partial_y psi_- and partial_x psi_+} respectively, give us the following result

\begin{equation}\label{eqn: Approximation alpha_u>infty in terms of alphau>S+}
\alpha^{u,>}_\infty{}'(\theta_>) = -\frac{\Theta^{u}_{\circleSplus}{}'(\theta_>) + d_-{}'(\theta_>)}{\sqrt{2(1-\mu)}\delta} +\mathcal{O}(\mu\delta).
\end{equation}
Finally, applying the inverse of the changes \eqref{eqn:McGehee map Collision} and \eqref{eqn:polar change of coordinates for u,v into rho alpha} on the curve $ \overline{\gamma}^{u,>}_\infty$ in \eqref{eqn: Curve gammau> as a graph in coordinates (alpha,theta)}, we obtain the curve $\gamma^{u,>}_\infty$ in coordinates $(\theta,\Theta)$ as a graph of the form

\begin{equation*}
    \gamma^{u,>}_\infty = \Big\{(\theta,\Theta^{u,>}_\infty(\theta))\colon \theta \in B_\varepsilon(\theta_>)\Big\},
\end{equation*}
where
\begin{equation*}
\begin{aligned}
    \Theta^{u,>}_\infty(\theta) = \delta \sqrt{2(1-\mu) + \rho(\theta)} \cos \left(\alpha^{u,>}_\infty(\theta)\right) + \delta^4 - \mu \delta^2 \cos\theta
\end{aligned}
\end{equation*}
with $\rho(\theta)$ as defined in \eqref{eqn: Approximation rho Appendix D}. This expression and its derivative can be approximated by \eqref{eqn: Approximation alpha_u>infty in terms of alphau>S+} respectively, yielding \eqref{eqn: Expression of Thetau> infty through f} and completing the proof.
\printbibliography

\end{document}